\newcommand{\nicecolor}{Navy}
\setlist[1]{wide}
\setlist[2]{leftmargin=15mm}
\setlist[enumerate]{label=\rm{(\arabic*)}}
\setlist[enumerate,2]{label=\rm({\it\roman*}), }
\setlist[itemize]{label=\raisebox{0.25ex}{\tiny$\bullet$}}
\definecolor{grisclair}{rgb}{0.9,0.9,0.9}
\tikzset{>=stealth}
\tikzset{link/.style={column sep=1.8cm,row sep=0.16cm}}
\tikzset{map/.style={row sep=0em, column sep=0em}}
\DeclareFontFamily{U}{mathb}{\hyphenchar\font45}
\DeclareFontShape{U}{mathb}{m}{n}{
	<5> <6> <7> <8> <9> <10> gen * mathb
	<10.95> mathb10 <12> <14.4> <17.28> <20.74> <24.88> mathb12
}{}
\DeclareSymbolFont{mathb}{U}{mathb}{m}{n}
\DeclareMathSymbol{\bigast}{1}{mathb}{"06}
\DeclareFontFamily{U}{mathx}{\hyphenchar\font45}
\DeclareFontShape{U}{mathx}{m}{n}{<-> mathx10}{}
\DeclareSymbolFont{mathx}{U}{mathx}{m}{n}
\DeclareMathAccent{\widebar}{0}{mathx}{"73}
\DeclareFontFamily{U}{mathx}{\hyphenchar\font45}
\DeclareFontShape{U}{mathx}{m}{n}{<-> mathx10}{}
\DeclareSymbolFont{mathx}{U}{mathx}{m}{n}
\DeclareMathAccent{\widebar}{0}{mathx}{"73}
\renewcommand{\to}{ \, \tikz[baseline=-.6ex] \draw[->,line width=.5] (0,0) -- +(.5,0); \, }
\renewcommand{\rightarrow}{ \, \tikz[baseline=-.6ex] \draw[->,line width=.5] (0,0) -- +(.5,0); \, }
\newcommand{\longto}{ \, \tikz[baseline=-.6ex] \draw[->,line width=.5] (0,0) -- +(.9,0); \, }
\newcommand{\rat}{ \, \tikz[baseline=-.6ex] \draw[->,densely dashed,line width=.5] (0,0) -- +(.5,0); \, }
\renewcommand{\dasharrow}{\rat}
\renewcommand{\dashrightarrow}{\rat}
\newcommand{\ps}{ \, \tikz[baseline=-.6ex] \draw[->,dotted,line width=.6] (0,0) -- +(.5,0); \,}
\renewcommand{\mapsto}{ \, \tikz[baseline=-.6ex] \draw[|->,line width=.5] (0,0) -- +(.5,0); \, }
\newcommand\iso{\stackrel{\sim}{\to}}
\newcommand{\tto}{ \, \tikz[baseline=-.6ex] \draw[->>,line width=.5] (0,0) -- +(.5,0); \, }
\renewcommand{\twoheadrightarrow}{\tto}
\newcommand{\hookto}{ \, \tikz[baseline=-.6ex] \draw[right hook->,line width=.5] (0,0) -- +(.5,0); \, }
\renewcommand{\hookrightarrow}{\hookto}
\newcommand{\Ra}{2.6cm}
\newcommand{\Rb}{1.70cm}
\renewcommand{\Rc}{0.85cm}
\tikzset{map/.style={row sep=0em, column sep=0em}}
\DeclareMathOperator{\Bir}{Bir}
\DeclareMathOperator{\Spec}{Spec}
\DeclareMathOperator{\rk}{rk}
\DeclareMathOperator{\Pic}{Pic}
\DeclareMathOperator{\Aut}{Aut}
\DeclareMathOperator{\PGL}{PGL}
\DeclareMathOperator{\GL}{GL}
\DeclareMathOperator{\Sym}{Sym}
\DeclareMathOperator{\Br}{Br}
\DeclareMathOperator{\Gal}{Gal}
\DeclareMathOperator{\Mat}{Mat}
\DeclareMathOperator{\Proj}{Proj}
\DeclareMathOperator{\charact}{char}
\DeclareMathOperator{\indexx}{ind}
\DeclareMathOperator{\Cohom}{H}
\DeclareMathOperator{\BirMori}{BirMori}
\DeclareMathOperator{\cg}{cov.gen}
\DeclareMathOperator{\covgon}{cov.gon}
\DeclareMathOperator{\gon}{gon}
\theoremstyle{plain}
\newtheorem{thm}{Theorem}[subsection]
\newtheorem{lem}[thm]{Lemma}
\newtheorem{cor}[thm]{Corollary}
\newtheorem{prop}[thm]{Proposition}
\newtheorem{maintheorem}{Theorem}
\newtheorem{maincorollary}[maintheorem]{Corollary}
\theoremstyle{definition}
\newtheorem{mydef}[thm]{Definition}
\newtheorem{rem}[thm]{Remark}
\newtheorem{ex}[thm]{Example}
\theoremstyle{definition}
\newtheorem*{Question}{Question}
\g@addto@macro{\endabstract}{\@setabstract}
\newcommand{\authorfootnotes}{\renewcommand\thefootnote{\@fnsymbol\c@footnote}}%
\title[Severi-Brauer surfaces]{Birational maps of Severi-Brauer surfaces, with applications to Cremona groups of higher rank}
\author{J\'er\'emy Blanc}
\address{J\'er\'emy Blanc, Universit\'e de Neuch\^atel,
Institut de Math\'ematiques,
Rue Emile-Argand 11,
Switzerland \href{mailto:jeremy.blanc@unine.ch}{jeremy.blanc@unine.ch}}
\author{Julia Schneider}
\address{Julia Schneider, 
Universit\"at Z\"urich, Institut f\"ur Mathematik
Winterthurerstrasse 190,
CH-8057 Z\"urich, \href{mailto:julia.schneider@math.ch}{julia.schneider@math.ch}}
\author{Egor Yasinsky}
\address{
Egor Yasinsky, IMB, Universit\'{e} de Bordeaux, 351 Cours de la Lib\'{e}ration,
	33405 Talence Cedex, France, \href{mailto:egor.yasinsky@u-bordeaux.fr}{egor.yasinsky@u-bordeaux.fr}}
\thanks{
J.S.~was first supported by the Swiss National Science Foundation project P2BSP2\_200209 and hosted by the Institut de Math\'ematiques de Toulouse, and then partially supported by the ERC starting grant $\#$804334.}
\subjclass[2010]{14E07, 14E05, 14E30, 14J45, 14M22, 20F05, 20L05}
\newcommand{\I}{\ensuremath{\mathrm{I}}}
\newcommand{\II}{\ensuremath{\mathrm{II}}}
\newcommand{\III}{\ensuremath{\mathrm{III}}}
\newcommand{\IV}{\ensuremath{\mathrm{IV}}}
\newcommand{\CC}{\mathbb C}
\newcommand{\QQ}{\mathbb Q}
\newcommand{\id}{\mathrm{id}}
\newcommand{\NN}{\mathbb N}
\newcommand{\kk}{K}
\newcommand{\RR}{\mathbb R}
\renewcommand{\AA}{\mathbb A}
\newcommand{\PP}{\mathbb P}
\newcommand{\ZZ}{\mathbb Z}
\newcommand{\pt}{\mathrm{pt}}
\def\@tocline#1#2#3#4#5#6#7{\relax
	\ifnum #1>\c@tocdepth 
	\else
	\par \addpenalty\@secpenalty\addvspace{#2}%
	\begingroup \hyphenpenalty\@M
	\@ifempty{#4}{%
		\@tempdima\csname r@tocindent\number#1\endcsname\relax
	}{%
		\@tempdima#4\relax
	}%
	\parindent\z@ \leftskip#3\relax \advance\leftskip\@tempdima\relax
	\rightskip\@pnumwidth plus4em \parfillskip-\@pnumwidth
	#5\leavevmode\hskip-\@tempdima
	\ifcase #1
	\or\or \hskip 3em \or \hskip 4em \else \hskip 5em \fi%
	#6\nobreak\relax
	\hfill\hbox to\@pnumwidth{\@tocpagenum{#7}}\par
	\nobreak
	\endgroup
	\fi}
\begin{document}

	\maketitle

\begin{abstract}

	We prove that any group of cardinality at most the one of $\mathbb{C}$ is a quotient of any Cremona group of rank at least $4$. This provides a definitive answer to the question of what the quotients of Cremona groups can be. As a consequence, this gives a negative answer to the question of I. Dolgachev of whether Cremona groups of all ranks are generated by involutions. As another application, we show that higher Cremona groups do not enjoy some classical group-theoretic properties (namely, the Hopfian property) which are satisfied by Cremona groups of rank 2. Finally, we discover that the $3$-torsion of the Cremona group of rank at least $4$ is not countable.

	To deduce these properties of higher Cremona groups, we first describe the group of birational transformations of a non-trivial Severi-Brauer surface $S$ over a perfect field, proving in particular that if $S$ contains a point of degree $6$, then its group of birational self-maps is not generated by elements of finite order as it admits a surjective group homomorphism to~$\mathbb{Z}$. We then use this result to study Mori fibre spaces over the field of complex numbers, for which the generic fibre is a non-trivial Severi-Brauer surface.

\end{abstract}

\setcounter{tocdepth}{2}

{
	\hypersetup{linkcolor=blue}
	\tableofcontents
}

\section{Introduction}\label{sec:intro}

\subsection{Higher Cremona groups} The question of the (non)simplicity of Cremona groups, i.e., groups of birational transformations of the projective space $\PP_\kk^n$, dates back at least to F.~Enriques \cite[p. 116]{Enriques}. It is now known that, contrary to Enriques' own expectations, these groups do contain proper normal subgroups. This was first shown by S. Cantat and S. Lamy in dimension $n=2$ over $\kk=\CC$ using methods from geometric group theory \cite{CantatLamy}, and then for any dimension $n\geqslant 3$ and $\kk\subseteq\CC$ by the first author, S. Lamy, and S. Zimmermann \cite{BLZ}, using methods from modern birational geometry: the minimal model program, the Sarkisov program, and the boundedness of Fano varieties, established by C.~Birkar \cite{Birkar}. Recently, there have been numerous results about the structure of this group, see e.g.~\cite{CantatLamy,Lonjou,Zimmermann,LamyZimmermann,BlancYasinsky,Zikas,LamySchneider,ShinderLin,BLZ,Schneider,DGO17} and references therein. At the same time, all these articles leave the following question open: \emph{what are possible quotients of Cremona groups?} In this paper, we provide the \emph{ultimate} answer for any dimension $\geqslant 4$:

\begin{maintheorem}\label{thm: free product}
	For every integer $m\geqslant 4$, there is a surjective group homomorphism
	\[
	\Phi\colon \Bir_{\CC}(\PP^m)\twoheadrightarrow \mathcal{F}(\CC),
	\]
	where $\mathcal{F}(\CC)$ is the free group over the set $\CC$. In particular, $\Bir_{\CC}(\PP^m)$ is not generated by elements of finite order and admits a surjective homomorphism to any group whose cardinality is at most the cardinality of $\CC$.
\end{maintheorem}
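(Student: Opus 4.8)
The plan is to combine the structural description of $\Bir(S)$ for a non-trivial Severi-Brauer surface $S$ over a perfect field --- which, as soon as $S$ carries a point of degree $6$, surjects onto $\ZZ$ --- with the Sarkisov-program techniques for higher-rank Cremona groups developed in \cite{BLZ}. Fix $m\geqslant 4$ and put $d:=m-2\geqslant 2$. The restriction $m\geqslant 4$ is forced at the outset: for $d=1$ the field $\CC(x_1)$ is a $C_1$-field, so $\Br(\CC(x_1))=0$ and no non-trivial Severi-Brauer surface exists over it, whereas for $d\geqslant 2$ the field $K:=\CC(x_1,\dots,x_d)$ carries an abundance of order-$3$ classes in $\Br(K)$. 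For each $c\in\CC$ I would fix such a class $\alpha_c$, arranging the $\alpha_c$ to be pairwise inequivalent under the action of $\Aut_\CC(K)=\Bir(\PP^d)$: for $m=4$ one can take $\alpha_c$ ramified along a smooth plane cubic with $j$-invariant $c$ (together with a fixed choice of cover data), and cubics with distinct $j$-invariants are non-isomorphic, hence lie in distinct $\Bir(\PP^2)$-orbits, so the $\alpha_c$ are pairwise inequivalent; the case $m>4$ is obtained by pulling these classes back along the purely transcendental extension $K/\CC(\PP^2)$, under which a non-trivial class stays non-trivial. Writing $A_c$ for the degree-$3$ division algebra with class $\alpha_c$ and $S_c:=\SB(A_c)$, each $S_c$ is a non-trivial Severi-Brauer surface over $K$; since $\indexx(A_c)=3$, the surface $S_c$ automatically has closed points of every degree divisible by $3$, in particular of degree $6$.

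The next step is to realise each $S_c$ as the generic fibre of a Mori fibre space over $\CC$. Taking a smooth projective model of the Severi-Brauer scheme of $A_c$ over $\PP^d_\CC$ and running a relative minimal model program, one obtains a Mori fibre space $\pi_c\colon X_c\to B_c$ with $B_c$ rational of dimension $d$, with $\rho(X_c/B_c)=1$, and with generic fibre $S_c$. The delicate point here is that one must arrange $X_c$ to be $\CC$-rational, so that $\Bir_\CC(X_c)\cong\Bir_\CC(\PP^m)$: this is where the mildness of the ramification of $\alpha_c$ is used, to write down an explicit birational parametrisation of $X_c$, in the spirit of the rationality of conic bundles over $\PP^2$ with discriminant of small degree. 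Granting this, the group $\Bir(X_c/B_c)$ of birational self-maps of $X_c$ over $B_c$ is canonically identified with $\Bir_K(S_c)$, and the structural theorem on Severi-Brauer surfaces --- applicable because $S_c$ is non-trivial and contains a point of degree $6$ --- gives a surjection $\Bir(X_c/B_c)\cong\Bir_K(S_c)\twoheadrightarrow\ZZ$. Hence $\Bir_\CC(\PP^m)$ contains, for every $c\in\CC$, a subgroup surjecting onto $\ZZ$.

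The heart of the argument is to lift these homomorphisms to all of $\Bir_\CC(\PP^m)$ and to combine them into a single homomorphism onto the free product. For this I would invoke the mechanism of \cite{BLZ}, which, from the Sarkisov program and its elementary relations, produces a surjection of $\Bir_\CC(\PP^m)$ onto a free product $\bigast_{[X/B]}\Gamma_{X/B}$ indexed by equivalence classes of suitably rigid Mori fibre spaces birational to $\PP^m$, where $\Gamma_{X/B}$ denotes the largest quotient of $\Bir(X/B)$ that kills the contribution of every Sarkisov link issuing from $X/B$. Two verifications are then required: first, that each $X_c/B_c$ is rigid, in the sense that any Sarkisov link out of it lands on a Mori fibre space equivalent to $X_c/B_c$ and is induced by an element of $\Bir_K(S_c)$ --- the decisive input being the stiffness of non-trivial Severi-Brauer surfaces, whose Picard group over $K$ is just $\ZZ\cdot(-K_{S_c})$, so that there are no divisor classes with which to build links not already defined over $B_c$; and second, that the surjection $\Bir(X_c/B_c)\twoheadrightarrow\ZZ$ factors through $\Gamma_{X_c/B_c}$, which follows from the first point together with the fact that the fibrewise links act trivially on $\ZZ$. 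Since the $X_c/B_c$ are pairwise non-equivalent --- their generic fibres $S_c$ being so, by the choice of the $\alpha_c$ --- and, being rigid, no Sarkisov link connects $X_c/B_c$ to $X_{c'}/B_{c'}$ for $c\neq c'$, the corresponding factors of $\bigast_{[X/B]}\Gamma_{X/B}$ are independent, and the projection onto them yields the desired surjection $\Phi\colon\Bir_\CC(\PP^m)\twoheadrightarrow\bigast_{c\in\CC}\ZZ=\mathcal{F}(\CC)$. I expect the rigidity step --- proving the stiffness of the Severi-Brauer surface fibrations $X_c/B_c$ and checking that it survives the elementary Sarkisov relations --- together with the $\CC$-rationality of the models $X_c$ to be the main obstacles.

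The remaining assertions are purely group-theoretic. Since $\mathcal{F}(\CC)$ is free of rank $|\CC|=2^{\aleph_0}$, any group $G$ with $|G|\leqslant 2^{\aleph_0}$ is a quotient of it: choose a surjection from a basis of $\mathcal{F}(\CC)$ onto a generating set of $G$ (possible as $|G|\leqslant 2^{\aleph_0}$) and extend it to a group homomorphism; composing with $\Phi$ then exhibits $G$ as a quotient of $\Bir_\CC(\PP^m)$. Moreover $\mathcal{F}(\CC)$ is non-trivial and torsion-free, hence cannot be a quotient of a group generated by elements of finite order --- the images of such generators would all be trivial, forcing the target to be trivial. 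Therefore $\Bir_\CC(\PP^m)$ is not generated by elements of finite order; in particular it is not generated by involutions, which answers Dolgachev's question in the negative.
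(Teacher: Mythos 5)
Your overall strategy (fibrations in non-trivial Severi--Brauer surfaces over $\CC(t_1,\dots,t_{m-2})$, plus the Sarkisov program) is the right one, and your reduction of the "in particular" statements to the surjection onto $\mathcal{F}(\CC)$ is fine. But the architecture of your main construction diverges from what actually works, and the divergence hides the two real difficulties. In the paper, the free group does \emph{not} arise from $|\CC|$-many pairwise inequivalent Brauer classes $\alpha_c$, one $\ZZ$ per class; it arises from a \emph{single} fibration $X/B$ with one fixed generic fibre $S$, whose group $\Bir_K(S)$ already surjects onto $\bigast_{\mathcal{P}_6}\ZZ$ --- one free factor per degree-$6$ point of $S$ up to $\Aut_K(S)$ (Theorem~\ref{theoremSB}, resting on the classification of elementary relations between Severi--Brauer surfaces in Lemma~\ref{lem:OnlyOnePiece}: the only non-trivial ones are hexagons of $3$-links, so $6$-links satisfy essentially no relations). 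Your version instead needs to combine surjections $\Bir(X_c/B_c)\twoheadrightarrow\ZZ$ for distinct fibrations $X_c/B_c$ into a free product inside $\Bir_\CC(\PP^m)$. Since all the $X_c$ are birational to $\PP^m$, they are all connected by chains of Sarkisov links, and "no single Sarkisov link joins $X_c$ to $X_{c'}$" gives you nothing: to get freeness you must classify all \emph{elementary relations} (rank-$3$ fibrations) whose boundary involves links from different $X_c$'s, and your appeal to "rigidity/stiffness" does not do this --- indeed every $3$- or $6$-link out of $X_c/B_c$ lands on a fibration with generic fibre $S_c^{\rm op}\not\simeq S_c$, so the fibrations are not rigid even in your own sense.

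The second gap is the invocation of "the mechanism of \cite{BLZ}". No theorem of that form exists there: the BLZ control of elementary relations is via \emph{covering gonality} of the centres, and for Severi--Brauer surface bundles the relevant centres are generically $3:1$ or $6:1$ over the base, so their covering gonality is bounded by $6$ and the BLZ criterion never applies. The paper has to replace gonality by covering \emph{genus}, prove a new boundedness statement for elementary relations coming from rank-$3$ fibrations with $\dim T-\dim B\geqslant 3$ (Proposition~\ref{prop: covgen link bound} --- false when $\dim T-\dim B=2$, see Example~\ref{ExCovgon}), and separately classify the equidimensional-base relations (Lemmas~\ref{lem:IandthenII}--\ref{lem:IIIandthenII}); it then needs an explicit supply of $6$-links of arbitrarily large covering genus and pairwise inequivalent, not equivalent to their inverses (Propositions~\ref{prop:existence6linksC} and \ref{prop:Matsumura-Monsky}). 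Finally, two smaller but genuine errors: (i) "index $3$ implies points of every degree divisible by $3$, in particular degree $6$" is false --- Proposition~\ref{prop: no points of deg 6} exhibits a non-trivial Severi--Brauer surface with no degree-$6$ point; the existence of degree-$6$ points over $\CC(t_1,\dots,t_n)$ must be proved (Proposition~\ref{prop: existence of links over function fields}\ref{exi6}); (ii) the $\CC$-rationality of the total spaces is not a formality --- Kresch--Tschinkel produce non-stably-rational Severi--Brauer surface bundles --- and the paper secures it by an explicit singular cubic model (Propositions~\ref{prop: birational cubic model} and \ref{prop: explicit SB fibration}), not by a ramification argument.
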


As an example, for each complex algebraic variety $X$, there is an abstract surjective group homomorphism $\Bir_{\CC}(\PP^{m})\twoheadrightarrow \Bir_{\CC}(X)$, where $m\geqslant 4$.

\medskip

Theorem \ref{thm: free product} implies a negative answer to the following question of I. Dolgachev:

\begin{Question}[I. Dolgachev]
	Is the group $\Bir_{\CC}(\PP^m)$ generated by involutions?
\end{Question}

For $m=2$ the answer to Dolgachev's question is positive and follows from the classical Noether-Castelnuovo theorem which states that $\Bir_{\CC}(\PP^2)$ is generated by $\Aut_{\CC}(\PP^2)\simeq\PGL_3(\CC)$ and the standard Cremona involution $[x:y:z]\mapsto [yz:xz:xy]$, see~\cite{CerveauDeserti}. More generally, as was recently proven in \cite{LamySchneider}, the group $\Bir_{\kk}(\PP^2)$ is generated by involutions over any perfect field $K$.

\medskip

Another application of Theorem \ref{thm: free product} concerns the \emph{Hopfian property}. Recall that a group $\Gamma$ is called {\it Hopfian} if any surjective endomorphism $\Gamma\twoheadrightarrow\Gamma$ is an isomorphism.  Equivalently, a group is Hopfian if and only if it is not isomorphic to any of its proper quotients. Examples of Hopfian groups include all finite groups, all  finitely-generated free groups, the group of rationals $\QQ$, and many more. In \cite{Deserti} it was proven that the complex plane Cremona group $\Bir_{\CC}(\PP^2)$ is Hopfian. By contrast, Theorem~\ref{thm: free product} implies that this is wrong in higher dimensions:

\begin{maincorollary}\label{CorHopfian}
	For each $m\geqslant 4$, the group  $\Bir_{\CC}(\PP^{m})$  is not Hopfian.
\end{maincorollary}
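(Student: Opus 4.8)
The plan is to deduce the non-Hopfian property directly from Theorem~\ref{thm: free product}, by a purely formal cardinality argument; no further geometry is needed. Recall that $\Bir_{\CC}(\PP^m)$ fails to be Hopfian precisely when it admits a surjective group endomorphism which is not injective, so it suffices to produce one such endomorphism.

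First I would check the (easy) cardinality bound $\lvert\Bir_{\CC}(\PP^m)\rvert\leqslant\lvert\CC\rvert$. Every birational self-map of $\PP^m$ is given, after clearing denominators and dividing by the greatest common factor, by a class $[f_0:\dots:f_m]$ of homogeneous polynomials $f_i\in\CC[x_0,\dots,x_m]$ of a common degree $d\geqslant 1$, uniquely up to a common nonzero scalar; hence $\varphi\mapsto\bigl(d,[f_0:\dots:f_m]\bigr)$ injects $\Bir_{\CC}(\PP^m)$ into the disjoint union over $d\geqslant 1$ of the projectivisations of the spaces of such tuples. For each fixed $d$ this is a subset of a finite-dimensional $\CC$-vector space, so has cardinality $\lvert\CC\rvert$, and a countable union of sets of cardinality $\lvert\CC\rvert$ again has cardinality $\lvert\CC\rvert$. (Equality in fact holds since $\PGL_{m+1}(\CC)\subseteq\Bir_{\CC}(\PP^m)$, but only the upper bound is used.) Consequently $\Bir_{\CC}(\PP^m)$ is generated by a set of cardinality at most $\lvert\CC\rvert$, so there is a surjective homomorphism $\rho\colon\mathcal F(\CC)\twoheadrightarrow\Bir_{\CC}(\PP^m)$: choose any map from the free basis $\CC$ onto a generating set of $\Bir_{\CC}(\PP^m)$ and extend it to $\mathcal F(\CC)$.

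Then I would set $\Psi:=\rho\circ\Phi\colon\Bir_{\CC}(\PP^m)\to\Bir_{\CC}(\PP^m)$, where $\Phi\colon\Bir_{\CC}(\PP^m)\twoheadrightarrow\mathcal F(\CC)$ is the homomorphism of Theorem~\ref{thm: free product}. It is surjective because $\rho$ is. It is not injective: $\mathcal F(\CC)$ is a free group, hence torsion-free, whereas $\Bir_{\CC}(\PP^m)$ contains nontrivial elements of finite order (for instance any nontrivial finite-order element of $\PGL_{m+1}(\CC)\subseteq\Bir_{\CC}(\PP^m)$), so $\Ker\Phi\neq\{\id\}$ and therefore $\Ker\Psi\supseteq\Ker\Phi\neq\{\id\}$. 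Thus $\Psi$ is a surjective endomorphism of $\Bir_{\CC}(\PP^m)$ that is not an isomorphism, which is exactly the assertion that $\Bir_{\CC}(\PP^m)$ is not Hopfian.

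I expect no real obstacle here beyond Theorem~\ref{thm: free product} itself: once a Cremona group of cardinality $\lvert\CC\rvert$ surjects onto the free group of rank $\lvert\CC\rvert$, that free group surjects back onto it and the composite endomorphism is automatically non-injective for the soft reason that $\Phi$ must kill torsion. The only points requiring any care are the cardinality bookkeeping in the second paragraph and the observation that $\Bir_{\CC}(\PP^m)$ is not torsion-free, both of which are routine.
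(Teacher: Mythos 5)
Your proposal is correct and is essentially the paper's own argument: bound $\lvert\Bir_{\CC}(\PP^m)\rvert$ by $\lvert\CC\rvert$ to get a surjection $\mathcal F(\CC)\twoheadrightarrow\Bir_{\CC}(\PP^m)$, compose with the surjection of Theorem~\ref{thm: free product} in the same order, and note the composite is surjective but not injective. Your justification of non-injectivity (torsion in $\Bir_{\CC}(\PP^m)$ versus torsion-freeness of $\mathcal F(\CC)$) just makes explicit what the paper leaves implicit.
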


Finally, our techniques allow to get some insights into the structure of the abelianisation of higher Cremona groups. Recall that the abelianisation of a group $G$ is the quotient $G/[G,G]$, where $[G,G]$ denotes the commutator subgroup. The abelianisation of $\Bir_{\CC}(\PP^2)$ is trivial, by the Noether-Castelnuovo theorem, but the one of $\Bir_{\RR}(\PP^2)$ is isomorphic to $\bigoplus_{\RR} \ZZ/2\ZZ$ by the main result of \cite{Zimmermann}. For each perfect field $K$, as $\Bir_{\kk}(\PP^2)$ is generated by involutions, its abelianisation is a $2$-torsion group. Theorem~\ref{thm: free product} implies that the abelianisation of $\Bir_{\CC}(\PP^{ m})$ is not a torsion group for $m\geqslant 4$. The results of \cite{BLZ} imply however that its $2$-torsion is not trivial. Here we prove

\begin{maintheorem}\label{3Torsion}
	For each $m\geqslant 4$, the $3$-torsion of the abelianisation of  $\Bir_{\CC}(\PP^m)$  is  not countable.
\end{maintheorem}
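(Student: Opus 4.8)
The plan is to exhibit, over an uncountable index set, a copy of $\bigoplus\ZZ/3\ZZ$ inside the $3$-torsion of $\Bir_\CC(\PP^m)^{ab}$, by running the same scheme as for Theorem~\ref{thm: free product} but tracking a $3$-torsion invariant supported on automorphisms of non-trivial Severi-Brauer surfaces rather than the infinite-order invariant coming from degree-$6$ points. The algebraic input is elementary: for a non-trivial Severi-Brauer surface $S$ over a field $K$ one has $\Aut_K(S)=\PGL_1(A)$ for a central division $K$-algebra $A$ of degree $3$; choosing a maximal subfield $K(\theta)\subseteq A$ with $\theta^3=a\in K^\ast$ and $[K(\theta):K]=3$, the class of $\theta$ in $\PGL_1(A)=A^\ast/K^\ast$ has order $3$, while Wang's theorem ($\SK_1(A)=1$ as $\deg A$ is squarefree) gives $\PGL_1(A)^{ab}\cong\Nrd(A^\ast)/(K^\ast)^3$, a nonzero subgroup of $K^\ast/(K^\ast)^3$ and hence a nontrivial $3$-torsion group (it contains the class of $a=\Nrd(\theta)$). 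Using the structure of $\Bir_K(S)$ obtained in the study of birational maps of Severi-Brauer surfaces — not merely the surjection $\Bir_K(S)\twoheadrightarrow\ZZ$ — one then fixes a homomorphism $\rho_S\colon\Bir_K(S)\to\ZZ/3\ZZ$ together with an order-$3$ element $\sigma_S\in\Aut_K(S)\subseteq\Bir_K(S)$ such that $\rho_S(\sigma_S)$ is a generator.

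Next I would build the uncountable family. Set $B=\PP^{m-2}$ and $K=\CC(B)$. This is exactly where the hypothesis $m\geqslant 4$ enters: $\Br(K)[3]$ is uncountable precisely when $\dim B\geqslant 2$, since for $\dim B\leqslant 1$ the field $K$ is $C_1$ and $\Br(K)=0$ by Tsen. Fix an uncountable family of pairwise distinct nonzero classes $\alpha\in\Br(K)[3]$ for which the Mori fibre space $X_\alpha\to B$ with generic fibre the Severi-Brauer surface $S_\alpha=\SB(\alpha)$ — as constructed in the study of such Mori fibre spaces — is $\CC$-rational (for instance an uncountable family of cyclic classes split by a radical extension of $K$, so the total space is visibly rational). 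Since $X_\alpha$ is $\CC$-rational, a choice of birational map $X_\alpha\dashrightarrow\PP^m$ gives an isomorphism $\Bir_\CC(X_\alpha)\cong\Bir_\CC(\PP^m)$ which carries the order-$3$ element $\sigma_\alpha:=\sigma_{S_\alpha}\in\Bir_K(S_\alpha)\subseteq\Bir_\CC(X_\alpha)$ to an order-$3$ element of $\Bir_\CC(\PP^m)$, still denoted $\sigma_\alpha$.

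The core is then to produce a group homomorphism $\theta\colon\Bir_\CC(\PP^m)\to\bigoplus_\alpha\ZZ/3\ZZ$ with $\theta(\sigma_\alpha)=e_\alpha$ and $\theta(\sigma_\beta)_\alpha=0$ for $\beta\neq\alpha$. For this I would use the Sarkisov program in dimension $m$: the group $\Bir_\CC(\PP^m)$ is presented by Sarkisov links between Mori fibre spaces birational to $\PP^m$ modulo Kaloghiros's elementary relations, and it suffices to define $\theta$ on a link and to check that it kills the elementary relations. Declare $\theta(\chi)$ to be the sum over $\alpha$ of: $\rho_{S_\alpha}(g)$ if $\chi$ is conjugate in the Sarkisov groupoid to an element $g$ of $\Bir_K(S_\alpha)=\Bir(X_\alpha/B)$, and $0$ otherwise. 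The whole content is that the elementary relations are sent to $0$. The decisive feature, parallel to the role of pointless conics for the real plane Cremona group, is that $S_\alpha$ has no point of degree $1$, $2$ or $4$, which forces the Mori fibre spaces reachable from $X_\alpha\to B$ by a single link, and hence the elementary relations in which $X_\alpha\to B$ participates, to be exactly those described in the Severi-Brauer analysis; each such relation restricts to a relation in $\Bir_K(S_\alpha)$, which $\rho_{S_\alpha}$ annihilates. Different $\alpha$ do not interfere, since $\alpha\neq\beta$ in $\Br(K)$ makes $X_\alpha\to B$ and $X_\beta\to B$ non-isomorphic as fibrations. Granting all this, $\theta$ is a surjective homomorphism to an abelian group, hence factors through $\Bir_\CC(\PP^m)^{ab}$; as $3[\sigma_\alpha]=[\sigma_\alpha^3]=0$ and $\theta$ witnesses that the classes $[\sigma_\alpha]$ are $\ZZ/3\ZZ$-linearly independent, they span a copy of $\bigoplus_\alpha\ZZ/3\ZZ$ inside the $3$-torsion of $\Bir_\CC(\PP^m)^{ab}$, which is therefore uncountable.

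The main obstacle is the verification that $\theta$ respects every elementary relation: one must know, for each $\alpha$ in the family, a sufficiently complete list of the Sarkisov links out of $X_\alpha\to B$ and of the relations among them, so that the "$\Bir_K(S_\alpha)^{ab}$-component" of a link is a well-defined function on the Sarkisov groupoid — and this is precisely where the full structure theorem for $\Bir_K(S)$ of a non-trivial Severi-Brauer surface (its $\Aut$-part together with the Geiser/Bertini-type involutions) is indispensable, the pointlessness of $S_\alpha$ being what keeps that list tractable. A more routine but non-empty point is arranging the uncountable family so that each total space $X_\alpha$ is genuinely $\CC$-rational and that the base may be taken to be $\PP^{m-2}$ for every $m\geqslant 4$ (or else reducing general $m$ to the case $m=4$ by a standard product argument).
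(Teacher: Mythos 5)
Your strategy is genuinely different from the paper's, and it has two gaps that are not merely technical. The paper keeps a \emph{single} Brauer class: it fixes the one rational SBMfs $X\to B$ of Proposition~\ref{prop: explicit SB fibration} and gets uncountability from uncountably many pairwise inequivalent $3$-links on that one fibration (indexed by the set $R_{n,d}$ of Lemma~\ref{Lemm:CompleteInt}, distinguished via the canonical models $\Omega_{[A:B]}$ of their centres), feeding them into Theorem~\ref{Theorem:SBMfs}. The order-$3$ elements hitting the generators are \emph{not} automorphisms of the generic fibre: they are the compositions of two $3$-links with different splitting fields coming from the order-$3$ automorphism of the smooth cubic model in Proposition~\ref{ExplicitForm:SB3}, arranged so that exactly one of the two links is detected by the link-counting homomorphism.

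The first gap in your argument is the homomorphism $\rho_S\colon\Bir_K(S)\to\ZZ/3\ZZ$ that is nontrivial on an order-$3$ element of $\Aut_K(S)$. Wang's theorem gives you a character of $\Aut_K(S)=\PGL_1(A)$ via the reduced norm, but nothing in the structure theory of $\Bir_K(S)$ tells you it extends: Theorem~\ref{thm:SBgroupoidhomo} and Theorem~\ref{theoremSB} send \emph{every} isomorphism of Mori fibre spaces, in particular all of $\Aut_K(S)$, to the trivial word, so the only quotients this theory produces kill exactly the elements you want to detect. Whether $\Nrd(A^*)/(K^*)^3$ survives in $\Bir_K(S)^{\mathrm{ab}}$ requires controlling the relations between automorphisms and $d$-links (e.g.\ $\tau_{\alpha(p)}$ versus $\tau_p\circ\alpha^{-1}$ for $\alpha\in\Aut_K(S)$), which you do not do and the paper does not do either. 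This is the sole source of $3$-torsion in your scheme, so the argument does not get off the ground without it. The second gap is the descent to $\Bir_\CC(\PP^m)$: an automorphism of the generic fibre $S_\alpha$ induces a birational self-map of $X_\alpha$ over $B$ whose Sarkisov factorisation consists of links that are isomorphisms on generic fibres (Lemma~\ref{lem:SBMfsSarki}\ref{SBSar1}), and the entire relation-checking machinery (Theorem~\ref{Theorem:DPMfs}, Lemmas~\ref{lem:IandthenII}--\ref{lem:IIIandthenII}, Proposition~\ref{prop: covgen link bound}) is built on sending precisely those links to the identity. An invariant of Sarkisov links that remembers the reduced norm of an induced fibrewise automorphism would need a new analysis of elementary relations that you have not supplied. (The rationality of uncountably many $X_\alpha$ and the pairwise distinctness of the classes $\alpha$ are further unverified points, though less fundamental.) By contrast, the paper's route avoids both issues by never asking the quotient to see automorphisms at all.
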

This raises the following question:
\begin{Question}[possible orders of elements in the abelianisation for Cremona groups]
	Let $N\geqslant 4$ be an integer. Does there exist $m\geqslant 1$ such that the $N$-torsion of the abelianisation of  $\Bir_{\CC}(\PP^m)$ is not trivial? For $m>2$ fixed, what are the possible orders of elements of the abelianisation of $\Bir_\CC(\PP^m)$?
\end{Question}

\subsection{Severi-Brauer surfaces}

The crucial insight allowing for the construction of arbitrary quotients of Cremona groups in Theorem \ref{thm: free product} is that projective spaces of dimension $\geqslant 4$ admit birational models fibered in \emph{non-trivial Severi-Brauer surfaces}. In turn, the groups of birational self-maps of these surfaces themselves possess quite exotic properties.

\medskip

Let $\kk$ be a field. A {\it Severi-Brauer} variety over $\kk$ is a projective algebraic variety $X$ such that $X_{\overline{\kk}}=X\times_{\Spec\kk}\Spec\overline{\kk}\simeq\PP^{m}_{\overline{\kk}}$. By the classical Ch\^{a}telet's theorem, a Severi-Brauer variety $X$ over $\kk$ is isomorphic to $\PP_\kk^{m}$ if and only if $X(\kk)\ne\varnothing$; we say that in this case the Severi-Brauer variety is \emph{trivial}. So, brational transformations of a trivial Severi-Brauer variety $X=\PP_\kk^{m}$ consitute the Cremona group $\Bir_{\kk}(\PP^{m})$ of rank~$m$. At the same time, the group $\Bir_K(X)$ of birational transformations of a {\it non-trivial} Severi-Brauer variety $X$ over a perfect field $K$ got a lot less attention. The surface case was first studied by Felix Weinstein in his Master's thesis \cite{Weinstein89}, republished recently as \cite{Weinstein22}, then more recently in \cite{Shramov1,Shramov2,ShramovVologodsky,TrepalinSB,Vikulova}. In particular, for a non-trivial Severi-Brauer surface~$S$, generators and finite subgroups of $\Bir_K(S)$ were studied. Here, we go one step further and describe quotients of $\Bir_K(S)$. It turns out that this group exhibits some properties which are very different from the properties of the plane Cremona group (which was already observed by C. Shramov in the context of finite group actions on $S$ and the Jordan property).

\medskip

Let $S$ be a non-trivial Severi-Brauer surface over a perfect field $K$. Although $S$ does not contain any $K$-rational point, it always contains points of degree $3$. Furthermore, if $S$ contains a closed point of degree $d\geqslant 3$, which corresponds to $d$ $\overline{K}$-points forming one orbit of $\Gal(\overline{K}/K)$, then $d$ is always a multiple of $3$, see Corollary~\ref{cor: SB contains a point of degree p}. Our second main result is the following.

\begin{maintheorem}\label{theoremSB}
	Let $S$ be a Severi-Brauer surface over a perfect field $K$, with $S(K)=\varnothing$. We fix an algebraic closure $\overline{K}$ of $K$ and denote for $d\in \{3,6\}$ by $\mathcal{P}_d$ the set of degree $d$ points of $S$ up to the action of $\Aut_K(S)$. Then, $\lvert\mathcal{P}_3\rvert \geqslant 2$ and for each $p\in \mathcal{P}_3$, there is a surjective group homomorphism
	\[
	\Psi\colon \Bir_K(S)\to \bigoplus\limits_{\mathcal{P}_3\setminus \{p\}} \ZZ/3\ZZ \ast \left ( \bigast_{\mathcal{P}_6} \mathbb{Z} \right).
	\]
	In particular, $\Bir_K(S)$ is not a perfect group $($and is thus not simple$)$ and is not generated by involutions. Moreover, if $\mathcal{P}_6\not=\varnothing$, then $\Bir_K(S)$ is not generated by elements of finite order.
\end{maintheorem}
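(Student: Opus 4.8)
The plan is to build the homomorphism $\Psi$ out of the combinatorics of the Sarkisov program applied to the surface $S$. The key point is that a non-trivial Severi-Brauer surface $S$ with $S(K)=\varnothing$ is already a minimal Mori fibre space over $\operatorname{Spec} K$ (it has Picard rank $1$ over $K$, since $\operatorname{Pic}(S_{\overline K})^{\Gal}=\ZZ K_S$), and moreover it is the \emph{unique} such MFS in its birational class: there is no Sarkisov link starting from $S$ to another surface, because any $K$-point or $K$-curve that could be blown down or contracted to produce a conic bundle or a del Pezzo fibration would force $S(K)\neq\varnothing$ (in particular, rational curves of low degree are obstructed exactly by the degree-$6$ divisibility from Corollary~\ref{cor: SB contains a point of degree p}). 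Consequently, every element of $\operatorname{Bir}_K(S)$ decomposes into a product of Sarkisov links from $S$ to itself, i.e.\ $\operatorname{Bir}_K(S)$ is generated by $\operatorname{Aut}_K(S)$ together with the \emph{elementary links of type II} centred at the closed points of $S$, each such link being a blow-up of a point of degree $3$ or $6$ followed by the inverse contraction. I would then feed this into the groupoid/relation machinery in the style of \cite{BLZ, LamySchneider}: the generating links and their relations between them are governed by the geometry of pairs of (disjoint) centres, so one gets a presentation of $\operatorname{Bir}_K(S)$ as an amalgamated product over $\operatorname{Aut}_K(S)$, and quotienting out all relations coming from links centred at points of degree $3$ (together with the $\operatorname{Aut}_K(S)$-action) yields precisely the stated free product $\bigoplus_{\mathcal P_3\setminus\{p\}}\ZZ/3\ZZ \ast (\bigast_{\mathcal P_6}\ZZ)$.

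The structure of the target is explained as follows. A type-II link centred at a point of degree $3$ is a Geiser-type involution (it is an involution because blowing up a degree-$3$ point of $S$ gives a del Pezzo surface of degree $6$ admitting a second contraction to $S$, and the two rulings are exchanged by an involution), so it contributes a $\ZZ/2$ — but after abelianising the relations among these, what survives is a $\ZZ/3$ per orbit of degree-$3$ point, reflecting the triality/$\mu_3$-symmetry of the associated degree-$6$ del Pezzo surface; the single distinguished point $p$ is killed because one relation (or the choice of base point of the groupoid) uses it up, which is why the sum runs over $\mathcal P_3\setminus\{p\}$ and forces $|\mathcal P_3|\geq 2$ for the statement to be non-vacuous (the inequality $|\mathcal P_3|\geq 2$ itself must be established separately, e.g.\ by exhibiting two non-$\operatorname{Aut}_K(S)$-equivalent degree-$3$ points, for instance using that the degree-$3$ points correspond to certain subfields/étale subalgebras of the underlying degree-$3$ division algebra and there are at least two orbits). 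A type-II link centred at a point of degree $6$ is \emph{not} of finite order: blowing up a degree-$6$ point gives a weak del Pezzo surface whose second contraction returns to $S$ but the composite has infinite order because the induced action on a suitable lattice (or on the set of degree-$6$ points) is of infinite order — this is the mechanism producing the $\ZZ$ factors and the surjection to $\ZZ$ mentioned in the abstract.

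Granting the homomorphism $\Psi$, the three concluding assertions are immediate. A free product $A\ast B$ with $A,B$ non-trivial is never perfect (its abelianisation is $A^{\mathrm{ab}}\oplus B^{\mathrm{ab}}$, which is non-trivial here since it contains a copy of $\ZZ/3\ZZ$ or of $\ZZ$), hence non-simple; since $S$ surjects onto a group with non-trivial $3$-torsion in its abelianisation, it is not generated by involutions; and if $\mathcal P_6\neq\varnothing$ the target has $\ZZ$ as a free factor, hence surjects onto $\ZZ$, so $\operatorname{Bir}_K(S)$ surjects onto $\ZZ$ and therefore cannot be generated by torsion elements (the image of any torsion element in $\ZZ$ is $0$). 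The main obstacle I anticipate is the careful bookkeeping in the Sarkisov/groupoid step: one must verify that \emph{all} links from $S$ to itself are of type II (ruling out types I, III, IV by the absence of $K$-points and $K$-curves of the forbidden degrees), pin down exactly which relations hold among links with disjoint centres versus a common centre, and track precisely how the distinguished point $p$ and the $\operatorname{Aut}_K(S)$-conjugation collapse the naive free product of $\ZZ/2$'s down to the asserted $\bigoplus\ZZ/3\ZZ \ast \bigast\ZZ$. Identifying the infinite order of the degree-$6$ links concretely — rather than merely abstractly — is the other delicate point, and is presumably handled by an explicit lattice or Picard-group computation on the blow-up.
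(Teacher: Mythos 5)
Your overall architecture (Sarkisov decomposition of $\Bir_K(S)$, classification of links and elementary relations, then a groupoid homomorphism) matches the paper, but two of your key geometric claims are wrong, and they are exactly the claims the whole computation rests on.

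First, it is false that every Sarkisov link starting from $S$ returns to $S$, and false that a link centred at a degree-$3$ point is a Geiser-type involution contributing a $\ZZ/2$. Blowing up a degree-$3$ point of $S$ gives a del Pezzo surface of degree $6$ whose second contraction lands on the \emph{opposite} Severi--Brauer surface $S^{\rm op}$ (the one attached to the opposite algebra $A^{\rm op}$), and for degree-$3$ central simple algebras the period is $3$, so $S\not\simeq S^{\rm op}$. Consequently a $3$-link is never equivalent to its own inverse (this is Remark~\ref{rem: link is not equivalent to inverse} in the paper) and is certainly not an involution; every element of $\Bir_K(S)$ is a composition of an even number of links alternating $S\rat S^{\rm op}\rat S\rat\cdots$. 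The $\ZZ/3\ZZ$ factors do not arise by ``abelianising relations among $\ZZ/2$'s'': they arise because the only rank-$3$ fibrations dominating $S/\pt$ are blow-ups of \emph{two} degree-$3$ points (degree bookkeeping: $3+6>8$ and $6+6>8$ rule out everything else), and the resulting cubic surface produces a hexagonal elementary relation $\chi_6\circ\cdots\circ\chi_1=\id$ in which $\chi_1,\chi_3,\chi_5$ are mutually equivalent and $\chi_2,\chi_4,\chi_6$ are mutually equivalent; sending each equivalence class of $3$-links to a generator, this relation reads $3\cdot 1_{\chi_1}-3\cdot 1_{\chi_2^{-1}}=0$, which is why $\ZZ/3\ZZ$ (and not $\ZZ/2\ZZ$ or $\ZZ$) is the correct factor. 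Second, the $\ZZ$ factors attached to degree-$6$ points do not come from a lattice computation showing a composite has infinite order; they come from the fact that no elementary relation involves a $6$-link at all (again by the degree bound on rank-$3$ fibrations), so these generators are genuinely free in the quotient.

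The removal of the distinguished class $p\in\mathcal{P}_3$ also has a concrete mechanism you should identify: since links alternate between $S$ and $S^{\rm op}$, the image of any element of $\Bir_K(S)$ under the groupoid homomorphism has coordinate sum zero, so the map to the full direct sum is not surjective; projecting away one $\ZZ/3\ZZ$ factor restores surjectivity (one realizes the generator at $q$ as $\Psi(\varphi^{-1}\circ\varphi')$ where $\varphi'$ has base-point $q$ and $\varphi$ has base-point $p$). Your treatment of $\lvert\mathcal{P}_3\rvert\geqslant 2$ and of the three concluding assertions is fine, but as written the core of your argument would produce the wrong target group and must be repaired along the lines above.
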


Theorem \ref{theoremSB} gives the first example of del Pezzo surfaces (or more generally geometrically rational surfaces) whose groups of birational transformations are not generated by elements of finite order.

\medskip

To deduce Theorem \ref{thm: free product} from Theorem  \ref{theoremSB}, we study Mori fibre spaces $X\to B$ over the field~$\CC$ of complex numbers, such that the generic fibre is a non-trivial Severi-Brauer surface over~$\CC(B)$. Such spaces were previously studied by T.~Maeda \cite{MaedaMaps,MaedaModels}, A.~Kresch and Yu.~Tschinkel \cite{KreschTschinkelModels,KreschTschinkel} (in the context of rationality problems), and others. Note that one needs $\dim(B)\geqslant 2$, as otherwise $\CC(B)$ has the $C_1$ property and hence there are no non-trivial Severi-Brauer surfaces over $\CC(B)$; in particular, $\dim(X)\geqslant 4$. We show that some birational maps of such fibrations appear in very few relations, and obtain in particular a group homomorphism from $\Bir_{\CC}(X)$ to products of free groups with $\ZZ/3\ZZ$, as in Theorem~\ref{theoremSB} (see Theorem~\ref{Theorem:SBMfs}).

\subsection{Relation to other work}

\subsubsection{Relation with \cite{BLZ} and SQ-universality of Cremona groups}
As explained above, \cite{BLZ} was the first article that proved the non-simplicity of $\Bir_{\CC}(\PP^m)$ for $m\geqslant 3$. The quotients constructed there are of the form $\bigoplus_{\CC}\ZZ/2\ZZ$ or $\bigast_{\CC}\ZZ/2\ZZ$, and thus are quite large but always generated by involutions.

The fact that every group of the right cardinality is a quotient of  $\Bir_{\CC}(\PP^m)$ for $m\geqslant 4$ is much stronger than all previously known results in this direction. As a comparison, it is known that groups $\Bir_{\CC}(\PP^2)$ and $\Bir_{\CC}(\PP^m)$, $m\geqslant 3$, are \emph{SQ-universal}, see \cite{CantatLamy,DGO17} and \cite[Corollary 8.4]{BLZ}, respectively. SQ-universality means that every countable group can be embedded in a quotient of $\Bir_{\CC}(\PP^m)$. For $\Bir_\CC(\PP^m)$ with $m\geqslant 4$, we can replace ``countable'' by ``having at most the cardinality of $\CC$'' and ``be embedded in'' with ``being equal to'', which is of course much stronger. Note that in particular we get a far reaching generalisation of all the results from \cite[8.A]{BLZ}.

\subsubsection{Motivic invariants of birational maps}

A completely different approach to this problem, via motivic invariants of birational maps, was recently developed by H.-Y. Lin and E.~Shinder, who constructed a non-trivial homomorphism $\Bir_{\kk}(\PP^m)\to\ZZ$ for $m\geqslant 4$ and $\kk=\CC$, or for $m=3$ and $\kk$ a number field (see \cite[Theorem 1.2]{ShinderLin} for a more precise statement); this answers negatively Dolgachev's question for $m\geqslant 4$, the remaining case is then $m=3$. One might actually derive from \cite{ShinderLin} a surjective group homomorphism to $\bigoplus\limits_{\CC} \ZZ$, but their technique cannot be used to obtain a non-abelian quotient, and thus cannot give Theorem~\ref{thm: free product}.

\medskip

Note that the construction of Theorem \ref{thm: free product} is {\it fundamentally} different from that of \cite{BLZ} and \cite{ShinderLin}: in fact, the simplest birational maps which generate the image of $\mathcal{F}(\CC)$ lie in the kernel of the homomorphisms constructed in these two articles.

\medskip

{\bf Acknowledgements.} We would like to thank Asher Auel, Marcello Bernardara, Olivier Benoist, Fabio Bernasconi, Julie D\'eserti, St\'ephane Druel, Diego Izquierdo, St\'ephane Lamy, Konstantin Loginov, Immanuel van Santen, Evgeny Shinder, and Constantin Shramov for interesting discussions during the preparation of this text.

\section{Preliminaries on Severi-Brauer surfaces}\label{BirSB}

\subsection{Central simple algebras}\label{sec: first definitions}

A {\it Severi-Brauer variety} over a field $\kk$ is a projective algebraic variety $X$ such that $X_{\overline{\kk}}=X\times_{\Spec\kk}\Spec\overline{\kk}\simeq\PP^{n}_{\overline{\kk}}$. For their basic properties we refer to \cite{Artin}, \cite{Kollar_SB} and \cite{Gille}. Severi-Brauer varieties of dimension $n-1$ over $\kk$ are in one-to-one correspondence with central simple $\kk$-algebras of degree~$n$. More precisely, one can observe that central simple algebras of dimension $n^2$ over $\kk$ and Severi-Brauer varieties of dimension $n-1$ over $\kk$ are both described by the continuous cohomology set $\Cohom^1(\Gal(\kk^{\rm sep}/\kk),\PGL_n(\kk^{\rm sep}))$. In fact, this correspondence can be made into an equivalence of categories, see \cite[\S 6]{Jahnel}. A Severi–Brauer variety $X$ always splits over a finite Galois extension $L$ of the base field $K$, i.e.~one has $X_L\simeq\PP_L^n$, see \cite[Corollary 5.1.5]{Gille}.

By contrast with the one-dimensional case, i.e. the case of conics, in dimensions $\geqslant 2$ it is not that easy to give an {\it explicit} equation of a non-trivial Severi-Brauer variety, as the canonical embedding gives a lot of equations in a projective space of large dimension; for some results in this direction, see \cite{BadrBars1,BadrBars2}. Let $X$ be a non-trivial Severi-Brauer surface over $\kk$. By the general theory, it corresponds to a non-trivial central simple algebra $A$ of degree $3$. From the Wedderburn's structural theorem \cite[Theorem 2.1.3]{Gille} it immediately follows that $A$ is a division algebra. On the other hand, by the classical result of Wedderburn and Albert \cite[15.6]{Pierce}, every division algebra of degree 3 is a {\it cyclic algebra}, i.e.~there is a
strictly maximal subfield $L$ of $A$ such that $L/K$ is a cyclic Galois extension. Let $g$ be a generator of $\Gal(L/K)$. Then there is an element $e\in A^*$ such that $e^3=\xi\in K^*$, $A$ is spanned by $1$, $e$ and $e^2$ as an $L$-vector space and the multiplication law is given by $\lambda\cdot e=e\cdot g(\lambda)$ for all $\lambda\in L$. We write $(L/\kk,g,\xi)$ for such an algebra. The image of $X$ (or the corresponding cyclic algebra) in $\Cohom^1(\Gal(\kk^{\rm sep}/\kk),\PGL_n(\kk^{\rm sep}))$ is then given by the cocycle
\[
\nu(g)=\left(\begin{smallmatrix}
	0 & 0 & \xi\\
	1 & 0 & 0 \\
	0 & 1 & 0
\end{smallmatrix}\right)
\]
The cyclic algebra $(L/\kk,g,\xi)$ is trivial if and only if $\xi$ is the norm of an element of $L$, see e.g.~\cite[15.1]{Pierce}. In \cite{BadrBars1} the authors give an explicit biregular model of a Severi-Brauer surface associated with each cyclic algebra $(L/\kk,g,\xi)$. In our paper, we provide much easier {\it birational} models given by explicit cubic equations, see Section \ref{sec: examples}.

\subsection{Twisted linear subvarieties}\label{subsec: twisted} The {\it index} of a proper $\kk$-scheme $X$ is the greatest common divisor of the degrees of all $0$-cycles on $X$. We will denote it by $\indexx(X)$. We say that a closed $\kk$-subvariety $Y\hookrightarrow X$ is a {\it twisted linear subvariety} of $X$ if $Y$ is a Severi-Brauer variety and moreover over $\overline{\kk}$ the inclusion $Y_{\overline{\kk}}\subseteq X_{\overline{\kk}}$ becomes isomorphic to the inclusion of a linear subvariety of the corresponding projective space.
We recall the following result:
\begin{prop}[{\cite[Lemma 27, Theorem 28, Corollary 30 and Theorem 53]{Kollar_SB}}]\label{prop:KollarSB}
	Let $X$ be a Severi-Brauer variety of dimension $d\geqslant 1$ over a field $\kk$. Let  $m\geqslant 0$ be the minimal dimension of a twisted linear subvariety of $X$. Then, the following hold:
	\begin{enumerate}
		\item\label{KollarSB1}
		$m+1$ divides $d+1$;
		\item\label{KollarSB2}
		For each integer $e$ with $m\leqslant e\leqslant d$, there is a twisted linear subvariety of $X$ of dimension $e$ if and only if $m+1$ divides $e+1$;
		\item\label{KollarSB3}
		The twisted linear subvarieties of $X$ of dimension $m$ form a unique isomorphism class. Moreover if $P$ is such a twisted linear subvariety, then $\indexx(P)=\indexx(X)=m+1$ and $X$ is birational to $P\times \mathbb{P}^{d-m}$.
		\item\label{KollarSB4} $X$ contains smooth, $0$-dimensional subschemes of degree $\indexx(X)$.
	\end{enumerate}
\end{prop}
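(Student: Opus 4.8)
The plan is to derive everything from the cohomological classification of Severi-Brauer varieties, rather than to reprove Kollár's statements from scratch; since the proposition is explicitly attributed to \cite{Kollar_SB}, what I would actually write is a guided tour of \emph{why} each clause holds, assembled so the reader can use it as a black box. Recall that $X$ corresponds to a class $\alpha\in \Br(\kk)$ of some order $n$, and that the index $\indexx(X)$ equals the degree of the underlying division algebra $D$ with $[D]=\alpha$; the dimension is $d = \dim_\kk(A)^{1/2}-1$ for whatever central simple algebra $A$ with $[A]=\alpha$ gives rise to $X$, so $\indexx(X)\mid d+1$. The first step is to identify twisted linear subvarieties of dimension $e$ with the Severi-Brauer variety attached to the \emph{same} class $\alpha$ of dimension $e$: a sub-$\PP^e_{\overline\kk}\subseteq \PP^d_{\overline\kk}$ that is Galois-stable (after twisting by the cocycle $\nu$) corresponds exactly to a left ideal of $A$ of the appropriate reduced dimension, and such ideals exist precisely when $e+1$ is a multiple of the capacity $\indexx(X)=\deg D$ of $A$ — this is the Wedderburn picture $A\cong M_r(D)$ with $r(d+1)=\ldots$, and left ideals are free $D$-modules. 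This simultaneously gives the divisibility in \eqref{KollarSB1}–\eqref{KollarSB2} and pins down the minimal dimension $m$ as $m+1=\indexx(X)$.

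Next, for \eqref{KollarSB3}: taking $e=m$, the twisted linear subvariety $P$ of minimal dimension corresponds to a \emph{minimal} left ideal of $A$, i.e.\ a simple left $A$-module; all simple left $A$-modules are isomorphic, so the minimal twisted linear subvarieties form a single isomorphism class, and $P$ is itself the Severi-Brauer variety of $D$, whence $\indexx(P)=\deg D=m+1=\indexx(X)$. For the birationality $X\sim P\times \PP^{d-m}$, the cleanest route is the one via the corresponding division algebra: a nonzero left ideal $I\subseteq A$ with $\dim$ matching $P$ gives a rational section of the projection from the Severi-Brauer variety of $A$ to that of a Brauer-equivalent but smaller algebra, and a standard "generic point" argument (a linear projection defined over the function field of $P$, which splits $\alpha$ because $P$ has a point there) exhibits the open dense subset of $X$ lying over $P$ as a Zariski-locally-trivial $\mathbb A^{d-m}$-bundle; one then clears it to a product birationally. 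Finally \eqref{KollarSB4} is immediate once \eqref{KollarSB3} is in hand: $P$ is a Severi-Brauer variety of dimension $m$ with $\indexx(P)=m+1$, so a general hyperplane-section-type construction — or simply the fact that $P$ itself, cut down by a $0$-dimensional linear section over a splitting field and then taken Galois-stable — produces a smooth reduced $0$-cycle of degree exactly $m+1=\indexx(X)$; smoothness is automatic in characteristic $0$ or by choosing the section generically in the perfect-field case.

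The main obstacle, and the step I would spend the most care on, is the birationality statement $X\sim P\times \PP^{d-m}$ in \eqref{KollarSB3}: translating "$A\cong M_r(D)$" into an honest birational map of schemes requires either the explicit linear-projection argument over $\kk(P)$ (checking that the projection centre can be chosen Galois-equivariantly and that the fibres are affine spaces, not just projective spaces minus something) or an appeal to the theory of Severi-Brauer schemes and their reduced norm varieties. Everything else — the divisibility relations, the uniqueness of the minimal class, the existence of the $0$-cycle — is formal once the dictionary between twisted linear subvarieties and left ideals of $A$ is set up, so I would state that dictionary as the first lemma and lean on it throughout, citing \cite{Kollar_SB} for the parts where a full proof would be a digression.
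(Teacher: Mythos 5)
The paper gives no proof of this proposition at all: it is quoted verbatim from \cite{Kollar_SB} (Lemma 27, Theorem 28, Corollary 30, Theorem 53) and used as a black box, so there is nothing to compare your argument against except the cited source itself. Your sketch is the standard route and is the one Koll\'ar follows: the dictionary between twisted linear subvarieties of dimension $e$ and left ideals of reduced dimension $e+1$ in $A\cong M_r(D)$ gives \ref{KollarSB1}--\ref{KollarSB2} and the identification $m+1=\indexx(X)=\deg D$, uniqueness of the simple left $A$-module gives the first half of \ref{KollarSB3}, and the birationality comes from linear projection away from a complementary twisted linear subvariety, whose generic fibre is an affine space over $\kk(P)$ --- you correctly flag this as the step needing care. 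The one genuinely thin spot is \ref{KollarSB4}: over a general (possibly imperfect) field, producing a \emph{smooth} $0$-dimensional subscheme of degree $\indexx(X)$ is not a ``generic hyperplane section'' matter but rests on the existence of a maximal \emph{separable} subfield of the division algebra $D$ (Koethe's theorem), so your characteristic-$0$/perfect-field dodge does not cover the statement as written; since the present paper only invokes the proposition over perfect fields, this does not affect its use here.
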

The following is a direct consequence, well-known to specialists, that we will mostly use in the case when $p=3$, i.e. for Severi-Brauer surfaces.
\begin{cor}\label{cor: SB contains a point of degree p}
	Let $p\geqslant 2$ be a prime number and let $X$ be a non-trivial Severi-Brauer variety of dimension $p-1$ over a field $K$. Then, the minimal dimension of a twisted linear subvariety of $X$ is $p-1$. Moreover, $X$ does not contain points of degree $d$ not divisible by $p$. On the other hand, $X$ always contains a closed point of degree $p$.
\end{cor}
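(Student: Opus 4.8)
The plan is to read everything off Proposition~\ref{prop:KollarSB} applied to $X$, which has dimension $d=p-1$, so that $d+1=p$. First I would let $m\geqslant 0$ be the minimal dimension of a twisted linear subvariety of $X$. By part~\ref{KollarSB1}, $m+1$ divides $d+1=p$; since $p$ is prime, this forces $m+1\in\{1,p\}$, i.e.\ $m=0$ or $m=p-1$. If $m=0$, then $X$ contains a $0$-dimensional twisted linear subvariety, which is a $0$-dimensional Severi-Brauer variety and hence isomorphic to $\Spec K$; this is a $K$-point of $X$, contradicting the non-triviality of $X$ by Ch\^{a}telet's theorem. Hence $m=p-1$, which is the first assertion.

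Next, part~\ref{KollarSB3} gives $\indexx(X)=m+1=p$. Since $\indexx(X)$ is by definition the greatest common divisor of the degrees of all $0$-cycles on $X$, it divides the degree of every closed point of $X$. Therefore $X$ has no closed point whose degree is not divisible by $p$, which is the second assertion.

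For the existence of a point of degree exactly $p$, I would invoke part~\ref{KollarSB4}: $X$ contains a smooth, $0$-dimensional subscheme $Z$ of degree $\indexx(X)=p$. Being smooth and $0$-dimensional over $K$, $Z$ is a disjoint union $\coprod_i \Spec L_i$ with each $L_i/K$ a finite separable field extension, and $\sum_i [L_i:K]=\deg Z=p$. Each $\Spec L_i$ is a closed point of $X$, so by the previous paragraph $p$ divides $[L_i:K]$ for every $i$; combined with $\sum_i [L_i:K]=p$ this is possible only if there is a single index $i$, with $[L_i:K]=p$, and that $\Spec L_i$ is the desired closed point of degree $p$. The only mildly delicate point is precisely this last step — upgrading a $0$-dimensional subscheme of degree $p$ to an honest closed point of degree $p$ — and it goes through exactly because the second assertion already guarantees that every closed point of $X$ has degree a multiple of $p$.
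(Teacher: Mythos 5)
Your proof is correct and follows essentially the same route as the paper's: deduce $m=p-1$ from Proposition~\ref{prop:KollarSB}\ref{KollarSB1} and Ch\^{a}telet's theorem, get $\indexx(X)=p$ from \ref{KollarSB3}, and obtain the degree-$p$ point from \ref{KollarSB4}. The paper compresses the last two assertions into ``gives the result,'' whereas you spell out the (correct) bookkeeping that a smooth $0$-dimensional subscheme of degree $p$, all of whose closed points have degree divisible by $p$, must be a single closed point of degree $p$.
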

\begin{proof}
	Let $m$ be the minimal dimension of a twisted linear subvariety of $X$. By Proposition~\ref{prop:KollarSB}\ref{KollarSB1}, $m+1$ divides $\dim(X)+1=p$, so $m\in \{0,p-1\}$. The case $m=0$ is impossible, as otherwise $X$ would be trivial by Ch\^{a}telet's theorem. Hence, $m=p-1$, which implies that $\indexx(P)=p$ by  Proposition~\ref{prop:KollarSB}\ref{KollarSB3}, and gives the result.
\end{proof}

In particular, every non-trivial Severi-Brauer surface contains a point of degree $3$. We will see in Section \ref{subsec: points 6} that it may fail to have points of degree 6.

\subsection{Points of degree $3$}
Let $X$ be a smooth projective variety over a perfect field $K$. A~point $p\in X$ of degree $d$ is a closed point that corresponds to $d$ $\overline{K}$-points $p_1,\ldots,p_d$, forming one orbit of $\Gal(\overline{K}/K)$. These are the irreducible components of $p_{\overline{K}}$. By the \emph{splitting field of~$p$}, we mean the smallest subfield $L$ of $\overline{K}$ over which the points $p_1,\ldots,p_d$ are defined. Note that  $L/K$ is a finite Galois extension.

\begin{ex}\label{ex: splitting field}
	Let $K=\QQ$, $q$ be a prime and $\zeta$ be a primitive third root of unity. Consider the polynomial $f_q(x)=x^3-q\in K[x]$. Recall that its splitting field is $L=\QQ(\sqrt[3]{q},\zeta)$ and its Galois group $G=\mathrm{Gal}(L/K)$ is the symmetric group $\Sym_3$ generated by
	\[
	\sqrt[3]{q}\mapsto\zeta\sqrt[3]{q},\ \zeta\mapsto\zeta\ \ \ \text{and}\ \ \ \sqrt[3]{q}\mapsto\sqrt[3]{q},\ \zeta\mapsto\zeta^{-1}.
	\]
	The $G$-orbit of the point $[0:1:\sqrt[3]{q}]\in \PP^2(\overline{K})$ consists of 3 points
	\[
	p_{\overline{K}}=\big \{[0:1:\sqrt[3]{q}],[0:1:\zeta\sqrt[3]{q}],[0:1:\zeta^2\sqrt[3]{q}]\big \}\subseteq\PP^2(\overline{K}),
	\]
	corresponding to a point $p\in \PP^2$ of degree $3$. Note that the splitting field of $p$ is $L$ and does not coincide with its residue field, which is simply $\QQ[y,z]/(y-1,z^3-q)\simeq\QQ(\sqrt[3]{q})$ and is not Galois over $\QQ$. Note that by changing the prime $q$, we get infinitely many degree $3$ points with different splitting fields.
\end{ex}

As we already mentioned in Section \ref{sec: first definitions}, Severi-Brauer surfaces correspond to cyclic algebras of degree $3$. Let us put this result into a slightly more geometric form that will be convenient for us in the future.

\begin{lem}\label{lem:auto}
	Let $S$ be a non-trivial Severi-Brauer surface over a perfect field $K$, and let $p\in S$ be a point of degree $3$. Denote by $L$ the splitting field of $p$ over $K$, and by $p_1,p_2,p_3\in S(L)$ the three irreducible components of $p_{L}$.
	\begin{enumerate}

		\item\label{it:auto--Gal} The Galois group $\Gal(L/K)$ acts faithfully and transitively on $\{p_1,p_2,p_3\}$. In particular, it  is either isomorphic to $\Sym_3$ or to $\ZZ/3\ZZ$ and contains a unique element $g\in \Gal(L/K)$ of order $3$ such that $g(p_1)=p_2, g(p_2)=p_3, g(p_3)=p_1$.
		\item\label{it:auto--g} Choosing $g$ as in \ref{it:auto--Gal}, there exists an $L$-isomorphism $\varphi\colon S_L\iso\PP^2_L$ such that $\varphi(p_1)=[1:0:0]$, $\varphi(p_2)=[0:1:0]$, $\varphi(p_3)=[0:0:1]$, and such that $\varphi\circ g\circ \varphi^{-1}=A_g\circ g$, where $A_g=\left(\begin{smallmatrix}0 & 0 &\xi\\ 1&0&0\\ 0&1&0\end{smallmatrix}\right)\in\GL_3(L)$ for some $\xi \in L^g$.
		\item\label{autopts}
		If $q$ is a $3$-point of $S$ having the same splitting field as $p$, there is $\alpha\in\Aut_K(S)$ such that $\alpha(p)=q$.
	\end{enumerate}
\end{lem}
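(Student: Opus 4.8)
The plan is to prove the three parts in succession, each using the previous ones. For \ref{it:auto--Gal}: the points $p_1,p_2,p_3$ form a single orbit of $\Gal(\overline{K}/K)$ by the very definition of a degree-$3$ point, and since they are all defined over $L$ this orbit is already permuted transitively by $\Gal(L/K)$. The action is faithful, because any element of its kernel fixes every $p_i$, so each $p_i$ is defined over the corresponding fixed field, and minimality of the splitting field $L$ forces that element to be trivial. Hence $\Gal(L/K)$ is a transitive subgroup of $\Sym(\{p_1,p_2,p_3\})\simeq\Sym_3$, so it is $\Sym_3$ or $\ZZ/3\ZZ$; in either case it contains a unique subgroup of order~$3$, inside which there is exactly one element inducing the cyclic permutation $p_1\mapsto p_2\mapsto p_3\mapsto p_1$, and this is the $g$ we want.

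For \ref{it:auto--g}: I would start from any $L$-isomorphism $\psi\colon S_L\iso\PP^2_L$, which exists since $L$ splits $S$. The images $\psi(p_1),\psi(p_2),\psi(p_3)$ cannot be collinear, for otherwise the unique line through them in $\PP^2_{\overline{K}}$ would be stable under $\Gal(\overline{K}/K)$ and would descend to a twisted linear subvariety of $S$ of dimension~$1$, contradicting Corollary~\ref{cor: SB contains a point of degree p}. Composing $\psi$ with a suitable element of $\PGL_3(L)$, I may therefore assume $\psi(p_i)=e_i$, the $i$-th coordinate point. Then $a_\sigma:=\psi\circ\sigma_S\circ\psi^{-1}\circ\sigma_\PP^{-1}$ (with $\sigma_S,\sigma_\PP$ the semilinear structure maps) lies in $\PGL_3(L)$ and defines a $1$-cocycle; since $\sigma_\PP$ fixes the $e_i$ while $\sigma_S$ permutes the $p_i$ as in \ref{it:auto--Gal}, the matrix $a_\sigma$ permutes the coordinate points in the same way, so $a_g$ is monomial of the form $\smallmat{0&0&c\\ a&0&0\\ 0&b&0}$ with $a,b,c\in L^*$. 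Composing $\psi$ once more with a diagonal matrix $D=\diag(t_1,t_2,t_3)$ (which fixes the $e_i$) replaces $a_g$ by $D\,a_g\,{}^gD^{-1}$, and an explicit choice of $t_1,t_2,t_3$ normalises it to $A_g=\smallmat{0&0&\xi\\ 1&0&0\\ 0&1&0}$ with $\xi\in L^*$. Finally, since $g$ has order~$3$, the cocycle relation gives $\id=A_g\cdot{}^gA_g\cdot{}^{g^2}A_g=\diag(\xi,g(\xi),g^2(\xi))$ in $\PGL_3(L)$, whence $g(\xi)=\xi$, i.e.\ $\xi\in L^g$.

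For \ref{autopts}: I would work with the central division algebra $A$ of degree~$3$ attached to $S$, so that $S\simeq\SB(A)$ and $\Aut_K(S)\simeq\PGL_1(A)=A^*/K^*$; under the identification $S(\overline{K})\simeq\PP(\overline{K}^3)$ coming from $A\otimes_K\overline{K}\simeq\Mat_3(\overline{K})$, this group acts via the inclusion $A^*\hookrightarrow\GL_3(\overline{K})$ and left multiplication on $\overline{K}^3$. By \ref{it:auto--Gal} the stabiliser of each $p_i$ in $\Gal(L/K)$ has index~$3$, hence order $1$ or~$2$, so $p_i$ is defined over a subfield of $L$ of degree~$3$ over $K$; the same holds for the $q_j$, and the subfields occurring for $p$ and for $q$ coincide (they are exactly the cubic subfields of $L$). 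After relabelling the $q_j$, I may assume $q_1$ is defined over the same cubic subfield $F\subseteq L$ as $p_1$. Since $S(F)\neq\varnothing$, Ch\^atelet's theorem gives $A\otimes_K F\simeq\Mat_3(F)$, so $F^3\subseteq\overline{K}^3$ is an $A$-submodule; it has $K$-dimension $9=\dim_K A$, and since $A$ is a division algebra $F^3$ is therefore a free $A$-module, necessarily of rank one. Hence $A^*$ acts transitively on $F^3\setminus\{0\}$, and so on $S(F)\simeq\PP^2(F)$; picking $a\in A^*$ with $a\cdot p_1=q_1$, the corresponding $\alpha\in\Aut_K(S)$ is defined over $K$, so it commutes with $\Gal(\overline{K}/K)$ and carries the orbit $\{p_1,p_2,p_3\}$ onto $\{q_1,q_2,q_3\}$, i.e.\ $\alpha(p)=q$.

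The step I expect to be most delicate is \ref{autopts} in the case $\Gal(L/K)\simeq\Sym_3$: there the residue fields of $p$ and $q$ need not be equal as subfields of $L$, and the crucial — easily overlooked — point is that the geometric components of a degree-$3$ point of $S$ are distributed over the \emph{three} cubic subfields of $L$, which is what licenses the relabelling above and reduces the statement to the transitivity of $A^*$ on $F^3\setminus\{0\}$. A purely coordinate-based argument using only \ref{it:auto--g} gets stuck here, since that part pins down the action of $g$ but says nothing about the order-$2$ elements of $\Gal(L/K)$.
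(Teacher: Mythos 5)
Your parts \ref{it:auto--Gal} and \ref{it:auto--g} follow the paper's own proof essentially verbatim: the same non-collinearity argument via Corollary~\ref{cor: SB contains a point of degree p}, the same normalisation of the monomial matrix by a diagonal change of coordinates, and the same cocycle identity $A_g\cdot g(A_g)\cdot g^2(A_g)=\mathrm{id}$ in $\PGL_3(L)$ forcing $\xi\in L^g$. Part \ref{autopts} is where you genuinely diverge, and your argument is correct. The paper proceeds by explicit coordinates: after arranging $\varphi(q_1)=[1:a:b]$ with $a,b\in L^*$, it forms the matrix $M=\left(\begin{smallmatrix}v_1&v_2&v_3\end{smallmatrix}\right)$ with $v_2=A_g\cdot g(v_1)$, $v_3=A_g\cdot g(v_2)$, and verifies by hand that $M\cdot A_g=A_g\cdot g(M)$ and, in the $\Sym_3$ case, the analogous identity $M\cdot A_h=A_h\cdot h(M)$ for an order-$2$ element $h$. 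Your route --- identify $\Aut_K(S)$ with $A^*/K^*$, note that each $p_i$ (and each $q_j$) is rational over the cubic subfield $L^{\mathrm{Stab}(p_i)}$ and that these subfields can be matched up after relabelling, then use that $F^3$ is a free left $A$-module of rank one to get transitivity of $A^*$ on $S(F)$ --- is shorter and more conceptual, and proves the stronger statement that $\Aut_K(S)$ is transitive on all of $S(F)$ for any cubic splitting field $F$. Two caveats. First, for ``$F^3\subseteq\overline{K}^3$ is an $A$-submodule'' and ``$S(F)\simeq\PP(F^3)$'' you must choose the splitting $A\otimes_K\overline{K}\simeq\Mat_3(\overline{K})$ to be induced from an isomorphism $A\otimes_K F\simeq\Mat_3(F)$, so that the descent cocycle is trivial on $\Gal(\overline{K}/F)$ and $A\subseteq\Mat_3(F)$; this is possible precisely because $F$ splits $A$, but it is the one step that needs to be said explicitly. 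Second, your closing remark that a coordinate argument based on \ref{it:auto--g} ``gets stuck'' on the order-$2$ elements is overstated: the paper's proof is exactly such an argument and disposes of $h$ with a second matrix identity; what is true is that your approach avoids having to do so.
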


\begin{proof}
	\ref{it:auto--Gal}: Since $p$ is a point of degree $3$, the Galois group $\Gal(L/K)$ acts transitively on $\{p_1,p_2,p_3\}$. This action is moreover faithful, as $L$ is the splitting field of $p$. Hence, $\Gal(L/K)$ is isomorphic to a transitive subgroup of $\Sym_3$, so it is either isomorphic to $\Sym_3$ or to the alternating group $\mathrm{Alt}_3\simeq \ZZ/3\ZZ$, and the statement follows.

	\ref{it:auto--g}: Since $p_1\in S(L)$, there exists an isomorphism $\varphi\colon S_L\iso \PP^2_L$.
	The preimage of the smallest linear subspace of $\PP^2$ containing the image of the three points $p_1,p_2,p_3$ is defined over $K$, and is thus of dimension $2$ by Corollary~\ref{cor: SB contains a point of degree p}. There is then no line containing the three points, and thus we may choose that $\varphi$ sends $p_1,p_2,p_3$ onto $[1:0:0], [0:1:0],[0:0:1]$ respectively.

	Since $g$ permutes the points $p_i$ cyclically, $\varphi\circ g\circ \varphi^{-1}= A_g\circ g$ with $A_g=\left(\begin{smallmatrix}0&0&\lambda \\ \mu &0&0\\0&\nu &0\end{smallmatrix}\right)$ and $\lambda,\mu,\nu\in L^*$.
	Consider the map $\alpha\colon[x:y:z]\mapsto \left [x: \mu y:g(\mu)\nu z \right ]$. Replacing $\varphi$ by $\alpha\circ\varphi$, we may assume that $A_g$ is as in the statement, namely with $\xi=\lambda g^2(\mu)g(\nu)\in L^*$.
	Finally, since $g^3$ is the identity, so is $(\varphi\circ g\circ\varphi^{-1})^3$, and thus $A_g\cdot g(A_g)\cdot g^2(A_g)$ is the identity in $\PGL_3(L)$. This implies that $\xi=g(\xi)=g^2(\xi)$, i.e.~$\xi\in L^g$.

\ref{autopts}: Let $q_1,q_2,q_3\in S(L)$ be the components of $q$. We may assume $p\ne q$. Order the points $q_1,q_2,q_3$ so that the action of $\Gal(L/K)$ on $(p_1,p_2,p_3)$ and $(q_1,q_2,q_3)$ is the same. Suppose first that one point $\varphi(q_j)$ lies on the coordinate lines (the lines containing two of the $\varphi(p_i)$). As $\varphi(q_j)\notin \{\varphi(p_1),\varphi(p_2),\varphi(p_3)\}$, it belongs to exactly one of the three lines and applying $\Gal(L/K)$ we obtain that $\varphi(q_1),\varphi(q_2),\varphi(q_3)$ lie on different coordinate lines. Hence, $\varphi(p_1)$ is on no line passing through two of the points $\varphi(q_i)$, and the same holds for $\varphi(p_2)$ and $\varphi(p_3)$. Exchanging the roles of $p$ and $q$, we may thus assume that  $\varphi(q_1)=[1:a:b]$ for some $a,b\in L^*$. Consider a column vector $v_1\in L^3$ that represents this point and a matrix $M\in \Mat_{3\times 3}(L)$ associated to it, by
\[
v_1=\left(\begin{smallmatrix} 1 \\ a \\ b\end{smallmatrix}\right), \quad M=\left(\begin{smallmatrix} 1 & \xi g(b)  & \xi g^2(a)\\ a& 1 &\xi g^2(b) \\ b & g(a) & 1 \end{smallmatrix}\right)=\left(\begin{smallmatrix} v_1 & v_2 & v_3\end{smallmatrix}\right)
\]
where $v_2=A_g\cdot g(v_1)$ and $v_3=A_g\cdot g(v_2)$. The points $q_2$ and $q_3$ are sent by $\varphi$ onto the classes of $v_2$ and $v_3$ in $\mathbb{P}^2(L)$ respectively. Since $\varphi(q_1),\varphi(q_2),\varphi(q_3)$ are not collinear, we have $M\in \GL_3(L)$. We obtain an element  $\alpha\in \Aut_L(S)$  given by $\varphi^{-1}\circ M\circ \varphi$ that sends $p_i$ onto $q_i$ for each $i\in \{1,2,3\}$ and it remains to see that $\alpha\in \Aut_K(S)$, or equivalently that the action of $M$ on $\mathbb{P}^2$ commutes with $\varphi \Gal(L/K)\varphi^{-1}$. We  compute
\[
M\cdot A_g=\left(\begin{smallmatrix} v_2 & v_3 & \xi v_1\end{smallmatrix}\right)=A_g\cdot g(M),
\]
which implies that $\alpha$ commutes with $g$, as $x\mapsto M\cdot x$ commutes with $\varphi\circ g\circ  \varphi^{-1}=A_g\circ g$.
If $\Gal(L/K)=\langle g\rangle=\mathrm{Alt}_3$, we are done. The remaining case is when $\Gal(L/K)\simeq \Sym_3$. There is then $h\in \Gal(L/K)$ that fixes $p_1,q_1$, exchanges $p_2$ and $p_3$, and exchanges $q_2$ and $q_3$.

We obtain $\varphi\circ h\circ \varphi^{-1}= A_h\circ h$ with $A_h=\left(\begin{smallmatrix}1&0&0 \\ 0 &0&\mu_2 \\0&\mu_3 &0\end{smallmatrix}\right)$ and $\mu_2,\mu_3\in L^*$. As $\varphi\circ h\circ \varphi^{-1}$ fixes $\varphi(q_1)$ and permutes $\varphi(q_2)$ and $\varphi(q_3)$, there are $\lambda_1,\lambda_2,\lambda_3\in L^*$ such that  \[A_h\cdot  h(v_1)=\lambda_1 v_1,\quad  A_h\cdot h(v_2)=\lambda_3 v_3, \quad A_h \cdot h(v_3)=\lambda_2 v_2\] for $i\in \{1,2,3\}$. The first coordinate of $v_1$ being $1$, we get $\lambda_1=1$. Similarly, the second coordinate of $v_2$ and the third coordinate of $v_3$ are equal to $1$, so $\lambda_2=\mu_2$ and $\lambda_3=\mu_3$. This gives
\[
M\cdot A_h=\left(\begin{smallmatrix} v_1 & \mu_3v_3 & \mu_2v_2 \end{smallmatrix}\right) =A_h\cdot h(M),
\]
which implies that $\alpha$ commutes with $h$, as $x\mapsto M\cdot x$ commutes with $\varphi\circ h \circ \varphi^{-1}=A_h\circ h$.
\end{proof}

\begin{lem}\label{lem:SBdegree3}
	Let $K$ be a perfect field, $L/K$ be a Galois extension of degree $3$ and let $g$ be a generator of the group $\ \Gal(L/K)$. We denote by
	$
	N=N_{L/K}(L^*)=\big \{a g(a) g^2(a)\mid a\in L^*\big \}
	$
	the norm group of this extension.
	\begin{enumerate}
		\item\label{SB31}
		For each element $\xi \in K^*$, there is a unique Severi-Brauer surface $S_\xi$ over $K$, such that there is an isomorphism $\varphi\colon S_L\iso \mathbb{P}^2_L$ satisfying
		\[
		\varphi\circ g\circ \varphi^{-1}\colon [x:y:z]\mapsto [\xi g(z) :g(x):g(y)].
		\]
		\item\label{SB32}
		Every surface $S$ defined over $K$ that is $L$-isomorphic to $\PP^2_L$ is $K$-isomorphic to $S_\xi$, for some $\xi\in K^*$.
		\item\label{SB33}
		For each $\xi\in K^*$, the surface $S_\xi$ has a $K$-point if and only if $\xi\in N$.
		\item\label{SB34}
		Given $\xi,\xi'\in K^*$, the surfaces $S_\xi$ and $S_{\xi'}$ are $K$-isomorphic if and only if $\xi/\xi' \in N$.
	\end{enumerate}
\end{lem}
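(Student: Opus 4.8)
The plan is to deduce all four parts from the dictionary (recalled in Section~\ref{sec: first definitions}) between $K$-forms of $\PP^2$ split by $L$ and elements of $\Cohom^1(\Gal(L/K),\PGL_3(L))$, made completely explicit using that $\Gal(L/K)=\langle g\rangle$ is cyclic of order~$3$. If $S$ is a surface over $K$ with an $L$-isomorphism $\varphi\colon S_L\iso\PP^2_L$, write $\varphi\circ g\circ\varphi^{-1}=A_g\circ g$ with $A_g\in\GL_3(L)$, exactly as in the proof of Lemma~\ref{lem:auto}\ref{it:auto--g}; the relation $g^3=\id$ then forces $A_g\cdot g(A_g)\cdot g^2(A_g)=\xi\cdot\mathrm{Id}$ for some $\xi\in L^*$, and two such data $(\varphi,A_g)$, $(\varphi',A_g')$ yield $K$-isomorphic surfaces precisely when $A_g'$ and $P\,A_g\,g(P)^{-1}$ coincide in $\PGL_3(L)$ for some $P\in\GL_3(L)$. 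The matrix $A_\xi:=\smallmat{0&0&\xi\\1&0&0\\0&1&0}$ satisfies $A_\xi^3=\xi\cdot\mathrm{Id}$, so when $\xi\in K^*$ is $g$-invariant one has $A_\xi\cdot g(A_\xi)\cdot g^2(A_\xi)=\xi\cdot\mathrm{Id}$, trivial in $\PGL_3(L)$; thus $A_\xi$ defines a genuine action of $\Gal(L/K)$ on $\PP^2_L$, namely $[x:y:z]\mapsto[\xi g(z):g(x):g(y)]$, and Galois descent produces the surface $S_\xi$ of part~\ref{SB31} together with $\varphi$. For uniqueness in \ref{SB31}: if $S,S'$ both carry isomorphisms to $\PP^2_L$ inducing this same action, the composite of the two isomorphisms intertwines the Galois actions and descends to a $K$-isomorphism $S\iso S'$.

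For part~\ref{SB32}, start from an arbitrary $S$ and $\varphi$ with associated $A_g$ and $\xi\in L^*$ as above; applying $g$ to $A_g\,g(A_g)\,g^2(A_g)=\xi\cdot\mathrm{Id}$ and using $g^3(A_g)=A_g$ gives $g(\xi)=\xi$, so $\xi\in K^*$. The goal is to modify $\varphi$ so that $A_g$ becomes $A_\xi$, and for this it suffices to find $v_1\in L^3$ such that $v_1$, $v_2:=A_g\,g(v_1)$, $v_3:=A_g\,g(v_2)$ are linearly independent over $L$. Indeed, one then has $A_g\,g(v_3)=A_g\,g(A_g)\,g^2(A_g)\,g^3(v_1)=\xi v_1$ automatically, so with $P=(v_1\mid v_2\mid v_3)\in\GL_3(L)$ a column comparison gives $A_g\,g(P)=P\,A_\xi$, i.e.\ $P^{-1}A_g\,g(P)=A_\xi$; replacing $\varphi$ by $P^{-1}\circ\varphi$ exhibits $S$ with data inducing the standard action, so $S\cong_K S_\xi$ by \ref{SB31}. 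Producing $v_1$ is the crux of the whole lemma: the $g$-semilinear operator $\tau:=A_g\circ g$ on $W:=L^3$ satisfies $\tau^3=\xi\cdot\mathrm{Id}$ and $\tau\circ\lambda=g(\lambda)\circ\tau$ for $\lambda\in L$, so $W$ is a left module over the $K$-subalgebra $B=L\oplus L\tau\oplus L\tau^2$ of $\End_K(W)$, which is a cyclic --- hence central simple --- $K$-algebra with $\dim_K B=9=\dim_K W$. As a simple algebra has a unique module of that dimension, namely the regular one, we obtain $W\cong B$ as a left $B$-module, and a $B$-generator $v_1$ of $W$ satisfies $Lv_1+L\tau v_1+L\tau^2 v_1=W$, i.e.\ $v_1,\tau v_1,\tau^2 v_1$ span $W$ over $L$, as needed. (One can instead invoke that $L$ embeds as a strictly maximal subfield of the central simple algebra attached to $S$ and quote the cyclic presentation of Section~\ref{sec: first definitions} directly.)

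Parts~\ref{SB33} and \ref{SB34} are then short computations with $A_\xi$. Since $\Gal(L/K)=\langle g\rangle$, a $K$-point of $S_\xi$ is a class $[v]\in\PP^2(L)$ with $A_\xi\,g(v)=\mu v$ for some $\mu\in L^*$; writing $v=(x,y,z)$ and unwinding the three scalar equations (noting $z\ne 0$) one gets $\xi=N_{L/K}(\mu)$, and conversely any $\mu$ with $N_{L/K}(\mu)=\xi$ produces such a $v$; hence $S_\xi(K)\ne\varnothing$ iff $\xi\in N$, which is \ref{SB33}. For \ref{SB34}, a $K$-isomorphism $S_\xi\cong S_{\xi'}$ corresponds to a class in $\PGL_3(L)$ which lifts to $P\in\GL_3(L)$ with $A_{\xi'}=\mu_0\,P\,A_\xi\,g(P)^{-1}$ for a suitable $\mu_0\in L^*$; applying $X\mapsto X\,g(X)\,g^2(X)$ to this identity telescopes (using $g^3(P)=P$ and $A_\xi^3=\xi\cdot\mathrm{Id}$) to $\xi'\cdot\mathrm{Id}=N_{L/K}(\mu_0)\cdot\xi\cdot\mathrm{Id}$, whence $\xi/\xi'\in N$; conversely, if $\xi/\xi'=N_{L/K}(\lambda)$ then $\diag(1,\lambda,\lambda g(\lambda))$ conjugates the twisted action of $S_\xi$ into that of $S_{\xi'}$ up to a scalar. (Part~\ref{SB33} also follows from \ref{SB34} and Ch\^atelet's theorem, since $1\in N$.) Everything here is routine bookkeeping with semilinear matrices; the one step I expect to require genuine care is the construction of the cyclic vector $v_1$ in part~\ref{SB32}.
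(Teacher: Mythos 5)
Your proof is correct, and for parts \ref{SB31}, \ref{SB33} and \ref{SB34} it is essentially the paper's argument: the same explicit description of $\Cohom^1(\langle g\rangle,\PGL_3(L))$ by a single matrix $\nu$ with $\nu\,g(\nu)\,g^2(\nu)=\id$, the same companion-matrix cocycle $A_\xi$, the same fixed-point computation for \ref{SB33}, and the same telescoping identity and explicit coboundary $\diag(1,\lambda,\lambda g(\lambda))$ for \ref{SB34}. The genuine divergence is in part \ref{SB32}. The paper splits into cases: if $S(K)\neq\varnothing$ it invokes Ch\^atelet, and otherwise it takes the Galois orbit of an $L$-point, which is a degree-$3$ point, and feeds it into Lemma~\ref{lem:auto}\ref{it:auto--g}; that lemma in turn needs $S$ to be non-trivial, since its proof uses Corollary~\ref{cor: SB contains a point of degree p} (hence Koll\'ar's results on twisted linear subvarieties) to see that the three conjugate points are not collinear. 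You instead produce the normalizing matrix $P=(v_1\mid \tau v_1\mid \tau^2 v_1)$ directly, by observing that $L^3$ is the regular module over the cyclic algebra $B=L\oplus L\tau\oplus L\tau^2\cong(L/K,g,\xi)$ and taking a $B$-generator as cyclic vector. This is the classical algebraic construction of the Severi--Brauer surface from the cyclic algebra; it is uniform in the trivial and non-trivial cases, avoids the appeal to Lemma~\ref{lem:auto} and to the twisted-linear-subvariety machinery, and the step you flag as delicate (linear independence of $v_1,\tau v_1,\tau^2 v_1$) is correctly settled by the uniqueness of the $9$-dimensional module over a simple $K$-algebra of dimension $9$. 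The only cosmetic caveat is that writing $B=L\oplus L\tau\oplus L\tau^2$ presupposes the sum is direct; this follows either from injectivity of the map from the abstract cyclic algebra or from linear independence of the $g^i$-semilinear pieces, and is worth one sentence.
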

\begin{proof}
	Recall \cite[Chapter III]{Serre} that the set of $K$-surfaces $S$ that are $L$-isomorphic to $\PP^2_L$, up to $K$-isomorphism, is parametrised by the cohomology set $\Cohom^1(\Gal(L/K),\PGL_3(L))$. As $\Gal(L/K)$ is cyclic and generated by $g$, a cocycle corresponds to an element $\nu\in \PGL_3(L)$ such that $\nu g(\nu) g^2(\nu)=\mathrm{id}$ and two cocycles $\nu,\nu'$ are equivalent if and only if there exists $\alpha\in \PGL_3(L)$ such that $\nu'=\alpha^{-1}  \nu  g(\alpha)$. Moreover, each cocycle $\nu$ corresponds to a surface $S$ defined over $K$, with an isomorphism $\varphi\colon S_L\iso \mathbb{P}^2_L$ such that $\varphi\circ g\circ \varphi^{-1}=\nu \circ g$.

	\ref{SB31}: For each $\xi \in K^*$, the matrix
	$
	\nu_\xi=\left(\begin{smallmatrix} 0 & 0 & \xi  \\  1 & 0 & 0 \\ 0 & 1 & 0\end{smallmatrix}\right)\in \PGL_3(K)
	$
	yields a cocyle, as $g(\nu)=\nu$ and $\nu^3=\mathrm{id}$. This proves the claim.

	\ref{SB34}: If $\xi/\xi'\in N$, there is $a\in L^*$ such that $\xi/\xi'=a g(a) g^2(a)$. We then choose
	$
	\alpha=\left(\begin{smallmatrix} 1 & 0 & 0  \\  0 & a & 0 \\ 0 & 0 & a g(a)\end{smallmatrix}\right)\in\PGL_3(L)
	$
	and obtain that $\alpha \nu_\xi=\nu_{\xi'} g(\alpha)$, as elements of $\PGL_3(L)$, proving that the two cocycles $\nu_\xi$ and $\nu_{\xi'}$ are equivalent. Conversely, we suppose that $\nu_\xi$ and $\nu_{\xi'}$ are equivalent. Consider the matrices
	$
	R_\xi=\left(\begin{smallmatrix} 0 & 0 & \xi  \\  1 & 0 & 0 \\ 0 & 1 & 0\end{smallmatrix}\right)\in \GL_3(K)$, $R_{\xi'}=\left(\begin{smallmatrix} 0 & 0 & \xi'  \\  1 & 0 & 0 \\ 0 & 1 & 0\end{smallmatrix}\right)\in \GL_3(K),
	$
	and obtain a matrix $A\in \GL_3(L)$ and an element $a\in L^*$ such that
	$a  R_{\xi'}= A^{-1} R_{\xi} g(A)$. Hence, we find
	\[A = a^{-1}R_\xi g(A)R_{\xi'}^{-1}.\]
	Applying $g$ and $g^2$ gives $g(A) ={g(a)^{-1}}R_\xi g^2(A)R_{\xi'}^{-1}$ and $g^2(A)={g^2(a)^{-1}}R_\xi AR_{\xi'}^{-1}$. We then replace successively $g(A)$ and $g^2(A)$ and obtain
	\[
	ag(a)g^2(a)A = g(a)g^2(a)R_\xi g(A)R_{\xi'}^{-1} = g^2(a)R_\xi^2 g^2(A)R_{\xi'}^{-2} =R_\xi^3 AR_{\xi'}^{-3} = \xi\xi'^{-1} A.
	\]
	This yields $\xi/\xi'=a g(a) g^2(a)$, as required.

	\ref{SB33}: If $\xi\in N$, then $S_\xi$ is $K$-isomorphic to $S_1$, by \ref{SB34}. There is thus an isomorphism  $\varphi\colon S_L\iso \mathbb{P}^2_L$ such that $\varphi\circ g\circ \varphi^{-1}\colon[x:y:z]\mapsto [g(z) :g(x):g(y)].$ Hence, $[1:1:1]$ is fixed by $\varphi\circ g\circ \varphi^{-1}$, and thus $\varphi^{-1}([1:1:1])$ is a $L$-point of $S$ fixed by $g$, which is thus a $K$-point of $S$. Conversely, let us assume that $S_\xi(K)\not=\varnothing$. Take an isomorphism  $\varphi\colon S_L\iso \mathbb{P}^2_L$ such that $\varphi\circ g\circ \varphi^{-1}\colon[x:y:z]\mapsto [\xi g(z) :g(x):g(y)].$ The image of a $K$-point is a point $[a:1:b]$ with $a,b\in L^*$, with $[a:1:b]=[\xi g(b) : g(a):1]$, so $b=\frac{1}{g(a)}$ and $ag(a)=\xi g(b)=\xi \frac{1}{g^2(a)}$, which implies that $\xi=a g(a)g^2(a)\in N$.

	\ref{SB32}: Let $S$ be a surface defined over $K$ that is $L$-isomorphic to $\PP^2_L$. If $S(K)\not=\varnothing$, then it is already $K$-isomorphic to $\PP^2$, and is then isomorphic to $S_1$ by \ref{SB33} and by Ch\^{a}telet's theorem. If $S(K)=\varnothing$, we take an $L$-point $p\in S(L)$, and consider its orbit under $\Gal(L/K)$, which corresponds to three $L$-points $p_1=p$, $p_2=g(p)$ and $p_3=g^2(p)$, and thus to a $3$-point whose splitting field is $L$. By Lemma~\ref{lem:auto} and \ref{SB31}, $S$ is $K$-isomorphic to $S_\xi$.
\end{proof}

In fact, every non-trivial Severi-Brauer surface is as in Lemma~\ref{lem:SBdegree3}:

\begin{lem}\label{lem:CyclicExtension}
	Let $S$ be a non-trivial Severi-Brauer surface over a perfect field $K$. Then there exists a Galois extension $L/K$ of degree $3$ such that $S_L\iso \PP^2_L$.
\end{lem}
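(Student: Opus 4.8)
The plan is to move everything to the side of central simple algebras, where the statement is essentially the structure theory already recalled in Section~\ref{sec: first definitions}. Since $S$ is a \emph{non-trivial} Severi--Brauer surface over the perfect field $K$, it corresponds to a non-trivial central simple $K$-algebra $A$ of degree $3$; by Wedderburn's theorem $A$ is a division algebra, and by the theorem of Wedderburn and Albert \cite[15.6]{Pierce} it is a \emph{cyclic} algebra. So there are a strictly maximal subfield $L\subset A$ with $L/K$ cyclic Galois of degree $3$, a generator $g$ of $\Gal(L/K)$, and an element $\xi\in K^*$ with $A\simeq(L/K,g,\xi)$. This $L$ will be the extension we want: it is Galois over $K$ of degree exactly $3$ by construction, so the only thing left to check is that $S_L\simeq\PP^2_L$.

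For this I would use the cocycle description recorded in Section~\ref{sec: first definitions}: the class of $S$ in $\Cohom^1(\Gal(\overline K/K),\PGL_3(\overline K))$ is the inflation of the cocycle $g\mapsto\nu_\xi:=\smallmat{0&0&\xi\\1&0&0\\0&1&0}$ lying in $\Cohom^1(\Gal(L/K),\PGL_3(L))$. Restricting this class to $\Gal(\overline K/L)$ kills it, since inflation followed by restriction is trivial, and this says exactly that $S_L$ is the trivial $L$-form of $\PP^2$, i.e.\ $S_L\simeq\PP^2_L$. Equivalently, and more in the spirit of the preceding lemma, $\nu_\xi$ is precisely the cocycle appearing in Lemma~\ref{lem:SBdegree3}\ref{SB31}, so $S$ is $K$-isomorphic to $S_\xi$, which splits over $L$ by its very definition; this is also what justifies the sentence before the lemma that every non-trivial Severi--Brauer surface is of the form $S_\xi$ for some cyclic cubic $L/K$ and some $\xi\in K^*$. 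A purely algebraic variant of this last step is to invoke the classical fact that a strictly maximal subfield of a central simple algebra splits it, so $A\otimes_K L\simeq\Mat_3(L)$; as the Severi--Brauer variety attached to $A\otimes_K L$ is $S_L$ and a Severi--Brauer variety is trivial if and only if its algebra is split (Ch\^atelet's theorem), this again gives $S_L\simeq\PP^2_L$.

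The only genuinely non-formal ingredient is the Wedderburn--Albert theorem that a degree-$3$ division algebra is cyclic; everything else is bookkeeping about the dictionary between degree-$3$ central simple algebras and Severi--Brauer surfaces and its compatibility with base change, which is exactly what Section~\ref{sec: first definitions} and Lemma~\ref{lem:SBdegree3} provide. So I do not expect a serious obstacle. The one point to be careful about is that the absolute-Galois cocycle attached to $S$ really does factor through $\Gal(L/K)$ with values in $\PGL_3(L)$ --- which is immediate from the explicit shape of $\nu_\xi$ together with $g(\nu_\xi)=\nu_\xi$ and $\nu_\xi^3=\mathrm{id}$ in $\PGL_3$ --- so that the inflation--restriction argument (equivalently, the identification $S\simeq S_\xi$) is legitimate.
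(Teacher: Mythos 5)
Your proposal is correct and follows essentially the same route as the paper: both rest on the Wedderburn--Albert theorem that a degree-$3$ division algebra is cyclic, take the resulting cyclic cubic maximal subfield $L$, and conclude via the fact that a (strictly) maximal subfield splits the algebra --- the paper cites \cite[Proposition 4.5.3]{Gille} for exactly the splitting step you describe in your ``purely algebraic variant''. The additional inflation--restriction phrasing you give is a harmless reformulation of the same fact and introduces no new content or gap.
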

\begin{proof}
	Let $A$ be the non-trivial central simple algebra of degree $3$ over $K$ that corresponds to $S$. Then $A$ is a cyclic division algebra, see Section \ref{sec: first definitions}. By definition of a cyclic algebra, there exists a maximal subfield $L$ of $A$ such that $L/K$ is a cyclic Galois extension of degree~$3$. By \cite[Proposition 4.5.3]{Gille} or \cite[Lemma 11.8.3]{Stacks}, the field $L$ is a splitting field for $A$.
\end{proof}

\begin{rem}\label{rem: field conditions}

	Let $K$ be a perfect field. By Lemma~\ref{lem:CyclicExtension} and Lemma~\ref{lem:SBdegree3}\ref{SB33}, non-trivial Severi-Brauer surfaces over $K$ exist if and only if there is a Galois extension $L/K$ of degree $3$ whose norm map is not surjective.

	\begin{enumerate}
		\item If $\charact(K)=3$, there is no non-trivial Severi-Brauer surface over $K$. Indeed, since $K$ is perfect, the cube map $\phi\colon K\to K$, $\phi(t)=t^3$, is surjective. Hence the norm map $N_{L/K}\colon L^*\to K^*$ is also surjective, as for every $s\in K^*$ one can write $s=t^3$ for some $t\in K^*$ and then $N_{L/K}(t)=t^3=s$.
		\item If $K$ has the $C_1$ property, then the norm of every finite separable Galois extension is surjective \cite[X \S7, Propositions 10 and 11]{SerreLocalFields}. Hence, every Severi-Brauer variety over $K$ is trivial. In particular, if a morphism $X\to B$ between complex varieties has a generic fibre being a non-trivial Severi-Brauer surface over $\CC(B)$, then we have $\dim(B)\geqslant 2$ and thus $\dim(X)\geqslant 4$ (recall that algebraic function fields of dimension $1$ over algebraically closed fields are $C_1$ by Tsen's theorem).
	\end{enumerate}
\end{rem}

Note that Lemma~\ref{lem:SBdegree3} indeed allows to construct non-trivial Severi-Brauer surfaces for many fields: for instance, by \cite{Stern} the norm $N\colon L^*\to K^*$ is not surjective for any Galois extension $L/K$ of number fields. We can also construct a non-trivial Severi-Brauer surfaces over $\CC(t_1,\ldots,t_n)$ for each $n\geqslant 2$, which will be useful in the sequel, as it will be the generic fibre of a Mori fibre space over $\mathbb{P}^n_{\CC}$.

\begin{cor}\label{cor: non-trivial SB over function field}
	Let $n\geqslant 2$ and let $K=\CC(t_1,\ldots,t_n)=\CC(\mathbb{A}^n)$ be the complex rational function field in $n$ variables. Choosing $\lambda=t_1$ and $L=K[\sqrt[3]{\lambda}]$, the  field extension $L/K$ is Galois of degree~$3$, with Galois group generated by $g\colon \sqrt[3]{\lambda}\mapsto \zeta\sqrt[3]{\lambda}.$ Moreover, $\xi=t_2$ is not the norm of an element of $L$. There is then a unique Severi-Brauer surface $S$ over $K$ which admits an isomorphism $\varphi\colon S_L\iso \PP^2_L$ such that \[\varphi\circ g\circ \varphi^{-1}\colon[x:y:z]\mapsto [g(z)t_2 :g(x):g(y)],\] and this Severi-Brauer is a non-trivial one: $S(K)=\varnothing$.
\end{cor}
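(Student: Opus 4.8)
The plan is to verify the three assertions in turn; everything is a direct appeal to Lemma~\ref{lem:SBdegree3} except the claim that $t_2$ is not a norm from $L$, which is the real content and which I would prove by a valuation argument.

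First, for the extension $L/K$: since $\charact(\CC)=0$ the polynomial $x^3-t_1$ is separable, and it is irreducible over $K=\CC(t_1,\dots,t_n)$ because $t_1$ is not a cube in $K$ --- if $t_1=(f/h)^3$ with $f,h\in\CC[t_1,\dots,t_n]$ then $f^3=t_1h^3$, and the $t_1$-adic valuation of the left side is divisible by $3$ while that of the right side is not. As $\zeta\in\CC\subseteq K$, the field $L=K(\sqrt[3]{t_1})$ contains all three roots $\sqrt[3]{t_1},\zeta\sqrt[3]{t_1},\zeta^2\sqrt[3]{t_1}$ of $x^3-t_1$, so $L/K$ is Galois of degree $3$, cyclic, with $g\colon\sqrt[3]{t_1}\mapsto\zeta\sqrt[3]{t_1}$ a generator of $\Gal(L/K)$.

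The hard part is to show $t_2\notin N_{L/K}(L^*)$. Let $v$ be the $t_2$-adic valuation of $K$, normalised so that $v(t_2)=1$; its residue field is $k=\CC(t_1,t_3,\dots,t_n)$, and the image $\overline{t_1}$ of $t_1$ in $k$ is again not a cube, by the same unique-factorisation argument as above. Since $t_1$ is a unit for $v$ and $\overline{t_1}$ is not a cube in $k$, the extension $L=K(\sqrt[3]{t_1})$ is unramified over $v$ and there is a unique valuation $w$ of $L$ lying over $v$, with $w|_K=v$ and residue extension $k(\sqrt[3]{\overline{t_1}})/k$ of degree $3$ (use that $\mathcal{O}_v[\sqrt[3]{t_1}]$ is \'etale over $\mathcal{O}_v$, the discriminant of $x^3-t_1$ being a $v$-unit; or Hensel and Kummer theory). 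If $t_2=N_{L/K}(a)$ for some $a\in L^*$, then, $L/K$ being Galois with the single place $w$ above $v$, every $\sigma\in\Gal(L/K)$ satisfies $w\circ\sigma=w$, so
\[
1=v(t_2)=w\big(N_{L/K}(a)\big)=\sum_{\sigma\in\Gal(L/K)}w(\sigma(a))=3\,w(a)\in 3\ZZ,
\]
a contradiction. Hence $t_2$ is not a norm of an element of $L$.

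Finally, Lemma~\ref{lem:SBdegree3}\ref{SB31}, applied with the extension $L/K$, the generator $g$ and $\xi=t_2\in K^*$, yields a unique Severi-Brauer surface $S=S_{t_2}$ over $K$ together with an isomorphism $\varphi\colon S_L\iso\PP^2_L$ such that $\varphi\circ g\circ\varphi^{-1}\colon[x:y:z]\mapsto[g(z)t_2:g(x):g(y)]$. By Lemma~\ref{lem:SBdegree3}\ref{SB33}, $S$ has a $K$-point if and only if $t_2\in N_{L/K}(L^*)$; as this was just ruled out, $S(K)=\varnothing$, so $S$ is non-trivial. The only delicate point in the whole argument is the norm computation in the third paragraph.
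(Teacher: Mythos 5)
Your proof is correct. The only genuinely non-formal step, as you say, is showing $t_2\notin N_{L/K}(L^*)$, and here your route differs from the paper's in implementation though not in spirit: both arguments exhibit a $\Gal(L/K)$-invariant valuation $w$ on $L$ restricting to a valuation $v$ on $K$ with $v(t_2)=1$, so that $w(N_{L/K}(a))=3\,w(a)\in 3\ZZ$ cannot equal $1$. The paper takes $w$ to be (minus) the total-degree function on $\CC[\sqrt[3]{t_1},t_2,\ldots,t_n]$: it observes that $L$ is itself purely transcendental in the variables $u_1=\sqrt[3]{t_1},t_2,\ldots,t_n$, writes $a=b/c$ with $b,c$ polynomials, and compares degrees in $t_2\,c\,g(c)\,g^2(c)=b\,g(b)\,g^2(b)$, getting $3\deg(b)=3\deg(c)+1$ immediately because $g$ acts linearly on the variables and hence preserves degree. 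You instead take the $t_2$-adic valuation on $K$ and must first establish that it has a unique, unramified extension to $L$ with residue degree $3$ (via the \'etaleness of $\mathcal{O}_v[\sqrt[3]{t_1}]$ and irreducibility of $x^3-\overline{t_1}$ over the residue field), which is what makes $w$ Galois-invariant. Your version costs a little more decomposition theory but generalises more readily to situations where $L$ is not purely transcendental; the paper's degree count is shorter precisely because it exploits that $L=\CC(u_1,t_2,\ldots,t_n)$. One small bonus of your write-up: you correctly invoke Lemma~\ref{lem:SBdegree3}\ref{SB33} for the equivalence between $S_\xi(K)\neq\varnothing$ and $\xi\in N$, where the paper's proof cites part \ref{SB32} (an apparent typo).
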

\begin{proof}
	As $\lambda$ is not a cube in $K$ and $K$ contains the root of unity $\zeta=e^{2\mathbf{i} \pi/3}$, the field extension $L/K$ is Galois, with Galois group generated by $g.$ The existence of $S$ is then given by Lemma~\ref{lem:SBdegree3}\ref{SB31}. We now prove that $\xi=t_2$  is not a norm of $L/K$, which is equivalent to $S(K)=\varnothing$ by Lemma~\ref{lem:SBdegree3}\ref{SB32}. Assume for contradiction that there exists $a\in L^*$ such that $t_2=ag(a)g^2(a)$. Writing $u_1= \sqrt[3]{\lambda}= \sqrt[3]{t_1}$, the field $L=\CC(u_1,t_2,\ldots,t_n)$ is a function field in $n$ variables. We then write $a=b/c$ for two polynomials $b,c\in\CC[u_1,t_2,\ldots,t_n]\setminus\{0\}$ and obtain that $t_2cg(c)g^2(c)=bg(b)g^2(b)$ is of degree $3\deg(b)=3\deg(c)+1$, which is impossible.
\end{proof}

In Section \ref{subsec: twisted} we already saw that some basic properties of Severi-Brauer surfaces imply that there is always a point of degree $3$. In fact, we have a stronger statement:

\begin{prop}\label{L3atleast2}
	Let $S$ be a non-trivial Severi-Brauer surface over a perfect field $K$. There are two points $p,q$ of degree $3$ of $S$ with different splitting fields. In particular, there is no $\alpha\in \Aut_K(S)$ such that $\alpha(p)=q$.
\end{prop}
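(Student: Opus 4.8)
The plan is to produce, alongside a degree-$3$ point whose splitting field is a cyclic cubic extension $L$, a second degree-$3$ point whose residue field is the cubic field $K(\sqrt[3]{\xi})$ coming from the cyclic presentation of $S$; the two will then have different splitting fields. To set up, by Lemma~\ref{lem:CyclicExtension} I fix a cyclic Galois extension $L/K$ of degree $3$ with $S_L\simeq\PP^2_L$, a generator $g$ of $\Gal(L/K)$, and, via Lemma~\ref{lem:SBdegree3}\ref{SB32}, an isomorphism $S\simeq S_\xi$ for some $\xi\in K^*$; since $S(K)=\varnothing$, Lemma~\ref{lem:SBdegree3}\ref{SB33} gives $\xi\notin N:=N_{L/K}(L^*)$, and in particular $\xi$ is not a cube in $K$, as cubes lie in $N$. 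By Remark~\ref{rem: field conditions}\,(1) we have $\charact(K)\ne 3$, so $x^3-\xi$ is separable and irreducible over $K$ and $E:=K(\sqrt[3]{\xi})$ is a separable cubic field (perfect, being algebraic over $K$). Finally, exactly as in the proof of Lemma~\ref{lem:SBdegree3}\ref{SB32}, the $\Gal(L/K)$-orbit of any $L$-point of $S_L\simeq\PP^2_L$ gives a degree-$3$ point $p$ of $S$ with splitting field $L$.

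The heart of the matter is to show that $S$ has a closed point of degree $3$ with residue field $E$, equivalently that $S_E\simeq\PP^2_E$. Look at $x^3-\xi$ over $L$: if it had a root $\theta\in L$, then $K(\theta)=L$ (as $\xi$ is not a cube in $K$) would be Galois of degree $3$ and would therefore contain the remaining roots $\zeta_3\theta$ and $\zeta_3^2\theta$, so $\zeta_3\in L$ and $\xi=\theta\cdot\zeta_3\theta\cdot\zeta_3^2\theta=N_{L/K}(\theta)\in N$, contradicting $\xi\notin N$. Hence $x^3-\xi$ is irreducible over $L$, so $M:=L\otimes_KE\simeq L(\sqrt[3]{\xi})$ is a field, $M/E$ is cyclic of degree $3$, and, taking for generator of $\Gal(M/E)$ the one restricting to $g$ on $L$, one checks that $S_E$ is the surface associated (in the sense of Lemma~\ref{lem:SBdegree3}\ref{SB31} applied over the perfect field $E$) to $\xi$ and to this cyclic extension $M/E$. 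Since $\sqrt[3]{\xi}\in E$ is fixed by $\Gal(M/E)$, we have $\xi=N_{M/E}(\sqrt[3]{\xi})\in N_{M/E}(M^*)$, and so $S_E(E)\ne\varnothing$ by Lemma~\ref{lem:SBdegree3}\ref{SB33}. (Equivalently, $E=K(\sqrt[3]{\xi})$ splits the central simple algebra attached to $S$, which is classical; see e.g.~\cite{Gille,Pierce}.) The closure in $S$ of an $E$-point is then a closed point $q$ of degree dividing $[E:K]=3$; it is not a $K$-point, so $q$ has degree $3$ and residue field $E$.

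To conclude, let $\widetilde E$ be the splitting field of $q$, that is, the Galois closure of $E/K$. If $\widetilde E=L$, then $E\subseteq L$ together with $[E:K]=[L:K]=3$ forces $E=L$, i.e.\ $\sqrt[3]{\xi}\in L$; but $x^3-\xi$ has no root in $L$, a contradiction. Thus $p$ and $q$ are degree-$3$ points of $S$ with different splitting fields. Moreover, if some $\alpha\in\Aut_K(S)$ satisfied $\alpha(p)=q$, then, being defined over $K$, $\alpha$ would commute with the action of $\Gal(\overline K/K)$ and carry the geometric components of $p$ onto those of $q$ equivariantly; it would thus induce a $K$-isomorphism between the residue fields of $p$ and $q$, so these two points would have the same splitting field, a contradiction. (This last implication is the converse of Lemma~\ref{lem:auto}\ref{autopts}.)

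The step I expect to be the genuine obstacle is the middle one: showing that the concrete cubic field $E=K(\sqrt[3]{\xi})$ really occurs as the residue field of a point of $S$, i.e.\ that $E$ splits $S$. The delicate ingredient is the dichotomy ``$x^3-\xi$ has a root in $L$'' versus ``$x^3-\xi$ irreducible over $L$'': it is precisely the non-triviality hypothesis $\xi\notin N$ that rules out the first possibility and makes $LE/E$ an honest cyclic cubic extension over which $\xi$ becomes a norm. Everything else --- the first point $p$, the degree bookkeeping, and the statement about $\Aut_K(S)$ --- is routine given the lemmas already established in this section.
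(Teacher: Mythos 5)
Your proof is correct, and it follows the same skeleton as the paper's --- in particular the pivotal step, showing that $X^3-\xi$ has no root in $L$ because a root would force $\xi=N_{L/K}(\theta)$, is verbatim the paper's argument --- but you realize the second $3$-point differently. The paper proceeds by explicit coordinates: it writes down the three points $p_i=[\xi_i:1:\xi_i^{-1}]\in\PP^2(F)$ (where $F$ is the splitting field of $X^3-\xi$ and the $\xi_i$ its roots), checks by direct computation that $\{p_1,p_2,p_3\}$ is simultaneously a $\Gal(F/K)$-orbit and a $\varphi\circ g\circ\varphi^{-1}$-orbit, and concludes that $\varphi^{-1}(\{p_i\})$ descends to a $3$-point of $S$ whose splitting field cannot be $L$ since $p_i\notin\PP^2(L)$. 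You instead argue abstractly that the cubic field $E=K(\sqrt[3]{\xi})$ splits $S$: irreducibility of $X^3-\xi$ over $L$ makes $M=LE$ a cyclic cubic extension of $E$, the cocycle base-changes to the presentation of Lemma~\ref{lem:SBdegree3}\ref{SB31} over $E$, and $\xi=N_{M/E}(\sqrt[3]{\xi})$ gives $S(E)\neq\varnothing$ by Lemma~\ref{lem:SBdegree3}\ref{SB33}; the closure of an $E$-point is then the desired degree-$3$ point with residue field $E$. The two routes are equivalent (your $E$-point is essentially the paper's $q_1=\varphi^{-1}([\xi_1:1:\xi_1^{-1}])$), but yours is more conceptual --- it is the classical statement that $K(\sqrt[3]{\xi})$ splits the cyclic algebra $(L/K,g,\xi)$ --- at the cost of the routine verification that the base change $S_E$ really is the surface attached to $\xi$ and $M/E$, which you correctly flag but do not write out; the paper's computation avoids any base change and exhibits the orbit concretely. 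Your closing argument for the ``in particular'' clause (a $K$-automorphism induces a $K$-isomorphism of residue fields, hence preserves splitting fields) is also fine; the paper leaves this implicit.
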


\begin{proof}
	By Lemma~\ref{lem:CyclicExtension}, there exists a Galois extension $L/K$ of degree $3$ and an isomorphism  $\varphi\colon S_L\iso \mathbb{P}^2_L$. Let $g\in \Gal(L/K)$ be a generator. By Lemma~\ref{lem:SBdegree3}, we may choose $\varphi$ such that $\varphi\circ g\circ \varphi^{-1}\colon[x:y:z]\mapsto [\xi g(z) :g(x):g(y)],$ for some $\xi \in K^*$, which is not the norm of an element of $L$, and, in particular, not a cube in $K$. The polynomial $X^3-\xi$ then has no roots in $K$. We now prove that $X^3-\xi$ also has no root in $L$. Indeed, otherwise one of the root being in $L$ implies that all root belong to $L$, as $L/K$ is Galois. Hence, $L$ is the splitting field of $X^3-\xi$ over $K$ and $g$ acts cyclically on the roots $\xi_1,\xi_2,\xi_3$ of the polynomial $X^3-\xi$ and $\xi$ is then the norm of $\xi_1$, a contradiction.

	Let $F$ be the splitting field of $X^3-\xi$ over $K$, that is, $F=K[\xi_1,\xi_2,\xi_3]$. The field extensions $K\subseteq L\cap F \subseteq L$ gives $3=[L:K]=[L:L\cap F]\cdot [L\cap F:K]$, so either $L\cap F=L$ or $L\cap F=K$. A $\xi_1\not\in L$, we find that $L\cap F=K$. By  \cite[Chapter VI, Theorem~1.14]{lang-algebra},  $L'/K$ is Galois, where $L'=LF$ is the splitting field of $X^3-\xi$ over $L$, and we have a group isomorphism
	\[ \begin{array}{ccc}
		\Gal(L'/K) &\iso & \Gal(L/K)\times \Gal(F/K)\\
		\sigma & \mapsto & (\sigma|_L,\sigma|_F).
	\end{array}\]
	As $\Gal(F/K)$ is either of order $3$ or $6$ and acts transitively on the three roots $\xi_1$, $\xi_2$, $\xi_3$, we can extend $g$ to an element of $\Gal(L'/K)$, that we again write $g$, such that $g(\xi_1)=\xi_2$, $g(\xi_2)=\xi_3$ and $g(\xi_3)=\xi_1$.

	Consider the three points $p_i=\left [\xi_i:1:\xi_i^{-1}\right ]\in \PP^2(F)$, with $i\in \{1,2,3\}$. Note that $\{p_1,p_2,p_3\}\subseteq \PP^2(F)$ form one $\Gal(F/K)$-orbit.
	We calculate that $\varphi\circ g\circ \varphi^{-1}\colon[x:y:z]\mapsto [\xi g(z) :g(x):g(y)]$ satisfies
	\[
	\varphi\circ g\circ \varphi^{-1}(p_i)=\left [\frac{\xi}{g(\xi_i)}:g(\xi_i):1\right ]=\left [g(\xi_i):1:\frac{1}{g(\xi_i)} \right ]=g(p_i),
	\]
	using $\xi=(g(\xi_i))^3$, so the set $\{p_1,p_2,p_3\}$ is a $\varphi\circ g\circ \varphi^{-1}$-orbit. As $g$ and $\Gal(F/K)$ generate $\Gal(L'/K)$ (using the isomorphism above),  the set $\{q_1,q_2,q_3\}$ with $q_i=\varphi^{-1}(p_i)\in S(L')$ for $i=1,2,3$ forms a $\Gal(L'/K)$-orbit on $S$, and hence a $3$-point $q$ on $S$, since it is $\Gal(F/K)$-invariant and also invariant by $g$. The splitting field of $q$ is an intermediate field $K\subseteq L''\subseteq L'$, of degree $3$ or $6$, different from the splitting field $L$ of $p$, because $p_i\notin \PP^2(L)$ for each $i\in\{1,2,3\}$.
\end{proof}

\subsection{Non-trivial Severi-Brauer surfaces with no points of degree 6}\label{subsec: points 6}

Corollary \ref{cor: SB contains a point of degree p} implies that every non-trivial Severi-Brauer surface contains a point of degree $3$. In this section, we explain that it may fail to have points of degree $6$; in particular, for some fields $K$ the set $\mathcal{P}_6$ appearing in Theorem \ref{theoremSB} is empty. Since we were not able to find the corresponding example in the literature, we give it in the Proposition \ref{prop: no points of deg 6} below.

For each field $\mathrm{k}$, we denote as usual by $\mathrm{k}(\!(x)\!)$ the field of formal Laurent series. By the Newton-Puiseux theorem \cite[Chapter IV, Proposition 8]{SerreLocalFields}, the field $\bigcup_{a\in \NN} \mathrm{k}(\!(x^{1/a})\!)$ of Puiseux series is an algebraic closure of $\mathrm{k}(\!(x)\!)$ if $\mathrm{k}$ is algebraically closed of characteristic zero. Our field $K$ will be some subfield of the algebraic closure of $\CC(\!(x)\!)(\!(y)\!)$.

\begin{prop}\label{prop: no points of deg 6}
	The field $K=\bigcup_{a\in \NN\setminus 3\NN} \CC(\!(x^{1/a})\!)(\!(y^{1/b})\!)$ satisfies the following properties:
\begin{enumerate}
 \item\label{LKseries}
Each finite Galois extension $L/K$ is of degree $3^n$ for some $n\geqslant 1$.
\item\label{SBseries}
There is a non-trivial Severi-Brauer surface defined over $K$ that does not have any point of degree $6$.
\end{enumerate}
\end{prop}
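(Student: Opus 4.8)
The plan is to observe that $K$ carries a henselian valuation whose value group and residue field are well under control; statement~\ref{LKseries} then follows at once, and statement~\ref{SBseries} follows by combining~\ref{LKseries} with the construction of a single non-trivial Severi--Brauer surface over $K$ via Lemma~\ref{lem:SBdegree3}. For~\ref{LKseries}: being a directed union of the complete — hence henselian — valued fields $\CC(\!(x^{1/a})\!)(\!(y^{1/b})\!)$, the field $K$ is henselian for the $y$-adic valuation $v_y$, whose residue field is $K_1=\bigcup_{3\nmid a}\CC(\!(x^{1/a})\!)$ and whose value group is $v_y(K^{*})=\ZZ_{(3)}=\{m/n\in\QQ:3\nmid n\}$; likewise $K_1$ is henselian for the $x$-adic valuation, with value group $\ZZ_{(3)}$ and residue field $\CC$. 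For a finite extension $L$ of $F\in\{K,K_1\}$ the valuation extends uniquely, and since the residue characteristic is $0$ there is no defect, so $[L:F]=e\cdot f$ with $e$ the ramification index and $f$ the residue degree. Here $e=[\Gamma_L:\ZZ_{(3)}]$ for some $\ZZ_{(3)}\subseteq\Gamma_L\subseteq\QQ$ of finite index, and as $\QQ/\ZZ_{(3)}\cong\ZZ(3^{\infty})$ is a $3$-group, every such index, hence every such $e$, is a power of $3$. For $F=K_1$ the residue field $\CC$ is algebraically closed, so $f=1$ and every finite extension of $K_1$ has $3$-power degree; feeding this back to $F=K$, the residue degree $f=[\kappa_L:K_1]$ is again a power of $3$. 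Thus every finite extension of $K$ has degree a power of $3$. (Equivalently $G_K$ is pro-$3$; in fact $G_K\cong\ZZ_3\times\ZZ_3$, using $G_{\CC(\!(x)\!)}\cong\hat{\ZZ}$ together with the splitting $G_{k(\!(t)\!)}\cong G_k\ltimes\hat{\ZZ}(1)$ valid in characteristic $0$. If one reads $K$ as the iterated union $\bigcup_{3\nmid b}K_1(\!(y^{1/b})\!)$, the same argument applies verbatim.)

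For~\ref{SBseries}: the non-existence of degree-$6$ points is then automatic and unrelated to Severi--Brauer geometry — a closed point of degree $6$ on any $K$-variety would have residue field a degree-$6$ extension of $K$, which cannot exist by~\ref{LKseries}. So it is enough to produce one non-trivial Severi--Brauer surface over $K$; by Lemma~\ref{lem:SBdegree3} and Remark~\ref{rem: field conditions} this means exhibiting a degree-$3$ Galois extension $L/K$ and an element of $K^{*}$ that is not a norm from $L$. I would take $L=K(\sqrt[3]{x})$ — Galois since $\mu_3\subseteq\CC\subseteq K$ — and $\xi=y$. As $x$ is not a cube in $K_1$, the residue extension $K_1(\sqrt[3]{x})/K_1$ has degree $3$, so $L/K$ is unramified for $v_y$ (using $ef=3$); since $K$ is henselian, $v_y$ extends uniquely to $L$, with the same value group $\ZZ_{(3)}$, and therefore $v_y\bigl(N_{L/K}(\alpha)\bigr)=3\,w(\alpha)\in 3\ZZ_{(3)}$ for every $\alpha\in L^{*}$, where $w$ is the extended valuation. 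Since $v_y(y)=1\notin 3\ZZ_{(3)}$, the element $y$ is not a norm from $L$; hence the surface $S_y$ of Lemma~\ref{lem:SBdegree3}\ref{SB31} satisfies $S_y(K)=\varnothing$ and, by the first observation, has no point of degree~$6$. (Alternatively, once $G_K\cong\ZZ_3^{2}$ is known, the existence of a non-trivial degree-$3$ Brauer class is immediate: $\Br(K)[3]\cong H^{2}(G_K,\mu_3)\cong H^{2}(\ZZ_3^{2},\ZZ/3)\cong\ZZ/3$ by K\"unneth, as $\ZZ_3$ has cohomological dimension~$1$.)

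The main obstacle is~\ref{LKseries}: the point is to recognise the correct valuation-theoretic picture — $K$ henselian with value group $\ZZ_{(3)}$ (forcing ramification indices to be $3$-powers, because $\QQ/\ZZ_{(3)}$ is a $3$-divisible $3$-group) and residue field $K_1$ of the same shape one variable down — and to invoke the absence of defect in residue characteristic~$0$. Once~\ref{LKseries} is in hand the rest is routine: the non-existence of degree-$6$ points is a one-line consequence, and the non-triviality of $S_y$ is a short norm-valuation computation of the kind already performed in Corollary~\ref{cor: non-trivial SB over function field}.
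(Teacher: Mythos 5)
Your proof is correct, but it follows a genuinely different route from the paper's. For part (1) the paper argues bare-hands with Puiseux series: it embeds $L$ into $\CC(\!(x^{1/a})\!)(\!(y^{1/a})\!)$, observes that any $\sigma\in\Gal(L/K)$ multiplies $x^{1/a}$ and $y^{1/a}$ by $3^m$-th roots of unity (where $3^m\,\|\,a$), and concludes that every element of the Galois group has $3$-power order. You instead set up the valuation-theoretic picture (henselianity of the directed union, value group $\ZZ_{(3)}$ with $\QQ/\ZZ_{(3)}$ a $3$-group, residue field tower ending at $\CC$, and defectlessness in residue characteristic $0$); this is heavier machinery but yields the strictly stronger conclusion that \emph{every} finite extension, Galois or not, has $3$-power degree — which is in fact what you then use to rule out degree-$6$ points via residue fields, whereas the paper passes through the splitting field (Galois closure) of the point so as to stay within the literal statement of (1). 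For part (2) the paper simply asserts that $y$ is not a norm from $L=K(\sqrt[3]{x})$ and invokes Lemma~\ref{lem:SBdegree3}; your contribution here is an actual verification, namely that $L/K$ is unramified for $v_y$ so norms have $y$-adic valuation in $3\ZZ_{(3)}$, which excludes $y$. So your write-up is more complete on the norm computation and proves a stronger form of (1), at the cost of invoking standard but non-elementary facts (henselianity of directed unions of complete fields, the fundamental equality $[L:F]=ef$ without defect) that the paper's elementary argument avoids.
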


\begin{proof}
Applying the Newton-Puiseux theorem to $\mathrm{k}=\CC$, we get the algebraically closed field $F=\bigcup_{a\in \NN} \CC(\!(x^{1/a})\!)$. Doing this again with a new variable $y$, an algebraic closure of $F(\!(y)\!)$ is the field
\[
\bigcup_{a,b\in \NN} \CC(\!(x^{1/a})\!)(\!(y^{1/b})\!)=\bigcup_{a\in \NN} \CC(\!(x^{1/a})\!)(\!(y^{1/a})\!),
\]
which is then also an algebraic closure of $\CC(\!(x)\!)(\!(y)\!)$ and of $K$.

\ref{LKseries}: Let  $L/K$ be a finite Galois extension. We prove that $[L:K]$ is a power of $3$ by proving that the order of every element  $\sigma\in \mathrm{Gal}(L/K)$ is a power of $3$.  We take $a\in \NN$ such that $L\subseteq \CC(\!(x^{1/a})\!)(\!(y^{1/a})\!)$ (for instance take $a$ to be the common multiple of all denominators of exponents in a generating set of $L/K$), and extend $\sigma$ to an element $\widehat\sigma\in  \mathrm{Gal}(\overline{K}/K)$. Choose the largest $m\geqslant 1$ such that $3^m$ divides $a$ and obtain that $(x^{1/a})^{3^m}$ belongs to $K$. Hence, $\widehat\sigma(x^{1/a})=\xi x^{1/a}$ for some $\xi\in \CC$ with $\xi^{3^m}=1$. This proves that $\widehat\sigma^{3^m}$ fixes $x^{1/a}$. Similarly, $\widehat\sigma^{3^m}$ fixes $y^{1/a}$, so the order of $\sigma$ divides $3^m$.

\ref{SBseries}: We define $L=K(\sqrt[3]{x})$. Note that $L/K$ is a Galois extension with $\Gal(L/K)\simeq\ZZ/3\ZZ$ generated by $\sqrt[3]{x}\mapsto\zeta\sqrt[3]{x}$, where $\zeta$ is a primitive third root of unity. As $y$ is not in the image of the norm map $N_{L/K}\colon L^*\to K^*$, there exists a non-trivial Severi-Brauer $S$ surface over $K$ by Lemma \ref{lem:SBdegree3}. This surface does not contain any point of degree $6$ as its splitting field would be a Galois extension $L'/K$ of degree divisible by $6$, which does not exist since there is no finite Galois extension of $K$ of even degree by~\ref{LKseries}. \end{proof}

\section{Sarkisov links between Severi-Brauer surfaces}

\subsection{Sarkisov links and relations}\label{subsec: Sarkisov} In this section, we will be interested in the structure of the group $\Bir(S)$, where $S$ is a Severi-Brauer surface. Following \cite{LamyZimmermann} and \cite{BLZ}, we denote by $\BirMori(S)$ the groupoid of all birational maps between Mori fibre spaces (see Definition \ref{def: Mfs}) birational to $S$ over $K$. The generators of this groupoid $\BirMori(S)$ were described in 1970 by Felix Weinstein in his Master's thesis \cite{Weinstein89}, republished recently as \cite{Weinstein22}. Today, Weinstein's results can be viewed as a part of {\it Sarkisov program} for 2-dimensional Mori fibre spaces, which we now recall. In our exposition, we follow \cite{LamySchneider}.

\begin{mydef}\label{def: Mfs}
Let $K$ be a perfect field, let $S$ be a smooth projective surface defined over~$K$, and let $r \geqslant 1$ be an integer.

\begin{enumerate}
\item
A morphism $S\to B$ (often written $S/B$) defined over $K$ is called  a \emph{rank~$r$ fibration} if it is a surjective morphism with geometrically connected fibres, $-K_S$ relatively ample, $\rk\Pic_{K}(S/B)=r$, and if $B$ is a point or a smooth curve.
\item
A rank $1$ fibration $S\to B$ is said to be a \emph{Mori fibre space}.\end{enumerate}
\end{mydef}

In Definition~\ref{def: Mfs}, there are only two possibilities for $B$:
either $B = \pt$ is a point and $S$ is a del Pezzo surface of Picard rank $r$ over $K$, or $S \to B$ is a conic bundle over a smooth curve $B$ with $r-1$ $\Gal(\overline{K}/K)$-orbits of singular fibres.

\begin{mydef}
Let $K$ be a perfect field and let $S/ B$ and $S'/ B'$ be two fibrations of ranks $r$ and $r'$, respectively.

\begin{enumerate}
\item
We say that $S/B$ \emph{factorizes through} $S'/B'$, or that $S'/B'$ is \emph{dominated} by $S/B$, if there is a birational morphism $\varphi\colon S\to S'$ and a morphism $\beta\colon B' \to B$, both defined over $K$, such that the following diagram commutes (if it exists then $r\geqslant r'$):
\[
\begin{tikzcd}[row sep=small]
	S \ar[rrr] \ar[dr,"\varphi",swap] &&& B \\
	& S' \ar[r] & B' \ar[ur,"\beta",swap]
\end{tikzcd}
\]
\item
In the above definition, $\varphi$ is called \emph{isomorphism of rank $r$ fibrations} if $r=r'$ and $S\to S'$, $B'\to B$ are both isomorphisms, and is moreover also called \emph{isomorphism of Mori fibre spaces} if $r=r'=1$.
\end{enumerate}
\end{mydef}
By additivity of the relative Picard rank, we get the following easy result:

\begin{lem}[{\cite[Lemma 2.1]{LamySchneider}}]\label{l:2_domination_type}
	Assume that $S/B$ is a rank $r+1$ fibration that factorizes through a rank~$r$ fibration $S'/B'$. Then one of the following holds:
	\begin{enumerate}
		\item \label{domination:blowup} Either $B \simeq B'$, and $S \to S'$ is the blow-up of a $d$-point $p\in S'$;
		\item \label{domination:base} Or $B$ is a curve, $B'$ is a point, and $S \to S'$ is an isomorphism.
	\end{enumerate}
\end{lem}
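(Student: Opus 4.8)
The plan is to extract the dichotomy from two pieces of standard bookkeeping: the factorisation of a birational morphism of smooth projective surfaces into blow-ups, and the additivity of the relative Picard rank along the commutative square. Write $\pi\colon S\to B$ and $\pi'\colon S'\to B'$ for the two fibrations, so that $\pi=\beta\circ\pi'\circ\varphi$, and set $\epsilon(C)=0$ if $C$ is a point and $\epsilon(C)=1$ if $C$ is a smooth curve, so that $\rk\Pic_K(T/C)=\rk\Pic_K(T)-\epsilon(C)$ for every fibration $T\to C$. The two inputs I would invoke are: (i) a birational morphism $\varphi\colon S\to S'$ of smooth projective surfaces over the perfect field $K$ is a finite composition of blow-ups of closed points, and the blow-up of a point of degree $d$ raises $\rk\Pic_K$ of the surface by exactly $1$ (its $d$ exceptional curves over $\overline{K}$ form one $\Gal(\overline{K}/K)$-orbit, hence a single class in $\Pic_K$), so that $\rk\Pic_K(S)=\rk\Pic_K(S')+k$ for some integer $k\geqslant 0$, with $k=0$ precisely when $\varphi$ is an isomorphism; and (ii) since $\varphi$, $\pi'$ and $\pi=\beta\circ\pi'\circ\varphi$ are all surjective, $\beta$ is surjective, and in particular $\epsilon(B)\leqslant\epsilon(B')$, as a point cannot surject onto a curve.

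Combining these with $\rk\Pic_K(S/B)=r+1$ and $\rk\Pic_K(S'/B')=r$ yields $k=1+\epsilon(B)-\epsilon(B')$. As $0\leqslant k$ and $\epsilon(B)\leqslant\epsilon(B')$, only two possibilities remain: either $k=1$ and $\epsilon(B)=\epsilon(B')$, or $k=0$, $\epsilon(B)=0$ and $\epsilon(B')=1$. In the first case $\varphi$ is a single blow-up, i.e.\ the blow-up of a $d$-point $p\in S'$, and $B\simeq B'$: this is immediate when $B$ and $B'$ are points, and when they are curves it follows because the finite morphism $\beta$ of smooth curves must have degree $1$ — otherwise a general fibre of $\pi$ would split into a disjoint union of several fibres of $\pi'\circ\varphi$ and fail to be geometrically connected (in positive characteristic one argues instead with smoothness of the generic fibre) — so $\beta$ is an isomorphism; this is alternative~\ref{domination:blowup}. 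In the second case $\varphi$ is an isomorphism and $\{B,B'\}$ consists of a point and a curve, as in alternative~\ref{domination:base}.

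The only genuinely non-elementary ingredient is (i) over the non-closed field $K$: the $\Gal(\overline{K}/K)$-equivariant factorisation of $\varphi$ into successive blow-ups of closed points, obtained by contracting at each stage a $\Gal(\overline{K}/K)$-orbit of $(-1)$-curves inside the exceptional locus, which makes the Picard ranks add up as claimed. This is classical but deserves a careful reference; granting it, the rest is the one-line rank computation above together with the short connectedness argument controlling $\deg\beta$.
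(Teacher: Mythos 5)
Your argument is the intended one. The paper offers no proof beyond the phrase ``by additivity of the relative Picard rank'' together with the citation of \cite[Lemma 2.1]{LamySchneider}, and what you do --- combine the Galois-equivariant factorisation of $\varphi\colon S\to S'$ into blow-ups of closed points (each raising $\rk\Pic_K$ by exactly one) with the identity $\rk\Pic_K(T/C)=\rk\Pic_K(T)-\rk\Pic_K(C)$ and the surjectivity of $\beta\colon B'\to B$ --- is precisely that computation. The degree argument in the curve--curve case (geometric connectedness of the fibres of $S/B$ forces $\deg\beta=1$, hence $B\simeq B'$) is also the standard one.

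There is, however, one point you must not smooth over. Your bookkeeping in the second case gives $k=0$, $\epsilon(B)=0$, $\epsilon(B')=1$, that is, \emph{$B$ is a point and $B'$ is a curve} --- the opposite of what alternative \ref{domination:base} literally asserts. Hiding this behind the unordered phrase ``$\{B,B'\}$ consists of a point and a curve'' is not acceptable, since the two assignments are not interchangeable. Your version is the correct one: with the definition of domination used here, $\beta$ goes from $B'$ to $B$ and must be surjective (because $S\to B$ is), so a point $B'$ cannot dominate a curve $B$; and even ignoring that, the rank count for ``$B$ a curve, $B'$ a point'' would give $\rho(S)-\rho(S')=2$, incompatible with $S\to S'$ being an isomorphism. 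The configuration you derived is exactly what occurs when a rank $2$ fibration over a point dominates a conic bundle over a curve, as in the links of type \I\ and \III\ of Figure~\ref{fig:SarkisovTypesSurfaces}. In short, the statement as printed has $B$ and $B'$ swapped in alternative \ref{domination:base}; your computation establishes the corrected statement, and you should flag the discrepancy explicitly rather than pretending the two formulations agree.
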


The \emph{piece} \label{def:piece} of a rank $r$ fibration $S/B$ is the $(r-1)$-dimensional combinatorial polytope constructed as follows:
Each rank $d$ fibration dominated by $S/B$ is a $(d-1)$-dimensional face, and for each pair of faces $S_i/B_i$, $i=1,2$, $S_2/B_2$ lies in $S_1/B_1$ if and only if $S_1/B_1$ dominates $S_2/B_2$. We write $(r-1)$-piece when we want to emphasize the dimension of the piece (associated to a rank $r$ fibration).

We now consider more closely the case of $1$-pieces, which turn out to encode Sarkisov links. Let $Z/B$ be a rank 2 fibration.
As a consequence of the two-rays game, there are exactly two rank~$1$ fibrations $X_1/B_1$ and $X_2/B_2$ dominated by $Z/B$ (see \cite[Lemma~3.7]{BLZ}). This gives a birational map $\chi\colon X_1 \rat X_2$ that is called \emph{Sarkisov link}. Note that this one is unique up to replacing a link with the inverse and up to composing at the source and target by automorphisms of Mori fibre spaces. Using Lemma~\ref{l:2_domination_type} we can distinguish between four types of Sarkisov links, depending if the domination of $X_1/ B_1$ (resp. $X_2/B_2$) by $Z/B$ is a blow-up or a  base change. This description of Sarkisov links is entirely classical and can be found in Iskovskikh's seminal paper \cite[\S 2.2]{Isk1996}. We recall it in Figure~\ref{fig:SarkisovTypesSurfaces} (one can later compare with the higher dimensional case described in Figure~\ref{fig:SarkisovTypes}).
\begin{figure}[ht]
\[
{
\def\arraystretch{2.2}
\begin{array}{cc}
\begin{tikzcd}[ampersand replacement=\&,column sep=1.3cm,row sep=0.14cm]
\ar[dd,"\rm div",swap] Z \ar[rr,"\simeq"] \&\& X_2 \ar[dd,"\rm fib"] \\ \\
X_1 \ar[uurr,"\chi",dashed,swap] \ar[dr,"\rm fib",swap] \&  \& B_2 \ar[dl] \\
\& B_1 = B=\pt \&
\end{tikzcd}
&
\begin{tikzcd}[ampersand replacement=\&,column sep=.8cm,row sep=0.14cm]
  \&Z\ar[ddl,"\rm div",swap]\ar[ddr,"\rm div"]\&  \\ \\
X_1 \ar[rr,"\chi",dashed,swap] \ar[dr,"\rm fib",swap] \&  \& X_2 \ar[dl,"\rm fib"] \\
\& B_1 = B = B_2 \&
\end{tikzcd}
\\
\I & \II
\\
\begin{tikzcd}[ampersand replacement=\&,column sep=1.3cm,row sep=0.14cm]
X_1 \ar[ddrr,"\chi",dashed,swap] \ar[dd,"\rm fib",swap]  \ar[rr,"\simeq"] \&\& Z \ar[dd,"\rm div"] \\ \\
B_1 \ar[dr] \& \& X_2 \ar[dl,"\rm fib"] \\
\& B = B_2 \&
\end{tikzcd}
&
\begin{tikzcd}[ampersand replacement=\&,column sep=1.3cm,row sep=0.14cm]
X_1=Z \ar[rr,"\chi=\mathrm{id}_Z",swap] \ar[dd,"\rm fib",swap]  \&\& Z=X_2 \ar[dd,"\rm fib"] \\ \\
B_1 \ar[dr] \& \& B_2 \ar[dl] \\
\& B \&
\end{tikzcd}
\\
\III & \IV \vspace{-0.2cm}
\end{array}
}
\]
\caption{The four types of Sarkisov links for dimension $2$ over perfect fields. Here ``div'' and ``fib'' mean divisorial contraction and fibration, respectively.}
\label{fig:SarkisovTypesSurfaces}
\end{figure}
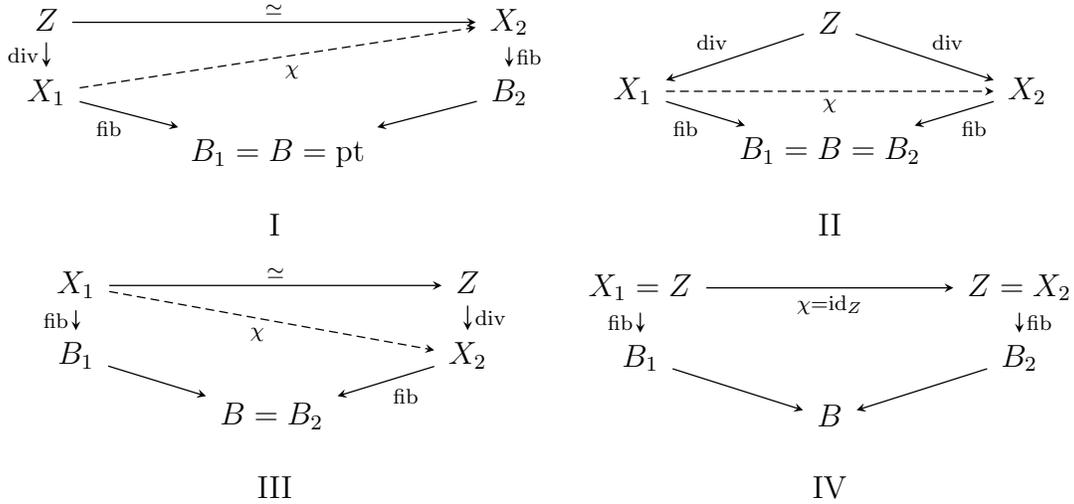

As we shall use only Sarkisov links of type II over a point, in Figure \ref{fig:link2} we give on the left the classical diagrammatic notation as in Figure~\ref{fig:SarkisovTypesSurfaces} and on the right its counterpart from \cite{LamySchneider} showing the $1$-piece with the rank~$2$ fibration in the centre of the edge, dominating the two rank 1 fibrations $S/ B$ and $S'/B'$ on the left and right vertices. An arrow $\stackrel{_d}{\to}$ refers to the blow-up of a $d$-point.

\begin{figure}[ht]
	\begin{tabular}{ccc}
		 \text{Diagram} & \text{Piece} \\
		\\
		\begin{tikzcd}[column sep=1.3cm,row sep=0.14cm]
			& Z\ar[dl,"d",swap]\ar[dr,"d'"] \\ X_1\ar[dr] && X_2\ar[dl] \\ & \pt
		\end{tikzcd} &
		\begin{tikzcd}
			X_1/\pt & Z/\pt\ar[l,-,ultra thick,"d",swap]\ar[r,-,ultra thick,"d'"] & X_2/\pt
		\end{tikzcd}
	\end{tabular}
	\caption{Sarkisov links of type II over a point}
	\label{fig:link2}
\end{figure}
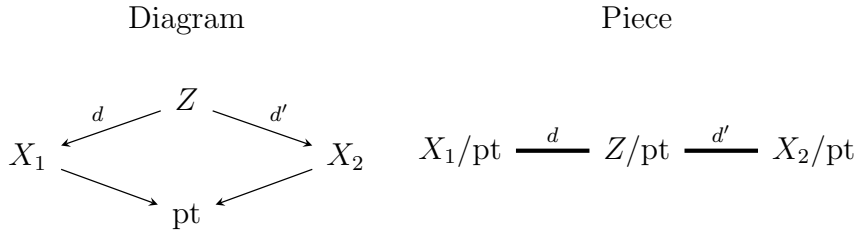

In \cite[Proposition 2.6]{LamyZimmermann} it was shown that a $2$-piece (or rather its geometric realisation) is homeomorphic to a disk, and following \cite{LamySchneider} we will draw it as a regular polygon. The boundary of the polygon corresponds to a sequence of Sarkisov links whose product is an automorphism: we say that the piece encodes an \emph{elementary relation} between Sarkisov links.
Since the composition of a Sarkisov link with an automorphism of Mori fibre space is a Sarkisov link, and the inverse of a Sarkisov link is a Sarkisov link, we obtain \emph{trivial relations} given by $\chi\circ \chi^{-1}=\mathrm{id}$ and $\alpha'\circ \chi\circ \alpha=\chi'$, where $\chi,\chi'$ are Sarkisov links and $\alpha,\alpha'$ are automorphisms of Mori fibre spaces. The Sarkisov program can be formulated in the following form:

\begin{thm}[{\cite[Theorem 3.1]{LamyZimmermann}, see also \cite{Isk1996}} ] \label{p:Sarkisov}
	Let $K$ be a perfect field, and let $S$ be a smooth projective surface defined over $K$, that is birational to a Mori fibre space. The groupoid $\BirMori(S)$ is generated by Sarkisov links and isomorphisms of Mori fibre spaces.
	Any relation between Sarkisov links is generated by trivial relations and elementary relations.
\end{thm}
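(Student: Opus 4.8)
The plan is to prove the two assertions by the standard machinery of the Sarkisov program, carried out $\Gal(\overline{K}/K)$-equivariantly (equivalently, over $K$), so that every blow-up contracts a $\Gal$-orbit, i.e.\ a closed point of some degree $d$, and every run of the minimal model program is relative and defined over $K$.

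\textbf{Generation of the groupoid.} Let $\phi\colon X/B\dashrightarrow X'/B'$ be a birational map between Mori fibre spaces birational to $S$. Fix a $\Gal$-invariant very ample linear system $\mathcal{H}'$ on $X'$ (say a suitable multiple of $-K_{X'}$) and set $\mathcal{H}=\phi^{-1}_*\mathcal{H}'$, a mobile system on $X$; write $\mathcal{H}\sim_{\QQ}\mu(-K_X)+(\text{a pullback from }B)$. Attach to $\phi$ a \emph{Sarkisov degree}: a well-ordered invariant built from $\mu$ together with the singularities of the pair $(X,\frac{1}{\mu}\mathcal{H})$, designed so that each Sarkisov link strictly decreases it — in dimension $2$ one can take, following Iskovskikh, the pair consisting of $\mu$ and the maximal multiplicity of $\mathcal{H}$ along a $\Gal$-orbit of points, ordered lexicographically. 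First I would check the Noether--Fano dichotomy: either $(X,\frac{1}{\mu}\mathcal{H})$ is canonical and $\mu(-K_X)$ lies on the boundary of the relative movable cone, in which case $\phi$ is already an isomorphism of Mori fibre spaces; or this fails, and then running a relative MMP that either extracts a divisor computing the canonical threshold of $(X,\frac{1}{\mu}\mathcal{H})$ or changes the fibration structure produces a rank $2$ fibration $Z/B''$ dominating $X/B$. The two-rays game on $Z/B''$ dominates exactly two Mori fibre spaces (see \cite[Lemma~3.7]{BLZ}), namely $X/B$ and a new one $X_1/B_1$, giving a Sarkisov link $\chi_1\colon X\dashrightarrow X_1$ for which a Noether--Fano computation shows that $\phi\circ\chi_1^{-1}$ has strictly smaller Sarkisov degree. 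By Noetherian induction, after finitely many links $\phi$ becomes an isomorphism of Mori fibre spaces, so $\phi$ is a composition of Sarkisov links and isomorphisms of Mori fibre spaces. In dimension $2$ this is Iskovskikh's theorem \cite{Isk1996}.

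\textbf{Relations.} For the second assertion I would assemble the Sarkisov links into a $2$-dimensional CW-complex $\mathcal{C}$: its vertices are the Mori fibre spaces birational to $S$ up to isomorphism; its edges are the Sarkisov links, equivalently the rank $2$ fibrations up to isomorphism, i.e.\ the $1$-pieces (using Lemma~\ref{l:2_domination_type} to see that a rank $2$ fibration dominates exactly two rank $1$ fibrations); and its $2$-cells are the $2$-pieces of rank $3$ fibrations, glued along their boundaries. By \cite[Proposition~2.6]{LamyZimmermann} each $2$-piece is homeomorphic to a disk and its boundary is a cyclic word of Sarkisov links whose composite is an automorphism — an elementary relation — so the attaching is along genuine loops and $\mathcal{C}$ is a well-defined complex. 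The assertion that every relation between Sarkisov links is generated by trivial and elementary relations then says precisely that $\mathcal{C}$ is simply connected. Connectedness is the first assertion. For simple connectedness, given a relation $\chi_n\circ\cdots\circ\chi_1=\mathrm{id}$, the finitely many Mori fibre spaces and links occurring are all dominated by a single higher-rank fibration $Z/B$ (a common log resolution, after a relative MMP), whose $(r-1)$-dimensional piece is a contractible polytope; its fundamental group is normally generated by the boundary loops of its $2$-faces, each of which is the piece of a rank $3$ fibration, hence an elementary relation. Therefore the given loop is, up to trivial relations, a product of conjugates of elementary relations, and $\mathcal{C}$ is simply connected.

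\textbf{Main obstacle.} I expect the relations part to be the crux, for two reasons. First, everything must be done $\Gal$-equivariantly: a blow-up extracts a $\Gal$-orbit rather than a single divisor, so one has to verify that the two-rays game, the disk structure of $2$-pieces, and the decomposition of a higher-rank piece into $2$-faces all survive the descent to $K$. Second, one needs the (non-uniform) finiteness that packages a fixed relation into the piece of a single fibration carrying a genuine polytope structure; for surfaces this is tractable via explicit log resolutions, but it still has to be set up with care. By comparison, the generation statement is comparatively soft, resting only on the termination of the Sarkisov degree.
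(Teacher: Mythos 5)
First, note that the paper does not prove this statement at all: it is quoted verbatim from \cite[Theorem 3.1]{LamyZimmermann} (with \cite{Isk1996} for the generation part), so there is no internal proof to compare against. Judged against the arguments in those references, your sketch of the \emph{generation} statement is the standard untwisting argument (Noether--Fano inequality, Sarkisov degree, termination), carried out over $K$ so that maximal singularities are $\Gal(\overline{K}/K)$-orbits of points; apart from the usual care needed to see that the set of values of $\mu$ satisfies the descending chain condition (so that the lexicographic induction actually terminates), this is correct in outline and is essentially Iskovskikh's proof.

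The \emph{relations} part, however, contains a genuine gap at the decisive step. You claim that for a given relation $\chi_n\circ\cdots\circ\chi_1=\mathrm{id}$, ``the finitely many Mori fibre spaces and links occurring are all dominated by a single higher-rank fibration $Z/B$ (a common log resolution, after a relative MMP), whose $(r-1)$-dimensional piece is a contractible polytope.'' No such $Z/B$ exists in general. A rank $r$ fibration must have $-K$ relatively ample (Definition~\ref{def: Mfs}), which for surfaces over a point bounds it to a del Pezzo surface with $\rho\leqslant 9$; already for a non-trivial Severi--Brauer surface, Lemma~\ref{lem:ClassificationRankr} shows that the rank of a dominating fibration is at most $3$. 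A relation obtained by concatenating two distinct hexagonal elementary relations (Lemma~\ref{lem:OnlyOnePiece}) involves surfaces and links that cannot all be dominated by a single such fibration, and a common log resolution of all the maps in the relation will not have relatively ample anticanonical class, nor does running a relative MMP on it preserve the property of dominating every member of the relation. The content of the theorem is precisely that an arbitrary relation \emph{decomposes} into conjugates of elementary relations, each supported on its own rank $3$ fibration; one cannot assume this globalises. The actual proofs (Kaloghiros's relations in the Sarkisov program, \cite[\S 4]{BLZ}, and \cite{LamyZimmermann} in the surface case) replace your single ``piece'' by the geography of ample models attached to a finite set of divisors on a common resolution $W$: the relevant contractible object is a convex polytope in the space of divisors on $W$ spanned by $K_W$, the exceptional divisors and an ample class, whose codimension-$0$, $1$, $2$ faces correspond to rank $1$, $2$, $3$ fibrations; the loop defined by the relation is then contracted across codimension-$2$ faces, each crossing producing one elementary relation. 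Your CW-complex $\mathcal{C}$ and the identification ``simply connected $\Leftrightarrow$ relations generated by elementary ones'' are the right framework, but the simple connectedness must be established by this divisorial-polytope argument (or, for surfaces, by an induction on the number of exceptional divisors of a resolution), not by exhibiting a single dominating rank-$r$ fibration.
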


\subsection{$d$-links on Severi-Brauer surfaces}

Unlike the projective plane, a non-trivial Severi-Brauer surface $S$ admits very few types of Sarkisov links. First, by the Lang-Nishimura lemma (see e.g.~\cite[Proposition IV.6.2]{Kollar_rational}), a non-trivial Severi-Brauer surface $S$ is not birational to any smooth projective surface which admits a $K$-rational point. Moreover, as the degree of any closed point of $S$ is a multiple of $3$, one directly obtains exactly two types of Sarkisov links starting from $S$. We first describe these in Example~\ref{Ex36links} (also see \cite[Theorem 2.6, ii, (c) and (e)]{Isk1996}), then prove that these are the only ones in Lemma~\ref{lem:ClassificationRankr}. We will then describe elementary relations in Lemma~\ref{lem:OnlyOnePiece}.

\begin{mydef}\label{Def36links}
	Let $K$ be a perfect field and $S_1, S_2$ be two del Pezzo surfaces with Picard rank $1$. Let $d_1,d_2\geqslant 1$ be integers.
	We say that a link $\varphi\colon S_1\dashrightarrow S_2$ of type \II\ over $K$ is a \emph{$(d_1,d_2)$-link} (or \emph{$d$-link} if $d=d_1=d_2$) if the base-point of $\varphi$ has degree $d_1$ and the base-point of $\varphi^{-1}$ has degree $d_2$.
\end{mydef}

\begin{ex}\label{Ex36links}
Let $S$ be a non-trivial Severi-Brauer surface over a perfect field $K$ and let $p\in S$ be a closed point of degree $d\in \{3,6\}$. Then the blow-up  $T\to S$ at $p$ is a del Pezzo surface by \cite[Lemma 2.8]{Shramov1} or \cite[Lemma 2.1]{Weinstein22}, which is of degree $9-d$, and thus $T/\pt$ is a rank $2$ fibration. Fix an isomorphism  $S_{\overline{K}}\simeq\PP_{\overline{K}}^2$ that sends $p_{\overline{K}}$ onto $d$-points $p_1,\ldots,p_d\in \PP^2(\overline{K})$.
\begin{enumerate}
	\item If $d=3$, we may blow-down the proper transforms of the three lines on $\PP_{\overline{K}}^2$ passing through the pairs of points of $\{p_i\}$, and get a birational map $\tau_p\colon  S\dashrightarrow S'$ where $S'$ is another non-trivial Severi-Brauer surface. Hence, $\tau_p$ is a $3$-link, and all $3$-links starting from $S$ are of this form.
	\item If $d=6$, we may blow-down the proper transforms of the six conics on $\PP_{\overline{K}}^2$ passing through the quintuples of points of $\{p_i\}$, and get a birational map $\eta_p\colon S\dashrightarrow S'$ where $S'$ is another non-trivial Severi-Brauer surface. Hence, $\eta_p$ is a $6$-link, and all $6$-links starting from $S$ are of this form.
\end{enumerate}
\end{ex}

\begin{lem}\label{lem:splittingfieldinverse}
Let $S$ be a non-trivial Severi-Brauer surface over a perfect field $K$ and let $p\in S$ be a point of degree $d\in \{3,6\}$. Let $\tau\colon S\dasharrow S'$ be a $d$-link associated to $p$, as in Example~$\ref{Ex36links}$. The inverse of $\tau$ is a birational map $\tau^{-1}\colon S'\dasharrow S$, that is a $d$-link associated to a point $q\in S'$ of degree $d$, and the splitting fields of $p$ and $q$ are equal.
\end{lem}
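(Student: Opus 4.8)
The plan is to reduce everything to the base field $L=$ the splitting field of $p$, over which the construction of Example~\ref{Ex36links} becomes completely explicit and visibly symmetric in $p$ and in the point $q$ produced by the link.

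First I would record that the non--splitting--field part of the statement is almost formal. By Example~\ref{Ex36links} the map $\tau$ is a $d$-link, and by Definition~\ref{Def36links} this means exactly that the base-point of $\tau^{-1}$ is a closed point $q\in S'$ of degree $d$. Since $S'$ is again a non-trivial Severi-Brauer surface and $\tau^{-1}\colon S'\dasharrow S$ is again a Sarkisov link of type \II, the classification in Example~\ref{Ex36links} (``all $d$-links starting from $S'$ are of this form'') shows that $\tau^{-1}$ is the $d$-link associated to $q$. Concretely, writing $\pi\colon T=\Bl_pS\to S$ and $\pi'\colon T\to S'$ for the two divisorial contractions resolving $\tau$, the morphism $\pi'$ contracts $d$ pairwise disjoint $(-1)$-curves over $\overline{K}$ --- the strict transforms of the three lines through pairs of $p_1,\dots,p_d$ when $d=3$, resp.\ of the six conics through five of the six points when $d=6$ --- onto the $d$ geometric points of $q$, and $\tau^{-1}=\pi\circ(\pi')^{-1}$.

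For the equality of splitting fields, set $L=$ splitting field of $p$ and $L'=$ splitting field of $q$. I would show $L'\subseteq L$; the reverse inclusion then follows by symmetry, applying the same argument to the $d$-link $\tau^{-1}$ associated to $q$, whose inverse $\tau$ has base-point $p$. Over $L$ we have an isomorphism $S_L\simeq\PP^2_L$ under which $p$ becomes $d$ individual $L$-rational points $p_1,\dots,p_d$ (by definition of the splitting field, or directly from Lemma~\ref{lem:auto}\ref{it:auto--g}); hence $T_L$ is the blow-up of $\PP^2_L$ at these $L$-points, and each of the $d$ curves contracted by $\pi'_L\colon T_L\to S'_L$ is defined over $L$ and carries an $L$-point, being the strict transform of a line (resp.\ conic) passing through some of the $p_i$. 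A birational morphism defined over $L$ maps such a curve onto an $L$-rational point of $S'_L$; therefore the $d$ geometric points $q_1,\dots,q_d$ of $q$ all lie in $S'(L)$, which is exactly the assertion $L'\subseteq L$.

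I do not expect a genuine obstacle: the lemma is in essence a bookkeeping statement to the effect that the construction of Example~\ref{Ex36links}, being already defined over the splitting field of the blown-up point, cannot change that splitting field, and is symmetric under interchanging the two del Pezzo surfaces of Picard rank $1$ joined by the link. The only slightly delicate point --- that an $L$-curve contracted by an $L$-morphism has $L$-rational image --- is settled by taking the image of any $L$-point of the curve.
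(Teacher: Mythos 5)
Your proof is correct and follows essentially the same route as the paper's: both rest on the observation that the curves contracted by the link are canonically indexed by the geometric components $p_1,\dots,p_d$ (the line, resp.\ conic, through the complementary points), so they and their images are defined over the same field. The paper phrases this as the statement that $\Gal(\overline{K}/K)$ acts on the contracted curves, hence on $q_1,\dots,q_d$, exactly as on $p_1,\dots,p_d$, which gives the equality of splitting fields in one step rather than via your containment-plus-symmetry argument; the two formulations are interchangeable.
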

\begin{proof} Over the algebraic closure $\overline{K}$ of $K$, the map $\tau$ becomes a birational map $\PP^2_{\overline{K}}\dasharrow \PP^2_{\overline{K}}$, that is the blow-up of $d$ points $p_1,\ldots,p_d$ followed by the contraction of the strict transforms of $d$ irreducible curves $C_1,\ldots,C_d\subseteq \PP^2_{\overline{K}}$. If $d=3$ (respectively $d=6$) the curve $C_i$ is the line (respectively the conic) passing through the $d-1$ points $\{p_1,\ldots,p_d\}\setminus \{p_i\}$. The action of $\Gal(\overline{K}/K)$ on the curves $C_1,\ldots,C_d$ is then exactly the same as the action on the points $p_1,\ldots,p_d$. The images $q_1,\ldots,q_d$ of the curves contracted correspond then to a point $q\in S'$ of degree $d$, and the splitting fields are the same as the action of $\Gal(\overline{K}/K)$ on the $p_i$'s and the $q_i$'s is the same.
\end{proof}

\begin{lem}[see also {\cite[Corollaries 2.4 and 2.5]{Shramov1}}]\label{lem:ClassificationRankr}
	Let $S$ be a non-trivial Severi-Brauer surface defined over a perfect field $K$.
	Let $T/B$ be a rank $r$ fibration that dominates $S/\pt$. Then $B=\pt$ and one of the following holds:
	\begin{enumerate}
		\item\label{rank1} $r=1$ and $T$ is a non-trivial Severi-Brauer surface;
		\item\label{rank2} $r=2$ and $T=T_d$ is a del Pezzo surface of degree $d\in\{3,6\}$, encoding a $(9-d)$-link;
		\item\label{rank3} $r=3$ and $T=T_3$ is a del Pezzo surface of degree $3$ obtained by the blow-up $T_3\to T_6\to S$ of two points of degree $3$.
	\end{enumerate}
\end{lem}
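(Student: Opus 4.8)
The plan is to follow the factorisation of the birational morphism $T\to S$ as a chain of blow-ups of closed points and to pin it down with two constraints: the $K$-Picard rank, which must equal $r$, and the fact (Corollary~\ref{cor: SB contains a point of degree p}) that $\indexx(S)=3$, i.e.\ every closed point of $S$ has degree divisible by $3$.

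By hypothesis there is a birational morphism $\varphi\colon T\to S$ and a morphism $\Spec K\to B$, so $B(K)\neq\varnothing$. Since $S$ and $T$ are smooth projective surfaces over the perfect field $K$, the map $\varphi$ factors as a composition of blow-ups of closed points (the classical statement over $\overline{K}$ descends $\Gal(\overline{K}/K)$-equivariantly),
\[
T=S^{(k)}\to S^{(k-1)}\to\cdots\to S^{(0)}=S,
\]
where $S^{(i)}\to S^{(i-1)}$ is the blow-up of a closed point $p_i\subseteq S^{(i-1)}$ of degree $d_i$. Two bookkeeping facts are then used. First, blowing up a closed point raises $\rk\Pic_K$ by exactly $1$ — over $\overline{K}$ the new exceptional locus is a single $\Gal(\overline{K}/K)$-orbit of disjoint $(-1)$-curves, so the $\Gal$-invariants of $\Pic_{\overline{K}}$ gain exactly one generator — whence $\rk\Pic_K(T)=1+k$. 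Second, $0$-cycles push forward to $0$-cycles of the same degree along each $S^{(i)}\to S$, so $3=\indexx(S)$ divides $\indexx(S^{(i)})$; hence every closed point of every $S^{(i)}$ has degree divisible by $3$, and in particular each $d_i$ is a positive multiple of $3$. Therefore $K_T^2=9-\sum_{i=1}^{k}d_i\leqslant 9-3k$.

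Next I would rule out the possibility that $B$ is a curve. In that case $T\to B$ is a conic bundle (by the description of rank $1$ fibrations following Definition~\ref{def: Mfs}), and its scheme-theoretic fibre over a point $b\in B(K)$ is a (possibly degenerate) plane conic over $K$. But every $1$-dimensional conic over a field carries a closed point of degree at most $2$: a smooth conic meets a $K$-line in a length-$2$ subscheme, while a degenerate conic has a $K$-rational singular point or a $K$-rational line as a component. Such a point is a closed point of $T$ of degree not divisible by $3$, contradicting the previous paragraph. Hence $B=\pt$, so $T$ is a del Pezzo surface with $\rk\Pic_K(T)=r=1+k$, and $1\leqslant K_T^2\leqslant 9-3k$ forces $k\leqslant 2$, i.e.\ $r\in\{1,2,3\}$.

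It remains to read off the three cases. If $r=1$ then $k=0$, $\varphi$ is an isomorphism, and $T\simeq S$ is a non-trivial Severi-Brauer surface, which is case~\ref{rank1}. If $r=2$ then $k=1$ and $T\to S$ is the blow-up of one point of degree $d_1$ with $9-d_1=K_T^2\geqslant 1$ and $3\mid d_1$, hence $d_1\in\{3,6\}$; then $T$ is a del Pezzo surface of degree $9-d_1\in\{3,6\}$, and by Example~\ref{Ex36links} (using Lemma~\ref{lem:splittingfieldinverse} for the base point of the inverse) $T/\pt$ is the rank $2$ fibration encoding a $d_1$-link, which is case~\ref{rank2} (with $d=9-d_1$). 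If $r=3$ then $k=2$ and $d_1+d_2=9-K_T^2\leqslant 8$ with $d_1,d_2$ positive multiples of $3$, forcing $d_1=d_2=3$; by Example~\ref{Ex36links}(1) the intermediate surface $S^{(1)}=\Bl_{p_1}S$ is the del Pezzo surface $T_6$ of degree $6$, and $T=\Bl_{p_2}S^{(1)}$ is the del Pezzo surface $T_3$ of degree $3$, giving the factorisation $T_3\to T_6\to S$ by blow-ups of degree $3$ points, which is case~\ref{rank3}. Conceptually there is no serious obstacle: once one knows that $\indexx(S)=3$ forces all point degrees on $T$ to be divisible by $3$ and that $K_T^2\geqslant 1$ in the non-fibred case, everything is forced. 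The steps needing care are the reduction of $\varphi$ to a $\Gal$-equivariant chain of blow-ups of closed points, the Picard-rank bookkeeping, and the easy but essential observation that a conic-bundle fibre over a rational point of the base always contains a closed point of degree at most $2$.
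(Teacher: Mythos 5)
Your proof is correct and follows essentially the same strategy as the paper's: factor $T\to S$ into blow-ups of closed points, use $\indexx(S)=3$ to force every degree to be a multiple of $3$, and use $K_T^2\geqslant 1$ to bound the sum of degrees by $8$. You additionally spell out the exclusion of the case where $B$ is a curve (via a degree $\leqslant 2$ point on a conic fibre over the $K$-point of $B$) and the divisibility of point degrees on the intermediate surfaces, two steps the paper's proof leaves implicit.
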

\begin{proof}
Let $T/B$ be a rank $r$ fibration dominating $S/\pt$. If $r=1$, then $T/B$ is isomorphic to $S/\pt$, as in \ref{rank1}. Suppose that $r\geqslant 2$. Hence $B=\pt$ and $T$ is therefore a del Pezzo surface, which implies that $T\to S$ is given by the blow-up of $r-1$ points $p$ on $S$ whose sum of degrees it at most $8$. Since $3$ divides the degree of each point on $S$ (Corollary~\ref{cor: SB contains a point of degree p}), the degree of each point blown-up is either $3$ or $6$. Moreover, there are  exactly three possibilities: one point of degree $3$, one point of degree $6$, or two points of degree $3$. This gives Cases~\ref{rank2}-\ref{rank3}.
\end{proof}

Curiously enough, to describe the target of $3$-links and $6$-links completely, we need a short detour to the theory of algebras.

\begin{mydef}
	Let $X$ be a non-trivial Severi-Brauer variety of dimension $n-1$ and let $A$ be the central simple algebra of dimension $n^2$ corresponding to $X$. Define the opposite algebra $A^{\rm op}$ as the ring with the same underlying set and addition as $A$, but with the opposite multiplication, i.e.~$a\times_{A^{\rm op}} b=b\times_A a$ (in fact, this is the inverse of $A$ in the Brauer group $\Br(K)$). This is again a central simple algebra of dimension $n^2$ over $K$. The corresponding {\it opposite Severi-Brauer variety} will be denoted $X^{\rm op}$.
\end{mydef}

\begin{rem}\label{rem: opposite SB}
	It is possible that $X\simeq\ X^{\rm op}$. Indeed, let $A$ be a non-trivial central simple $K$-algebra of degree 2, i.e. the generalised quaternion algebra \cite[Chapter 1]{Gille}, so $X$ is a conic with no $K$-points. Then the period of $A$ (i.e.~its order in the Brauer group $\Br(K)$) is~ $2$, see e.g. \cite[Proposition 4.5.13]{Gille}, hence $A\simeq A^{\rm op}$ and $X\simeq X^{\rm op}$. By contrast, if $A$ is a central simple $K$-algebra of degree 3 and $X$ is a non-trivial Severi-Brauer surface, then the period of $A$ is $3$ by the same Proposition. Thus $X$ is not isomorphic to $X^{\rm op}$.
\end{rem}

\begin{lem}\label{LinkSop}
	Let $S$ be a non-trivial Severi-Brauer surface over a perfect field.

	\begin{enumerate}
	\item
	If $\chi\colon S\dashrightarrow S'$ is a Sarkisov link, then $\chi$ is a $3$-link or a $6$-link, and $S'\simeq S^\mathrm{op}$.
	\item
	Any Mori fibre space birational to $S$ is isomorphic to $S$ or to $S^{\mathrm{op}}$. In particular, $S$ is not birational to a conic bundle.
	\end{enumerate}
\end{lem}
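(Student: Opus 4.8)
The plan is to combine the classification of rank~$2$ fibrations over $S$ (Lemma~\ref{lem:ClassificationRankr}) with the Sarkisov program (Theorem~\ref{p:Sarkisov}) for part~(1), and then bootstrap to part~(2) using that $\BirMori(S)$ is generated by Sarkisov links. For part~(1): by Theorem~\ref{p:Sarkisov}, any Sarkisov link $\chi\colon S\dashrightarrow S'$ is encoded by a rank~$2$ fibration $Z/B$ dominating both $S/\pt$ and $S'/\pt$. By Lemma~\ref{lem:ClassificationRankr} applied to $Z$ (which dominates $S/\pt$), we have $B=\pt$ and $Z=T_d$ is a del Pezzo surface of degree $d\in\{3,6\}$ obtained by blowing up a point of degree $9-d$ on $S$; so $\chi$ is a $3$-link or a $6$-link as in Example~\ref{Ex36links}. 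The target $S'$ is again a non-trivial Severi-Brauer surface (stated in Example~\ref{Ex36links}), so it remains to identify it with $S^{\mathrm{op}}$. Here I would argue at the level of central simple algebras / the Brauer group: base-change everything to $\overline{K}$, where $\chi_{\overline{K}}\colon\PP^2_{\overline{K}}\dashrightarrow\PP^2_{\overline{K}}$, and track the class of $S'$ in $\Cohom^1(\Gal(\overline{K}/K),\PGL_3(\overline{K}))=\Br(K)[3]$. The standard quadratic transformation based at three points in general position realizes, on the algebra side, the passage $A\mapsto A^{\mathrm{op}}$; concretely one checks that the cocycle of $S'$ computed from the explicit matrix $A_g$ of Lemma~\ref{lem:auto} is the transpose-inverse of the cocycle of $S$, which is exactly the cocycle of $A^{\mathrm{op}}$. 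Since by Remark~\ref{rem: opposite SB} the period of $A$ is $3$, the classes of $S$, $S^{\mathrm{op}}$, and $\PP^2$ are the three distinct elements of $\langle [A]\rangle\subseteq\Br(K)$; as $S'$ is non-trivial and a quadratic (or degree~$6$ sextic) transformation changes the class, one concludes $S'\simeq S^{\mathrm{op}}$.

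For part~(2): let $X/B$ be any Mori fibre space birational to $S$. By the Sarkisov program (Theorem~\ref{p:Sarkisov}), a birational map $S\dashrightarrow X$ factors as a composition of Sarkisov links $S=X_0\dashrightarrow X_1\dashrightarrow\cdots\dashrightarrow X_n=X$ and isomorphisms of Mori fibre spaces. By induction on $n$, using part~(1) at each step: each $X_i$ is a non-trivial Severi-Brauer surface, isomorphic to $S$ or $S^{\mathrm{op}}$ according to the parity of the number of links performed, since both a $3$-link and a $6$-link send a Severi-Brauer surface to its opposite, and $(S^{\mathrm{op}})^{\mathrm{op}}\simeq S$. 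In particular $X$ is never a conic bundle, because part~(1) shows no Sarkisov link starting from a non-trivial Severi-Brauer surface is of type~\I\ (which would introduce a fibration over $\PP^1$); so the base $B$ stays a point throughout.

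The main obstacle is the identification $S'\simeq S^{\mathrm{op}}$ in part~(1). The geometric statement that $S'$ is a non-trivial Severi-Brauer surface is already granted, but pinning down \emph{which} one requires a genuinely arithmetic computation with the Galois cocycles: one must express the cocycle of $S'$ in $\PGL_3$ in terms of that of $S$ after performing the Cremona involution $\overline{K}$-isomorphically, and recognize the result as the class $[A^{\mathrm{op}}]=-[A]=[A]^{-1}$ in $\Br(K)$. A clean way to do this is to use that, over $\overline{K}$, the standard Cremona involution is conjugate to an element of $\PGL_3$ acting on the relevant twisted forms by $M\mapsto {}^t\!M^{-1}$ up to coboundary, which is precisely the involution realizing the opposite algebra; alternatively, invoke the known fact (e.g.\ from the references on Severi-Brauer surfaces already cited, \cite{Shramov1,Weinstein22}) that contracting the three exceptional lines through a degree~$3$ point produces the Severi-Brauer surface of the opposite algebra. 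Once this is in place, part~(2) is a routine induction, the only subtlety being to note that type~\I\ and type~\III\ links are excluded, which follows from Lemma~\ref{lem:ClassificationRankr} forcing $B=\pt$ at every stage.
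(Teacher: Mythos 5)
Your proposal is correct and its skeleton coincides with the paper's: the paper's entire proof is the one-line observation that the statement follows from Lemma~\ref{lem:ClassificationRankr} together with Lemmas 1 and 4 of \cite{Weinstein22}, i.e.\ it outsources exactly the step you identify as the main obstacle (the identification $S'\simeq S^{\mathrm{op}}$) to Weinstein. Your fallback option of citing \cite{Weinstein22,Shramov1} is therefore literally what the paper does, and your part~(2) induction is the intended content of that citation combined with Lemma~\ref{lem:ClassificationRankr} (which forces $B=\pt$ at every stage and, via Example~\ref{Ex36links}, shows both extremal contractions from $T_d$ are divisorial, so no type~\I\ link and hence no conic bundle ever appears).

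Your alternative self-contained route for the $3$-link case --- computing that the standard Cremona involution based at the three coordinate points conjugates the cocycle $A_g=\nu_\xi$ to $\nu_{\xi^{-1}}={}^t\!A_g^{-1}$, which is the cocycle of the dual variety and hence of $A^{\mathrm{op}}$ --- is sound and is in fact the computation the paper carries out later in Lemma~\ref{Lem:SxiOp}; note however that in the paper that lemma is deduced \emph{from} Lemma~\ref{LinkSop}, so if you take this route you must (as you do) derive the identification $S_{\xi^{-1}}\simeq S_\xi^{\mathrm{op}}$ directly from the cocycle/Brauer-group description rather than quoting Lemma~\ref{Lem:SxiOp}. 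Two small points to tighten if you pursue the self-contained version: (i) when the splitting field of the degree-$3$ point has Galois group $\Sym_3$ you must also check compatibility with the order-$2$ element $A_h$ of Lemma~\ref{lem:auto}, which works because the standard Cremona involution commutes with all coordinate permutations; (ii) the $6$-link case is genuinely less immediate --- the assertion that the degree-$5$ transformation contracting the six conics inverts the Brauer class is not a one-line consequence of the quadratic case, and this is precisely the content of Weinstein's Lemma~4, so the citation is doing real work there.
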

\begin{proof}
	This is a consequence of Lemma~\ref{lem:ClassificationRankr} and \cite[Lemma 1, Lemma 4]{Weinstein22}.
\end{proof}

As a consequence of Lemmas~\ref{lem:ClassificationRankr} and \ref{LinkSop}, the $3$-links and $6$-links, together with $\Aut(S)$ and $\Aut(S^{\rm op})$, generate $\Bir(S)$:

\begin{thm}[{\cite[Theorem 3,1]{Weinstein22}, \cite[Theorem 2.10]{Shramov1}}]\label{thm: Weinstein}
	Let $S$ be a non-trivial Severi-Brauer surface over $K$ and
	$S'$ be a del Pezzo surface over $K$ with $\rk\Pic(S')=1$. Assume there is a birational map $\varphi: S\dashrightarrow S'$. Then either $S\simeq S'$ or $S'\simeq S^{\rm op}$. Moreover,
	\[
	\varphi=\varphi_1\circ\cdots\circ\varphi_n,
	\]
	where each $\varphi_i$ is either an isomorphism, or a $d$-link $S\dasharrow S^{\rm op}$ or $S^{\rm op}\dasharrow S$ with $d\in\{3,6\}$.
\end{thm}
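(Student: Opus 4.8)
The plan is to combine the two-dimensional Sarkisov program (Theorem~\ref{p:Sarkisov}) with the structural results already established, namely Lemma~\ref{lem:ClassificationRankr} (which classifies the rank~$r$ fibrations dominating $S/\pt$) and Lemma~\ref{LinkSop} (every Sarkisov link from $S$ is a $3$-link or a $6$-link, landing on $S^{\mathrm{op}}$, and every Mori fibre space birational to $S$ is isomorphic to $S$ or to $S^{\mathrm{op}}$). The first assertion is then immediate: a del Pezzo surface $S'$ with $\rk\Pic(S')=1$ is a Mori fibre space over a point, and it is birational to $S$ by hypothesis, so $S'\simeq S$ or $S'\simeq S^{\mathrm{op}}$ by Lemma~\ref{LinkSop}(2); in particular $\varphi$ is a morphism in the groupoid $\BirMori(S)$.

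Before decomposing $\varphi$, I would record two auxiliary facts. First, $S^{\mathrm{op}}$ is again a non-trivial Severi-Brauer surface, since it corresponds to the opposite algebra $A^{\mathrm{op}}$, which is the inverse of $A$ in $\Br(K)$ and hence still non-trivial; moreover $(S^{\mathrm{op}})^{\mathrm{op}}\simeq S$, so Lemmas~\ref{lem:ClassificationRankr} and~\ref{LinkSop} apply verbatim to $S^{\mathrm{op}}$ as well. Second, $S\not\simeq S^{\mathrm{op}}$ by Remark~\ref{rem: opposite SB}.

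Now apply Theorem~\ref{p:Sarkisov} to write $\varphi=\varphi_n\circ\cdots\circ\varphi_1$ as a composition of Sarkisov links and isomorphisms of Mori fibre spaces, passing through intermediate Mori fibre spaces $S=M_0,M_1,\dots,M_n=S'$. By Lemma~\ref{LinkSop}(2) each $M_i$ admits an isomorphism $\theta_i\colon M_i\iso N_i$ with $N_i\in\{S,S^{\mathrm{op}}\}$; take $\theta_0=\mathrm{id}_S$. Putting $\widetilde\varphi_i=\theta_i\circ\varphi_i\circ\theta_{i-1}^{-1}\colon N_{i-1}\dashrightarrow N_i$, one has $\varphi=\theta_n^{-1}\circ\widetilde\varphi_n\circ\cdots\circ\widetilde\varphi_1$. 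If $\varphi_i$ is an isomorphism of Mori fibre spaces, so is $\widetilde\varphi_i$, which forces $N_{i-1}=N_i$ (as $S\not\simeq S^{\mathrm{op}}$), so $\widetilde\varphi_i$ is an isomorphism. If $\varphi_i$ is a Sarkisov link, then $\widetilde\varphi_i$ is a Sarkisov link from the non-trivial Severi-Brauer surface $N_{i-1}$, so by Lemma~\ref{LinkSop}(1) it is a $d$-link with $d\in\{3,6\}$ and $N_i\simeq N_{i-1}^{\mathrm{op}}$, hence $\{N_{i-1},N_i\}=\{S,S^{\mathrm{op}}\}$ and $\widetilde\varphi_i$ is a $d$-link $S\dashrightarrow S^{\mathrm{op}}$ or $S^{\mathrm{op}}\dashrightarrow S$. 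Together with the isomorphism $\theta_n^{-1}$, this exhibits $\varphi$ in the required form.

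I do not expect a genuine obstacle here: the geometric heart of the statement — that only $3$-links and $6$-links occur and that they interchange $S$ and $S^{\mathrm{op}}$ — is precisely the content of Lemmas~\ref{lem:ClassificationRankr} and~\ref{LinkSop}, and the Sarkisov program furnishes the abstract factorisation. The only points needing care are the bookkeeping of the auxiliary isomorphisms $\theta_i$ and the observation, used at each step, that every intermediate Mori fibre space — in particular $S^{\mathrm{op}}$ — is itself a non-trivial Severi-Brauer surface, so that the earlier lemmas may be re-applied along the whole chain.
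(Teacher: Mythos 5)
Your argument is correct and follows exactly the route the paper intends: the theorem is quoted from Weinstein and Shramov, and the paper presents it precisely as the consequence of Lemma~\ref{lem:ClassificationRankr} and Lemma~\ref{LinkSop} combined with the two-dimensional Sarkisov program (Theorem~\ref{p:Sarkisov}), which is what you carry out, with the bookkeeping of the isomorphisms $\theta_i$ and the observation that $S^{\mathrm{op}}$ is again a non-trivial Severi-Brauer surface correctly handled.
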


\begin{lem}\label{Lem:SxiOp}
Let $K$ be a perfect field, $L/K$ be a Galois extension of degree $3$ and let $g$ be a generator of the group $\ \Gal(L/K)$. For each $\xi\in K^*$, we denote by $S_\xi$ the Severi-Brauer surface over $K$ given by Lemma~$\ref{lem:SBdegree3}$. Then, we have $S_\xi^{\rm op}\simeq S_{\xi^{-1}}$, for each $\xi \in K^*$.
\end{lem}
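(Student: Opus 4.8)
The plan is to translate the claim into an isomorphism of central simple $K$-algebras and then exhibit that isomorphism explicitly. By the equivalence of categories recalled in Section~\ref{sec: first definitions} between non-trivial Severi--Brauer surfaces over $K$ and central simple $K$-algebras of degree $3$, the surface $S_\xi$ corresponds to the cyclic algebra $A_\xi=(L/K,g,\xi)$ (this is exactly the cocycle $\nu_\xi=\left(\begin{smallmatrix}0&0&\xi\\1&0&0\\0&1&0\end{smallmatrix}\right)$ appearing both in Section~\ref{sec: first definitions} and in Lemma~\ref{lem:SBdegree3}\ref{SB31}), and $S_\xi^{\rm op}$ corresponds, by definition, to the opposite algebra $A_\xi^{\rm op}$. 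Since $S_{\xi^{-1}}$ corresponds to $(L/K,g,\xi^{-1})$, it suffices to produce a $K$-algebra isomorphism $A_\xi^{\rm op}\iso (L/K,g,\xi^{-1})$.

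\textbf{The isomorphism.} Recall that $A_\xi$ has $L$-basis $1,e,e^2$ with $e^{3}=\xi\in K^{*}$ and the commutation rule $\lambda e=e\,g(\lambda)$ for all $\lambda\in L$; multiplying this identity on the left and on the right by $e^{-1}$ shows it is equivalent to $e^{-1}\lambda=g(\lambda)\,e^{-1}$. Now write $*$ for the multiplication of $A_\xi^{\rm op}$, so that $a*b=ba$ in $A_\xi$, and set $f:=e^{-1}$. Then $f^{*3}=e^{-3}=\xi^{-1}\in K^{*}$; the elements $1,f,f^{*2}$ span $A_\xi^{\rm op}$ as an $L$-vector space because $e^{-1}=\xi^{-1}e^{2}$ and $e^{-2}=\xi^{-1}e$; and for every $\lambda\in L$ one computes
\[
\lambda*f=f\cdot\lambda=e^{-1}\lambda=g(\lambda)\,e^{-1}=g(\lambda)\cdot f=f*g(\lambda).
\]
These are precisely the defining relations of the cyclic algebra $(L/K,g,\xi^{-1})$, so $\lambda\mapsto\lambda$, $f\mapsto f$ extends to a $K$-algebra isomorphism $(L/K,g,\xi^{-1})\iso A_\xi^{\rm op}$; passing back through the equivalence of categories gives $S_\xi^{\rm op}\simeq S_{\xi^{-1}}$.

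\textbf{Alternative, and the main point to be careful about.} One can equivalently run this at the level of cocycles: the transpose map is an $L$-algebra anti-isomorphism $\Mat_3\iso\Mat_3$, hence conjugates the inner automorphism $\mathrm{Int}(M)$ to $\mathrm{Int}\big((M^{\mathsf T})^{-1}\big)$, so the cocycle in $\Cohom^{1}(\Gal(L/K),\PGL_3(L))$ attached to $A_\xi^{\rm op}$ is $g\mapsto(\nu_\xi^{\mathsf T})^{-1}$; a direct computation gives $(\nu_\xi^{\mathsf T})^{-1}=\nu_{\xi^{-1}}$, and the uniqueness in Lemma~\ref{lem:SBdegree3}\ref{SB31} finishes the argument. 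There is no genuine obstacle here beyond bookkeeping: one must make sure that the relation one lands on is the one defining $(L/K,g,\xi^{-1})$ and not $(L/K,g^{-1},\xi^{-1})$ (a generally non-isomorphic algebra, since it represents $+[A_\xi]$ rather than $-[A_\xi]$) — which is why it matters that the computation above yields $f*g(\lambda)$ and not $f*g^{-1}(\lambda)$. Finally, this is consistent with Remark~\ref{rem: opposite SB}: $S_\xi^{\rm op}\simeq S_\xi$ would force $\xi^{2}\in N_{L/K}(L^{*})$ by Lemma~\ref{lem:SBdegree3}\ref{SB34}, i.e.\ $2[A_\xi]=0$ in $\Br(K)$, which fails for a non-trivial $S_\xi$.
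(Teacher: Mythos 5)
Your proof is correct, but it takes a genuinely different route from the paper's. The paper argues geometrically: it fixes the isomorphisms $\varphi_\xi$ and $\varphi_{\xi^{-1}}$ of Lemma~\ref{lem:SBdegree3}, checks that the standard quadratic involution $\sigma\colon[x:y:z]\mapsto[yz:xz:xy]$ intertwines the two twisted Galois actions, so that $(\varphi_{\xi^{-1}})^{-1}\circ\sigma\circ\varphi_\xi\colon S_\xi\dasharrow S_{\xi^{-1}}$ is defined over $K$ and is a $3$-link, and then invokes Lemma~\ref{LinkSop} (resting on Weinstein's results) to conclude that the target of a $3$-link is $S_\xi^{\rm op}$. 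You instead work entirely on the algebra side, showing $(L/K,g,\xi)^{\rm op}\simeq(L/K,g,\xi^{-1})$ via $e\mapsto e^{-1}$ (equivalently, via the cocycle computation $(\nu_\xi^{\mathsf T})^{-1}=\nu_{\xi^{-1}}$). Your computation is right, and you correctly isolate the one place where a sign error would be fatal — landing on $(L/K,g^{-1},\xi^{-1})\simeq(L/K,g,\xi)$ instead of $(L/K,g,\xi^{-1})$. What each approach buys: yours is self-contained, does not depend on the classification of Sarkisov links between Severi--Brauer surfaces, and covers the degenerate case $\xi\in N_{L/K}(L^*)$ (where both sides are $\PP^2$) without a separate argument, whereas Lemma~\ref{LinkSop} is stated only for non-trivial surfaces; the paper's argument, on the other hand, produces an explicit birational map $S_\xi\dasharrow S_{\xi^{-1}}$ that is reused later (e.g.\ in Proposition~\ref{prop: birational cubic model} and Example~\ref{Example:LinkEquiInverse}), which is presumably why the authors chose it.
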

\begin{proof}We fix some $\xi\in K^*$. For each $\theta\in \{\xi,\frac{1}{\xi}\}$, we take an isomorphism  $\varphi_\theta\colon (S_\theta)_L\iso \mathbb{P}^2_L$ such that $\varphi_\theta \circ g\circ \varphi_\theta^{-1}=[x:y:z]\mapsto [\theta g(z) :g(x):g(y)].$ We consider the birational involution $\sigma\in \Bir_L(\mathbb{P}^2)$ given by $[x:y:z]\mapsto [yz:xz:xy]$. It has three base-points, which are the coordinates points, and satisfies
\[
\sigma \circ (\varphi_\xi \circ g\circ (\varphi_\xi)^{-1})=(\varphi_{\xi^{-1}} \circ g\circ (\varphi_{\xi^{-1}})^{-1}) \circ \sigma.
\]
Composing at the left with $(\varphi_{\xi^{-1}})^{-1}$ and at the right with $\varphi_\xi$, we find that the birational map $(\varphi_{\xi^{-1}})^{-1}\circ \sigma \circ \varphi_{\xi}\colon S_\xi\dasharrow S_{\xi^{-1}}$ commutes with $g$ and is therefore defined over $K$. It is a $3$-link (see Definition~\ref{Def36links}), and thus goes from a Severi-Brauer surface to its opposite (Lemma~\ref{LinkSop}), i.e.~$(S_\xi)^{\rm op}\simeq S_{\xi^{-1}}$.
\end{proof}

\subsection{Elementary relations between Sarkisov $d$-links}
We will consider the following equivalence relation between Sarkisov links:
\begin{mydef}\label{def:equiLinkDim2}
	Let $\chi\colon X_1\dashrightarrow X_2$, $\chi'\colon X_1'\dashrightarrow X_2'$ be two Sarkisov links between del Pezzo surfaces of Picard rank $1$ over a perfect field $K$. We say that $\chi, \chi'$ are \emph{equivalent} if there is a commutative diagram
	\begin{equation*}
		\xymatrix@R=6pt@C=20pt{
			X_1\ar[d]_{\alpha}\ar@{-->}[rr]^{\chi} && X_2\ar[d]^{\beta}\\
			X_1'\ar@{-->}[rr]^{\chi'} && X_2'
		}
	\end{equation*}
	for some isomorphisms $\alpha\colon X_1\iso X_1',\beta\colon X_2\iso X_2'$.
\end{mydef}

The following result is a simple observation. We leave the proof to the reader.

\begin{lem}\label{lem:equivalencLinksSurfaceBasePt}
Let $\chi\colon X_1\dashrightarrow X_2$, $\chi'\colon X_1'\dashrightarrow X_2'$ be two Sarkisov links between del Pezzo surfaces of Picard rank $1$ over a perfect field $K$. The following conditions are equivalent:
\begin{enumerate}
\item
$\chi$ and $\chi'$ are equivalent;
\item
there exists an isomorphism $\alpha\colon X_1\iso X_1'$ that sends the base-point of $\chi$ onto the base-point of $\chi'$;
\item
there exists an isomorphism $\beta\colon X_2\iso X_2'$ that sends the base-point of $\chi^{-1}$ onto the base-point of $\chi'^{-1}$.
\end{enumerate}
\end{lem}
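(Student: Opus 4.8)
The plan is to prove the two implications $(1)\Rightarrow(2)$ and $(2)\Rightarrow(1)$, and then to deduce $(1)\Leftrightarrow(3)$ by a symmetry argument requiring no further work: $\chi$ is equivalent to $\chi'$ if and only if $\chi^{-1}$ is equivalent to $\chi'^{-1}$ (transpose the defining commutative square), and condition $(2)$ applied to the pair of Sarkisov links $(\chi^{-1},\chi'^{-1})$ is, word for word, condition $(3)$ for the pair $(\chi,\chi')$; so the equivalence $(1)\Leftrightarrow(2)$, once established for every pair of such links, at once yields $(1)\Leftrightarrow(3)$. The implication $(1)\Rightarrow(2)$ costs nothing: if $\chi'=\beta\circ\chi\circ\alpha^{-1}$ with isomorphisms $\alpha\colon X_1\iso X_1'$ and $\beta\colon X_2\iso X_2'$, then post-composing with the isomorphism $\beta$ leaves the base locus of the map unchanged and pre-composing with $\alpha^{-1}$ transports it through $\alpha$, so the base-point of $\chi'$ is the image under $\alpha$ of the base-point of $\chi$.

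The content lies in $(2)\Rightarrow(1)$. Since $X_1$ and $X_2$ are del Pezzo surfaces of Picard rank $1$, the link $\chi$ is of type \II\ over a point, so it arises from a rank~$2$ fibration $Z/\pt$ dominating both $X_1/\pt$ and $X_2/\pt$; write $\pi_1\colon Z\to X_1$ and $\pi_2\colon Z\to X_2$ for the two divisorial contractions, so that $\pi_1$ is the blow-up of the base-point $p$ of $\chi$, $\pi_2$ is the blow-up of the base-point of $\chi^{-1}$, and $\chi=\pi_2\circ\pi_1^{-1}$; likewise for $\chi'$, with $\pi_1'\colon Z'\to X_1'$ the blow-up of the base-point $p'$ of $\chi'$ and $\pi_2'\colon Z'\to X_2'$. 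Given an isomorphism $\alpha\colon X_1\iso X_1'$ with $\alpha(p)=p'$, I would first lift it, by functoriality of blow-ups, to the unique isomorphism $\tilde\alpha\colon Z\iso Z'$ with $\pi_1'\circ\tilde\alpha=\alpha\circ\pi_1$. Then I would invoke the two-rays game (as recalled in Section~\ref{subsec: Sarkisov}, following \cite[Lemma~3.7]{BLZ}): the cone of curves $\NE(Z)$ has exactly two extremal rays, whose contractions are $\pi_1$ and $\pi_2$, and similarly for $\NE(Z')$; since $\tilde\alpha$ carries $\NE(Z)$ onto $\NE(Z')$ and already matches the ray of $\pi_1$ with that of $\pi_1'$, it must match the ray of $\pi_2$ with that of $\pi_2'$. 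Hence $\pi_2'\circ\tilde\alpha$ contracts exactly the curves contracted by $\pi_2$, and by the universal property of the contraction of an extremal ray there is a unique isomorphism $\beta\colon X_2\iso X_2'$ with $\pi_2'\circ\tilde\alpha=\beta\circ\pi_2$. It then remains to check that $\beta\circ\chi\circ\alpha^{-1}=\chi'$, which is a one-line diagram chase: from $\pi_1'\circ\tilde\alpha=\alpha\circ\pi_1$ one gets $\pi_1^{-1}\circ\alpha^{-1}=\tilde\alpha^{-1}\circ(\pi_1')^{-1}$ as birational maps, whence
\[
\beta\circ\chi\circ\alpha^{-1}=\beta\circ\pi_2\circ\pi_1^{-1}\circ\alpha^{-1}=\pi_2'\circ\tilde\alpha\circ\tilde\alpha^{-1}\circ(\pi_1')^{-1}=\pi_2'\circ(\pi_1')^{-1}=\chi'.
\]

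The step I expect to be the crux is the passage from the statement that $\tilde\alpha$ intertwines $\pi_1$ with $\pi_1'$ to the statement that it intertwines $\pi_2$ with $\pi_2'$. This rests on the fact, supplied by the two-rays game, that a rank~$2$ del Pezzo fibration over a point carries exactly two extremal contractions, intrinsically attached to the surface and each determined up to a unique isomorphism, so an isomorphism $Z\iso Z'$ respecting one of the two pairs has no choice but to respect the other. Everything else --- functoriality of blow-ups, the diagram chase above, and the transpose symmetry upgrading $(1)\Leftrightarrow(2)$ to $(1)\Leftrightarrow(3)$ --- is purely formal.
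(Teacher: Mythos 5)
Your proof is correct, and since the paper explicitly leaves this lemma to the reader as "a simple observation," your write-up supplies exactly the intended argument: the only non-formal point is the two-rays game forcing $\tilde\alpha$ to intertwine the second pair of extremal contractions once it intertwines the first, and you identify and justify that correctly. The reduction of $(1)\Leftrightarrow(3)$ to $(1)\Leftrightarrow(2)$ by passing to inverses is also exactly right.
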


\begin{rem}\label{rem: link is not equivalent to inverse}
	Lemma~\ref{lem:equivalencLinksSurfaceBasePt} and Remark~\ref{rem: opposite SB} imply that a $d$-link $\chi$ between Severi-Brauer surfaces is never equivalent to its inverse $\chi^{-1}$. This observation will be used in the proof of Theorem \ref{theoremSB}.
\end{rem}
\begin{lem}\label{lem:OnlyOnePiece}
	Let  $X$ be rank $3$ fibration birational to a non-trivial Severi-Brauer surface defined over a perfect field $K$. Then $X$ is a del Pezzo surface of degree $3$ and the piece associated to $X$ is a hexagon as in Figure~$\ref{fig:relation}$, associated to a relation that is equal, modulo trivial relations, to
	\[\chi_6\circ \chi_5\circ\chi_4\circ\chi_3\circ\chi_2\circ\chi_1=\id_S,\]
	where $\chi_{2i-1}\colon S\rat S^{\rm op}$ and $\chi_{2i}\colon S^{\rm op}\rat S$ are $3$-links, for $i\in \{1,2,3\}$, and $S$, $S^{\rm op}$ are the two isomorphism classes of non-trivial Severi-Brauer surfaces birational to $X$. Moreover, $\chi_1,\chi_3,\chi_5$ are equivalent and $\chi_2,\chi_4,\chi_6$ are equivalent.
\end{lem}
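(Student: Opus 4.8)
The plan is to base-change to an algebraic closure $\overline K$ of $K$, to determine the combinatorial piece of $X$ by a direct count of $(-1)$-curves on the cubic surface $X_{\overline K}$, and then to read off the relation and the equivalences of links from the resulting hexagon. First, $X$ dominates some rank $1$ fibration, which by Lemma~\ref{LinkSop} is a non-trivial Severi-Brauer surface; calling it $S$ (with opposite $S^{\mathrm{op}}$, and $S\not\simeq S^{\mathrm{op}}$ by Remark~\ref{rem: opposite SB}), Lemma~\ref{lem:ClassificationRankr}\ref{rank3} shows that $X$ is a del Pezzo surface of degree $3$ with $B=\pt$, obtained from a double blow-up $X=T_3\to T_6\to S$ of two degree-$3$ points; write $p$ for the first one (a point of $S$) and $q$ for the second (a point of $T_6$). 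Fix $\overline K$ together with an isomorphism $X_{\overline K}\simeq\Bl_{P_1,\dots,P_6}\PP^2_{\overline K}$ for which $\{P_1,P_2,P_3\}$ are the components of $p_{\overline K}$ and $\{P_4,P_5,P_6\}$ those of the image of $q_{\overline K}$; the six points are in general position since $X$ is del Pezzo of degree $3$. As both blow-ups $X\to T_6$ and $T_6\to S$ are defined over $K$, the group $\Gamma=\Gal(\overline K/K)$ permutes $\{E_1,\dots,E_6\}$ preserving the partition $\{E_1,E_2,E_3\}\sqcup\{E_4,E_5,E_6\}$ and acting transitively on each part; in particular some $g\in\Gamma$ acts as the permutation $(1\,2\,3)(4\,5\,6)$ of indices.

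Next I would enumerate the faces of the piece, i.e.\ the fibrations dominated by $X$. By Lemma~\ref{LinkSop}, a rank $1$ fibration dominated by $X$ is a non-trivial Severi-Brauer surface ($\simeq S$ or $\simeq S^{\mathrm{op}}$), hence over $\overline K$ the contraction of a $\Gamma$-invariant set of six pairwise disjoint $(-1)$-curves of $X_{\overline K}$; and by Lemma~\ref{lem:ClassificationRankr}\ref{rank2} a rank $2$ fibration dominated by $X$ is a del Pezzo surface of degree $6$, hence the contraction of a $\Gamma$-invariant set of three pairwise disjoint $(-1)$-curves, which is a single $\Gamma$-orbit since $X_{\overline K}$ has no $(-1)$-curve fixed by $\Gamma$ (as $X$ is not birational to a surface with a $K$-point, by Lang--Nishimura). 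Using the classical description of the $27$ lines of $X_{\overline K}$ — the six $E_i$, the fifteen strict transforms $L_{ij}$ of the lines $\overline{P_iP_j}$, and the six strict transforms $C_i$ of the conics through $\{P_1,\dots,P_6\}\setminus\{P_i\}$ — and their intersection numbers, one checks that the only $g$-invariant sets of three pairwise disjoint $(-1)$-curves are
\[
\begin{array}{lll}
\mathcal A_1=\{E_1,E_2,E_3\}, & \mathcal A_2=\{E_4,E_5,E_6\}, & \mathcal A_3=\{L_{12},L_{23},L_{13}\},\\[2pt]
\mathcal A_4=\{L_{45},L_{56},L_{46}\}, & \mathcal A_5=\{C_1,C_2,C_3\}, & \mathcal A_6=\{C_4,C_5,C_6\}.
\end{array}
\]
Each $\mathcal A_i$ is in fact $\Gamma$-invariant, as it admits a description intrinsic over $K$ (exceptional locus of $X\to T_6$ or of $T_3\to T_6$; lines through the degree-$3$ point $p$, resp.\ $q$; conics omitting one component of $p_{\overline K}$, resp.\ $q_{\overline K}$), so these six are exactly the rank $2$ fibrations dominated by $X$. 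A further short computation shows that $\mathcal A_i\cup\mathcal A_j$ consists of six pairwise disjoint curves precisely for the pairs $\{1,2\},\{2,3\},\{3,6\},\{6,5\},\{5,4\},\{4,1\}$; these form a single $6$-cycle, so the rank $1$ fibrations dominated by $X$ are the six vertices of a cycle whose edges are $\mathcal A_1,\dots,\mathcal A_6$. Since the piece is a $2$-dimensional disk by \cite[Proposition~2.6]{LamyZimmermann}, it is the hexagon of Figure~\ref{fig:relation}; in particular $X$ is a del Pezzo surface of degree $3$.

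To finish, each edge $\mathcal A_i$ is a del Pezzo surface of degree $6$ dominating its two neighbouring rank $1$ fibrations by blow-ups of points of degree $9-6=3$, so the associated Sarkisov link is a $3$-link, which by Lemma~\ref{LinkSop} goes between $S$ and $S^{\mathrm{op}}$; hence the six vertices alternate between copies of $S$ and of $S^{\mathrm{op}}$. Labelling the links $\chi_1,\dots,\chi_6$ cyclically from a vertex $\simeq S$ yields $\chi_{2i-1}\colon S\rat S^{\mathrm{op}}$ and $\chi_{2i}\colon S^{\mathrm{op}}\rat S$ for $i\in\{1,2,3\}$, and the elementary relation carried by the piece reads, modulo trivial relations, $\chi_6\circ\chi_5\circ\chi_4\circ\chi_3\circ\chi_2\circ\chi_1=\id_S$ (the boundary loop is an automorphism of $S$, absorbed into one of the links by a trivial relation). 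Tracing the identifications at the vertices, the base-point of $\chi_i$ is the image of one of the triples $\mathcal A_j$: those of $\chi_1,\chi_3,\chi_5$ are images of triples on which $\Gamma$ acts through its action on $\{P_1,P_2,P_3\}$, hence degree-$3$ points sharing the splitting field of $p$, while those of $\chi_2,\chi_4,\chi_6$ are degree-$3$ points of $S^{\mathrm{op}}$ sharing the splitting field of $q$. By Lemma~\ref{lem:auto}\ref{autopts} the degree-$3$ points of a non-trivial Severi-Brauer surface with a fixed splitting field form a single $\Aut_K$-orbit, so by Lemma~\ref{lem:equivalencLinksSurfaceBasePt} the links $\chi_1,\chi_3,\chi_5$ are pairwise equivalent, and so are $\chi_2,\chi_4,\chi_6$.

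I expect the main difficulty to be the two $(-1)$-curve computations on the $27$ lines — that there are exactly six $\Gamma$-invariant triples of pairwise disjoint $(-1)$-curves, with ``compatibility'' forming a single hexagon — together with the bookkeeping needed to match, through the isomorphism $V\simeq S$ or $V\simeq S^{\mathrm{op}}$ at each vertex $V$, the base-point of each $\chi_i$ with the correct triple $\mathcal A_j$ so as to determine its splitting field; one must also verify that no $\mathcal A_i$ disappears when $\Gamma$ is strictly larger than $\langle g\rangle$, which is exactly why the intrinsic descriptions of the $\mathcal A_i$ are invoked.
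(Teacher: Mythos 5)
Your proof is correct and follows essentially the same route as the paper's: identify $X$ as the blow-up of two degree-$3$ points of $S$ via Lemma~\ref{lem:ClassificationRankr}, enumerate the Galois-invariant triples of pairwise disjoint lines among the $27$ lines of $X_{\overline K}$ to obtain the hexagon and the value $n=6$, and deduce the equivalences of the alternate links from the splitting fields of their base-points together with Lemma~\ref{lem:auto}\ref{autopts} and Lemma~\ref{lem:equivalencLinksSurfaceBasePt}. The only differences are presentational — you enumerate the faces of the piece directly (using that it is a disk), whereas the paper first invokes Theorem~\ref{thm: Weinstein} to constrain the shape of the relation — and the one assertion you leave implicit, namely that $\Gal(\overline K/K)$ contains an element acting simultaneously as a $3$-cycle on both triples of blown-up points, is true but deserves the short group-theoretic verification.
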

\begin{proof}
As $X$ is a rank $3$ fibration, it corresponds to a composition of Sarkisov links $\chi_n\circ \cdots \chi_1=\mathrm{id}_{S}$, where $S$ is a Mori fibre space birational to $S$. By Theorem~\ref{thm: Weinstein}, each of the Mori fibre spaces arising in this composition is isomorphic to $S$ or $S^{\rm op}$, and each of the Sarkisov links is a $3$-link or a $6$-link. Applying isomorphisms, which corresponds to using trivial relations, we may assume that each Mori fibre space is equal to $S$ or to $S^{\rm op}$. By Theorem~\ref{thm: Weinstein}, $n$ is even and each link $\chi_i$ goes from $S$ to $S^{\rm op}$ when $i$ is odd and from $S^{\rm op}$ to $S$ when $i$ is even.
By Lemma~\ref{lem:ClassificationRankr}\ref{rank3}, the surface $X$ is a del Pezzo surface of degree $3$, obtained as the blow-up of two points of degree $3$ on $S$. Denote them by $p$ and $p'$. We use, as in the definition of the links, an isomorphism $S_{\overline{K}}\iso \mathbb{P}^2_{\overline{K}}$ that sends $p$ onto three points $p_1,p_2,p_3\in \mathbb{P}^2(\overline{K})$ and $p'$ onto $p_1',p_2',p_3'\in \mathbb{P}^2(\overline{K})$.

There are exactly $27$ $\overline{K}$-lines on the cubic surface $X_{\overline{K}}$, corresponding to $27$ $(-1)$-curves defined over $\overline{K}$. We may describe them using the birational morphism $X_{\overline{K}}\to \mathbb{P}_{\overline{K}}^2$, that is the blow-up of $p_1,p_2,p_3,p_1',p_2',p_3'$. These $27$ lines contain $6$ orbits of three disjoint lines. We denote by $E_i$ and $E_i'$ the exceptional curves contracted to $p_i$ and $p_i'$ respectively, for $i\in \{1,2,3\}$, denote by $l_i$ (resp.~$l_i'$) the proper transform of the lines through $p_j,p_k$ (resp.~$p_j',p_k'$), where $\{i,j,k\}=\{1,2,3\}$. We then denote by $C_i$ (resp.~$C_i'$) the proper transform of the conic passing through all points except $p_i$ (resp.~$p_i'$). The six disjoint triples are thus given by $E_i,E_i',l_i,l_i',C_i,C_i'$. The nine other lines on $X_{\overline{K}}$ are given by lines through one $p_i$ and one $p_j'$ but no $3$ are disjoint and Galois invariant.

Each of the six disjoint triples is disjoint from exactly two other triples. This gives then $6$ birational morphisms from $X$. We obtain $n=6$ and get the diagram of Figure~\ref{fig:relation}. Each of the set of $3$ disjoint lines appears three times in the diagram. Moreover, the splitting field of a $3$-link and its inverse are the same (Lemma~\ref{lem:splittingfieldinverse}). Hence, the splitting field of the warm links (contractions of $E_i,l_i,C_i$) is the same and the splitting field of the cold links (contractions of $E_i',l_i',C_i'$). This, together with Lemma~\ref{lem:auto} and Lemma~\ref{lem:equivalencLinksSurfaceBasePt} implies that $\chi_1,\chi_3,\chi_5$ are equivalent and $\chi_2,\chi_4,\chi_6$ are equivalent.
\end{proof}
\begin{figure}[ht]
	\includegraphics[width=7cm]{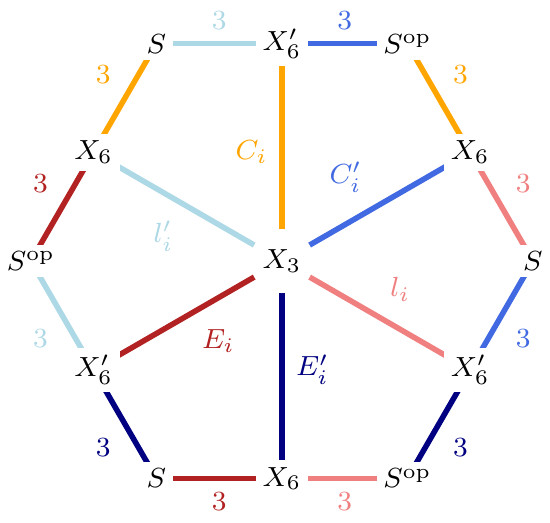}
	\caption{Relation of Lemma~\ref{lem:OnlyOnePiece}. The centre is a del Pezzo surface $X_3$ of degree $3$ of Picard rank $3$. Each segment denotes the blow-up of a point of degree~$3$.}
	\label{fig:relation}
\end{figure}

\subsection{Proof of Theorem \ref{theoremSB}}

Before proving our first main Theorem \ref{theoremSB}, let us provide a slightly more general statement (that will be also used in higher dimensions later).

\begin{thm}\label{thm:SBgroupoidhomo}
	Let $S$ be a non-trivial Severi-Brauer surface over a perfect field $K$. For $d\in \{3,6\}$, denote by $\mathcal{E}_d$ a set of representatives of the equivalence classes of $d$-links $S\dashrightarrow S^{\rm{op}}$.
	Then there is a groupoid homomorphism
	\[
	\Psi\colon \BirMori_K(S)\to \bigoplus\limits_{\mathcal{E}_3} \ZZ/3\ZZ \ast \left ( \bigast_{\mathcal{E}_6} \mathbb{Z} \right)
	\]
	that is defined as follows:
	We send each isomorphism between Mori fibre spaces onto the trivial word. Each $d$-link $\chi\colon S\dashrightarrow S^{\rm op}$ is sent onto the generator $1_{\chi'}$ of the copy of $\ZZ/3\ZZ$ or $\ZZ$, where $\chi'\in\mathcal{E}_d$ is equivalent to $\chi$, and each link $\chi\colon S^\text{op}\dashrightarrow S$ is sent onto $-1_{\chi'}$ where $\chi'\in\mathcal{E}_d$ is equivalent to $\chi^{-1}\colon S\dashrightarrow S^\text{op}$.
\end{thm}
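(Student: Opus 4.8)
The plan is to construct $\Psi$ from the presentation of the groupoid $\BirMori_K(S)$ provided by the Sarkisov program. By Theorem~\ref{p:Sarkisov}, $\BirMori_K(S)$ is generated by Sarkisov links and isomorphisms of Mori fibre spaces, and every relation among these generators is a consequence of trivial relations and elementary relations; hence it suffices to prescribe $\Psi$ on generators and to check that every trivial and every elementary relation is sent to the identity. To set up the bookkeeping, note that by Lemma~\ref{LinkSop} every Mori fibre space birational to $S$ is isomorphic to $S$ or to $S^{\mathrm{op}}$, and by Remark~\ref{rem: opposite SB} these two surfaces are not isomorphic, so every object of $\BirMori_K(S)$ has a well-defined \emph{type} ($S$ or $S^{\mathrm{op}}$); moreover, by Lemma~\ref{LinkSop} again, every Sarkisov link is a $d$-link with $d\in\{3,6\}$ and it exchanges the two types. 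I then let $\Psi$ send every object to the unique object of the target group, send every isomorphism of Mori fibre spaces to the trivial word, send a Sarkisov link $\chi\colon X_1\dashrightarrow X_2$ with $X_1$ of type $S$ (hence $X_2$ of type $S^{\mathrm{op}}$) to the generator $1_{\chi'}$, where $\chi'\in\mathcal{E}_d$ is the representative equivalent to $\psi_2\circ\chi\circ\psi_1^{-1}\colon S\dashrightarrow S^{\mathrm{op}}$ for a choice of isomorphisms $\psi_1\colon X_1\iso S$ and $\psi_2\colon X_2\iso S^{\mathrm{op}}$, and send a link $\chi$ from type $S^{\mathrm{op}}$ to type $S$ to $1_{\chi'}^{-1}$, where $\chi'\in\mathcal{E}_d$ is the representative equivalent to the $d$-link $S\dashrightarrow S^{\mathrm{op}}$ obtained from $\chi^{-1}$ in the same way.

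The first point to verify is that $\Psi(\chi)$ is independent of the choices of $\psi_1,\psi_2$: for two other choices $\psi_1',\psi_2'$, the isomorphisms $\psi_1'\circ\psi_1^{-1}\colon S\iso S$ and $\psi_2'\circ\psi_2^{-1}\colon S^{\mathrm{op}}\iso S^{\mathrm{op}}$ fit into the commuting square of Definition~\ref{def:equiLinkDim2}, so $\psi_2\circ\chi\circ\psi_1^{-1}$ and $\psi_2'\circ\chi\circ(\psi_1')^{-1}$ are equivalent links and determine the same element of $\mathcal{E}_d$; when $X_1=S$ and $X_2=S^{\mathrm{op}}$ one may take $\psi_1=\psi_2=\id$, which recovers the rule in the statement. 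Next I treat the trivial relations. For $\chi\circ\chi^{-1}=\id$ and $\chi^{-1}\circ\chi=\id$: the links $\chi$ and $\chi^{-1}$ are computed from one and the same $d$-link $S\dashrightarrow S^{\mathrm{op}}$ and receive mutually inverse values, so both relations go to the identity. For $\alpha'\circ\chi\circ\alpha=\chi'$ with $\alpha,\alpha'$ isomorphisms of Mori fibre spaces: the relation itself exhibits $\chi$ and $\chi'$ — or, matching the types, their inverses — as equivalent links in the sense of Definition~\ref{def:equiLinkDim2}, so, since $\Psi$ is trivial on isomorphisms, $\Psi(\chi')=\Psi(\chi)=\Psi(\alpha'\circ\chi\circ\alpha)$.

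The substantive step is the elementary relations. By Lemma~\ref{lem:OnlyOnePiece}, every rank~$3$ fibration birational to $S$ is a cubic del Pezzo surface whose associated elementary relation is, modulo trivial relations (already dealt with), of the form $\chi_6\circ\chi_5\circ\chi_4\circ\chi_3\circ\chi_2\circ\chi_1=\id_S$, in which \emph{all six} maps $\chi_i$ are $3$-links, $\chi_1,\chi_3,\chi_5\colon S\dashrightarrow S^{\mathrm{op}}$ are pairwise equivalent, and $\chi_2,\chi_4,\chi_6\colon S^{\mathrm{op}}\dashrightarrow S$ are pairwise equivalent. Since $\Psi$ respects equivalence of links (by the previous paragraph together with the elementary observation that $\chi_i$ equivalent to $\chi_j$ forces $\chi_i^{-1}$ equivalent to $\chi_j^{-1}$), we get $\Psi(\chi_1)=\Psi(\chi_3)=\Psi(\chi_5)$ and $\Psi(\chi_2)=\Psi(\chi_4)=\Psi(\chi_6)$, and all six of these elements are images of $3$-links, hence lie in the abelian direct summand $\bigoplus_{\mathcal{E}_3}\ZZ/3\ZZ$ of the target, a group of exponent $3$. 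Writing that summand additively, the $\Psi$-image of the above word is therefore $3\,\Psi(\chi_1)+3\,\Psi(\chi_2)=0=\Psi(\id_S)$, so $\Psi$ sends every elementary relation to the identity. By Theorem~\ref{p:Sarkisov}, $\Psi$ extends to the asserted groupoid homomorphism, and by construction it is surjective.

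The hard part — where all the preceding structural results are cashed in — is precisely this last computation. It works only because Lemma~\ref{lem:OnlyOnePiece} guarantees that the hexagonal relation involves \emph{only} $3$-links (a single $6$-link would contribute to a free factor $\ZZ$, where $3a\neq 0$) and that these $3$-links fall into exactly two triples of mutually equivalent ones, so that the total contribution has the shape $3a+3b$ with $a,b$ of order dividing~$3$. Everything else — the independence of choices and the trivial relations — is routine bookkeeping with the equivalence relation of Definition~\ref{def:equiLinkDim2} and Lemma~\ref{lem:equivalencLinksSurfaceBasePt}.
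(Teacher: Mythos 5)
Your proposal is correct and follows essentially the same route as the paper: define $\Psi$ on generators, observe that trivial relations die by construction, and use Lemma~\ref{lem:OnlyOnePiece} to see that every elementary relation is a hexagon of $3$-links contributing $3a+3b=0$ in the exponent-$3$ summand. Your version merely spells out the well-definedness and trivial-relation checks that the paper leaves implicit.
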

\begin{proof}
	By definition of the homomorphism, trivial relations are sent onto the trivial word.
	By Lemma~\ref{lem:OnlyOnePiece}, all elementary relations are of the form \[\chi_6\circ \chi_5\circ\chi_4\circ\chi_3\circ\chi_2\circ\chi_1=\id_S,\]
	where $\chi_{2i-1}\colon S\rat S^{\rm op}$ and $\chi_{2i}\colon S^{\rm op}\rat S$ are $3$-links, for $i\in \{1,2,3\}$ and where $\chi_1,\chi_3,\chi_5$ are equivalent and $\chi_2,\chi_4,\chi_6$ are equivalent.
	So the elementary relation is sent onto $3_{\chi_1}-3_{\chi_2^{-1}}$, which is trivial.
	We thus obtain a groupoid homomorphism by Theorem~\ref{p:Sarkisov}.
\end{proof}

We are now in position to prove Theorem~\ref{theoremSB}.

\begin{proof}[Proof of Theorem~\ref{theoremSB}]
First, observe that, for $d\in \{3,6\}$, there is a one-to-one correspondence between $\mathcal{E}_d$ and $\mathcal{P}_d$, that sends a link $\chi\colon S\dasharrow S^{\rm op}$ onto its base-point.  This follows from Lemma~\ref{lem:equivalencLinksSurfaceBasePt} and the fact that every $d$-point gives a $d$-link, see Example~\ref{Ex36links}. Theorem~\ref{thm:SBgroupoidhomo} induces a groupoid homomorphism
\begin{equation}\label{eq: BirMori hom Theorem A}
\BirMori(S)\to  \bigoplus\limits_{\mathcal{P}_3} \ZZ/3\ZZ \ast \left (\bigast_{\mathcal{P}_6} \mathbb{Z} \right )
\end{equation}
that sends every isomorphism between Mori fibre spaces onto the identity, a $d$-link is sent onto the generator $1_p$ of the copy of $\ZZ/3\ZZ$ or $\ZZ$ corresponding to the class of the base-point $p\in \mathcal{P}_d$ if the link goes from $S$ to $S^\mathrm{op}$, and onto $-1_p$ if the link goes from $S^\mathrm{op}$ to $S$ (up to isomorphism).
Let us restrict the homomorphism \eqref{eq: BirMori hom Theorem A} to $\Bir(S)$. We then fix an element $p\in \mathcal{P}_3$ (which always exists by Proposition~\ref{prop:KollarSB}), and consider the projection
\[
\bigoplus\limits_{\mathcal{P}_3} \ZZ/3\ZZ \ast \left (\bigast_{\mathcal{P}_6} \mathbb{Z} \right )\to  \bigoplus\limits_{\mathcal{P}_3\setminus \{p\}} \ZZ/3\ZZ \ast \left (\bigast_{\mathcal{P}_6} \mathbb{Z} \right).
\]
The set $\mathcal{P}_3\setminus \{p\}$ is not empty (Lemma~\ref{L3atleast2}). We now prove that the obtained group homomorphism
$$
\Psi\colon \Bir(S)\to  \bigoplus\limits_{\mathcal{P}_3\setminus\{p\}} \ZZ/3\ZZ \ast \left ( \bigast\limits_{\mathcal{P}_6} \mathbb{Z} \right )
$$
is surjective. For this, we simply take an element $q\in \mathcal{P}_d$ for $d\in \{3,6\}$ with $q\not=p$ and choose a $d$-link $\varphi'\colon S\dasharrow S^{\rm op}$ whose base-point is $q$, and a $3$-link $\varphi\colon S\dasharrow S^{\rm op}$ whose base-point is $p$. Then, the image $\Psi(\varphi^{-1}\circ \varphi')$ corresponds to the generator associated to $q$.
\end{proof}

We would like to close this section with the following open problem:

\begin{Question}
	Let $X$ be an algebraic variety. Recall that $\varphi\in\Bir(X)$ is called {\it regularisable} if there is a birational map $\psi\colon X\dashrightarrow Y$ such that $\psi\circ\varphi\circ\psi^{-1}\in\Aut(Y)$ and $Y$ is a smooth projective variety. Let $S$ be a non-trivial Severi-Brauer surface over a perfect field $K$. Is the group $\Bir_K(S)$ always  generated by regularisable elements?
\end{Question}

\section{Explicit birational models of Severi-Brauer surfaces}\label{sec: examples}

\subsection{A singular cubic surface as a birational model} The goal of this section is to give an explicit and simple birational model of a non-trivial Severi-Brauer surface over a perfect field $K$ which contains a primitive third root of unity. It will be used in the proofs of our main results. Note that singular cubics over a non-closed field were studied by Th. Skolem in \cite{Skolem}. The below examples are part of Case 3 in \cite[\S 3]{Skolem}.

\begin{prop}\label{prop: birational cubic model}
Let $K$ be a perfect field that contains a primitive third root of unity $\zeta\in K^*$, let $\lambda\in K^*$ be such that $L/K$ is Galois of degree $3$, with $L=K[\sqrt[3]{\lambda}]$ and Galois group generated by
$g\colon  \sqrt[3]{\lambda}\mapsto \zeta \sqrt[3]{\lambda}.$
For each $\xi\in K^*$, the singular cubic surface $X\subseteq \PP^3$ given by the equation
\begin{equation*}
	\xi w^3=\lambda x^{3}+y^{3}+\lambda^{-1}z^3-3 xyz
\end{equation*}
is birational over $K$ to the $($unique$)$ Severi-Brauer surfaces $S_{\xi}$ and $S_{\xi^{-1}}$, admitting isomorphisms $\varphi_\xi\colon (S_{\xi})_L\iso \mathbb{P}^2_L$ and $\varphi_{\xi^{-1}}\colon (S_{\xi^{-1}})_L\iso \mathbb{P}^2_L$ satisfying \[\varphi_\theta\circ g\circ (\varphi_\theta)^{-1}([x:y:z])= [\theta g(z) :g(x):g(y)],\]
for $\theta=\xi$ and $\theta=\xi^{-1}$ $($recall that such surfaces exist by Lemma~$\ref{lem:SBdegree3}\ref{SB31})$.
\end{prop}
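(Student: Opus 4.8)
The plan is to verify the birationality by an explicit computation over $L$, where $\lambda$ becomes a cube, and then to descend the resulting map to $K$ by Galois-equivariance, comparing it with the cocycle presentation of $S_\xi$ from Lemma~\ref{lem:SBdegree3}. First I would factor the cubic form: setting $\theta=\sqrt[3]{\lambda}\in L$ and using $N_{L/K}(a_0+a_1\theta+a_2\theta^2)=a_0^3+\lambda a_1^3+\lambda^2 a_2^3-3\lambda a_0a_1a_2$, one checks at once that
\[
\lambda x^{3}+y^{3}+\lambda^{-1}z^3-3xyz=N_{L/K}\!\left(y+\theta x+\lambda^{-1}\theta^2 z\right)=\ell_0\ell_1\ell_2,
\]
where $\ell_0=y+\theta x+\lambda^{-1}\theta^2 z$ and $\ell_i=g^i(\ell_0)$ (equivalently, the $L$-linear substitution $x\mapsto\theta^{-1}x$, $z\mapsto\theta z$ turns the form into the Hesse form $x^3+y^3+z^3-3xyz=(x+y+z)(x+\zeta y+\zeta^2 z)(x+\zeta^2 y+\zeta z)$). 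Hence the $L$-linear change of coordinates $[w:x:y:z]\mapsto[W:A:B:C]:=[w:\ell_0:\ell_1:\ell_2]$ identifies $X_L$ with the cubic $Y:=\{\xi W^3=ABC\}\subseteq\PP^3_L$; and, since the equation of $X$ has coefficients in $K$, the natural Galois descent datum on $X_L$ (acting coordinate-wise by $g$) becomes, using $g(\ell_0)=\ell_1$, $g(\ell_1)=\ell_2$, $g(\ell_2)=\ell_0$ and $\zeta\in K$, the datum $[W:A:B:C]\mapsto[g(W):g(C):g(A):g(B)]$, which permutes $A,B,C$ cyclically exactly as the cocycle $\nu_\xi=\left(\begin{smallmatrix}0&0&\xi\\1&0&0\\0&1&0\end{smallmatrix}\right)$ of $S_\xi$ from Lemma~\ref{lem:SBdegree3}\ref{SB31} permutes the coordinates of $\PP^2_L$. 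Note that $Y$ is the $3A_2$-cubic, with three lines $\{W=A=0\},\{W=B=0\},\{W=C=0\}$ forming one Galois orbit and the three $A_2$-points (their pairwise intersections) forming a degree-$3$ point of $X$ with splitting field $L$.

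The heart of the proof is to produce a birational map over $\bar K$ between $\PP^2_L$, carrying the descent datum $\gamma\colon[x:y:z]\mapsto[\xi g(z):g(x):g(y)]$ that defines $S_\xi$, and $Y$, carrying the cyclic descent datum above, which intertwines the two descent data; being Galois-equivariant it then descends to a birational map $S_\xi\dashrightarrow X$ over $K$. One natural attempt is to look for the map in the shape $[x:y:z]\mapsto[m_1m_2m_3:m_1^2m_2:m_2^2m_3:\xi m_3^2m_1]$ for a suitable triple of linear forms $m_1,m_2,m_3$ over $L$: the relation $\xi W^3=ABC$ holds identically, and the map is birational onto $Y$ because $[m_1:m_2:m_3]$ is a linear function of $W^2$, $AW$, $AB$, so one recovers $[x:y:z]$ generically; the forms $m_i$ should be taken so that $[m_1],[m_2],[m_3]$ is a $\gamma$-orbit in the dual plane, i.e. a point of degree $3$ (with splitting field $L$) of the dual Severi--Brauer surface $S_\xi^{\mathrm{op}}\simeq S_{\xi^{-1}}$, which exists by Corollary~\ref{cor: SB contains a point of degree p}, the equivariance then being controlled by Lemma~\ref{lem:auto}. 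The alternative, avoiding an explicit formula, is to resolve the three $A_2$-singularities of $X$ to a smooth weak del Pezzo surface of degree $3$ over $K$, run a Galois-equivariant minimal model program to reach a Mori fibre space $T$, and identify $T$ via its class in $\Br(K)$ (computed from the explicit Galois action on the Picard group of $Y$), which one shows equals the class of the cyclic algebra $(L/K,g,\xi)$; together with Lemma~\ref{LinkSop} this yields $T\simeq S_\xi$.

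I expect this identification to be the only genuine obstacle: one has to get the bookkeeping of scalars (equivalently, cocycles modulo coboundaries) exactly right so that equivariance holds for \emph{every} $\xi\in K^*$, including the case $\xi\in N_{L/K}(L^*)$, in which $S_\xi=\PP^2_K$ and the assertion becomes that $X$ is $K$-rational (witnessed over $L$ by the point $[1:1:1:\xi]$ of $Y$, fixed by the descent datum when $\xi$ is a norm after the obvious rescaling); everything else is direct computation with the model $Y$.

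Finally, once $X$ is known to be $K$-birational to $S_\xi$, it is also $K$-birational to $S_{\xi^{-1}}$: by Lemma~\ref{LinkSop} any $3$-link out of $S_\xi$ (e.g. the one of Example~\ref{Ex36links}) is a birational map $S_\xi\dashrightarrow S_\xi^{\mathrm{op}}$ defined over $K$, and $S_\xi^{\mathrm{op}}\simeq S_{\xi^{-1}}$ by Lemma~\ref{Lem:SxiOp}. The uniqueness of $S_\xi$ and of $S_{\xi^{-1}}$, together with the stated isomorphisms $\varphi_\xi$ and $\varphi_{\xi^{-1}}$, is part of Lemma~\ref{lem:SBdegree3}\ref{SB31}.
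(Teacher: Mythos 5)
Your overall strategy is the paper's: factor the right-hand side as the norm form $\ell_0\ell_1\ell_2$ (the paper writes $F_i=\lambda_i x+y+\lambda_i^{-1}z$ with $\lambda_i=\zeta^i\sqrt[3]{\lambda}$, which is your $\ell_i$), observe that $X_L$ is the $3A_2$-cubic $\xi w^3=F_0F_1F_2$ with its three conjugate double points, relate it to $\PP^2_L$ by a degree-$2$ birational map, and conclude by checking that the map intertwines the Galois descent data. The paper makes this concrete by projecting from the singular point $p_2=[0:1:\lambda_2:\lambda_2^2]$ and adjusting by $[1:a:b]\mapsto[1:a:ab]$, which yields the explicit map $\psi\colon[w:x:y:z]\mapsto[w^2:wF_0:F_0F_1]$; a three-line computation using $g(F_i(x,y,z))=F_{i+1}(g(x),g(y),g(z))$ and $\xi w^3=F_0F_1F_2$ shows $\psi\circ g\circ\psi^{-1}=\widetilde g$ with $\widetilde g\colon[x:y:z]\mapsto[\xi^{-1}g(z):g(x):g(y)]$, so $\psi$ descends to $X\dasharrow S_{\xi^{-1}}$; composing with the standard Cremona involution $\sigma\colon[x:y:z]\mapsto[yz:xz:xy]$, which conjugates $\widetilde g$ to $[x:y:z]\mapsto[\xi g(z):g(x):g(y)]$, gives $X\dasharrow S_\xi$. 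Your last paragraph (get one of the two surfaces, then use $S_\xi^{\rm op}\simeq S_{\xi^{-1}}$ and a $3$-link) is a legitimate substitute for this last step.

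The one place where your write-up falls short of a proof is exactly the step you flag as ``bookkeeping of scalars'': that verification \emph{is} the proof, and it cannot be waved through via Lemma~\ref{lem:auto}, which concerns automorphisms of a Severi--Brauer surface, not equivariance of a map to the cubic $Y$. Concretely, with the obvious choice $m_1=x$, $m_2=y$, $m_3=z$ and $\gamma\colon[x:y:z]\mapsto[\xi g(z):g(x):g(y)]$, your map $[x:y:z]\mapsto[m_1m_2m_3:m_1^2m_2:m_2^2m_3:\xi m_3^2m_1]$ sends $\gamma(p)$ to $[\xi g(W):\xi g(C):g(A):\xi^2 g(B)]$, which is \emph{not} proportional to the cyclic datum $[g(W):g(C):g(A):g(B)]$ (the entries carry the scalars $1,1,\xi^{-1},\xi$ after normalising). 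So the linear forms must be chosen, or the monomials re-weighted, with care; this is the same phenomenon that forces the paper to land first on $S_{\xi^{-1}}$ and only then pass to $S_\xi$ via $\sigma$. Your alternative route (resolve the three $A_2$'s, run an equivariant MMP, identify the Brauer class) would also work but again requires the same cocycle computation in the end, so it does not avoid the issue. None of this is a wrong idea, but as written the decisive verification is missing.
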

\begin{proof}
Define the map $\widetilde{g}\colon [x:y:z]\mapsto [\xi^{-1} g(z) :g(x):g(y)]$. It suffices to find a birational map $\psi\colon X_L\dasharrow \mathbb{P}^2_L$ such that $\psi\circ g\circ \psi^{-1}=\widetilde{g}$.
Indeed, the birational map $\varphi_{\xi^{-1}}^{-1}\circ\psi\colon X_L\dasharrow (S_{\xi^{-1}})_L$ will commute with $g$ and hence be a birational map $X\dasharrow S_{\xi^{-1}}$.
Moreover, writing $\sigma\in \Bir(\PP^2)$ the birational map $[x:y:z]\mapsto [\frac{1}{x}:\frac{1}{y}:\frac{1}{z}]$, we will have $\sigma\circ \psi\circ g\circ (\sigma\circ \psi)^{-1}=[x:y:z]\mapsto [\xi g(z) :g(x):g(y)]$, so the birational map $\varphi_\xi^{-1}\circ\sigma\circ\psi\colon X_L\dasharrow (S_{\xi})_L$ will commute with $g$ and hence be a birational map $X\dasharrow S_{\xi}$.

Let $\lambda_0=\sqrt[3]{\lambda}$, $\lambda_1=\zeta\sqrt[3]{\lambda}$ and $\lambda_2=\zeta^2\sqrt[3]{\lambda}$ be the three cube roots of $\lambda$ in $L$. Then $F=\lambda x^{3}+y^{3}+\lambda^{-1}z^3-3 xyz\in L[x,y,z]$ is factorized as $F=F_0F_1F_2$, with $F_i=\lambda_i x+y+{\lambda_i}^{-1}z\in L[x,y,z]$ for $i\in \{0,1,2\}$.
The equation of $X$ becomes $\xi w^3=F_0F_1F_2$, and is thus singular at the three points where $w=F_i=F_j=0$, namely $p_k=[0:1:\lambda_k:\lambda_k^2]$ for $\{i,j,k\}=\{0,1,2\}$.
Each of these is a double point of $X$ and thus the projection away from $p_2$ yields a birational map $X_L\dasharrow \PP^2_L$, given by
\[[w:x:y:z]\mapsto [w:F_0(x,y,z):F_1(x,y,z)].\] Composing this map with the birational map of $\PP^2$ given by $[1:a:b]\mapsto [1:a:ab]$, we obtain a birational map
\[
\psi\colon X_L\dasharrow \mathbb{P}^2_L, \quad [w:x:y:z]\mapsto [w^2:wF_0(x,y,z):F_0(x,y,z)F_1(x,y,z)].
\]
It remains to check that $\psi=\widetilde{g}\circ \psi\circ g^{-1}$. For $i\in \{0,1\}$, we have $g(F_i(x,y,z))=g(\lambda_i)g(x)+g(y)+g(\lambda_i^{-1})g(z)=F_{i+1}(g(x),g(y),g(z))$. Using this, we get
\[
\widetilde{g}\circ \psi\circ g^{-1} ([w:x:y:z])=[\xi^{-1} F_1(x,y,z)F_2(x,y,z): w^2: wF_1(x,y,z)]=\psi([w:x:y:z]),
\]
which finishes the proof.
\end{proof}

\subsection{A smooth cubic as a birational model}

In this section, we give a smooth birational model of our Severi-Brauer surfaces and, moreover, construct a composition of two $3$-links that is a birational self-map of order $3$.

\begin{prop}\label{ExplicitForm:SB3}
Let $K$ be a perfect field that contains a primitive third root of unity $\zeta\in K^*$, let $\lambda,\mu\in K^*$ be such that $\widehat{L}=K[\sqrt[3]{\lambda},\sqrt[3]{\mu}]$ is Galois of degree $9$ over $K$, with Galois group generated by
\[g\colon  \sqrt[3]{\lambda}\mapsto \zeta \sqrt[3]{\lambda}, \sqrt[3]{\mu}\mapsto \sqrt[3]{\mu} ,\quad h\colon  \sqrt[3]{\lambda}\mapsto  \sqrt[3]{\lambda}, \sqrt[3]{\mu}\mapsto \zeta\sqrt[3]{\mu} .\]
We set $L=K[\sqrt[3]{\lambda}]$, so that $L/K$ is a Galois extension of order $3$, with Galois group
$\langle g\rangle$.
Let $\nu\in K^*$ be such that $\xi=27\lambda\mu+\nu^3\in K^*$ is not a norm in $L/K$, and let $X\subseteq \PP^3$ be the smooth cubic surface given by
\[\xi w^3=\lambda x^{3}+\mu y^{3}+ z^3+\nu xyz.\]
Then, the following hold:
\begin{enumerate}
\item\label{XtoSxideg3}
There is a $K$-birational morphism $\pi\colon X\to S_\xi$, where $S_{\xi}$ is the Severi-Brauer surface admitting an isomorphism $\varphi\colon (S_{\xi})_L\iso \mathbb{P}^2_L$ satisfying $\varphi\circ g\circ \varphi^{-1}([x:y:z])= [\xi g(z) :g(x):g(y)].$
\item\label{order3with2links}
Writing $\rho\in \Aut_K(X)$ the automorphism of order $3$ given by $[w:x:y:z]\mapsto [\zeta w:x:y:z]$, we can choose $\pi$ as above, such that $\pi \circ \rho \circ \pi^{-1}\in \Bir_K(S_{\xi})$ is the composition of a $3$-link with splitting field $K[\sqrt[3]{\lambda}]$ and a $3$-link with splitting field $K[\sqrt[3]{\mu}]$.
\end{enumerate}
\end{prop}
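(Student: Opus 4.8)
The plan is to analyse the $27$ lines of the smooth cubic surface $X$ together with the order‑$3$ $K$‑automorphism $\rho\colon[w:x:y:z]\mapsto[\zeta w:x:y:z]$, and to read off from this configuration both the birational morphism $\pi$ of \ref{XtoSxideg3} and, via the hexagon of Lemma~\ref{lem:OnlyOnePiece}, the factorisation of $\pi\rho\pi^{-1}$ in \ref{order3with2links}. First I would check that $X$ is a del Pezzo surface of degree $3$: since $\partial_w(\xi w^3-\lambda x^3-\mu y^3-z^3-\nu xyz)=3\xi w^2$ vanishes only on $\{w=0\}$, any singular point of $X$ lies in $\{w=0\}$ and there forces the plane cubic $C:=X\cap\{w=0\}=\{\lambda x^3+\mu y^3+z^3+\nu xyz=0\}$ to be singular; after the (geometric) rescaling $x\mapsto x/\sqrt[3]\lambda$, $y\mapsto y/\sqrt[3]\mu$, $C$ becomes the Hesse cubic $x^3+y^3+z^3+\psi xyz$ with $\psi=\nu/\sqrt[3]{\lambda\mu}$, which is smooth exactly when $\psi^3\neq-27$, i.e.\ when $\nu^3+27\lambda\mu=\xi\neq0$. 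Hence $C$ and $X$ are smooth, and $X$ is anticanonically embedded, so a del Pezzo surface of degree $3$.

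Next I would exploit $\rho$, which is defined over $K$ because $\zeta\in K$; on $\PP^3$ it acts as $\diag(\zeta,1,1,1)$ with fixed locus $\{w=0\}\sqcup\{[1:0:0:0]\}$, and $[1:0:0:0]\notin X$ because $\xi\neq0$. A $\rho$‑invariant line would therefore lie in $\{w=0\}$ (impossible: $C$ is smooth, hence contains no line) or pass through $[1:0:0:0]$ (impossible: that point is not on $X$); so $\rho$ acts freely on the $27$ lines, in $9$ orbits of size $3$. Since each line meets $C$ in a single point, which is then $\rho$‑fixed, the three lines of a $\rho$‑orbit all pass through a common point of $C$, which is thus an Eckardt point; hence $X$ has exactly nine Eckardt points, all on $C$, and the $27$ lines are the nine triangles cut out by the tritangent planes through them. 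Passing to the coordinates $u=\sqrt[3]\lambda\,x$, $v=\sqrt[3]\mu\,y$ over $\widehat L$, the curve $C$ is the Hesse cubic of parameter $\psi$, whose inflection points are the nine base points of the Hesse pencil, i.e.\ the Eckardt points; a direct computation of the Galois action on these nine points shows they form three orbits of size $3$ with splitting fields $K[\sqrt[3]\lambda]=L$, $K[\sqrt[3]\mu]$ and $K[\sqrt[3]{\lambda/\mu}]$ (all genuine cubic extensions, since $[\widehat L:K]=9$). From this I would determine the Galois orbit structure of the $27$ lines and exhibit two disjoint Galois‑stable triples of pairwise disjoint lines, with splitting fields $L$ and $K[\sqrt[3]\mu]$, that are simultaneously contractible.

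\emph{Proof of \ref{XtoSxideg3}.} Contracting these two triples gives a birational morphism $\pi\colon X\to S'$, where $S'$ is geometrically $\PP^2$ (a del Pezzo surface of degree $9$) with $\rk\Pic_K(S')=1$, hence a Severi–Brauer surface; by Lemma~\ref{l:2_domination_type} the contraction is the blow‑up of a degree‑$3$ point followed by that of another, so $\rk\Pic_K(X)=3$ and $X/\pt$ is a rank‑$3$ fibration. Over $L$ the first triple consists of three $L$‑lines, whose images give $L$‑points of $S'_L$, so $S'_L\iso\PP^2_L$; computing the induced action of a generator $g$ of $\Gal(L/K)$ and checking it is conjugate in $\PGL_3(L)$ to $[x:y:z]\mapsto[\xi g(z):g(x):g(y)]$ (equivalently, that the class of $S'$ in $\Cohom^1(\Gal(L/K),\PGL_3(L))$ is that of $S_\xi$), Lemma~\ref{lem:SBdegree3}\ref{SB32} yields $S'\iso S_\xi$; note $S_\xi$ is non‑trivial by Lemma~\ref{lem:SBdegree3}\ref{SB33}, since $\xi$ is not a norm by hypothesis. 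Choosing $\varphi$ so that this $g$‑action is exactly $[x:y:z]\mapsto[\xi g(z):g(x):g(y)]$ proves \ref{XtoSxideg3}.

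\emph{Proof of \ref{order3with2links}.} By Lemma~\ref{lem:OnlyOnePiece} the piece of the rank‑$3$ fibration $X/\pt$ is a hexagon whose six edges are $3$‑links, alternately $S_\xi\rat S_\xi^{\rm op}$ and $S_\xi^{\rm op}\rat S_\xi$, the three odd ones mutually equivalent and the three even ones mutually equivalent; its edges correspond to the six disjoint Galois‑stable triples of pairwise disjoint lines on $X$, and by the previous step the two resulting splitting fields are $L=K[\sqrt[3]\lambda]$ and $K[\sqrt[3]\mu]$ (the orbit with splitting field $K[\sqrt[3]{\lambda/\mu}]$ carrying the nine lines that lie in no contractible triple). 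The automorphism $\rho$ acts on this hexagon by an order‑$3$ symmetry, and it cannot act trivially: were a disjoint triple $\{E_1,E_2,E_3\}$ fixed as a set, then, $\rho$ fixing also the three distinct Eckardt points the $E_i$ pass through, it would fix each $E_i$, contradicting freeness of $\rho$ on the lines. Hence $\rho$ acts as a rotation by two vertices, which preserves the bipartition; taking $\pi$ to be the blow‑down to one of the three vertices of type $S_\xi$, the element $\pi\rho^{\pm1}\pi^{-1}\in\Bir_K(S_\xi)$ is the change of marking along a path of two consecutive edges, hence a composition of two consecutive $3$‑links, one with splitting field $K[\sqrt[3]\lambda]$ and one with splitting field $K[\sqrt[3]\mu]$; replacing $\rho$ by $\rho^{-1}$ and/or $\pi$ by another blow‑down to a vertex of type $S_\xi$ if needed to fix the orientation gives \ref{order3with2links}. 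The main obstacle is the explicit input: computing the $27$ lines and the Galois action on them, verifying that the two triples to be contracted are pairwise disjoint and simultaneously contractible, that their contraction is $S_\xi$ rather than $S_\xi^{\rm op}$ or another twist, and that the two relevant splitting fields are exactly $K[\sqrt[3]\lambda]$ and $K[\sqrt[3]\mu]$ (and not larger cubic extensions).
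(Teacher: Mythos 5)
Your overall strategy for part \ref{XtoSxideg3} coincides with the paper's: analyse the $27$ lines via the cyclic triple cover $X\to\PP^2$ ramified over the Hesse-form cubic, locate the nine Eckardt points in three Galois orbits with splitting fields $K[\sqrt[3]{\lambda}]$, $K[\sqrt[3]{\mu}]$, $K[\sqrt[3]{\lambda/\mu}]$, and contract two disjoint Galois-stable triples. Your preliminary observations (smoothness of $X$ from $\xi\neq 0$, freeness of $\rho$ on the lines, the orbit structure of the flexes, and the fact that the nine lines over the $K[\sqrt[3]{\lambda/\mu}]$-orbit lie in no contractible triple) are all correct. For part \ref{order3with2links} you take a genuinely different and arguably cleaner route: instead of the paper's explicit check that $\rho^{-1}$ applied to the first contracted triple is disjoint from the second (done by exhibiting $\rho^{-1}(E_i)$ in coordinates and drawing the resulting diagram of contractions), you let $\rho$ act on the hexagon of Lemma~\ref{lem:OnlyOnePiece} and show the action is a nontrivial rotation; your argument for nontriviality is valid, since a $\rho$-stable disjoint triple would force a line to meet its $\rho$-image at its fixed intersection point with $C=X\cap\{w=0\}$. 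This buys a coordinate-free proof of \ref{order3with2links}, at the price of invoking the full structure of the $2$-piece rather than one explicit commutative diagram.

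The one substantive step you have not supplied is exactly the computation you flag as ``the main obstacle'', and it is the crux of part \ref{XtoSxideg3}: identifying the contracted surface as $S_\xi$, with the stated cocycle, rather than $S_{\xi^{-1}}\simeq S_\xi^{\rm op}$ or another twist. This cannot be finessed by soft arguments: the two hexagon vertices adjacent to a given edge realise $S_\xi$ and $S_\xi^{\rm op}$ respectively, so the answer genuinely depends on which pair of triples is contracted, and even the correct choice of triples only determines $S'$ up to checking the cocycle. The paper carries this out by writing $\varphi\circ\pi$ explicitly as the system of quadrics $[\xi A_1A_2:B_0B_2:A_1B_0]$ through three suitable lines and verifying $\tilde{g}\circ\varphi=\varphi\circ g$ by means of the identity $\xi A_0A_1A_2=B_0B_1B_2$ on $X$. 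Until that explicit (but essential) verification is done, part \ref{XtoSxideg3} --- and with it the identification of the ambient surface and of the two splitting fields in part \ref{order3with2links} --- remains incomplete.
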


\begin{proof}
	Note that $X\to \PP^2, [w:x:y:z]\mapsto [x:y:z]$ is a triple covering, corresponding to the quotient by $\rho$, which is a cyclic covering ramified over the cubic $\Gamma\subseteq\PP^2$ given by the Hessian form $\lambda x^{3}+\mu y^{3}+ z^3+\nu xyz=0$. As $\mathrm{char}(K)\not=3$ and $\lambda\mu\xi\not=0$, this curve is smooth. There are then exactly nine flexes over the algebraic closure $\overline{K}$ of $K$, corresponding to the intersection of $xyz=0$ with the curve $\Gamma$. The $27$ lines of $X$ are then the preimages of the $9$ tangent lines at these $9$ flexes. The preimage of each such tangent line splits as a union of three lines on $X$ meeting in an Eckardt point.
	For $i\in\ZZ$, write $\lambda_i=\zeta^i\sqrt[3]{\lambda}$ and $\mu_i=\zeta^i\sqrt[3]{\mu}$, hence $\lambda_i^3=\lambda$ and $\mu_i^3=\mu$, and set
	\[
	\begin{array}{lllll}
		A_i&=&w-\frac{y}{3\lambda_i}, \qquad B_i &=& z + \lambda_i x - \frac{\nu y}{3\lambda_i},\\
		C_i&=&w-\frac{x}{3\mu_i}, \qquad D_i &=& z + \mu_i y - \frac{\nu x}{3\mu_i}.
	\end{array}
	\]
	Define the six lines
	\[
	\begin{array}{lllllll}
	E_i&=&\big \{A_i=B_i=0\big \},& i\in\{0,1,2\}, \qquad E_i&=&\big\{C_{i-1}=D_i=0\big \},& i\in\{3,4,5\},
	\end{array}
	\]
	and observe that $E_0,\ldots,E_5$ are six disjoint $(-1)$-curves in $X$ defined over $\widehat{L}$, which correspond to two orbits $\{E_0,E_1,E_2\}$ and $\{E_3,E_4,E_5\}$ of $\Gal(\hat{L}/K)=\langle g,h\rangle$.

	The contraction of the six lines $E_0,\ldots,E_5$ gives then a birational morphism $\pi\colon X\to S$, defined over $K$, where $S$ is a Severi-Brauer surface. The image of $\{E_0,E_1,E_2\}$ is a $3$-point $p\in S$ with splitting field $L=K[\sqrt[3]{\lambda}]$ and the image of $\{E_3,E_4,E_5\}$ is a $3$-point $q\in S$ with splitting field $K[\sqrt[3]{\mu}]$. There is then an isomorphism $\varphi\colon S_L\iso \PP^2_L$, that sends $\pi(E_0),\pi(E_1),\pi(E_2)\in S(L)$ onto $[1:0:0]$, $[0:1:0]$, $[0:0:1]$ respectively. We will compute $\varphi\circ \pi$ and show that we can choose $\varphi$ such that $\varphi\circ g\circ \varphi^{-1}([x:y:z])= [\xi g(z) :g(x):g(y)],$ which will prove that $S$ is the surface $S_\xi$ of Lemma~\ref{lem:SBdegree3}, associated to $\xi\in K$ and $g\in \mathrm{Gal}(L/K)$. For this, we consider some other lines on $S$, all defined over $\widehat{L}$:
	\[
	\begin{array}{llllllll}
	\ell_{01}&=&\big \{A_1=B_0=0\big \}, \qquad
	\mathcal{C}_0&=&\big \{A_1=B_2=0\big\}, \qquad
	\mathcal{C}_1&=&\big \{A_2=B_0=0\big \}.
	\end{array}
	\]
	We observe that these are three lines of $X$, which intersect the curves $E_0,\ldots,E_5$ as the following table describes.
	\[\begin{array}{|c|c|c|c|c|c|c|}
	\hline
	 &E_0 & E_1 & E_2 & E_3 & E_4 & E_5\\
	\hline
	\ell_{01} & 1 & 1 & 0 & 0 &0 & 0 \\
	\hline
	\mathcal{C}_0 & 0 & 1 & 1 & 1& 1& 1\\
	\hline
	\mathcal{C}_1 & 1 & 0 & 1 & 1& 1& 1\\
	\hline
	\end{array}\]

	In particular, $\ell_{01}$ is the strict transform (via $\varphi\circ \pi\colon X_L\to \PP^2_L$)  of the line $\{z=0\}$ of $\PP^2_L$ passing through $[1:0:0]$, $[0:1:0]$, and $\mathcal{C}_0$ (respectively $\mathcal{C}_1$) is the strict transform of the conic passing through all points blown-up except $[1:0:0]$ (respectively except $[0:1:0]$). The pull-back by $\varphi\circ \pi$ of a general line is then equivalent to $-2K_X-\ell_{01}-\mathcal{C}_0-\mathcal{C}_1$. This implies that $\varphi\circ\pi$ can be described, on the open subset $X\setminus (\ell_{01}\cup\mathcal{C}_0\cup\mathcal{C}_1)$, by the linear system of quadrics of $\PP^3$ containing the three lines $\ell_{01}$, $\mathcal{C}_0$ and $\mathcal{C}_1$.
	The polynomials
	$f_0=\xi A_1 A_2,f_1=B_0B_2,f_2=A_1B_0\in L[w,x,y,z]$ form a basis of the linear system of polynomials of degree $2$ that are zero on $\ell_{01}$, $\mathcal{C}_0$ and $\mathcal{C}_1$. Moreover, $f_1$ and $f_2$ vanish along $E_0$ but  $f_0$ does not. Similarly, $f_0$ and $f_1$ vanish along $E_2$ but $f_2$ does not, and $f_0$, $f_2$ vanish on $E_1$ but $f_1$ does not. Hence, we can choose $\varphi$ such that the birational morphism $\varphi\circ\pi\colon X_L\to\PP^2_L$ is given by
	\[[w:x:y:z]\mapsto [f_0(w,x,y,z):f_1(w,x,y,z):f_2(w,x,y,z)].\]
	To obtain \ref{XtoSxideg3}, we then need to check that $\tilde{g}\circ \varphi=\varphi\circ g$, where $\tilde{g}=[x:y:z]\mapsto [\xi g(z):g(x):g(y)]$. This equality is equivalent to $\varphi=\tilde{g}\circ \varphi\circ g^{-1}$.

	Note that
	\[A_0A_1A_2=w^3-\frac{y^3}{27\lambda}, \quad B_0B_1B_2=\lambda x^3-\frac{\nu^3}{27\lambda}y^3+z^3+\nu xyz,\]
	and hence we find that $\xi A_0A_1A_2-B_0B_1B_2=\xi w^3-\lambda x^{3}-\mu y^{3}- z^3-\nu xyz$ is zero on $X$. Using this, and the fact that for each $i\in \ZZ$ we have
	$g\circ A_i=A_{i+1}\circ g$ and $g\circ B_i=B_{i+1}\circ g$, we get for a general point $p\in X$ (where $A_i(p)B_i(p)\not=0$ for all $i$) the equality
	\[\begin{array}{rcl}
	\tilde{g}\circ \varphi\circ g^{-1} (\pi(p))= \tilde{g}\circ \varphi\circ\pi\circ g^{-1} (p)
	&=&[\xi A_2B_1: \xi A_0A_2: B_0B_1]\\
	&=&[\xi A_1A_2B_1: \xi A_1A_0A_2: A_1B_0B_1]\\
	&=&[\xi A_1A_2B_1:  B_0B_1B_2: A_1B_0B_1]\\
	&=&[\xi A_1A_2:B_0B_2:A_1B_0]=\varphi(\pi(p)),\end{array}
	\]
	writing $A_i,B_i$ instead of $A_i(p), B_i(p)$ for simplicity. This proves $S=S_{\xi}$ and we get \ref{XtoSxideg3}.

	We now prove~\ref{order3with2links}. We fix $\pi$ and $\varphi$ as above. As $\rho\in \Aut_K(X)$ is of order $3$, the element $\hat\rho=\pi\circ \rho\circ\pi^{-1}\in \Bir_K(S_{\xi})$ is a birational map of order $3$. We now prove that it is the composition of a $3$-link with splitting field $K[\sqrt[3]{\lambda}]$ and a $3$-link with splitting field $K[\sqrt[3]{\mu}]$.

	As $\pi\colon X\to S_{\xi}$ is the contraction of the six lines $E_0,\ldots,E_5$, the morphism $\pi\circ \rho\colon X\to S_{\xi}$ is the contraction of the six lines $E_0'=\rho^{-1}(E_0),\ldots,E_5'=\rho^{-1}(E_5)$, which are obtained as
	\[\begin{array}{llll}
	E_i'=\{ A_{i+1}= B_i=0\},\, i\in \{0,1,2\},\quad
	E_i'=\{ C_i=D_i=0\},\,i\in \{3,4,5\}.\end{array}\]

	As $E_0'=\ell_{01}$, the set $\{E_0',E_1',E_2'\}$ is a $\Gal(\widehat L/K)$-orbit that is disjoint from $\{E_3,E_4,E_5\}$, and so the contraction of $E_0',E_1',E_2',E_3,E_4,E_5$ gives then a birational morphism $\pi'\colon X\to S'$, defined over $K$, where $S'$ is a Severi-Brauer surface.
	Decomposing the three birational morphisms $\pi$, $\pi'$, $\rho\pi$ into a contraction of one orbit and then another one, we obtain a commutative diagram of birational morphisms defined over $K$:
	\[
	\begin{tikzcd}
	&&X\ar[dl,"{E_3,E_4,E_5}",swap]\ar[dr,"{E_0',E_1',E_2'}"]\\
	&X_6\ar[dl,"{E_0,E_1,E_2}",swap]\ar[dr,"{E_0',E_1',E_2'}",swap]&&X_6'\ar[dl,"{E_3,E_4,E_5}"]\ar[dr,"{E_3',E_4',E_5'}"]\\
	S_{\xi} \ar[rr,"\pi'\pi^{-1}",dashed,swap] & & S'\ar[rr,"\pi\rho\pi'^{-1}",dashed,swap] && S_{\xi}.
	\end{tikzcd}
	\]
	Moreover, $\pi' \pi^{-1}\colon S\dasharrow S'$ and $\pi \rho \pi'^{-1}\colon S'\dasharrow S$ are $3$-links, with splitting fields  $K[\sqrt[3]{\lambda}]$ and  $K[\sqrt[3]{\mu}]$ respectively, giving the result.
\end{proof}

\subsection{Existence of links on Severi-Brauer surfaces}

The following proposition serves to have enough links on a given Severi-Brauer surface. We will use it in the sequel with $K=\CC(t_1,\ldots,t_n)$ and $n\geqslant 2$, and $\lambda=t_1$.

\begin{prop}\label{prop: existence of links over function fields}
Let $K$ be a perfect field that contains a primitive third root of unity $\zeta\in K^*$, let $\lambda\in K^*$ be such that $L/K$ is Galois of degree $3$, with $L=K[\sqrt[3]{\lambda}]$ and Galois group generated by $g\colon \sqrt[3]{\lambda}\mapsto \zeta \sqrt[3]{\lambda}.$ Let $\xi\in K^*$ and let $S_\xi$ be the unique Severi-Brauer surface defined over $K$ that admits an isomorphism $\varphi\colon (S_{\xi})_L\iso \mathbb{P}^2_L$ satisfying $\varphi\circ g\circ \varphi^{-1}([x:y:z])= [\xi g(z) :g(x):g(y)].$ Then one has the following:
\begin{enumerate}
\item\label{exi6}
 For each $\alpha\in K$ that is not a square in $L$, there is a point $p$ of $S$ of degree $6$ whose splitting field is $L[\sqrt{\alpha}]$.
 \item\label{exi3}
 For each $\mu\in K$ such that $\mu$ is not a cube in $L$ but $\xi-27\lambda\mu$ is a cube in $K$, there is an element $\varphi\in \Bir_K(S)$ of order $3$, that is the composition of one $3$-link with splitting field $L$ and one $3$-link with splitting field $K[\sqrt[3]{\mu}]$.
 \end{enumerate}
\end{prop}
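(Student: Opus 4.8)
\emph{Part \ref{exi6}: producing a degree-$6$ point with splitting field $L[\sqrt{\alpha}]$.} First observe that $\charact(K)\neq 2$, since otherwise the perfect field $K$ would contain $\sqrt{\alpha}$, contradicting the hypothesis. Set $L'=L[\sqrt{\alpha}]=K[\sqrt[3]{\lambda},\sqrt{\alpha}]$; since $L\cap K[\sqrt{\alpha}]=K$, this is a Galois extension of $K$ of degree $6$ with group $\langle g\rangle\times\langle h\rangle\cong\ZZ/3\times\ZZ/2$, where $g$ is extended so as to fix $\sqrt{\alpha}$ and $h$ fixes $L$ and sends $\sqrt{\alpha}\mapsto-\sqrt{\alpha}$. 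Transporting the $\Gal(L'/K)$-action on $S(L')$ to $\PP^2(L')$ via $\varphi$, the element $g$ acts by $[x:y:z]\mapsto[\xi g(z):g(x):g(y)]$ (by the defining property of $S_\xi$), while $h$ acts coordinate-wise, because $\varphi$ already descends to an isomorphism over $L$. Now take the point $P\in S(L')$ with $\varphi(P)=[\sqrt{\alpha}:1:0]$: one computes $\varphi(g\cdot P)=[0:\sqrt{\alpha}:1]$ and $\varphi(h\cdot P)=[-\sqrt{\alpha}:1:0]$, both distinct from $[\sqrt{\alpha}:1:0]$ (for the second one, using $\charact(K)\neq 2$ and $\alpha\neq 0$). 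As every non-trivial subgroup of $\ZZ/3\times\ZZ/2$ contains $g$ or $h$, the stabiliser of $P$ in $\Gal(L'/K)$ is trivial, so the orbit of $P$ consists of $6$ distinct points; this $\Gal(L'/K)$-stable set is a closed point $p$ of $S$ of degree $6$, and since the action on the orbit is free, the splitting field of $p$ is precisely $L'=L[\sqrt{\alpha}]$. Example~\ref{Ex36links} then also gives a $6$-link of $S$ with that splitting field.

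\emph{Part \ref{exi3}: building an order-$3$ element from two $3$-links.} The plan is simply to invoke Proposition~\ref{ExplicitForm:SB3}. Choose $\nu\in K$ with $\nu^3=\xi-27\lambda\mu$, which exists by hypothesis. Since $\mu$ is not a cube in $L$, the polynomial $X^3-\mu$ is irreducible over $L$, hence $\widehat L=K[\sqrt[3]{\lambda},\sqrt[3]{\mu}]$ has degree $9$ over $K$; as $K$ contains $\zeta$, both $L/K$ and $K[\sqrt[3]{\mu}]/K$ are Galois, so $\widehat L/K$ is Galois, with group $\ZZ/3\times\ZZ/3$ generated by the automorphisms $g,h$ of Proposition~\ref{ExplicitForm:SB3}. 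Together with the fact that $\xi=27\lambda\mu+\nu^3$ is not a norm of $L/K$ (equivalently, that $S=S_\xi$ is non-trivial), this shows that all hypotheses of Proposition~\ref{ExplicitForm:SB3} are met when $\nu\neq 0$; when $\nu=0$, i.e.\ $\xi=27\lambda\mu$, the surface $X$ given by $\xi w^3=\lambda x^3+\mu y^3+z^3$ is a smooth diagonal cubic and the proof of that proposition applies verbatim. In either case, Proposition~\ref{ExplicitForm:SB3}\ref{order3with2links} yields $\rho\in\Aut_K(X)$ of order $3$ and a birational morphism $\pi\colon X\to S_\xi$ such that $\varphi:=\pi\circ\rho\circ\pi^{-1}\in\Bir_K(S_\xi)$ is the composition of a $3$-link with splitting field $K[\sqrt[3]{\lambda}]=L$ and a $3$-link with splitting field $K[\sqrt[3]{\mu}]$; moreover $\varphi$ has order $3$, since conjugation by the birational map $\pi$ is a group isomorphism $\Bir_K(X)\iso\Bir_K(S_\xi)$.

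\emph{Main difficulty.} Neither part is deep. In \ref{exi6} the only point requiring care is to identify the transported $\Gal(L'/K)$-action on $\PP^2(L')$ correctly — one must use that $\varphi$ is defined over $L$ so that $h$ acts coordinate-wise — and then to track splitting fields carefully (it is the freeness of the orbit that forces the splitting field to be exactly $L'$). In \ref{exi3} the only subtlety is that the degenerate case $\nu=0$ is not covered by the literal statement of Proposition~\ref{ExplicitForm:SB3}, although it is handled by the same argument; the rest is a direct application of that proposition.
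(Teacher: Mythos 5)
Your proof is correct and follows essentially the same route as the paper's: for \ref{exi6} one transports the $\Gal(L[\sqrt{\alpha}]/K)$-action to $\PP^2$ via $\varphi$ and exhibits an explicit free orbit of six points (the paper uses $[0:1:\pm\sqrt{\alpha}]$, you use $[\sqrt{\alpha}:1:0]$, with a cleaner stabiliser argument), and for \ref{exi3} one applies Proposition~\ref{ExplicitForm:SB3} directly. Your extra observation that the case $\nu=0$ falls outside the literal hypotheses of Proposition~\ref{ExplicitForm:SB3} but is covered by the same argument is a detail the paper's proof passes over silently.
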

\begin{proof}
\ref{exi6}
	Let $\alpha\in K$ be an element that is not a square in $L$. We write $L'=L[\sqrt{\alpha}]$, which is an extension of degree $2$ of $L$. We observe that $L'/K$ is Galois of order $6$, with an abelian Galois group generated by $g,h$, given by
	\[g\colon \sqrt{\alpha}\mapsto \sqrt{\alpha}, \quad \sqrt[3]{\lambda}\mapsto \zeta \sqrt[3]{\lambda}\]
	\[h\colon \sqrt{\alpha}\mapsto -\sqrt{\alpha}, \quad \sqrt[3]{\lambda}\mapsto  \sqrt[3]{\lambda}\]
	Here we have extended $g\in \mathrm{Gal}(L/K)$ to an element of $\mathrm{Gal}(L'/K)$ and will keep the same letter.

	We now consider the two points $[0:1:\sqrt{\alpha}], [0:1:-\sqrt{\alpha}]\in \mathbb{P}^2(L')$. As $\varphi$ is defined over $L$ and $h\in \mathrm{Gal}(L'/L)$, we find $\varphi\circ h\circ \varphi^{-1}=h$.
	The action of  $\varphi\circ g\circ \varphi^{-1}$ gives us $6$ points, that form an orbit of $6$ points of $\mathbb{P}^2(L')$ under the action of the group of order $6$ generated by $\varphi\circ g\circ \varphi^{-1}$ and $\varphi\circ h\circ \varphi^{-1}=h$. The preimage by $\varphi^{-1}$ is then a point on $S$ of degree $6$. As $L'/K$ is of order $6$ and the splitting field lies in $L'$, we find that $L'$ is the splitting field of the degree $6$ point.

\ref{exi3}:
	Let $\mu\in K$  be such that $\mu$ is not a cube in $L$ but $\xi-27\lambda\mu$ is a cube in $K$. We can then find $\nu\in K$ such that $\xi=27\lambda\mu +\nu^3$. We write $L'=L[\sqrt[3]{\mu}]$, which is an extension of degree $3$ of $L$ and observe that $L'/K$ is Galois of order $9$, with an abelian Galois group generated by $g,h$, given by
	\[g\colon \sqrt[3]{\alpha}\mapsto \sqrt[3]{\mu}, \quad \sqrt[3]{\lambda}\mapsto \zeta \sqrt[3]{\lambda},\]
	\[h\colon \sqrt[3]{\alpha}\mapsto \zeta\sqrt[3]{\mu}, \quad \sqrt[3]{\lambda}\mapsto  \sqrt[3]{\lambda}.\]
	As before, we have extended the action of $g$.  Proposition~\ref{ExplicitForm:SB3} then yields the existence of an element $\varphi\in \Bir_K(S)$ of order $3$ that the composition of one $3$-link with splitting field $L$ and one $3$-link with splitting field $K[\sqrt[3]{\mu}]$.
\end{proof}

\section{Higher-dimensional Sarkisov program}

\subsection{Tools from the Sarkisov program}

From now on, all varieties are defined over the complex numbers, unless stated otherwise. To construct the group homomorphism of Theorem \ref{thm: free product}, we shall need a higher-dimensional version of the Sarkisov theory introduced in Section \ref{subsec: Sarkisov}. For the proofs we refer to \cite{BLZ}, here we only introduce necessary definitions. We often write $X/B$ for a morphism $X\to B$, and denote by $\Bir(X/B)$ the subgroup of $\Bir(X)$ of maps defined over $B$. In particular, the group $\Bir(X/B)$ is naturally isomorphic to the group $\Bir(S)$ of the generic fibre $S$.

Let $r\geqslant 1$ be an integer. The notion of a rank $r$ fibration from Section \ref{subsec: Sarkisov} generalizes to higher dimensions. We omit technical details here and refer to \cite[Definition 3.1]{BLZ}. The essential thing is that a rank $r$ fibration is a morphism $\eta\colon X\to B$, $\dim X > \dim B \geqslant 0$, such that $X/B$ is a $\QQ$-factorial terminal Mori dream space with $\rho(X/B) = r$. If $\eta\colon X\to B$ is a surjective morphism between normal varieties, then $X/B$ is a rank $1$ fibration if and only if $X/B$ is a terminal Mori fibre space \cite[Lemma 3.3]{BLZ}.

We say that a rank~$r$ fibration $X/B$ \emph{factorises through} a rank~$r'$ fibration $X'/B'$, or that \emph{$X'/B'$ is dominated by $X/B$}, if the fibrations $X/B$ and $X'/B'$ fit in a commutative diagram
\[
\begin{tikzcd}[link]
	X \ar[rrr] \ar[dr,dashed] &&& B \\
	& X' \ar[r] & B' \ar[ur]
\end{tikzcd}
\]
where $X \rat X'$ is a birational contraction, and $B' \to B$ is a morphism with connected fibres. Note that $r \geqslant r'$.

The notion of a rank $2$ fibration corresponds to the notion of a higher-dimensional Sarkisov link. Any rank~$2$ fibration $Y/B$ factorises through exactly two rank~$1$ fibrations $X_1/B_1, X_2/B_2$ (up to isomorphisms), which both fit into a diagram
\[
\begin{tikzcd}[link]
	 \ar[dd]  & Y \ar[l,dotted] \ar[r,dotted] &  \ar[dd] \\ \\
	\ar[dr] && \ar[dl] \\
	& B &
\end{tikzcd}
\]
where the top dotted arrows are sequences of log-flips, and the other four arrows are morphisms of relative Picard rank~$1$.  In the above diagram, there are two possibilities for the sequence of two morphisms on each side of the diagram: either the first arrow is already a Mori fibre space, or it is a divisorial contraction and in this case the second arrow is a Mori fibre space.
This gives $4$ possibilities, which correspond to the usual definition of \emph{Sarkisov links of type \I, \II, \III\ and \IV}, as illustrated on Figure~\ref{fig:SarkisovTypes}. The induced birational map $\chi\colon X_1\rat X_2$ is called a {\it Sarkisov link} then.

\begin{figure}[ht]
\[
{
\def\arraystretch{2.2}
\begin{array}{cc}
\begin{tikzcd}[ampersand replacement=\&,column sep=1.3cm,row sep=0.16cm]
\ar[dd,"\rm div",swap]  \ar[rr,dotted,-] \&\& X_2 \ar[dd,"\rm fib"] \\ \\
X_1 \ar[uurr,"\chi",dashed,swap] \ar[dr,"\rm fib",swap] \&  \& B_2 \ar[dl] \\
\& B_1 = B \&
\end{tikzcd}
&
\begin{tikzcd}[ampersand replacement=\&,column sep=.8cm,row sep=0.16cm]
\phantom{X}\ar[dd,"\rm div",swap]  \ar[rr,dotted,-] \&\& \ar[dd,"\rm div"] \\ \\
X_1 \ar[rr,"\chi",dashed,swap] \ar[dr,"\rm fib",swap] \&  \& X_2 \ar[dl,"\rm fib"] \\
\& B_1 = B = B_2 \&
\end{tikzcd}
\\
\I & \II
\\
\begin{tikzcd}[ampersand replacement=\&,column sep=1.3cm,row sep=0.16cm]
X_1 \ar[ddrr,"\chi",dashed,swap] \ar[dd,"\rm fib",swap]  \ar[rr,dotted,-] \&\& \ar[dd,"\rm div"] \\ \\
B_1 \ar[dr] \& \& X_2 \ar[dl,"\rm fib"] \\
\& B = B_2 \&
\end{tikzcd}
&
\begin{tikzcd}[ampersand replacement=\&,column sep=1.7cm,row sep=0.16cm]
X_1 \ar[rr,"\chi",dotted,swap] \ar[dd,"\rm fib",swap]  \&\& X_2 \ar[dd,"\rm fib"] \\ \\
B_1 \ar[dr] \& \& B_2 \ar[dl] \\
\& B \&
\end{tikzcd}
\\
\III & \IV
\end{array}
}
\]
\caption{The four types of Sarkisov links.}
\label{fig:SarkisovTypes}
\end{figure}
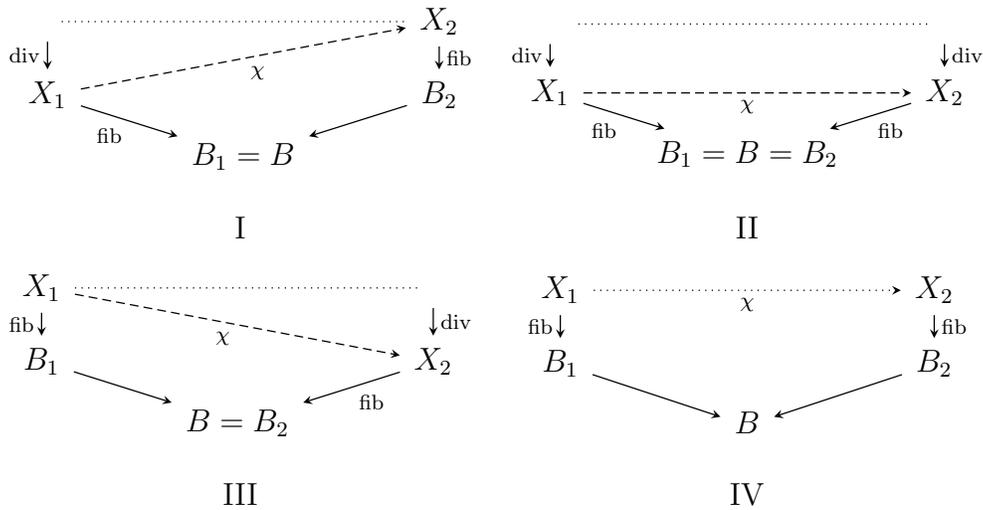

The definition of a rank $r$ fibration implies that every rank~$3$ fibration $T/B$ dominates only finitely many rank~1 and rank~$2$ fibrations. Moreover, there are only finitely many Sarkisov links $\chi_i$ dominated by $T/B$, up to isomorphism of Mori fibre space, and they fit in a relation
	\[
	\chi_t \circ \dots \circ \chi_1 = \id,
	\]
see \cite[Proposition 4.3]{BLZ}. Note that this relation is not uniquely determined by $T/B$, but only up to replacing the links by compositions with isomorphisms (which changes the relation by applying trivial relations, see below for the definition), taking the relation up to cyclic permutation or reversing the order by taking inverse. We say that this is an \emph{elementary relation} between Sarkisov links, coming from the rank $3$ fibration $T/B$.
We call \emph{trivial relations} those of the form $\chi\circ\chi^{-1}=\id$ and $\chi'=\beta\circ\chi\circ\alpha$ for automorphisms $\alpha,\beta$.

Let $X/B$ be a Mori fibre space. We denote by $\BirMori(X)$ the {\it groupoid} of birational maps between Mori fibre spaces birational to $X$. The key result that will be used in the construction of the homomorphism is the following

\begin{thm}[{\cite[Theorem 4.29]{BLZ}, see also \cite[Proposition 3.15]{LamyZimmermann}}]\label{thm: sarkisov}
	Let $X/B$ be a terminal Mori fibre space.
	\begin{enumerate}
		\item\label{sarkisov1} The groupoid $\BirMori(X)$ is generated by Sarkisov links and isomorphisms of Mori fibre spaces.
		\item\label{sarkisov2} Any relation between Sarkisov links in $\BirMori(X)$ is generated by elementary relations and trivial relations.
	\end{enumerate}
\end{thm}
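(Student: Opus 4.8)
Since this is precisely the higher-dimensional Sarkisov program in its ``groupoid'' form --- part~\ref{sarkisov1} is the Sarkisov factorisation theorem and part~\ref{sarkisov2} is the description of the relations among Sarkisov links --- I would not give a self-contained proof, but invoke it from the literature: the statement is exactly \cite[Theorem~4.29]{BLZ} (see also \cite[Proposition~3.15]{LamyZimmermann}), and in this paper it is used as a black box. For orientation, here is the shape of the argument one carries out within the formalism of rank~$r$ fibrations recalled above.

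\emph{Part~\ref{sarkisov1}.} Let $\psi\colon X_1\rat X_2$ be a birational map between terminal Mori fibre spaces $X_1/B_1$ and $X_2/B_2$. Fix an ample divisor $A_2$ on $X_2$ and let $\mathcal{H}_1$ be the strict transform on $X_1$ of the mobile linear system $\lvert nA_2\rvert$ for $n\gg 0$. To the pair $(X_1,\mathcal{H}_1)$ one attaches a \emph{Sarkisov degree} $(\mu_1,\lambda_1,e_1)$ lying in a well-ordered set: $\mu_1$ is the unique positive rational number for which $K_{X_1}+\mu_1\mathcal{H}_1$ is numerically trivial over $B_1$, $\lambda_1$ is the canonical threshold of $(X_1,\mathcal{H}_1)$, and $e_1$ counts the crepant exceptional divisors over $X_1$ computing $\lambda_1$. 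Running a suitable relative MMP with scaling --- on $X_1$ itself, or on a $\QQ$-factorial terminal model extracting the valuation computing $\lambda_1$ --- one produces a rank~$2$ fibration over $B_1$ dominating $X_1/B_1$, that is, a Sarkisov link $\chi\colon X_1\rat X_1'$, such that $\psi\circ\chi^{-1}\colon X_1'\rat X_2$ has strictly smaller Sarkisov degree. Since a birational map of minimal Sarkisov degree between Mori fibre spaces is an isomorphism (the Noether--Fano criterion), and the degree cannot decrease indefinitely, $\psi$ decomposes as a finite composition of Sarkisov links and isomorphisms. Termination of the MMPs appearing here holds in characteristic zero, as klt pairs admit minimal models or Mori fibre spaces via the MMP with scaling. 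This is the classical Sarkisov program (Corti in dimension three, Bruno--Matsuki, and Hacon--McKernan in general), repackaged so that rank~$2$ fibrations \emph{are} Sarkisov links.

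\emph{Part~\ref{sarkisov2}.} Consider the presentation of the groupoid $\BirMori(X)$ whose generators are the Sarkisov links and the isomorphisms of Mori fibre spaces, and whose relators are the trivial relations together with the elementary relations coming from rank~$3$ fibrations; one must show that any word in Sarkisov links representing the identity is a product of conjugates of these relators. The proof is an induction on the rank. A rank~$3$ fibration $T/B$ dominates only finitely many rank~$1$ and rank~$2$ fibrations, and these assemble into a single elementary relation, its ``piece'' being a simply connected $2$-dimensional polytope whose vertices are Mori fibre spaces and whose edges are Sarkisov links. One then breaks an arbitrary relation into pieces of this type by repeatedly factoring the maps involved through higher-rank fibrations and, when two distinct rank~$2$ fibrations dominate ``the same'' rank~$3$ configuration, connecting them by running MMPs in families over a curve. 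The procedure halts because the fibrations that can occur inside a fixed relation form a bounded family, which ultimately rests on the boundedness of Fano-type varieties \cite{Birkar}.

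The genuinely hard point is \ref{sarkisov2}: while \ref{sarkisov1} is by now standard, controlling \emph{all} relations requires a robust theory of rank~$r$ fibrations --- in particular that passing from a rank~$r$ to a dominating rank~$(r+1)$ fibration is well-behaved and that the associated polytopes glue correctly --- together with the boundedness inputs that force the induction to terminate. Both are exactly the technical core of \cite{BLZ}, which we invoke directly rather than reprove.
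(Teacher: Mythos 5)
Your proposal matches the paper exactly: this theorem is stated as an external result cited from \cite[Theorem 4.29]{BLZ} (see also \cite[Proposition 3.15]{LamyZimmermann}) with no proof given in the text, and your decision to invoke it as a black box is precisely what the authors do. Your accompanying sketch of the Sarkisov-degree descent for part (1) and the rank-$3$-fibration/elementary-relation analysis for part (2) is an accurate summary of the argument in \cite{BLZ}, but it is supplementary orientation rather than a divergence from the paper's treatment.
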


\subsection{The covering genus}\label{sec: covering genus}

Following the ideas of \cite{BLZ} and \cite{BlancYasinsky}, we shall introduce some numerical invariants of Sarkisov links which allows to control the occurrence of links in elementary relations. In \cite{BLZ} such an invariant was the {\it covering gonality} of the centre $\Gamma$ of a link, i.e.
\[
\covgon(\Gamma) = \min \left  \{ c > 0 \
\left |
\parbox{4.2in}{
	\begin{center}
		There is $U\subseteq \Gamma$ open and dense such that each point $x\in U$ is contained in an irreducible curve $C \subseteq \Gamma$ of {\bf gonality} $c$.
	\end{center}
}
\right \}\right..
\]
Then the crucial part of the proof was that the gonality of suitable links can take arbitrary large values. However, in our case, the gonality of the links will always be bounded by~$6$. Therefore, we consider the genus instead of the gonality and define, as in \cite{Lazarsfeld}, the analogous notion of the \emph{covering genus}
\[
\cg(\Gamma) = \min \left  \{ c \geqslant 0 \
\left |
\parbox{4.2in}{
	\begin{center}
		There is  $U\subseteq \Gamma$ open and dense such that each point $x\in U$ is contained in an irreducible curve $C \subseteq \Gamma$ of {\bf genus} $c$.
	\end{center}
}
\right \}\right..
\]
It will be even more convenient for us to use the following
\begin{mydef}[cf. {\cite[Definition 1.4]{Bastianelli}}]\label{def: covering family}
	Let $X$ be an irreducible projective variety. A \emph{covering family of curves of genus $g$} on $X$ is a smooth morphism of algebraic varieties
	\[
	\pi\colon \mathscr{C}\to T
	\] together with a dominant morphism $f\colon\mathscr{C}\to X$ which satisfies the following:
	\begin{enumerate}
		\item For  $t\in T$ general, the fibre $C_t=\pi^{-1}(t)$ is a smooth projective curve with $g(C_t)=g$;
		\item For   $t\in T$ general, the map $f_t\colon C_t\to X$ is birational onto its image.
	\end{enumerate}
\end{mydef}

\begin{rem}\label{rem: properties of covering family}
	Assume we are in the setting of Definition \ref{def: covering family}.

	\begin{enumerate}
		\item Resolving the singularities of $T$, if needed, we may assume that $T$ and $\mathscr{C}$ are smooth. Moreover, restricting to a suitable suvariety of $T$ we may suppose that $\dim\mathscr{C}=\dim X$, so the morphism $f\colon \mathscr{C}\to X$ is generically finite.
		\item  The \emph{covering genus} $\cg(X)$ of $X$ is the least integer $g\geqslant 0$ for which such a covering family exists, as proven in \cite[Lemma 2.23]{BLZ} in the case of gonality, but the proof is exactly the same for the genus. In particular, we recover the notion of \emph{covering genus} given in  \cite{Voisin}, defined with covering families.
	\end{enumerate}
\end{rem}

\begin{ex}\label{ex: covgen}
	Varieties $X$ of covering genus $\cg(X)=0$ (respectively $\covgon(X)=1$) are exactly uniruled varieties. If $S$ is a K3 surface then $\cg(S)=1$, and if $A$ is a very general abelian surface then $\cg(A)=2$ \cite[\S 1]{Lazarsfeld}.  If $X_d$ is a smooth hypersurface of degree $d$ and dimension $n$ in $\PP_\CC^n$, then $\cg(X_d)\sim d^2/2$, see \cite[Remark 3.7]{Lazarsfeld}.
\end{ex}

In the sequel, we will often bound the covering genus of some families of varieties. This also bounds the covering gonality, and is thus stronger than bounding the covering gonality as in \cite{BLZ}.

\begin{lem}\label{KXgeacg}
Let $X$ be a smooth projective irreducible variety, and let $a\geqslant 0$ be an integer such that the union of all irreducible curves $C\subseteq X$ with $C\cdot K_X<a$ is not dense. Then, the covering genus of $X$ satisfies $\cg(X)\geqslant \frac{a}{2}+1$
\end{lem}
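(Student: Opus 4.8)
The plan is to argue by contradiction. Set $g:=\cg(X)$. I will show that if $2g-2<a$, then the union of all irreducible curves $C\subseteq X$ with $C\cdot K_X<a$ is dense in $X$, contradicting the hypothesis; hence $2g-2\geqslant a$, i.e.\ $\cg(X)=g\geqslant\frac{a}{2}+1$.

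First I would invoke Remark~\ref{rem: properties of covering family} to fix a covering family of curves of genus $g$ on $X$, say $\pi\colon\mathscr{C}\to T$ and $f\colon\mathscr{C}\to X$, arranged so that $\mathscr{C}$ and $T$ are smooth, $\pi$ is smooth, and $f$ is dominant and generically finite with $\dim\mathscr{C}=\dim X$. Then for general $t\in T$ the fibre $C_t=\pi^{-1}(t)$ is a smooth projective curve of genus $g$ and $f_t:=f|_{C_t}$ is birational onto its image $C_t':=f_t(C_t)$, an irreducible curve of $X$.

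The heart of the proof is the bound $K_X\cdot C_t'\leqslant 2g-2$ for general $t$. Write $K_{\mathscr{C}}=f^*K_X+R$, where $R$ is the ramification divisor of $f$; this $R$ is effective, since in characteristic $0$ a generically finite dominant morphism of smooth varieties is generically \'etale, so $f^*\Omega_X\to\Omega_{\mathscr{C}}$ is generically an isomorphism and taking top exterior powers gives an inclusion $f^*\omega_X\hookrightarrow\omega_{\mathscr{C}}$. As $\Supp R$ is a proper closed subset of $\mathscr{C}$ (or $R=0$) and $\pi$ is open, for general $t$ one has $C_t\not\subseteq\Supp R$, hence $R\cdot C_t\geqslant 0$. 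On the other hand $\omega_{\mathscr{C}}=\omega_{\mathscr{C}/T}\otimes\pi^*\omega_T$, where $\pi^*\omega_T$ restricts trivially to the fibre $C_t$ and $\omega_{\mathscr{C}/T}|_{C_t}\simeq\omega_{C_t}$; hence $K_{\mathscr{C}}\cdot C_t=\deg\omega_{C_t}=2g-2$. Since $f_t$ has degree $1$, the projection formula gives $K_X\cdot C_t'=f^*K_X\cdot C_t$, and therefore
\[
K_X\cdot C_t' = K_{\mathscr{C}}\cdot C_t - R\cdot C_t \leqslant 2g-2 .
\]

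Finally I would observe that the curves $C_t'$, for $t$ ranging over a dense open $T^\circ\subseteq T$ on which the above holds, cover a dense subset of $X$: since $\pi$ is open, $\pi^{-1}(T^\circ)$ is dense in $\mathscr{C}$, and since $f$ is dominant its image $f(\pi^{-1}(T^\circ))$ is a dense constructible subset of $X$, each of whose points lies on some $C_t'$ with $K_X\cdot C_t'\leqslant 2g-2$. Thus the union of all irreducible curves $C\subseteq X$ with $C\cdot K_X\leqslant 2g-2$ is dense in $X$; if $2g-2<a$, this union is contained in the union of the curves with $C\cdot K_X<a$, which is then dense, a contradiction. The step I expect to need the most care is not a computation but the verification that the reductions of Remark~\ref{rem: properties of covering family} can be carried out simultaneously (smoothness of $\mathscr{C},T,\pi$ together with $f$ generically finite) and that, after them, the members $C_t'$ still sweep out a dense subset and satisfy $C_t\not\subseteq\Supp R$; granting this, the rest is just the ramification formula and adjunction on a general fibre.
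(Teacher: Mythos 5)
Your proof is correct and is essentially the paper's argument presented in contrapositive form: both rest on the same three ingredients, namely the effectiveness of the ramification divisor $R$ in $K_{\mathscr{C}}=f^*K_X+R$, the identity $K_{\mathscr{C}}\cdot C_t=2g-2$ on a general fibre, and the projection formula identifying $f^*K_X\cdot C_t$ with $K_X\cdot C_t'$. The paper argues directly (a general member satisfies $C\cdot K_X\geqslant a$ because the family is covering, whence $2g-2\geqslant a$), but the computation is identical and your extra care about $C_t\not\subseteq\Supp R$ and the density of $\bigcup_t C_t'$ only makes explicit what the paper leaves implicit.
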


\begin{proof}We consider a covering family of genus $g=\cg(X)$ on $X$, given by $\pi\colon \mathscr{C}\to T$ and $f\colon \mathscr{C}\to X$ as in Definition \ref{def: covering family}. By Remark \ref{rem: properties of covering family} we may assume $\mathscr{C}$ and $T$ smooth. By the same remark, restricting to a suitable subvariety $T_0\hookrightarrow T$, we may assume that there is a morphism $f_0\colon\mathscr{C}_0\to X$ which is generically finite, so that we have a commutative diagram
	\[
	\xymatrix@R=6pt@C=20pt{
		\mathscr{C}\ar[dr]_{\pi}\ar[dd]_{f} &&&& \mathscr{C}_0\ar[dl]^{\pi_0}\ar@{_{(}->}[llll]\ar[dd]^{f_0}\\
		& T && T_0\ar@{_{(}->}[ll] &\\
		X\ar@{=}[rrrr] &&&&X.
	}
	\]
We now show that $g\geqslant \frac{a}{2}+1$. Let $t\in T_0$ be a general point, let $C_0=\pi_0^{-1}(t)$ and $C=f(C_0)$, birational to $C_0$ via $f$. As $t$ is a general point and the family is covering, we have $C\cdot K_X \geqslant a$.
Since $f_0\colon\mathscr{C}_0\to X$ is generically finite, we have
\[
K_{\mathscr{C}_0}\equiv f_0^*K_{X}+R,
\]
where $R$ is an effective (ramification) divisor. Note that
\[
g(C_0)=\frac{1}{2}\deg K_{C_0}+1=\frac{1}{2}\left ( C_0\cdot K_{\mathscr{C}_0}\right ) +1
\]
by the adjunction formula. Since $R$ is effective, we have
\[
C_0\cdot K_{\mathscr{C}_0}\geqslant C_0\cdot f_0^* K_{X}=(f_0)_*(C_0)\cdot K_X= C\cdot K_X\geqslant a,
\]
and we conclude that $g=g(C)=g(C_0)\geqslant \frac{a}{2}+1$.
\end{proof}

\begin{lem}\label{lem: cyclic covering covgen}
Let $m,d\geqslant 1$, $n\geqslant 2$ be integers, and let $p\in \CC[x_0,\ldots,x_n]$ be an irreducible homogeneous polynomial of degree $md$, that defines a smooth hypersurface in $\PP^n$, and assume that
$
e=(m-1)d-n-1\geqslant 1.
$
The cyclic $m$-th covering
\[
\Delta=\{[w:x_0:\cdots:x_n]\in \PP(d,1,\ldots,1)\mid w^m=p(x_0,\ldots,x_n)\}
\]
is a smooth variety of dimension $n$, with ample canonical divisor $K_\Delta$, whose covering genus satisfies $\cg(\Delta)\geqslant \frac{e}{2}+1$. Moreover, the $m$-th covering $\Delta\to \PP^n$, $[w:x_0:\cdots:x_n]\mapsto [x_0:\cdots:x_n]$ is given by the linear system $\frac{1}{e}K_\Delta$.
\end{lem}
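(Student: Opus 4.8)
The statement bundles together four claims about the cyclic cover $\Delta = \{w^m = p(x_0,\dots,x_n)\} \subseteq \PP(d,1,\dots,1)$: smoothness and dimension, ampleness of $K_\Delta$, the lower bound $\cg(\Delta) \geq \frac{e}{2}+1$, and the identification of the covering morphism $\Delta \to \PP^n$ with the linear system $\frac{1}{e}K_\Delta$. The plan is to dispose of the geometry first, then compute $K_\Delta$ explicitly, and finally feed the computation into Lemma~\ref{KXgeacg}. First I would check that $\Delta$ is smooth of dimension $n$: the weighted projective space $\PP(d,1,\dots,1)$ has its only singular point at $[1:0:\cdots:0]$, which does not lie on $\Delta$ since $p$ has positive degree; away from that point $\Delta$ is cut out by a single quasi-homogeneous equation whose partial derivatives are $(m w^{m-1}, -\partial_{x_0}p, \dots, -\partial_{x_n}p)$, and these vanish simultaneously on $\Delta$ only where all $\partial_{x_i}p = 0$ and $w^m = p = 0$, i.e.\ at a singular point of the hypersurface $\{p=0\}$, which is excluded by hypothesis. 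Hence $\Delta$ is smooth of dimension $n$.

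Next I would compute the canonical class. The adjunction formula on $\PP:=\PP(d,1,\dots,1)$ gives $K_\Delta = (K_\PP + \Delta)|_\Delta$. Since $\mathcal{O}_\PP(1)$ makes sense on $\PP(d,1,\dots,1)$ with $K_\PP = \mathcal{O}_\PP(-(d + n + 1))$ (the sum of the weights, negated) and $\Delta$ is a hypersurface of degree $md$, we get $K_\Delta = \mathcal{O}_\Delta(md - d - n - 1) = \mathcal{O}_\Delta((m-1)d - n - 1) = \mathcal{O}_\Delta(e)$. In particular, since $e \geq 1$ by hypothesis and $\mathcal{O}_\PP(1)$ is ample on the weighted projective space (it is, after passing to a Veronese it is very ample, and $\Delta$ avoids the singular point so $\mathcal{O}_\Delta(1)$ is honestly ample), $K_\Delta = \mathcal{O}_\Delta(e)$ is ample. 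The covering morphism $f\colon \Delta \to \PP^n$, $[w:x_0:\cdots:x_n]\mapsto[x_0:\cdots:x_n]$, is by definition given by the global sections $x_0,\dots,x_n$ of $\mathcal{O}_\Delta(1)$, i.e.\ by the complete linear system $|\mathcal{O}_\Delta(1)| = |\frac{1}{e}K_\Delta|$, which is the last assertion.

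For the covering genus bound I would invoke Lemma~\ref{KXgeacg} with $a = e$: it suffices to show that the union of irreducible curves $C \subseteq \Delta$ with $C \cdot K_\Delta < e$ is not dense. Since $K_\Delta = \mathcal{O}_\Delta(e)$ and $\mathcal{O}_\Delta(1) = f^*\mathcal{O}_{\PP^n}(1)$, for any irreducible curve $C$ one has $C \cdot K_\Delta = e \cdot (C \cdot f^*H) = e \cdot (\deg f|_C)\cdot(\deg f(C))$ where $H$ is a hyperplane in $\PP^n$. If $f(C)$ is a curve this is $\geq e$; the only way to get $C\cdot K_\Delta < e$ is $C \cdot f^*H = 0$, i.e.\ $f(C)$ is a point, i.e.\ $C$ is contained in a fibre of $f$. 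But each fibre of $f$ is finite (it is $\{w : w^m = p(x)\}$, at most $m$ points), so there are no such curves at all. Hence the bad locus is empty, a fortiori not dense, and Lemma~\ref{KXgeacg} yields $\cg(\Delta) \geq \frac{e}{2}+1$.

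The main obstacle I anticipate is purely technical bookkeeping around weighted projective space: making sure the adjunction computation $K_\PP = \mathcal{O}(-\sum \text{weights})$ and the ampleness of $\mathcal{O}_\PP(1)$ are applied correctly given that $\PP(d,1,\dots,1)$ is singular — but this is harmless precisely because $\Delta$ is disjoint from the unique singular point $[1:0:\cdots:0]$, so all the sheaves involved restrict to genuine invertible sheaves on the smooth variety $\Delta$ and the standard adjunction argument applies verbatim. Everything else — smoothness via the Jacobian criterion, the degree computation, and the fibre-dimension argument for Lemma~\ref{KXgeacg} — is routine once this point is settled.
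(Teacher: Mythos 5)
Your proposal is correct and follows essentially the same route as the paper: compute $K_\Delta=eH|_\Delta$ by adjunction on $\PP(d,1,\dots,1)$, observe every irreducible curve meets $H|_\Delta$ positively so that $C\cdot K_\Delta\geqslant e$, and feed this into Lemma~\ref{KXgeacg}. The only (harmless) divergences are that you spell out the quasi-smoothness/Jacobian check that the paper leaves implicit, and you get ampleness of $K_\Delta$ by restricting the ample $\mathcal{O}(1)$ rather than via Nakai--Moishezon as the paper does.
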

\begin{proof}
As the hypersurface of $\PP^n$ given by $p=0$ is smooth, so is $\Delta$. The canonical divisor of the toric variety $\PP(d,1,\ldots,1)$ is equal to $-(w=0)-(x_0=0)-\cdots -(x_n=0)$ and is thus equivalent to $-(n+1+d)H$, where $H$ is a hyperplane: the zero set of a linear polynomial in $x_0,\ldots,x_n$. By the adjunction formula, we find that $K_\Delta=(md-n-1-d)H|_{\Delta}=eH|_{\Delta}$. For each irreducible curve $C\subseteq \Delta$, one has $C\cdot H\geqslant 1$, which implies that $C\cdot K_\Delta\geqslant e$. As $e\geqslant 1$, $K_\Delta$ is ample by the Nakai-Moishezon criterion.
It then follows from Lemma~\ref{KXgeacg} that $\cg(\Delta)\geqslant \frac{e}{2}+1.$ The $m$-th covering $\Delta\to \PP^n$ is given by $H|_\Delta$, which is equal to $\frac{1}{e}K_\Delta$, as we already calculated.
\end{proof}

Finally, we will need the following result which is known to experts:

\begin{lem}\label{lem: birational map of canonically polarized}
	Let $X$ and $Y$ be smooth projective varieties with ample canonical divisors $K_X$ and $K_Y$. Then every birational map $\varphi\colon X\rat Y$ is biregular.
\end{lem}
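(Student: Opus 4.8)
The plan is to reconstruct each of $X$ and $Y$ from its canonical ring and to observe that a birational map induces an isomorphism of these rings. Write $R(X)=\bigoplus_{m\geqslant 0}H^0(X,\mathcal{O}_X(mK_X))$ and $R(Y)=\bigoplus_{m\geqslant 0}H^0(Y,\mathcal{O}_Y(mK_Y))$ for the canonical rings. First I would recall the classical fact that for a projective variety $Z$ and an ample line bundle $\mathcal L$ on $Z$ one has a canonical identification $Z\simeq \Proj\big(\bigoplus_{m\geqslant 0}H^0(Z,\mathcal L^{\otimes m})\big)$, the grading being given by $m$; applied with $\mathcal{L}=\mathcal{O}_X(K_X)$ and $\mathcal{L}=\mathcal{O}_Y(K_Y)$ this yields $X\simeq\Proj R(X)$ and $Y\simeq\Proj R(Y)$, the rational map $X\rat\Proj R(X)$ defined by a sufficiently divisible multiple of $K_X$ being an isomorphism (and likewise for $Y$).

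Next I would use the birational invariance of pluricanonical sections. Since $X$ and $Y$ are smooth and projective, pulling back rational pluricanonical forms along $\varphi$ and using that such forms extend over closed subsets of codimension $\geqslant 2$ (Hartogs), one gets isomorphisms $\varphi^\ast\colon H^0(Y,\mathcal{O}_Y(mK_Y))\iso H^0(X,\mathcal{O}_X(mK_X))$ for every $m\geqslant 0$, compatible with multiplication of sections. Hence $\varphi^\ast$ is an isomorphism of graded $\CC$-algebras $R(Y)\iso R(X)$, and taking $\Proj$ produces an isomorphism of schemes $\Phi\colon X\simeq\Proj R(X)\iso\Proj R(Y)\simeq Y$.

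It then remains to check that $\Phi=\varphi$. Let $V\subseteq X$ be the dense open subset on which $\varphi$ restricts to an isomorphism onto an open subset of $Y$. Under the identifications of the first paragraph the rational maps $X\rat\Proj R(X)$ and $Y\rat\Proj R(Y)$ are the identity, so the fact that $\Phi$ is built from $\varphi^\ast$ shows that $\Phi$ and $\varphi$ agree on $V$; since $Y$ is separated and $V$ is dense, the two morphisms coincide, so $\varphi=\Phi$ is a morphism. Running the same argument for $\varphi^{-1}$ shows $\varphi$ is biregular. The only genuinely delicate point is the verification that the abstract isomorphism coming from $\varphi^\ast$ on canonical rings is exactly the one induced by $\varphi$ — this is where one uses the compatibility of the $\Proj$ description with the pluricanonical map to projective space; the rest is standard bookkeeping. (Alternatively, one can resolve the graph of $\varphi$ by a smooth projective $W$ with birational morphisms $f\colon W\to X$, $h\colon W\to Y$, write $K_W=f^\ast K_X+E=h^\ast K_Y+F$ with $E,F$ effective and $f$-, resp. $h$-exceptional, and use the negativity lemma together with ampleness of $K_X,K_Y$ to conclude that neither $f$ nor $h$ contracts a divisor and in fact both are isomorphisms, so $\varphi=h\circ f^{-1}$ is biregular.)
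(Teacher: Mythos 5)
Your proof is correct and follows essentially the same route as the paper: both use the birational invariance of the pluricanonical spaces $\Cohom^0(\cdot,mK)$ to get an isomorphism of canonical rings, then recover $X$ and $Y$ as $\Proj$ of these rings using ampleness of the canonical divisors. Your extra care in checking that the induced isomorphism agrees with $\varphi$ on a dense open set (and your alternative via the negativity lemma on a resolution of the graph) are sound but not needed beyond what the paper records.
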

\begin{proof}
	Since $K_X$ and $K_Y$ are ample, the canonical rings $R(X,K_X)=\bigoplus_{m=0}^{\infty}\Cohom^0(X,mK_X)$ and $R(Y,K_Y)=\bigoplus_{m=0}^{\infty}\Cohom^0(Y,mK_Y)$ are finitely generated. Since $\varphi$ is birational, for each $m$ the natural homomorphisms
	$
	\varphi^*\colon\Cohom^0(Y,mK_Y)\to \Cohom^0(X,mK_X)
	$
	are isomorphisms, see e.g. \cite[Lemma 6.3]{UenoClassificationTheory}. Now the isomorphism $\varphi^*\colon R(Y,K_Y)\iso R(X,K_X)$ induces a biregular isomorphism
	$
	\varphi\colon X\simeq\Proj R(X,K_X)\iso\Proj R(Y,K_Y)\simeq Y,
	$
	as desired.
\end{proof}

\subsection{Elementary relations over bases of codimension at least $3$}\label{sec: covgen bound}
In this section, we bound the covering genus of centres of Sarkisov links of type \II~arising in elementary relations over a base of codimension at least $3$ (Proposition~\ref{prop: covgen link bound} below).

\medskip

As explained in \cite{BLZ}, the following is a consequence of the BAB conjecture, which was recently established in arbitrary dimension by C.~Birkar \cite{Birkar}.

\begin{prop} \cite[Proposition 5.1]{BLZ}\label{pro:BAB}
Let $n$ be an integer, and let $\mathcal{Q}$ be the set of weak Fano terminal varieties of dimension~$n$.
There are integers $d,l,m\geqslant 1$, depending only on $n$, such that for each $X \in \mathcal{Q}$ the following hold:
\begin{enumerate}
\item
$\dim(H^0(-mK_X))\leqslant l$;
\item
The linear system $\lvert -mK_X \rvert$ is base-point free;
\item \label{BAB:3}
The morphism $\varphi\colon X\stackrel{\lvert -mK_X\rvert}{\longto} \mathbb{P}^{\dim(H^0(-mK_X))-1}$ is birational onto its image and contracts only curves $C\subseteq X$ with $C\cdot K_X=0$;
\item
$\deg \varphi(X) \leqslant d$.
\end{enumerate}
\end{prop}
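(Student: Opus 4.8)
The plan is to deduce all four assertions from a single input, namely Birkar's theorem \cite{Birkar}: the weak Fano terminal varieties of fixed dimension $n$ form a bounded family. Concretely, I would start from a projective morphism $\pi\colon\mathcal{X}\to S$ with $S$ of finite type over the base field, such that every $X\in\mathcal{Q}$ is isomorphic to a geometric fibre of $\pi$. After a finite stratification of $S$ — harmless, since $\mathcal{Q}$ is still covered by the fibres — I may assume that $\pi$ is flat, that some reflexive power $\mathcal{L}:=\omega_{\mathcal{X}/S}^{[m_0]}$ is a line bundle restricting to $\mathcal{O}_X(m_0K_X)$ on each fibre $X=\mathcal{X}_s$, and (by cohomology and base change together with Noetherian induction on $S$) that $s\mapsto \dim\Cohom^0(\mathcal{X}_s,\mathcal{L}^{-a}|_{\mathcal{X}_s})$ is constant on each stratum for every fixed $a\geqslant 1$; in particular $\pi_*\mathcal{L}^{-a}$ is locally free and commutes with base change.

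The heart of the argument is to produce one integer $m$ that works uniformly. For a single $X\in\mathcal{Q}$ this is classical: since $-K_X$ is nef and big and $X$ is klt, the base-point-free theorem gives that $-K_X$ is semiample, so $-m_XK_X$ is base-point free for some $m_X\geqslant 1$ and defines a birational morphism onto the anticanonical model of $X$ (birational because $-K_X$ is big), contracting exactly the curves $C$ with $C\cdot K_X=0$. To make $m_X$ independent of $X$, I would run Noetherian induction on $S$: over the generic point of each irreducible component of $S$ one fibre has $\mathcal{L}^{-a}$ base-point free for some $a$, and the locus of $s\in S$ for which $\mathcal{L}^{-a}|_{\mathcal{X}_s}$ is base-point free is open, since its complement is the image under the proper map $\pi$ of the closed base locus of $\mathcal{L}^{-a}$ in $\mathcal{X}$. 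Hence base-point freeness holds over a dense open of that component; the finitely many remaining locally closed strata are handled inductively, and one takes $m$ to be a common multiple of $m_0$ and of the finitely many exponents so obtained. The same openness principle, applied to the locus where the morphism induced by $\mathcal{L}^{-a}$ fails to be birational onto its image or contracts a curve with $C\cdot K_X\neq 0$, upgrades $m$ (again to a common multiple) so that properties (2) and (3) hold for \emph{every} $X\in\mathcal{Q}$ at once.

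Once $m$ is fixed, (1) and (4) are immediate. For (1), $\dim\Cohom^0(X,-mK_X)$ is constant on each of the finitely many strata, so $l:=\sup_{X\in\mathcal{Q}}\dim\Cohom^0(X,-mK_X)<\infty$. For (4), since $\varphi$ is birational onto its image and $-mK_X=\varphi^*\mathcal{O}(1)$ away from the contracted locus, one gets $\deg\varphi(X)=(-mK_X)^{n}=m^{n}(-K_X)^{n}$; the anticanonical volume $(-K_X)^{n}$ is locally constant on the stratified base, hence bounded by some $d'$, and one sets $d=m^{n}d'$. The one genuinely delicate point — and the step I would expect to cost the most care — is the uniformity of $m$ in the second paragraph: that a \emph{single} constant trivialises the anticanonical morphism for the whole family simultaneously. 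This is precisely where boundedness is used in an essential way, rather than merely the finiteness of each individual invariant, and it rests on combining Birkar's theorem with the openness of base-point freeness (and of the ``birational, contracting only $K$-trivial curves'' condition) in flat families, via Noetherian induction.
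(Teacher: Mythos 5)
Your argument is correct and is essentially the proof given in the cited source \cite[Proposition 5.1]{BLZ} (the present paper only quotes the result without reproving it): Birkar's boundedness theorem to put $\mathcal{Q}$ in finitely many flat families, the base-point-free theorem to handle a single $X$, and Noetherian induction with cohomology-and-base-change to make $m$, $l$, $d$ uniform. The step you rightly single out as delicate --- spreading the semiampleness and the ``birational, contracting only $K$-trivial curves'' property from one fibre to a dense open of each stratum --- is exactly where the original proof also concentrates its effort, so there is no gap in approach.
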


As a consequence, the following result was proven in \cite{BLZ}.

\begin{cor} \label{cor:BAB} \cite[Corollary 5.2]{BLZ}
Let $\pi\colon Y \to X$ be the blow-up of a reduced but not necessarily irreducible codimension $2$ subvariety $\Gamma \subseteq X$, $Y \ps \widehat Y$ a pseudo-isomorphism, and assume that both $X$ and $\widehat Y$ are weak Fano terminal varieties of dimension~$n \geqslant 3$,
whose loci covered by curves with trivial intersection against the canonical divisor has codimension at least $2$.
Let $\varphi$ be the birational morphism associated to the linear system $\lvert -mK_X \rvert$, with $m$ given by  Proposition~$\ref{pro:BAB}$, and assume that $\Gamma$ is not contained in the exceptional locus $\mathrm{Ex}(\varphi)$.
Then through any point of $\Gamma \setminus \mathrm{Ex}(\varphi)$ there is an irreducible curve $C\subseteq \Gamma$ with $\gon(C)\leqslant d$ and $C\cdot (-mK_X)\leqslant d$, where $d$ is the integer from Proposition~$\ref{pro:BAB}$.
\end{cor}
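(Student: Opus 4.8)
The plan is to transport the problem to the image of the anticanonical morphism of $X$, where low-degree curves can be produced by hand, and then to bound the degree of the image of $\Gamma$ using the weak Fano hypothesis on $\widehat Y$.

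First I would apply Proposition~\ref{pro:BAB} to $X$ to fix the constants $m,l,d$ (depending only on $n=\dim X$): the linear system $|-mK_X|$ is base-point free, so the associated morphism $\varphi\colon X\to\PP^N$, with $N=\dim H^0(-mK_X)-1\leqslant l-1$, is birational onto its image, contracts only curves $C$ with $C\cdot K_X=0$, satisfies $\varphi^*\mathcal{O}_{\PP^N}(1)\sim -mK_X$, and has $\deg\varphi(X)\leqslant d$. Let $x\in\Gamma\setminus\mathrm{Ex}(\varphi)$ be the given point, let $\Gamma_0$ be an irreducible component of $\Gamma$ through $x$ (so $\dim\Gamma_0=n-2\geqslant 1$, as $\Gamma$ has codimension $2$ and $n\geqslant 3$), and put $y=\varphi(x)$. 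Since $x\notin\mathrm{Ex}(\varphi)$, the morphism $\varphi$ is an isomorphism onto its image on a neighbourhood of $x$; in particular $\varphi|_{\Gamma_0}\colon\Gamma_0\rat\varphi(\Gamma_0)$ is birational, is an isomorphism on an open dense $U\subseteq\Gamma_0$ containing $x$, and $\dim\varphi(\Gamma_0)=n-2$.

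Next I would build the curve. Cutting $\varphi(\Gamma_0)$ by $\dim\varphi(\Gamma_0)-1$ general hyperplanes of $\PP^N$ through $y$, Bertini gives an irreducible curve $C'\ni y$ with $\deg C'\leqslant\deg\varphi(\Gamma_0)$; projecting $C'$ from a general codimension-$2$ linear subspace of $\PP^N$ realises $\gon(C')\leqslant\deg C'$. A general such $C'$ is not contained in the image of the proper closed subset $\Gamma_0\setminus U$, so the closure $C\subseteq\Gamma_0\subseteq\Gamma$ of $(\varphi|_{\Gamma_0})^{-1}(C'\cap\varphi(U))$ is an irreducible curve, birational to $C'$ via $\varphi$, and it contains $x$ because $(\varphi|_{\Gamma_0})^{-1}(y)=x$. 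Then $\gon(C)=\gon(C')\leqslant\deg\varphi(\Gamma_0)$, and since $\varphi_*C=C'$ and $-mK_X\sim\varphi^*\mathcal{O}(1)$, we get $C\cdot(-mK_X)=C'\cdot\mathcal{O}_{\PP^N}(1)=\deg C'\leqslant\deg\varphi(\Gamma_0)$. So the whole statement is reduced to the single estimate $\deg\varphi(\Gamma_0)\leqslant d$, equivalently $(-mK_X)^{n-2}\cdot\Gamma_0\leqslant d$ (using that $\varphi|_{\Gamma_0}$ is birational onto its image).

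The hard part is this last degree bound, and it is where the remaining hypotheses enter: a codimension-$2$ subvariety of $\varphi(X)$ can a priori have arbitrarily large degree, so one must exploit that $X$ becomes, after blowing up $\Gamma$ and running a pseudo-isomorphism, the weak Fano terminal variety $\widehat Y$. The plan is to apply Proposition~\ref{pro:BAB} also to $\widehat Y$ (the same $d$ works, since $\dim\widehat Y=n$), and to combine it with the identification of $N^1(Y)$ and $N^1(\widehat Y)$ induced by the pseudo-isomorphism $Y\ps\widehat Y$, under which $-K_Y=\pi^*(-K_X)-E$ (with $E$ the exceptional divisor of $\pi$) corresponds to the nef and big class $-K_{\widehat Y}$. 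Transporting the anticanonical morphism $\widehat\varphi\colon\widehat Y\to\PP^{N'}$ back to $Y$ shows that $\pi^*(-mK_X)-mE$ defines a birational map onto a variety of degree $\leqslant d$; then, using the projection formula along $\pi$ and the intersection-theoretic identities for a codimension-$2$ blow-up (e.g. $\pi_*(E^2\cdot\pi^*\alpha)=-\alpha\cdot\Gamma$ in the smooth case, and a resolution of $\Gamma$ in general) to rewrite $(-mK_X)^{n-2}\cdot\Gamma_0$ in terms of anticanonical intersection numbers on $Y$ and on $\widehat Y$, one bounds it by $d$. The delicate point is carrying out this bookkeeping compatibly with the birational identifications so that the Proposition~\ref{pro:BAB} estimates for $X$ and for $\widehat Y$ can be used at the same time; granting it, the curve $C$ constructed above satisfies $\gon(C)\leqslant d$ and $C\cdot(-mK_X)\leqslant d$, as required.
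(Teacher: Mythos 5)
The first half of your argument --- reducing everything to the single inequality $\deg\varphi(\Gamma_0)=(-mK_X)^{n-2}\cdot\Gamma_0\leqslant d$ and then producing the curve $C$ by slicing $\varphi(\Gamma_0)$ with general hyperplanes through $\varphi(x)$ --- is correct and is the standard route (the paper itself only cites \cite[Corollary 5.2]{BLZ} and gives no proof). The problem is that you have left the actual content of the corollary, namely that degree bound, as a sketch, and the sketch as written does not close. Two concrete obstructions: (i) top intersection numbers of divisor classes are \emph{not} preserved by pseudo-isomorphisms --- a single flop already changes $D^n$ for the strict transform of a non-nef divisor $D$ --- so you cannot compute $(\pi^*(-mK_X)-mE)^n$ on $Y$ and identify it with $(-mK_{\widehat Y})^n$ on $\widehat Y$; only quantities like $h^0$, volumes, or intersections with curves avoiding the flipping loci survive the transport. (ii) Even working purely on $Y$, the expansion of $(\pi^*(-mK_X)-mE)^n$ contains the terms $E^j\cdot\pi^*(-mK_X)^{n-j}$ for all $j\geqslant 2$; only the $j=2$ term gives $-(-mK_X)^{n-2}\cdot\Gamma$, while the terms with $j\geqslant 3$ are governed by the Segre classes of $\Gamma$ in $X$ and have no sign or size control, so the term you want cannot be isolated and bounded by comparing total intersection numbers.

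The way to close the gap is to argue with linear systems rather than with intersection numbers. The pseudo-isomorphism gives $H^0(\widehat Y,-mK_{\widehat Y})\simeq H^0(Y,\pi^*(-mK_X)-mE)$, which sits inside $H^0(X,-mK_X)$ as the subspace of sections whose divisors have multiplicity at least $m$ along $\Gamma$. Hence $\widehat\varphi$, transported to $X$, is $\varphi$ followed by a linear projection, and every member of the corresponding subsystem of $\lvert -mK_X\rvert$ contains $\Gamma$. Taking $n-2$ general members $D_1,\dots,D_{n-2}$ of this subsystem, their images $\varphi(D_i)$ are hyperplane sections of $Z=\varphi(X)$, and $\varphi(\Gamma_0)$ is an irreducible component of $\varphi(D_1)\cap\dots\cap\varphi(D_{n-2})$ (there is no $(n-1)$-dimensional component, since the subsystem has no fixed divisor, the transported system being base-point free on $\widehat Y$ and $Y\dashrightarrow\widehat Y$ an isomorphism in codimension one). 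The refined B\'ezout theorem then gives $\deg\varphi(\Gamma_0)\leqslant\deg Z\leqslant d$, which is exactly the missing estimate; your curve construction then finishes the proof.
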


As the curves $C$ of Corollary~\ref{cor:BAB} are birationally sent by $\lvert -mK_X \rvert$ to curves of bounded degree in a projective space, Castelnuovo's bound implies that they also have bounded genus, see e.g.~\cite{Harris}. Hence, the covering genus of $\Gamma$ is bounded by some constant only depending on the dimension $n$.

The next result, crucial for our application to Severi-Brauer fibrations, bounds the covering genus of centres associated to a Sarkisov link of type \II. We consider a rank $2$ fibration $X/\widehat{B}$ that is dominated by a rank $3$ fibration $T/B$, and prove that the covering genus of the centres of the divisorial contractions in the rank $2$ fibration are bounded. The proof we give is the one of \cite[Proposition 5.3]{BLZ}, adapted to our situation. The difference in the assumptions is that we suppose $\dim(T)-\dim(B)\geqslant 3$, but  \cite[Proposition 5.3]{BLZ} supposes $\dim(T)-\dim(B)\geqslant 2$ and $\dim(X)-\dim(\widehat{B})=1$, and only proves that the covering gonality is bounded. In  Example~\ref{ExCovgon} below, we explain why the covering genus can actually not be bounded if one only assumes $\dim(T)-\dim(B)=2$, even in the case where $\dim(X)-\dim(\widehat{B})=1$.

We are interested in Sarkisov links of type \II.

\begin{mydef}
Let $\chi\colon X_1\dasharrow X_2$ be a Sarkisov link of type \II\ over $B_{\chi}$. As in Figure~\ref{fig:SarkisovTypes}, this corresponds to a diagram
\begin{equation}
	\begin{tikzcd}[ampersand replacement=\&,column sep=.8cm,row sep=0.16cm]
	Y_1\ar[dd,swap]  \ar[rr,dotted,-] \&\& \ar[dd] Y_2 \\ \\
	X_1 \ar[rr,"\chi",dashed,swap] \ar[dr,"\rm fib",swap] \&  \& X_2, \ar[dl,"\rm fib"] \\
	\& B_\chi \&
	\end{tikzcd}
	\label{eq: Link of type II with Y}
\end{equation}
where $Y_i\to X_i$ is a divisorial contraction and $X_i\to B_\chi$ is a Mori fibre spaces for $i=1,2$.

The \emph{covering genus} $\cg(\chi)$ of $\chi$ is defined to be the covering genus of the centre $\Gamma_1\subseteq X_1$ of the divisorial contraction $Y_1\to X_1$ associated to~$\chi$.
\end{mydef}
\begin{prop}\label{prop: covgen link bound}
For each $n\geqslant 3$, there is an integer $N(n)\geqslant 1$ such that the following holds. Let $\chi$ be a Sarkisov link of type \II\ between Mori fibre spaces of dimension $n$. If $\chi$ arises in an elementary relation induced by a rank $3$ fibration $T/B$ with $\dim(T)-\dim(B)\geqslant 3$, then both $\chi$ and $\chi^{-1}$ have covering genus smaller or equal to $N(n)$.
\end{prop}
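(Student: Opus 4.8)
The plan is to mimic the argument of \cite[Proposition~5.3]{BLZ}, using the covering genus in place of the covering gonality and exploiting the extra hypothesis $\dim(T)-\dim(B)\geqslant 3$. Write the elementary relation as coming from a rank $3$ fibration $T/B$. The rank $2$ fibration $X/\widehat B$ through which $\chi$ factors is dominated by $T/B$, so we have birational contractions $T\rat Y_1$, $T\rat Y_2$ and a rank $1$ fibration $\widehat X/B$ obtained by running a further step. First I would set up the Mori-dream-space structure of $T/B$: by the definition of a rank $3$ fibration, $T/B$ is $\QQ$-factorial terminal with $\rho(T/B)=3$, and its cone of movable divisors over $B$ is subdivided into finitely many Mori chambers; the chambers adjacent to a wall are exactly the rank $2$ fibrations dominated by $T/B$, and the interior chambers (vertices of the hexagon-type polytope) are the rank $1$ fibrations. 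In particular, $X/\widehat B$ fits between two rank $1$ fibrations dominated by $T/B$, say $X_1/B_\chi$ and another one; these are terminal Mori fibre spaces.

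Next I would pass to the generic fibre over $B$. Let $\eta\colon T\to B$ and let $F$ be the generic fibre, a variety of dimension $n':=\dim(T)-\dim(B)\geqslant 3$ over the function field $\CC(B)$; then $T/B$ corresponds to a rank $3$ fibration structure on $F$ over $\Spec\CC(B)$, and the rank $2$ and rank $1$ fibrations it dominates become, over the generic point, the corresponding fibrations of $F$. Spreading out, after shrinking $B$ we may assume all the relevant varieties $X_1,X_2,\widehat X,Y_1,Y_2$ are $\QQ$-factorial terminal with the loci covered by $K$-trivial curves of codimension $\geqslant 2$, and the divisorial contraction $Y_1\to X_1$ has an irreducible centre $\Gamma_1\subseteq X_1$ of codimension $\geqslant 2$. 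The key point, exactly as in \cite{BLZ}, is that one of the two rank $1$ fibrations is \emph{weak Fano} over the base: since $n'\geqslant 3$, a rank $1$ fibration over a point is a weak Fano terminal variety, and over a positive-dimensional base one restricts to a general fibre of $\widehat X\to B_\chi$, which is again weak Fano terminal of dimension $\geqslant 3$ (here the hypothesis $\dim(T)-\dim(B)\geqslant 3$, rather than $\geqslant 2$, is what guarantees that after restricting to a general fibre of $X_1/B_\chi$ we still land in dimension $\geqslant 3$, so that Proposition~\ref{pro:BAB} and Corollary~\ref{cor:BAB} apply).

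Then I would invoke Corollary~\ref{cor:BAB}: taking $X$ to be (a general fibre of) one of the weak Fano rank $1$ fibrations, $\widehat Y$ the pseudo-isomorphic model coming from the two-rays game inside $T/B$, and $\Gamma$ the centre $\Gamma_1$ of the divisorial contraction $Y_1\to X_1$ restricted to that fibre, one gets that through any general point of $\Gamma_1$ there is an irreducible curve $C\subseteq\Gamma_1$ with $\gon(C)\leqslant d$ and $C\cdot(-mK_X)\leqslant d$, with $d,m$ depending only on $n$. Since $C$ maps birationally under the bounded-degree morphism $\varphi=\varphi_{|-mK_X|}$ to a curve of degree $\leqslant d$ in a projective space of bounded dimension $l-1$, Castelnuovo's bound (see \cite{Harris}) gives $g(C)\leqslant \binom{d-1}{2}$ or some explicit function of $d$ and $l$ — in any case a bound depending only on $n$. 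This yields a covering family of curves on $\Gamma_1$ of genus $\leqslant N(n)$, hence $\cg(\chi)=\cg(\Gamma_1)\leqslant N(n)$. Running the symmetric argument with the roles of $X_1$ and $X_2$ exchanged (using that $\chi^{-1}$ arises in the same elementary relation, read backwards) bounds $\cg(\chi^{-1})$ as well, with the same $N(n)$ after enlarging it if necessary.

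The main obstacle I anticipate is the reduction step: ensuring that \emph{one} side of the two-rays game inside $T/B$ genuinely restricts, over a general fibre of the base, to a weak Fano terminal variety of dimension $\geqslant 3$ satisfying the codimension hypothesis on $K$-trivial loci, so that Corollary~\ref{cor:BAB} literally applies. This requires a careful case analysis of the type (\I, \II, \III\ or \IV) of the link and of whether $B_\chi$ is a point — precisely the place where \cite[Proposition~5.3]{BLZ} used $\dim(X)-\dim(\widehat B)=1$, a hypothesis we are dropping. The point is that with $\dim(T)-\dim(B)\geqslant 3$ the generic fibre of $T/B$ has dimension $\geqslant 3$, and a rank $1$ fibration of a variety of dimension $\geqslant 3$ over a point is automatically weak Fano of dimension $\geqslant 3$; if its base $B_\chi$ is positive-dimensional, a general fibre of $X_i\to B_\chi$ is weak Fano terminal of dimension $\geqslant \dim(T)-\dim(B)-\dim(B_\chi/B)\geqslant 3-\dim(B_\chi/B)$, and one checks this stays $\geqslant 3$ because a rank $2$ fibration adds at most one to the base dimension while adding one to the relative Picard rank — so $\dim(\widehat B)-\dim(B)\leqslant 2$ forces $\dim(B_\chi)-\dim(B)\leqslant 2$ is too weak; instead one uses that the centre $\Gamma_1$ lives in $X_1$ and only its curves matter, and replaces $X_1$ by a general fibre of $X_1\to B_\chi$ meeting $\Gamma_1$ in a positive-dimensional subvariety, which has dimension $\dim(T)-\dim(B)-\dim(B_\chi)+\dim(B)$. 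Tracking these inequalities to the conclusion that the relevant model has dimension $\geqslant 3$ is the technical heart; once past it, everything is a formal consequence of Corollary~\ref{cor:BAB}, Castelnuovo's bound, and Remark~\ref{rem: properties of covering family}.
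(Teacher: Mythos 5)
Your overall strategy (reduce to Corollary~\ref{cor:BAB} plus Castelnuovo's bound) is the right one and matches the paper, but the step you yourself flag as ``the technical heart'' is exactly where your sketch does not work, and you leave it unresolved. Concretely: you propose to restrict to a general fibre of $X_1\to B_\chi$ (or of $\widehat X\to B_\chi$) to land in a weak Fano variety of dimension $\geqslant 3$. This fails, because the relative dimension of $X_1/B_\chi$ can be $1$ or $2$ — indeed in the paper's application $X_1/B_\chi$ is a Severi--Brauer \emph{surface} bundle, so its general fibres are surfaces, where Proposition~\ref{pro:BAB} and Corollary~\ref{cor:BAB} in dimension $n_0\geqslant 3$ do not apply and no genus bound can come out (Example~\ref{ExCovgon} shows the statement is genuinely false when the relative dimension over $B$ is $2$). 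Your attempted dimension count over $B_\chi$ trails off without reaching the needed conclusion.

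The missing idea is to forget $B_\chi$ entirely and work relative to $B$, the base of the rank~$3$ fibration. Since $T\rat Y_1$ and $T\rat X_1$ are birational contractions, \cite[Lemma 3.4]{BLZ} makes $Y_1/B$ and $X_1/B$ rank $(r+1)$ and rank $r$ fibrations for some $r\in\{1,2\}$; \cite[Lemma 3.5]{BLZ} replaces $X_1/B$ by a pseudo-isomorphic model $X/B$ with $-K_X$ relatively big and nef, and \cite[Lemma 2.17]{BLZ} produces the divisorial contraction $Y\to X$ over $B$ together with a further model $\widehat Y/B$ with $-K_{\widehat Y}$ relatively big and nef. The general fibres $X_p,\widehat Y_p$ over $p\in B$ are then weak Fano terminal of dimension exactly $\dim(T)-\dim(B)\geqslant 3$, which is precisely where the hypothesis enters, and Corollary~\ref{cor:BAB} applies to $\Gamma_p=\Gamma\cap X_p$. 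One also needs to transfer the centre $\Gamma_1$ to $\Gamma\subseteq X$ across the pseudo-isomorphism $X_1\ps X$: this is where one reduces to $\cg(\Gamma_1)\geqslant 1$, so that $\Gamma_1$ is not covered by rational curves and hence not contained in the exceptional locus, by \cite[Lemma 2.15(3)]{BLZ}. These reductions are absent from your proposal, so as written it has a genuine gap rather than merely a technical one.
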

\begin{proof}
We write our Sarkisov link as $\chi\colon X_1\dasharrow X_2$ as in diagram~\eqref{eq: Link of type II with Y}, with a base $B_\chi=\widehat{B}$, that dominates $B$ via a morphism $\widehat{B}\to B$. Our aim is to prove that the centre $\Gamma_i\subseteq X_i$ of the divisorial contraction $Y_i\to X_i$ has covering genus bounded by some integer that depends only on the dimension $n$ of $X_i$ and $Y_i$. We may  prove it only for $\Gamma_1$, as we may replace $\chi$ with $\chi^{-1}$. We may morever assume that $\cg(\Gamma_1)\geqslant 1$, and thus that $\Gamma_1$ is not covered by rational curves.

As $\chi$ arises in an elementary relation induced by the rank $3$ fibration $T/B$, the rank $3$ fibration $T/B$ factorises through the rank $2$ fibration $Y_1/\widehat{B}$. This gives a commutative diagram
\[
\begin{tikzcd}[link]
	T \ar[rrr] \ar[dr,dashed] &&& B \\
	& Y_1 \ar[r] & \widehat{B} \ar[ur]
\end{tikzcd}
\]where $T \rat Y_1$ is a birational contraction, and $\widehat{B} \to B$ is a morphism with connected fibres. We obtain a sequence of rational maps
\[
\begin{tikzcd}[link]
	T \ar[r,dashed] & Y_1 \ar[r] & X_1\ar[r]& \widehat{B} \ar[r]& B.
\end{tikzcd}
\]
As $T/B$ is a rank $3$ fibration and $T\dasharrow Y_1$ and $T\dasharrow X_1$ are compositions of log-flips and divisorial contractions, the two morphisms $Y_1/B$ and $X_1/B$ are also rank $r+1$ and rank $r$ fibrations for some $r\in \{1,2\}$ \cite[Lemma 3.4]{BLZ}. Moreover, \cite[Lemma 3.5]{BLZ} implies the rank $r$ fibration $X_1/B$ is pseudo-isomorphic to another rank $r$ fibration $X/B$, such that $-K_X$ is relatively big and nef. Then, \cite[Lemma 2.17]{BLZ} gives a sequence of log-flips over $B$ from $Y_1/B$ to another rank $r+1$ fibration $Y/B$ such that the induced map $Y\to X$ is a divisorial contraction. We finally apply  \cite[Lemma 3.5]{BLZ} again and get that the  rank $r+1$ fibration $Y/B$ is pseudo-isomorphic to another rank $r$ fibration $\widehat{Y}/B$, such that $-K_{\widehat{Y}}$ is relatively big and nef, see the diagram below.
\[\begin{tikzcd}[link]
 Y_1\ar[dd]\ar[r,dotted,-]&Y\ar[dd]\ar[r,dotted,-]& \widehat{Y}\ar[ldddd]  \\ \\
X_1  \ar[rdd]\ar[r,dotted,-]& X\ar[dd]\\ \\
&B
\end{tikzcd}
\]
As $\Gamma_1$ is of codimension $2$ and is not covered by rational curves, it is not contained in the exceptional locus of the pseudo-isomorphism $X_1\ps X$ (by \cite[Lemma 2.15(3)]{BLZ}), and thus it is birational to its image, $\Gamma\subseteq X$. Bounding the covering genus of $\Gamma$ is equivalent to bounding the covering genus of $\Gamma_1$.

For a general point $p\in B$, we denote by $X_p$, $Y_p$ and $\widehat{Y}_p$ the fibres over $p$ in $X$, $Y$ and $\widehat{Y}$ respectively. All three are varieties of dimension $n_0=\dim(T)-\dim(B)\geqslant 3$. As $-K_X$ and $-K_{\widehat{Y}}$ are relatively big and nef over $B$, the varieties $X_p$ and $\widehat{Y}_p$ are weak-Fano varieties. Moreover, in each of these two weak-Fano varieties, the curves having intersection $0$ with the canonical divisor cover a subset of codimension at least $2$ of $X_p$ and $\widehat{Y}_p$ respectively, by \cite[Corollary 3.6]{BLZ}.

We denote by $\eta\colon X\to B$ the morphism studied above, and consider the morphism $\varphi =  \lvert -mK_X \rvert \times \eta \colon X\times X \rat \mathbb{P}^N \times B$, where $m$ is given by Proposition~$\ref{pro:BAB}$ applied in dimension $n_0$. We write $\Gamma_p=\Gamma \cap X_p$ , which is the fibre of $\Gamma\to B$ over $p$. By the generic smoothness result applied to $X_{\mathrm{reg}}$, we find that this intersection is transverse and that $\Gamma_p$ is reduced. The restriction of $\varphi$ is a  morphism $\varphi_p\colon X_p\to \mathbb{P}^N$, birational onto its image, described in Proposition~$\ref{pro:BAB}$. As $\Gamma_p$ is not covered by rational curves, we may again apply \cite[Lemma 2.15(3)]{BLZ} and obtain that $\Gamma_p$ is not contained in the exceptional locus of $\varphi_p$.
We may thus apply Corollary~\ref{cor:BAB} to the blow-up $Y_p\to X_p$ of $\Gamma_p$ and the pseudo-isomorphism $Y_p\ps \widehat{Y}_p$. This corollary implies that for a general $p$, the locus $\Gamma_p \setminus \mathrm{Ex}(\varphi_{p})$ is covered by curves $C$ with  $C\cdot (-mK_{X_p})\leqslant d$, where $d$ is the integer from Proposition $\ref{pro:BAB}$. The curve $C$ is sent by $\varphi$ onto a curve of $\mathbb{P}^N\times \{p\}$ of bounded degree in a projective space, and thus of bounded genus, by Castelnuovo's bound \cite{Harris}. This achieves the proof.
\end{proof}

\begin{ex}\label{ExCovgon}
We now observe that Proposition~\ref{prop: covgen link bound} is false if $\dim(T)-\dim(B)=2$. Define $B=\PP^1$, $B_2=\PP^1\times \PP^1$, with $B_2\to B$ given by $(x,y)\mapsto x$, $X=\PP^1\times \PP^1\times \PP^1$, and obtain a $\PP^1$-bundle $X\to B_2$, $(x,y,z)\mapsto (y,z)$. For $n\geqslant 1$, we define $\chi\in \Bir(X/B_2)$ to be the birational involution given by
\[
\chi\colon \left ( [x_0:x_1],y,z \right )\mapsto \left ( [x_0a(y,z)+x_1b(y,z):x_0c(y,z)-x_1a(y,z)],y,z\right ),
\] where $a,b,c\in \CC[y_0,y_1,z_0,z_1]$ are general bihomogeneous polynomials of bidegree $(1,n)$. The birational map $\chi$ is an isomorphism over $B_2\setminus \widehat{\Gamma}$, where $\widehat\Gamma$ is the zero locus of $a^2+bc=0$, being a smooth curve of bidegree $(2,n)$ in $B_2$, having then genus $n-1$. The birational map $\chi$ is a Sarkisov link of type \II~over the base $\widehat{B}$, as in diagram~\eqref{eq: Link of type II with Y}, with  $X_1=X_2=X$, $Y_1=Y_2=Y$ and $B_\chi=\widehat{B}$, where $Y\to X$ is the blow-up of a curve $\Gamma\subseteq X$, birational to $\widehat{\Gamma}$ via the morphism $X\to B_2$. Note that $Y/B$ is a rank $3$-fibration. Indeed, $X\to B$ is a trivial $\PP^1\times \PP^1$-bundle, and $Y\to X$ blows up two points in a general fibre, giving then some del Pezzo surface fibration $Y\to B$. One can also check all other conditions, and get an elementary relation involving $\chi$.
\end{ex}

\subsection{Elementary relations over bases of the same dimension}\label{sec:classificationElementaryRelations}
In this section we describe elementary relations that involve Sarkisov links whose bases are all equidimensional and where one link is of type \II~but not all. As usual, to construct a non-trivial homomorphism from a group of birational self-maps, we will need a convenient equivalence relation between Sarkisov links.

\begin{mydef}\label{Def:equiLink}
We say that two Sarkisov links $\chi\colon X_1\dasharrow X_2$ over $B$ and $\chi'\colon X_1'\dasharrow X_2'$ over $B'$ are \emph{equivalent} if there is a commutative diagram\[
\begin{tikzcd}[link]
	X_1\ar[dddd,swap]\ar[rrr,dashed,"\psi_1"]\ar[ddr,dashed,"\chi",swap]&&& X'_1\ar[ddr,dashed,"\chi'"]\ar[dddd]  \\ \\
	&X_2\ar[ddl,swap]\ar[rrr,dashed,"\psi_2"] &&& X'_2\ar[ddl] \\ \\
	B \ar[rrr,"\psi",dashed]&&& B'
\end{tikzcd}
\]
where $\psi,\psi_1,\psi_2$ are birational maps, and where $\psi_i$ induces an isomorphism between the generic fibres of $X_i/B$ and $X_i'/B'$, via the base-change $\psi$, for each $i\in \{1,2\}$.
\end{mydef}

We start with the following observation, that generalises \cite[Lemma 3.19(1)]{BLZ}.

\begin{lem}
\label{lem:LinksIV}
Let $Y/B$ be a rank~$2$ fibration and let $\chi\colon X_1/B_1\dasharrow X_2/B_2$ be the associated Sarkisov link, well-defined up to taking inverse and up to isomorphisms of Mori fibre spaces. Suppose that $\dim(B)=\dim(B_1)=\dim(B_2)$. Then, $\chi$ has type \IV\ if and only if $B$ is not $\QQ$-factorial. Moreover, in this case, $B_i\to B$ is a small contraction for each $i\in \{1,2\}$.
\end{lem}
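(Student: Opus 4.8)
The plan is to analyze the two-rays game diagram for the rank $2$ fibration $Y/B$ and track $\QQ$-factoriality through the two sides. Recall that $Y/B$ factorises through exactly two rank $1$ fibrations $X_1/B_1$ and $X_2/B_2$, sitting in the standard diagram where $Y \ps Y_i$ is a sequence of log-flips over $B$ and $Y_i \to X_i$, $X_i \to B_i \to B$ are morphisms of relative Picard rank $1$. Under the hypothesis $\dim B = \dim B_1 = \dim B_2$, the morphism $X_i \to B_i$ cannot be a fibration of relative dimension $\geq 1$ (that would drop the dimension by at least one on that side, contradicting $\dim B_i = \dim B$ only if $B_i = B$, but even then the link types III and I are excluded by the dimension count on the relevant side); hence on each side either $Y_i = X_i$ and $X_i \to B_i$ is the Mori fibre space — which forces the first arrow $Y \ps Y_i$ to be nontrivial or the divisorial contraction to be absent — or $Y_i \to X_i$ is divisorial. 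Carefully: type \IV\ is exactly the case where \emph{neither} side contracts a divisor, i.e. $Y \to X_1$ and $Y \to X_2$ are both of fibre type after the flips; given the equidimensionality this means $B_1 \to B$ and $B_2 \to B$ are both nontrivial morphisms with $\dim B_i = \dim B$, hence \emph{small} contractions. Types \I, \II, \III\ are precisely those where at least one side is divisorial.

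First I would set up the diagram and invoke the classification of Sarkisov link types from \cite[Proposition 4.3, \S3]{BLZ} together with Figure~\ref{fig:SarkisovTypes}, reading off that under $\dim B = \dim B_1 = \dim B_2$ the only possibilities are type \II\ (both sides divisorial) and type \IV\ (both sides small contractions onto $B$, via the intermediate $B_i$). Then I would prove the equivalence with non-$\QQ$-factoriality of $B$. For the implication ``type \IV\ $\Rightarrow$ $B$ not $\QQ$-factorial'': in a type \IV\ link we have two distinct small contractions $B_1 \to B \leftarrow B_2$ (distinct because the two extremal rays in the two-rays game are distinct), i.e. two different small $\QQ$-factorializations of $B$; since a $\QQ$-factorial variety admits no nontrivial small contraction onto it (a small contraction from a $\QQ$-factorial variety can never have $\QQ$-factorial target unless it is an isomorphism, by the standard fact that small modifications of $\QQ$-factorial terminal varieties are isomorphisms), $B$ cannot be $\QQ$-factorial. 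Conversely, if $\chi$ has type \II, both $Y_i \to X_i$ are divisorial and $X_i \to B_i$ is the Mori fibre space with $B_i = B$; then $B = B_i$ is the base of a Mori fibre space in the tower defining a rank $2$ fibration, and by the definition of rank $r$ fibration in \cite[Definition 3.1]{BLZ} the base $B$ is normal and, being the image of the $\QQ$-factorial $X/B$ under the structure morphism with $X/B$ a Mori dream space, one checks $B$ is $\QQ$-factorial — more directly, $B_\chi = B$ is required to be $\QQ$-factorial in the very definition of a rank $2$ fibration of type \II, so contrapositively $B$ not $\QQ$-factorial forces type \IV.

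The ``moreover'' statement that $B_i \to B$ is a small contraction in the type \IV\ case then comes for free from the type \IV\ diagram in Figure~\ref{fig:SarkisovTypes}: there $X_i \to B_i$ is the Mori fibre space and $B_i \to B$ is the remaining morphism, which has relative Picard rank $1$ and, by $\dim B_i = \dim B$, is birational; it is small because if it contracted a divisor $D \subseteq B_i$, pulling $D$ back to $X_i$ and pushing the resulting divisor through would contradict $\rho(X_i/B) = 2$ decomposing as $\rho(X_i/B_i) + \rho(B_i/B) = 1 + 1$ with $B_i/B$ genuinely small — or, cleanest, a divisorial $B_i \to B$ would make $\chi$ a type \III\ or \I\ link rather than \IV, already excluded. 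I expect the main obstacle to be stating the ``no nontrivial small contraction onto a $\QQ$-factorial variety'' fact in exactly the form available from the terminology of \cite{BLZ} and making sure the equidimensionality hypothesis genuinely rules out types \I\ and \III\ (where one of the $B_i$ equals $B$ but the opposite side drops dimension) — this is a bookkeeping check against Figure~\ref{fig:SarkisovTypes} and \cite[Lemma 3.4, Lemma 3.5]{BLZ} rather than a substantive difficulty.
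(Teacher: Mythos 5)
There is a genuine gap, and it sits exactly at the crux of the lemma. In your argument for ``type \IV\ $\Rightarrow$ $B$ not $\QQ$-factorial'' you take as given that $B_1\to B$ and $B_2\to B$ are \emph{small} contractions and then invoke the (correct) fact that a $\QQ$-factorial variety admits no non-isomorphic small contraction onto it. But the smallness of $B_i\to B$ is precisely the ``moreover'' part of the statement; a priori $B_i\to B$ is only a birational morphism with $\rho(B_i/B)=1$, so it could equally well be a divisorial contraction. Neither of your two attempted justifications works: the Picard-rank count $\rho(X_i/B)=\rho(X_i/B_i)+\rho(B_i/B)=1+1$ holds whether $B_i\to B$ is divisorial or small, so there is no contradiction to extract from it; and a divisorial $B_i\to B$ would \emph{not} change the type of the link to \I\ or \III\ --- the type is governed by whether the maps $Y\dasharrow X_i$ contract a divisor, not by the nature of $B_i\to B$. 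Indeed, Lemma~\ref{lem:IandthenII} features a type \I\ link in which $B_2\to B_1$ is a genuine divisorial contraction between equidimensional bases; this also refutes your preliminary claim that equidimensionality leaves only types \II\ and \IV.

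The missing argument runs as follows. Assume $B$ is $\QQ$-factorial and $\chi$ has type \IV; since $B_i$ and $B$ are then both $\QQ$-factorial of the same dimension with $\rho(B_i/B)=1$, each $B_i\to B$ must be \emph{divisorial} (a non-isomorphic small contraction would force $B$ to be non-$\QQ$-factorial), contracting a divisor $E_i$. If the induced birational map $B_1\dasharrow B_2$ sends $E_1$ onto $E_2$, then $\chi$ is an isomorphism of Mori fibre spaces, contradicting that it is a Sarkisov link; otherwise the pull-backs of $E_1$ and $E_2$ together with an ample class give three independent classes in $N^1(Y/B)$, contradicting $\rho(Y/B)=2$. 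This yields ``type \IV\ $\Rightarrow$ $B$ not $\QQ$-factorial''. The converse must cover types \I\ and \III\ as well as \II\ (you only treat \II): in all three cases $B$ coincides with $B_1$ or $B_2$, hence is the base of a terminal Mori fibre space and therefore $\QQ$-factorial. Finally, once $B$ is known to be non-$\QQ$-factorial in the type \IV\ case, smallness of $B_i\to B$ follows because a divisorial contraction from the $\QQ$-factorial $B_i$ would have $\QQ$-factorial image.
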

\begin{proof}
Assume that $B$ is not $\QQ$-factorial. Then $\chi$ has type \IV, since the base of a terminal Mori fibre space is always $\QQ$-factorial \cite[Proposition~2.10]{BLZ}: compare the diagrams in Figure~\ref{fig:SarkisovTypes}. Moreover, $B_1,B_2$ are $\QQ$-factorial and $B_i\to B$ is birational for $i\in \{1,2\}$, so $B_i\to B$ is a small contraction.

Assume that $B$ is $\QQ$-factorial and  that $\chi$ is  of type \IV, and let us derive a contradiction. For each $i\in \{1,2\}$, $B_i$ is $\QQ$-factorial and since $\rho(B_i/B)=1$ (Figure~\ref{fig:SarkisovTypes} for type \IV) and both have the same dimension, we conclude that $B_i\to B$ is a divisorial contraction, that contracts a divisor $E_i\subseteq B_i$.
If the birational map $B_1 \rat B_2$ sends $E_1$ onto $E_2$, then $\chi\colon X_1/B_1\dasharrow X_2/B_2$ is an isomorphism of Mori fibre spaces \cite[Lemma 2.18]{BLZ},  impossible as it is a Sarkisov link. Hence, the pull-backs of $E_1$, $E_2$ together with the choice of any ample divisor give three independent classes in $N^1(Y/B)$, which contradicts $\rho(Y/B) = 2$.
\end{proof}

We now consider elementary relations involving one Sarkisov link of type \II, and one other type of Sarkisov link, and where the bases all have the same dimension. Note that an elementary relation is always of length at least $3$, and that up to cyclic permutation, we may always assume the link of type \II~to be the second and the first to be of type~\I, \III~or \IV. This gives three distinct cases, studied in Lemmas  \ref{lem:IandthenII}, \ref{lem:IIIandthenII} and \ref{lem:IVandthenII} respectively.

We will use the following consequences of $2$-ray games repeatedly (see \cite[Lemma~4.2 and Proposition~4.3]{BLZ} and their proofs, including \cite[Lemmas 2.17 and~2.19]{BLZ}):

\begin{enumerate}
\item Given two rank $2$ fibrations $Y_i/B$, for $i=1,2$ with a sequence of log-flips $Y_1\ps Y_2$ over $B$, and a rank $3$ fibration $T_1/B$ that dominates $Y_1/B$ with a divisorial contraction $T_1\to Y_1$ over $B$, the involved maps can uniquely be completed to a commutative diagram
	\[\begin{tikzcd}[ampersand replacement=\&,column sep=1.3cm,row sep=0.16cm]
	T_1\ar[rr,dotted,-] \ar[dd,"E",swap]\&\&T_2\ar[dd,"E",swap] \\ \\
Y_1\ar[ddr,swap]  \ar[rr,dotted,-] \&\& Y_2, \ar[ddl]  \\ \\
\& B \&
\end{tikzcd}\]
where $T_1\ps T_2$ is a sequence of log-flips, and $T_2\to Y_2$ a divisorial contraction of the ``same'' divisor, namely the strict transform of the divisor contracted by $T_1\to Y_1$ under the map $T_1\ps T_2$. As usual, the labels of arrows denote the contracted divisors.
\item Given a rank $2$ fibration $T_1/X$ with two divisorial contractions $T_1\to Y_1\to X$, the involved maps can uniquely be completed to a commutative diagram
\[\begin{tikzcd}[ampersand replacement=\&,column sep=1.3cm,row sep=0.16cm]
	T_1\ar[rr,dotted,-] \ar[dd,"E",swap]\&\&T_2\ar[dd,"F",swap] \\ \\
Y_1\ar[ddr,swap,"F"]  \ar[rr,dotted,-] \&\& Y_2. \ar[ddl, "E"]  \\ \\
\& X \&
\end{tikzcd}\]
\end{enumerate}

\begin{lem}\label{lem:IandthenII}
	Let $T/B$ be a rank $3$ fibration inducing an elementary relation \[\chi_n\circ \cdots\circ \chi_2\circ\chi_1=\id\]
	with $n\geqslant 3$ and Sarkisov links $\chi_i\colon X_i\dashrightarrow X_{i+1}$ $($where $X_{n+1}=X_1)$ between Mori fibre spaces $X_i/B_i$, for $i=1,\ldots,n$.
	Assume that $\dim(B)=\dim(B_2)$, that $\chi_1$ is of type \I\, and that $\chi_2$ is of type \II. Then, $\chi_3$ is of type \III, and one of the following two possiblities occurs:
	\begin{enumerate}
	\item\label{IandthenII4}
	$n=4$, and $\chi_4$ is a link of type \II\ that is equivalent to  $\chi_2^{-1}$.
	\item\label{IandthenII3}
	$n=3$ and the centres of the divisorial contractions $Y_2\to X_2$ and $Y_3\to X_3$ associated to $\chi_2$ are contained in the preimage of a uniruled divisor of $B_2$.
	\end{enumerate}

\end{lem}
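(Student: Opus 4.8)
The plan is to run the two-ray games of $T/B$ at the Mori fibre spaces $X_2$ and $X_3$, keeping track of the bases $B_i$ and of which operation each arrow of the relevant diagrams performs (a divisorial contraction on the total space, or a base change), and to feed this into the two consequences of the two-ray game recalled just before the statement, together with Lemma~\ref{lem:LinksIV} and the list of types in Figure~\ref{fig:SarkisovTypes}. The first thing to pin down is the bases. Since $T/B$ dominates each rank~$1$ fibration $X_i/B_i$, the morphism $B_i\to B$ is surjective, so $\dim B_i\geqslant\dim B$. As $\chi_1$ is of type~\I, the base of its rank~$2$ fibration is $B_1$, there is a divisorial contraction onto $X_1$, and $B_2\to B_1$ is a surjective morphism of relative Picard rank~$1$, so $\dim B_2\geqslant\dim B_1$. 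Together with $\dim B=\dim B_2$ this forces $\dim B=\dim B_1=\dim B_2$; hence $B_1\to B$ and $B_2\to B$ are birational, and $B_2\to B_1$ is a birational morphism of relative Picard rank~$1$ between $\QQ$-factorial varieties, not an isomorphism (else $\chi_1$ would not be a link), hence a divisorial contraction. As $\chi_2$ is of type~\II\ we have $B_2=B_3$, so all bases encountered so far are birational to~$B$.

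Next I would show that $\chi_3$ is of type~\III. Consider $X_3$, which is the target of $\chi_2$ and the source of $\chi_3$. Since $\chi_2$ is of type~\II, its rank~$2$ fibration carries two divisorial contractions $Y_2\to X_2$ and $Y_3\to X_3$ joined by a sequence of log-flips; write $\Gamma_2\subseteq X_2$, $\Gamma_3\subseteq X_3$ for their centres and $E$ for the divisor contracted by $Y_3\to X_3$. Running the two-ray game of $X_3$ inside $T/B$ and using the first two-ray-game consequence recalled above, one extremal direction of $X_3$ is the divisorial contraction of $E$ onto $\Gamma_3$ (the one coming from $\chi_2$); hence the other extremal direction is the one governing $\chi_3$. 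Since $\dim B_3=\dim B$ and $B_4\to B$ is surjective, this other direction cannot be a fibration lowering the dimension of the base; it is therefore a base change, and in particular does not contract a divisor of $X_3$. This already excludes type~\I\ for $\chi_3$ (which would contract a divisor onto the source $X_3$), and Lemma~\ref{lem:LinksIV} excludes type~\IV\ because the base of $X_3/B_3$ is $\QQ$-factorial. Hence $\chi_3$ is of type~\III: its rank~$2$ fibration has $X_3$ as source (an isomorphism up to log-flips), a base morphism $B_3\to B_4$ which, by the same dimension count as above, is a divisorial contraction, and a divisorial contraction onto $X_4$; in particular $\rho(B_4/B)=\rho(B_3/B)-1$.

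It remains to close the polygon. Applying the second two-ray-game consequence to the two divisorial contractions lying over $X_2$ --- the one coming from $\chi_1$, whose centre lies over the divisor of $B_2$ contracted by $B_2\to B_1$, and the one coming from $\chi_2$, with centre $\Gamma_2$ --- determines the remaining links of the elementary relation, and the resulting polygon has either four or three edges. If it has four, then $\chi_4\colon X_4\dashrightarrow X_1$ is a genuine Sarkisov link which, by construction, realizes the ``same'' divisorial modification as $\chi_2$ carried out from the $X_4$-side; a diagram chase matching the generic fibres through the base changes then shows that $\chi_4$ is of type~\II\ and equivalent to $\chi_2^{-1}$ in the sense of Definition~\ref{Def:equiLink}, which is case~\ref{IandthenII4}. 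If it has three, then $n=3$, and the collapse means precisely that the two divisorial contractions attached to $\chi_2$ already meet: unwinding this shows that $\Gamma_2$ and $\Gamma_3$ lie, respectively, in the preimages under $X_2\to B_2$ and $X_3\to B_3=B_2$ of one and the same divisor $D\subseteq B_2$, and that $D$ is uniruled since it is swept out by the images over $B_2$ of the fibres of the divisorial contractions $Y_2\to X_2$, $Y_3\to X_3$, which are rational; this is case~\ref{IandthenII3}.

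The delicate point is the two-ray-game bookkeeping in the second step: identifying the extremal contraction of $X_3$ opposite to the one coming from $\chi_2$ and checking that it is a base change (which is what rules out types~\I\ and~\IV\ for $\chi_3$), and then, in the last step, separating the $n=3$ and $n=4$ branches and extracting the uniruledness of $D\subseteq B_2$ from the degeneration of the ``square'' into a ``triangle''. The identification $\chi_4\sim\chi_2^{-1}$, while morally forced by the symmetry of the second two-ray-game consequence, likewise requires some care with Definition~\ref{Def:equiLink}.
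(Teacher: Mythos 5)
Your overall strategy coincides with the paper's (equidimensionality of the bases forces $B_2\to B_1$ to be a divisorial contraction; $\chi_3$ is pinned down via Lemma~\ref{lem:LinksIV}; the relation is then closed by exchanging the order of two divisorial contractions via a $2$-ray game), but the two steps you yourself flag as delicate are precisely where the argument is not actually carried out. First, your justification that $\chi_3$ is of type \III\ fails: a type \I\ link is perfectly compatible with equidimensional bases ($\chi_1$ itself is such a link, with $B_2\to B_1$ a divisorial contraction), so ``cannot be a fibration lowering the dimension of the base'' excludes nothing, and type \II\ is never addressed. Note also that $Y_3\to X_3$ is a contraction \emph{into} $X_3$, hence an edge of the piece at the vertex $X_3/B_3$, not an extremal ray of $\overline{\NE}(X_3/B)$. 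The correct reason is a Picard-rank count: since $B_3=B_2\to B_1=B$ already has relative Picard rank $1$, the birational contraction $T\rat X_3$ has relative rank $\rho(T/B)-\rho(X_3/B_3)-\rho(B_3/B)=1$, so it contracts exactly one divisor; that divisorial contraction is the one belonging to the edge of $\chi_2$, hence the second edge at $X_3/B_3$ must be the base change $X_3/B$, so $\chi_3$ is of type \III\ or \IV, and \IV\ is excluded by Lemma~\ref{lem:LinksIV} because $B_1$ is $\QQ$-factorial.

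Second, the dichotomy $n=3$ versus $n=4$ is asserted rather than derived. The two divisorial contractions to be exchanged lie over $X_1$, not over $X_2$ (over $X_2$ there is only one, since $\rho(T/X_2)=1$): writing $G$ for the divisor of the type \I\ link and $E_2,E_3$ for those of $\chi_2$, one has $T_1\to Y_1\to X_1$ contracting first $E_2$ and then $G$, and the criterion separating the two cases is whether $E_3$ is the strict transform of $G$. If it is not, the exchange over $X_1$ (and the symmetric one over $X_4$) produces a fourth link of type \II\ whose two divisors are again $E_2$ and $E_3$, whence $n=4$ and $\chi_4$ equivalent to $\chi_2^{-1}$; if it is, the exchange over $X_1$ already swaps $E_2$ and $E_3$ and the relation closes with $n=3$. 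Without this criterion you cannot rule out other lengths of the polygon nor identify $\chi_4$ with $\chi_2^{-1}$. Finally, your uniruledness argument in case~\ref{IandthenII3} is incorrect: the exceptional fibres of $Y_2\to X_2$ map to \emph{points} of $B_2$ and sweep out nothing there. The relevant divisor is the one contracted by $B_2\to B_1$ (whose preimage in $X_2$ is the strict transform of $G$ and contains the centres precisely in the case $E_3=\widetilde G$), and its uniruledness comes from its being the exceptional divisor of a divisorial contraction of the MMP, by \cite[Lemma 2.15]{BLZ}.
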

\begin{proof}
As  $\chi_1$ is of type \I, it comes with a divisorial contraction  $Y_1\to X_1$, that contracts a divisor $G\subseteq Y_1$. Similarly, $\chi_2$ being of type \II, comes with divisorial contractions $Y_i\to X_i$, contracting a divisor $E_i$, for $i\in \{2,3\}$.
This gives a commutative diagram
\[\begin{tikzcd}[ampersand replacement=\&,column sep=1.3cm,row sep=0.16cm]
T_1\ar[rr,dotted,-] \ar[dd,"E_2",swap]\&\&Y_2\ar[dd,"E_2",swap]  \ar[rr,dotted,-] \&\& \ar[dd,"E_3"] Y_3\\ \\
Y_1\ar[dd,"G",swap]  \ar[rr,dotted,-] \&\& X_2 \ar[dd] \ar[rr,"\chi_2",dashed,swap]\&  \& X_3\ar[dd]   \\ \\
X_1 \ar[uurr,"\chi_1",dashed,swap] \ar[dr,swap] \&  \& B_2 \ar[dl]\ar[rr,equal]\&  \& B_3. \\
\& B_1 \&
\end{tikzcd}\]
As $Y_2/B_1$ is a rank $3$ fibration and the relation comes from $T/B$, we find $B=B_1$. Moreover, $\dim(B)=\dim(B_2)$ by assumption, so all bases $B_1,B_2,B_3$ have the same dimension, and are $\QQ$-factorial, as these are the bases of Mori fibre spaces. Hence, $B_2\to B_1$ is a divisorial contraction, that contracts a divisor $G_2\subseteq B_2$. The preimage in $X_2$ of this divisor $G_2$ is a divisor of $X_2$; each irreducible component that surjects to $G_2$ is contracted by $\chi_1^{-1}$, so there is only one such component, and it is the strict stransform of $G\subseteq Y_1$.

The $2$-ray game on $X_{3}/B_1$ gives the link $\chi_{3}$, which is thus of type \III\, or \IV. As $B_1$ is $\QQ$-factorial and all bases have the same dimension, $\chi_3$ is of type \III\ (Lemma~\ref{lem:LinksIV}). We complete the diagram and obtain
\[\begin{tikzcd}[ampersand replacement=\&,column sep=1.3cm,row sep=0.16cm]
	T_1\ar[rr,dotted,-] \ar[dd,"E_2",swap]\&\&Y_2\ar[dd,"E_2",swap]  \ar[rr,dotted,-] \&\& \ar[dd,"E_3"] Y_3\ar[rr,dotted,-]\&\& T_4 \ar[dd,"E_3",swap]\\ \\
Y_1\ar[dd,"G",swap]  \ar[rr,dotted,-] \&\& X_2 \ar[dd] \ar[rr,"\chi_2",dashed,swap]\&  \& X_3\ar[ddrr,"\chi_3",dashed,swap]\ar[dd]\ar[rr,dotted,-]  \&\& Y_4 \ar[dd,"G'"]  \\ \\
X_1 \ar[uurr,"\chi_1",dashed,swap] \ar[dr,swap] \&  \& B_2 \ar[dl,"G_2"]\ar[rr,equal]\&  \& B_3\ar[dr]\&  \& X_4 \ar[dl] \\
\& B_1 \& \& \& \& B_4=B_1
\end{tikzcd}\]
where $G'$ is the divisor contracted by $Y_4\to X_4$. We now consider two cases, depending on whether $E_3$ is the strict transform of $G$, or not.

Suppose first that $E_3$ is not the strict transform of $G$. In this case, $G'$ is the strict transform of $G$. Indeed, by the same argument as above, the preimage of $G_2\subseteq B_3$ in $X_3$ is a divisor that contains a unique irreducible component that surjects to $G_2$ and this latter is the strict transform of $G$ and of $G'$. It remains to apply a $2$-ray game, on $T_i\to X_i$ for $i\in \{1,4\}$, which exchanges the two divisors $G$ and $E_2$ (respectively $E_3$). This gives the last link $\chi_4$ of type \II, as in Figure~\ref{fig:relationwithtypeI}. Hence, $\chi_4$ is equivalent to $\chi_2^{-1}$.

Suppose now that $E_3$ is the strict transform of $G$. In this case, the $2$-ray game applied to $T_1/X_1$ exchanges the two divisors $E_2$ and $E_3$.  This gives $n=3$, and yields the diagram of Figure~\ref{fig:triangle_relation}. Moreover, the centres of the divisorial contractions $Y_2\to X_2$ and $Y_3\to X_3$ associated to $\chi_2$ are contained in the preimage of the divisor contracted by $B_2\to B_1$, which is uniruled by  \cite[Lemma 2.15]{BLZ}.
\end{proof}

\begin{figure}[ht]
\begin{tabular}{c c}
	\begin{tikzpicture}[scale=1.1,font=\small,outer sep=-1pt] 
	\node (Y3) at (60+90:\Ra) {$Y_3$};
	\node (Y2) at (120+90:\Ra) {$Y_2$};
	\node (T1') at (180+90:\Ra) {$T_1$};
	\node (T1) at (240+90:\Ra) {$T_1'$};
	\node (T4) at (300+90:\Ra) {$T_4'$};
	\node (T4') at (360+90:\Ra) {$T_4$};
	\node (X3) at (60+90:\Rb) {$X_3$};
	\node (X2) at (120+90:\Rb) {$X_2$};
	\node (X1) at (300:\Rc) {$X_1$};
	\node (X4) at (420:\Rc) {$X_4$};
	\node (Y1') at (180+90:\Rb) {$Y_1$};
	\node (Y1) at (240+90:\Rb) {$Y_1'$};
	\node (Y4) at (300+90:\Rb) {$Y_4'$};
	\node (Y4') at (360+90:\Rb) {$Y_4$};
	\node (B) at (0:0cm) {$B$};
	\node (Bhat) at (180:\Rc) {$B_2$};
	\draw[dotted,-] (Y3) to [bend right=21] (Y2)
			(Y2) to [bend right=21] (T1')
			(T1') to [bend right=21] (T1)
			(T1) to [bend right=21] (T4)
			(T4) to [bend right=21] (T4')
			(T4') to [bend right=21] (Y3)
			(X2) to [bend right=21] (Y1')
			(Y1) to [bend right=21] (Y4)
			(Y4') to [bend right=21] (X3);
	\draw[->] (Bhat) to(B);
	\draw[->] (X1) to(B);
	\draw[->] (X4) to(B);
	\draw[->] (T4') to [auto,"\tiny $E_3$"] (Y4');
	\draw[->] (Y3) to [auto,pos=.1,"\tiny $E_3$"] (X3);
	\draw[->] (Y2) to [auto,swap,pos=.1,"\tiny $E_2$"] (X2);
	\draw[->] (T1') to [auto,swap,"\tiny $E_2$"] (Y1');
	\draw[->] (T1) to [auto,swap,"\tiny $G$"] (Y1);
	\draw[->] (T4) to [auto,"\tiny $G$"] (Y4);
	\draw[->] (Y1') to [auto,swap,"\tiny $G$"] (X1);
	\draw[->] (Y4') to [auto,"\tiny $G$"] (X4);
	\draw[->] (Y1) to [auto,swap,pos=.7,"\tiny $E_2$"] (X1);
	\draw[->] (Y4) to [auto,pos=.7,"\tiny $E_3$"] (X4);
	\draw[->] (X3) to  (Bhat);
	\draw[->] (X2) to  (Bhat);
	\draw[dashed,->] (X2) to [bend left=30,auto,"$\chi_2$"] (X3);
	\draw[dashed,->] (X1) to [auto,pos=.7,"$\chi_1$"] (X2);
	\draw[dashed,->] (X4) to [bend left=38,auto,"$\chi_4$"] (X1);
	\draw[dashed,->] (X3) to [auto,pos=.3,"$\chi_3$"] (X4);
	\end{tikzpicture}%
	&
	\begin{tikzpicture}[scale=1.3,font=\small,outer sep=-1pt] 
	\node (X3) at (45+90:\Ra*1.15) {$X_3/B_3$};
	\node (X2) at (45+180:\Ra*1.15) {$X_2/B_2$};
	\node (X1) at (45+270:\Ra*1.15) {$X_1$};
	\node (X4) at (45:\Ra*1.15) {$X_4$};
	\node (Y3) at (180-15:\Rb*1.3) {$Y_3/B_3$};
	\node (Y2) at (180+15:\Rb*1.3) {$Y_2/B_2$};
	\node (X2P) at (270-15:\Rb*1.3) {$X_2$};
	\node (Y1) at (270+12:\Rb*1.3) {$Y_1$};
	\node (Y1') at (-10:\Rb*1.3) {$Y_1'$};
	\node (Y4') at (+10:\Rb*1.3) {$Y_4'$};
	\node (Y4) at (90-12:\Rb*1.3) {$Y_4$};
	\node (X3P) at (90+15:\Rb*1.3) {$X_3$};
	\node (Y3P) at (45+90:\Rc) {$Y_3$};
	\node (Y2P) at (45+180:\Rc) {$Y_2$};
	\node (T1) at (-60:\Rc) {$T_1$};
	\node (T1') at (-20:\Rc*1.1) {$T_1'$};
	\node (T4') at (20:\Rc*1.1) {$T_4'$};
	\node (T4) at (60:\Rc) {$T_4$};
	\draw[thick,dotted,-] (Y3P) to [bend right=21] (Y2P)
			(Y2P) to [bend right=21] (T1)
			(T1) to [bend right=21] (T1')
			(T1') to [bend right=21] (T4')
			(T4') to [bend right=21] (T4)
			(T4) to [bend right=21] (Y3P);
	\draw[thick,dotted,-]	(Y3) to (Y2)
			(X2P) to (Y1)
			(Y1') to (Y4')
			(Y4) to (X3P);
	\draw[very thick,-] (X2) to (X2P)
		(X3P) to (X3)
		(Y3P) to (Y3)
		(Y2) to (Y2P);
	\draw[very thick,-,red] (Y2) to node[right] {$E_2$}(X2)
	(Y2P) to node[right] {$E_2$}(X2P)
	(Y1) to node[right] {$E_2$}(T1)
	(X1) to node[right] {$E_2$}(Y1');
	\draw[very thick,-,blue] (T1') to node[above] {$G$}(Y1')
	(Y1) to node[above] {$G$}(X1)
	(Y4') to node[above] {$G$}(T4')
	(X4) to node[above] {$G$}(Y4);
	\draw[very thick,-,orange]
	(X3) to node[right] {$E_3$} (Y3)
	 (X3P) to node[right] {$E_3$} (Y3P)
	(T4) to node[right] {$E_3$}(Y4)
	(Y4') to node[right] {$E_3$}(X4);
	\end{tikzpicture}
\end{tabular}
\caption{Diagrams illustrating the relation of Lemma~\ref{lem:IandthenII}\ref{IandthenII4}: The one on the left is \cite[Figure~11]{BLZ}; The one on the right depicts the involved rank $1$, $2$ and $3$ fibrations, where the base is $B$ if not specified otherwise}
\label{fig:relationwithtypeI}
\end{figure}

\begin{figure}[ht]
\begin{tabular}{c c}
\begin{tikzpicture}[scale=1.5,font=\small] 
\node (Y3) at (135:\Ra) {$Y_3$};
\node (Y2) at (-135:\Ra) {$Y_2$};
\node (X3) at (135:\Rb) {$X_3$};
\node (X2) at (-135:\Rb){$X_2$}; 
\node (X) at (-42.5:\Rb) {$Y_1$};
\node (Y) at (42.5:\Rb) {$Y_1'$};
\node (T1) at (-42.5:\Ra) {$T_1$};
\node (T1') at (42.5:\Ra) {$T_1'$};
\node (B) at (0:0cm) {$B$};
\node (hatB) at (180:\Rc) {$B_2$};
\node (X1) at (0:\Rc) {$X_1$};
\draw[->] (Y2) to [auto,"\tiny $E_2$"] (X2);
\draw[->] (Y3) to [auto,"\tiny $E_3$"] (X3);
\draw[->] (X3)--  (hatB);
\draw[->] (X2)--  (hatB);
\draw[->] (T1)  to [auto,"\tiny $E_2$"]  (X);
\draw[->] (T1')  to [auto,"\tiny $E_3$"]  (Y);
\draw[->] (X) to [auto,swap,pos=.3,"\tiny $E_3$"] (X1);
\draw[->] (Y) to [auto,swap,pos=.3,"\tiny $E_2$",swap] (X1);
\draw[->] (hatB)--  (B);
\draw[->] (X1)--  (B);
\draw[dashed,<-,swap] (X3) to [bend right=38] node[auto]{$\chi_2$} (X2);
\draw[dashed,<-,swap] (X2) to [bend right=42] node[auto]{$\chi_1$} (X1);
\draw[dashed,<-,swap] (X1) to [bend right=42] node[auto]{$\chi_3$} (X3);
\draw[dotted,-] (X2) to [bend right=30] (X);
\draw[dotted,-] (X3) to [bend left=30] (Y);
\draw[dotted,-] (Y2) to [bend left=30] (Y3);
\draw[dotted,-] (Y2) to [bend right=30] (T1);
\draw[dotted,-] (Y3) to [bend left=30] (T1');
\draw[dotted,-] (T1') to [bend left=30] (T1);
\end{tikzpicture}
&
\begin{tikzpicture}[scale=1.1,font=\small,outer sep=-1pt] 
\node (X3) at (120:\Ra*1) {$X_3/B_3$};
\node (X2) at (240:\Ra*1) {$X_2/B_2$};
\node (X1) at (0:\Ra*1) {$X_1$};
\node (Y3) at (180-30:\Rb*0.9) {$Y_3/B_3$};
\node (Y2) at (180+30:\Rb*0.9) {$Y_2/B_2$};
\node (X2P) at (300-30:\Rb*1) {$X_2$};
\node (Y1') at (300+30:\Rb*1) {$Y_1$};
\node (Y1) at (60-30:\Rb*1) {$Y_1'$};
\node (X3P) at (60+30:\Rb*1) {$X_3$};
\node (Y3P) at (180-65:\Rc*0.6) {$Y_3$};
\node (Y2P) at (180+65:\Rc*0.6) {$Y_2$};
\node (T1') at (-30:\Rc) {$T_1$};
\node (T1) at (30:\Rc) {$T_1'$};
\draw[thick,dotted,-] (Y3P) to [bend right=21] (Y2P)
		(Y2P) to [bend right=21] (T1')
		(T1') to [bend right=21] (T1)
		(T1) to [bend right=21] (Y3P);
\draw[thick,dotted,-]	(Y3) to (Y2)
		(X2P) to (Y1')
		(Y1) to (X3P);
\draw[very thick,-] (X2) to (X2P)
	(X3P) to (X3)
	(Y3P) to (Y3)
	(Y2) to (Y2P);
\draw[very thick,-,red] (Y2) to node[right] {$E_2$}(X2)
(Y2P) to node[right] {$E_2$}(X2P)
(Y1') to node[above right] {$G'$}(T1')
(X1) to node[above right] {$G'$}(Y1);
\draw[very thick,-,orange] (T1) to node[below right] {$G$}(Y1)
(Y1') to node[below right] {$G$}(X1);
\draw[very thick,-,orange]
(X3) to node[right] {$E_3$} (Y3)
 (X3P) to node[right] {$E_3$} (Y3P);
\end{tikzpicture}
\end{tabular}
\caption{Diagrams illustrating the relation of Lemma~\ref{lem:IandthenII}\ref{IandthenII3}: The one on the left is a generalization of \cite[Example 4.32]{BLZ}; The one on the right depicts the involved rank $1$, $2$ and $3$ fibrations, where the base is $B$ if not specified otherwise}
\label{fig:triangle_relation}
\end{figure}
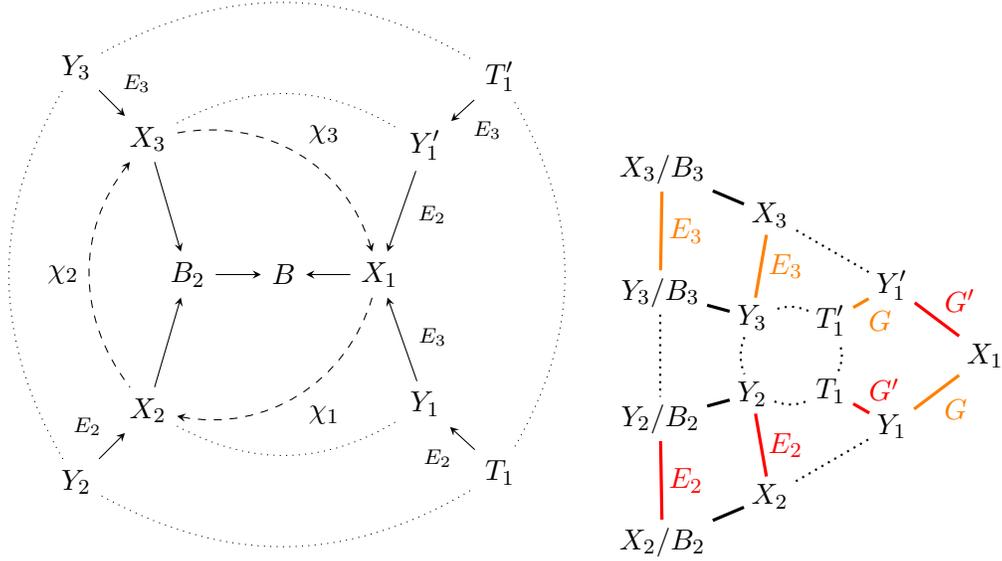

\begin{lem}\label{lem:IVandthenII}
Let $T/B$ be a rank $3$ fibration inducing an elementary relation \[
\chi_n\circ \cdots\circ \chi_2\circ\chi_1=\id
\]
with $n\geqslant 3$ and Sarkisov links $\chi_i\colon X_i\dashrightarrow X_{i+1}$ $($where $X_{n+1}=X_1)$ between Mori fibre spaces $X_i/B_i$, for $i=1,\ldots,n$.
Assume that $\dim(B)=\dim(B_1)=\dim(B_2)$, that $\chi_1$ is of type \IV\, and that $\chi_2$ is of type \II. Then, $n=4$, $\chi_3$ is of type \IV, $\chi_4$ is of type \II\ and is equivalent to~$\chi_2^{-1}$.

\end{lem}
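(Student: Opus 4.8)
The plan is to follow the template of the proof of Lemma~\ref{lem:IandthenII}, now with the type~IV link $\chi_1$ in the role of the type~I link there; the only structural change is that a type~IV link contributes a pair of small contractions on the bases rather than a divisorial contraction on the total spaces, and it is precisely the absence of a contracted divisor that rules out the length‑three alternative of Lemma~\ref{lem:IandthenII}\ref{IandthenII3}. Concretely, since $\chi_1\colon X_1/B_1\dashrightarrow X_2/B_2$ is of type~IV, by Lemma~\ref{lem:LinksIV} it is the Sarkisov link attached to a rank~$2$ fibration $X_2/\widehat B$, pseudo-isomorphic over $\widehat B$ to $X_1/\widehat B$, with $\widehat B$ not $\QQ$-factorial and $B_1\to\widehat B$, $B_2\to\widehat B$ small contractions; in particular $\chi_1$ is a pseudo-isomorphism over $\widehat B$ and $\dim\widehat B=\dim B_1=\dim B_2=\dim B$. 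Since $\chi_2\colon X_2/B_2\dashrightarrow X_3/B_3$ is of type~II, it comes with divisorial contractions $Y_2\to X_2$, $Y_3\to X_3$ of prime divisors $E_2$, $E_3$, a sequence of log-flips $Y_2\dashrightarrow Y_3$, and Mori fibre spaces $X_i\to B_i$ with $B_2=B_3$. As in the proof of Lemma~\ref{lem:IandthenII}, $Y_2/\widehat B$ is a rank~$3$ fibration (its relative Picard rank is $1+1+1=3$) dominating $\chi_2$ via $Y_2\dashrightarrow Y_3\to X_3$ and dominating $\chi_1$ via $Y_2\to X_2$ followed by $\chi_1^{-1}\colon X_2\dashrightarrow X_1$; as the elementary relation is induced by $T/B$, this forces $T/B\simeq Y_2/\widehat B$, so $B=\widehat B$ is not $\QQ$-factorial and $B_2=B_3\to B$ is a small contraction.

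Next I would run the two-ray game around the polygon of $T/B$, using the consequences recalled just before Lemma~\ref{lem:IandthenII} and assembling the commutative diagram built from $Y_2\to X_2$, $Y_2\dashrightarrow Y_3$, $Y_3\to X_3$ and $X_1\dashrightarrow X_2$. The link $\chi_3$ following $\chi_2$ is the one attached to the rank~$2$ fibration $X_3/B$, through which $T=Y_2/B$ factorises via the divisorial contraction $Y_3\to X_3$; since $B$ is not $\QQ$-factorial whereas bases of terminal Mori fibre spaces are, $\chi_3$ is of type~IV by Lemma~\ref{lem:LinksIV}. Thus $X_3\dashrightarrow X_4$ is a pseudo-isomorphism over $B$, $B_3\to B$ and $B_4\to B$ are small contractions, and $\dim B_4=\dim B$; transporting $Y_3\to X_3$ through the log-flips $Y_3\dashrightarrow Y_4$ that lift $X_3\dashrightarrow X_4$ produces a divisorial contraction $Y_4\to X_4$ and a rank~$2$ fibration $Y_4/B_4$, the next edge of the polygon, carrying $\chi_4$. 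The polygon is not a triangle: otherwise $\chi_3\colon X_3\dashrightarrow X_1$ would close it and $\chi_2\circ\chi_1=\chi_3^{-1}$ would be a pseudo-isomorphism, impossible since $\chi_2$ contracts a divisor over the generic fibre whereas $\chi_1$ and $\chi_3$ do not. Tracking $E_2$, $E_3$ around the quadrilateral exactly as in the proof of Lemma~\ref{lem:IandthenII}\ref{IandthenII4}, the two-ray game on $Y_4/B_4$ closes the polygon with a type~II link $\chi_4\colon X_4\dashrightarrow X_5=X_1$, so $n=4$.

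Finally I would check the equivalence $\chi_4\sim\chi_2^{-1}$ of Definition~\ref{Def:equiLink}. From $\chi_4\circ\chi_3\circ\chi_2\circ\chi_1=\id$ one gets $\chi_4=\chi_1^{-1}\circ\chi_2^{-1}\circ\chi_3^{-1}$, so with $\psi_1:=\chi_3^{-1}\colon X_4\dashrightarrow X_3$, $\psi_2:=\chi_1\colon X_1\dashrightarrow X_2$ and $\psi$ the induced birational identification of bases (all small modifications of $\widehat B$), the square $\chi_2^{-1}\circ\psi_1=\psi_2\circ\chi_4$ commutes. It remains to see that $\psi_1$ and $\psi_2$ induce isomorphisms on generic fibres: a type~IV link is a pseudo-isomorphism over the base $\widehat B$ of its rank~$2$ fibration, hence over the generic point of $\widehat B$ it restricts to a pseudo-isomorphism between two terminal $\QQ$-factorial Fano varieties over $\CC(\widehat B)$ of Picard rank~$1$, which must be an isomorphism since such a variety has no non-trivial small $\QQ$-factorial modification. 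This furnishes the diagram of Definition~\ref{Def:equiLink}, so $\chi_4$ is equivalent to $\chi_2^{-1}$, completing the proof.

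The step I expect to be the main obstacle is the two-ray-game bookkeeping of the second paragraph: pinning down the successive rank~$2$ fibrations $X_3/B$ and $Y_4/B_4$ carrying $\chi_3$ and $\chi_4$ and checking that the polygon has exactly four edges with $\chi_4$ equivalent to $\chi_2^{-1}$. This is routine but delicate and should be carried out in complete parallel with the proof of Lemma~\ref{lem:IandthenII}\ref{IandthenII4}, perhaps by drawing the analogue of Figure~\ref{fig:relationwithtypeI}.
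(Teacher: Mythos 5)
Your overall strategy is the paper's: identify the base of the type $\IV$ link $\chi_1$ with $B$ via the rank $3$ fibration $Y_2/B$, deduce from Lemma~\ref{lem:LinksIV} that $B$ is not $\QQ$-factorial and hence that $\chi_3$ is again of type $\IV$, and then close the polygon as a quadrilateral carrying $E_2$ and $E_3$ around it. Your verification of the equivalence $\chi_4\sim\chi_2^{-1}$ (type $\IV$ links restrict to pseudo-isomorphisms, hence isomorphisms, on the Picard-rank-one generic fibres) is a correct and more explicit version of what the paper reads off from its Figure~\ref{fig:relationwithtypeIVbeforeII}. One minor ordering point: you invoke Lemma~\ref{lem:LinksIV} for $\chi_1$ before knowing $\dim\widehat B=\dim B_1$; the identification $\widehat B=B$ via the rank $3$ fibration has to come first, after which the hypothesis $\dim(B)=\dim(B_1)=\dim(B_2)$ makes the lemma applicable. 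You do establish $\widehat B=B$ in the next sentence, so this is cosmetic.

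The step you defer to ``bookkeeping as in Lemma~\ref{lem:IandthenII}\ref{IandthenII4}'' is, however, exactly where that analogy fails to deliver: in Lemma~\ref{lem:IandthenII} the bases $B_1$ and $B_4$ are literally equal to $B$, whereas here $B_1\to B$ and $B_4\to B$ are two small contractions onto the non-$\QQ$-factorial $B$, and nothing in the two-ray game by itself identifies them. The paper's one non-routine ingredient is precisely this: by the uniqueness of log-flips there are, up to isomorphism, only two small extremal contractions of $\QQ$-factorial varieties onto $B$, one of which is $B_2=B_3\to B$; hence $B_1\dasharrow B_4$ is an isomorphism and one may take $B_4=B_1$, which is what forces the rank $2$ fibration $Y_4/B_4$ to close the polygon onto $X_1/B_1$ and gives $n=4$. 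Your ruling out of $n=3$ (a type $\II$ link cannot equal a composition of pseudo-isomorphisms) is fine but does not substitute for this identification. Supplying the log-flip uniqueness argument completes the proof along the lines you propose.
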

\begin{proof}
As $\chi_2$ is of type \II, it comes with divisorial contractions $Y_i\to X_i$, contracting a divisor $E_i$, for $i\in \{2,3\}$.
As  $\chi_1$ is of type \IV, it is over a base $B'$ that comes with two morphisms $B_1\to B'$ and $B_2\to B'$. As $Y_2\to X_2\to B_2\to B'$ is a rank $3$ fibration, we find $B'=B$. By assumption, $\dim(B)=\dim(B_1)=\dim(B_2)$, so $B$ is not $\QQ$-factorial and we have  small extremal contractions $B_1\to B$ and $B_2\to B$ (Lemma~\ref{lem:LinksIV}). We obtain the following diagram
\[
\begin{tikzcd}[ampersand replacement=\&,column sep=1.3cm,row sep=0.16cm]
Y_1\ar[rr,dotted] \ar[dd,"E_2",swap]\&\&Y_2\ar[dd,"E_2",swap]  \ar[rr,dotted,-] \&\& \ar[dd,"E_3"] Y_3\\ \\
X_1\ar[dd,swap]  \ar[rr,dotted,"\chi_1",->]\&\& X_2 \ar[dd] \ar[rr,"\chi_2",dashed]\&  \& X_3\ar[dd]   \\ \\
B_1  \ar[dr,swap] \&  \& B_2 \ar[dl]\ar[rr,equal]\&  \& B_3. \\
\& B \&
\end{tikzcd}
\]
The $2$-ray game on $X_{3}/B$ gives the link $\chi_{3}$, which is thus of type \III\, or \IV~(Figure~\ref{fig:SarkisovTypes}). Moreover, $\chi_3$ is of type \IV\, as $B$ is not $\QQ$-factorial (Lemma~\ref{lem:LinksIV}), and thus we  find that $\chi_1$ and $\chi_3$ are both of type \IV. We complete the diagram and obtain
\[\begin{tikzcd}[ampersand replacement=\&,column sep=1.3cm,row sep=0.16cm]
Y_1\ar[rr,dotted] \ar[dd,"E_2",swap]\&\&Y_2\ar[dd,"E_2",swap]  \ar[rr,dotted,-] \&\& \ar[dd,"E_3"] Y_3\ar[rr,dotted,-]  \&\& \ar[dd,"E_3"] Y_4\\ \\
X_1\ar[dd,swap]  \ar[rr,dotted,"\chi_1",->]\&\& X_2 \ar[dd] \ar[rr,"\chi_2",dashed]\&  \& X_3\ar[dd] \ar[rr,dotted,"\chi_3",->]\&  \& X_4\ar[dd]  \\ \\
B_1  \ar[dr,swap] \&  \& B_2 \ar[dl]\ar[rr,equal]\&  \& B_3\ar[dr]  \& \& B_4.\ar[dl] \\
\& B \&\&\& \&B
\end{tikzcd}\]
By the uniqueness of log-flips \cite[p.~340]{Matsuki}, there are exactly two small contractions from a $\QQ$-factorial variety to $B$, up to isomorphism, one being $B_2=B_3\to B$, and the second one then being $B_1\to B$ or $B_4\to B$. Hence, the birational map $B_1\dasharrow B_4$ induced by the above diagram is an isomorphism and we can choose $B_4=B_1$ and obtain the diagram of Figure~\ref{fig:relationwithtypeIVbeforeII}. It implies that $\chi_4$ is of type \II, and is equivalent to $\chi_2^{-1}$.
\end{proof}

\begin{figure}[ht]
\begin{tabular}{c c}
	\begin{tikzpicture}[scale=1.1,font=\small,outer sep=-1pt] 
	\node (Y1) at (135:\Ra) {$Y_3$};
	\node (Y2) at (-135:\Ra) {$Y_2$};
	\node (Y3) at (-45:\Ra) {$Y_1$};
	\node (Y4) at (45:\Ra) {$Y_4$};
	\node (X1) at (135:\Rb) {$X_3$};
	\node (X2) at (-135:\Rb) {$X_2$};
	\node (X3) at (-45:\Rb) {$X_1$};
	\node (X4) at (45:\Rb) {$X_4$};
	\node (B) at (0:0cm) {$B$};
	\node (Bl) at (180:\Rc) {$B_2$};
	\node (Br) at (0:\Rc) {$B_1$};
	\draw[->] (Y1) to [auto,swap,"\tiny $E_3$"] (X1);
	\draw[->] (Y2) to [auto,"\tiny $E_2$"] (X2);
	\draw[->] (Y3) to [auto,swap,"\tiny $E_2$"] (X3);
	\draw[->] (Y4) to [auto,"\tiny $E_3$"] (X4);
	\draw[->] (X1) to  (Bl);
	\draw[->] (X2) to  (Bl);
	\draw[->] (X3) to  (Br);
	\draw[->] (X4) to  (Br);
	\draw[->] (Bl) to  (B);
	\draw[->] (Br) to  (B);
	\draw[dashed,->] (X2) to [bend left=38,auto,"$\chi_2$"] (X1);
	\draw[dotted,->] (X3) to [bend left=38,auto,"$\chi_1$"] (X2);
	\draw[dashed,->] (X4) to [bend left=38,auto,"$\chi_4$"] (X3);
	\draw[dotted,->] (X1) to [bend left=38,auto,"$\chi_3$"] (X4);
	\draw[dotted,-] (Y1) to [bend right=38] (Y2)
			(Y2) to [bend right=38] (Y3)
			(Y3) to [bend right=38] (Y4)
			(Y4) to [bend right=38] (Y1)
			(Bl) to [bend right=20] (Br);
	\end{tikzpicture}
	&
	\begin{tikzpicture}[scale=1.3,font=\small,outer sep=-1pt] 
	\node (X1) at (45+90:\Ra*1.15) {$X_3/B_3$};
	\node (X2) at (45+180:\Ra*1.15) {$X_2/B_2$};
	\node (X3) at (45+270:\Ra*1.15) {$X_1/B_1$};
	\node (X4) at (45:\Ra*1.15) {$X_4/B_4$};
	\node (Y1) at (180-15:\Rb*1.3) {$Y_3/B_3$};
	\node (Y2) at (180+15:\Rb*1.3) {$Y_2/B_2$};
	\node (X2B) at (270-15:\Rb*1.3) {$X_2/B$};
	\node (X3B) at (270+15:\Rb*1.3) {$X_1/B$};
	\node (Y3) at (-15:\Rb*1.3) {$Y_1/B_1$};
	\node (Y4) at (+15:\Rb*1.3) {$Y_4/B_4$};
	\node (X4B) at (90-15:\Rb*1.3) {$X_4/B$};
	\node (X1B) at (90+15:\Rb*1.3) {$X_3/B$};
	\node (Y1B) at (45+90:\Rc) {$Y_3/B$};
	\node (Y2B) at (45+180:\Rc) {$Y_2/B$};
	\node (Y3B) at (45+270:\Rc) {$Y_1/B$};
	\node (Y4B) at (45:\Rc*1.1) {$Y_4/B$};
	\draw[thick,dotted,-] (Y1) to  (Y2)
	(X2B) to  (X3B)
	(Y3) to (Y4)
	(X4B) to (X1B);
	\draw[thick,dotted,-]	(Y1B) to [bend right=21] (Y2B)
		(Y2B) to [bend right=21] (Y3B)
		(Y3B) to [bend right=21] (Y4B)
		(Y4B) to [bend right=21] (Y1B);
	\draw[very thick,-] (X1) to (X1B)
		(Y1) to (Y1B)
		(Y2) to (Y2B)
		(X2) to (X2B);
		\draw[very thick,-] (X4) to (X4B)
			(Y4) to (Y4B)
			(Y3) to (Y3B)
			(X3) to (X3B);
	\draw[very thick,-,red] (Y2) to node[right] {$E_2$}(X2)
	(Y2B) to node[right] {$E_2$}(X2B)
	(Y3) to node[right] {$E_2$}(X3)
	(Y3B) to node[right] {$E_2$}(X3B);
	\draw[very thick,-,orange]
	(X1) to node[right] {$E_3$} (Y1)
	 (X1B) to node[right] {$E_3$} (Y1B)
	 (X4) to node[right] {$E_3$} (Y4)
 	 (X4B) to node[right] {$E_3$} (Y4B);
	\end{tikzpicture}
\end{tabular}
\caption{Diagrams illustrating the relations of Lemma~\ref{lem:IVandthenII}: The one on the left is \cite[Figure~11]{BLZ}; The one on the right depicts the involved rank $1$, $2$ and $3$ fibrations.}
\label{fig:relationwithtypeIVbeforeII}
\end{figure}
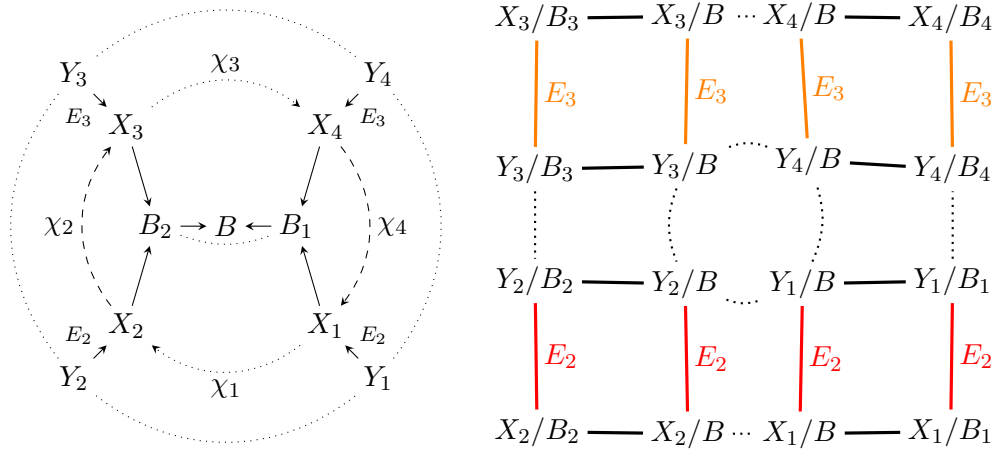

The next lemma shows that in an elementary relation with a link of type \II\, there are never two successive links of type \I~(or~\III) with equidimensional bases. It will be used in Lemma~\ref{lem:IIIandthenII}.

\begin{lem}\label{lem:IandthenI}
	Let $T/B$ be a rank $3$ fibration inducing an elementary relation \[\chi_n\circ \cdots\circ\chi_2\circ\chi_1=\id\]
	with $n\geqslant 3$ and Sarkisov links $\chi_i\colon X_i\dashrightarrow X_{i+1}$ $($where $X_{n+1}=X_1)$ between Mori fibre spaces $X_i/B_i$, for $i=1,\ldots,n$.
	Assume that $\dim(B_1)=\dim(B_2)=\dim(B_3)$, and that $\chi_1$ and $\chi_2$ are of type \I. Then, $n\geqslant 4$, $\chi_{3},\ldots,\chi_{n-2}$ are $n-4$ links of type \IV\, and both $\chi_{n-1}$ and $\chi_n$ are of type \III.
\end{lem}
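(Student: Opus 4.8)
The plan is to follow the method of Lemmas~\ref{lem:IandthenII} and~\ref{lem:IVandthenII}: extract from the two type~\I\ links the relevant divisorial contractions, identify the ``deepest'' Mori fibre space occurring in the relation, and then run the two-ray game iteratively around the polygon coming from $T/B$.

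\textbf{Step 1 (Setup).} Since $\chi_1$ and $\chi_2$ are of type~\I\ with equidimensional bases, each $\chi_i$ $(i=1,2)$ comes with a divisorial contraction $Y_i\to X_i$ and, on the other side, with a morphism $B_{i+1}\to B_i$ of relative Picard rank~$1$ (a divisorial contraction, as $\dim B_i=\dim B_{i+1}$). The rank~$2$ fibration (edge) associated to $\chi_2$ is $Y_2/B_2$, and $\rho(Y_2/B_1)=\rho(Y_2/B_2)+\rho(B_2/B_1)=2+1=3$. As the relation comes from the rank~$3$ fibration $T/B$, the variety $T$ dominates $Y_2$, so $3=\rho(T/B)\geqslant \rho(Y_2/B)=\rho(Y_2/B_1)+\rho(B_1/B)=3+\rho(B_1/B)$, whence $\rho(B_1/B)=0$; since $B_1$ is $\QQ$-factorial (being the base of a Mori fibre space), $B_1=B$. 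Moreover $T$ is pseudo-isomorphic to $Y_2$, so $\rho(X_3/B)=\rho(Y_2/B)=3$ and $T\ps X_3$; in particular $B$ is $\QQ$-factorial.

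\textbf{Step 2 (A dichotomy at the top).} Put $\delta_i:=\rho(B_i/B)=\rho(X_i/B)-1\in\{0,1,2\}$, the bound coming from $\rho(X_i/B)\leqslant\rho(T/B)=3$. Comparing Picard ranks through the edge of each link gives $\lvert\delta_{i+1}-\delta_i\rvert\leqslant 1$, with $\delta_{i+1}=\delta_i+1$ exactly for links of type~\I, $\delta_{i+1}=\delta_i-1$ exactly for type~\III, and $\delta_{i+1}=\delta_i$ exactly for type~\II\ or~\IV. By Step~1, $\delta_1=0$, $\delta_2=1$, $\delta_3=2$. I claim that whenever $\delta_i=\delta_{i+1}=2$, the link $\chi_i$ is of type~\IV. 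Indeed its edge $Y$ dominates the Mori fibre space $X_i$ with $\rho(X_i/B)=3$, so $\rho(Y/B)=3$ and $Y\dashrightarrow X_i$ is a pseudo-isomorphism; writing $B_Y$ for the base of $Y$, this forces $\rho(B_i/B_Y)=\rho(Y/B_Y)-\rho(X_i/B_Y)=2-1=1$, hence $B_i\neq B_Y$, so $\chi_i$ is not of type~\II; since $\delta_i=\delta_{i+1}$ it is not of type~\I\ or~\III\ either, so it is of type~\IV.

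\textbf{Step 3 (Reading off the whole relation) and the main obstacle.} It remains to prove that $(\delta_1,\dots,\delta_n,\delta_{n+1}=\delta_1)$ equals $(0,1,2,2,\dots,2,1,0)$. Since $\delta_3=2$, $\delta_{n+1}=0$ and $\delta$ moves by at most $1$ at each step, there are at least two indices $i\in\{3,\dots,n\}$ with $\delta_{i+1}=\delta_i-1$; in particular $n\geqslant 4$. To conclude I would run the two-ray game iteratively, starting from $X_3\ps T$: while $X_i\ps T$, the next edge $Y^{(i)}$ is squeezed between $\rho(T/B)=3$ and $\rho(X_i/B)=3$, hence $Y^{(i)}\ps T$, and by Step~2 the link $\chi_i$ is of type~\IV\ with $X_{i+1}\ps T$ again \emph{unless} the second end of the two-ray game on $Y^{(i)}$ is reached by a divisorial contraction, in which case $\chi_i$ is of type~\III\ and $\delta$ drops from $2$ to $1$. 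Once $\delta=1$ at some $X_{k+1}$, the edge $Y^{(k)}\ps T$ already realises a divisorial contraction onto $X_{k+1}$; running the two-ray game on the next edge, and using that $B$ is $\QQ$-factorial (which by Lemma~\ref{lem:LinksIV} excludes type~\IV\ when $\delta=1$) together with the fact that $X_{k+1}$ cannot receive a second independent divisorial contraction from a model pseudo-isomorphic to $T$, one gets that $\chi_{k+1}$ is of type~\III\ with $\delta$ dropping to $0$; tracing the contracted divisors through the diagram identifies $X_{k+2}$ with $X_1$ as Mori fibre spaces, so $k=n-1$. Hence $\chi_{n-1},\chi_n$ are the two type~\III\ links and $\chi_3,\dots,\chi_{n-2}$ are the $n-4$ type~\IV\ links. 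The genuine difficulty is precisely this last step: controlling the global combinatorics of the polygon of $T/B$ — in effect, that the Mori fibre space structures carried by the models pseudo-isomorphic to $T$ form a single connected arc, and that the descent from $\delta=2$ to $\delta=0$ uses exactly two consecutive type~\III\ links, symmetric to the two initial type~\I\ links. This requires carefully keeping track, through the iterated two-ray games, of the divisors contracted by $Y_i\to X_i$ and by $B_{i+1}\to B_i$, exactly as in \cite{BLZ} and in the proofs of Lemmas~\ref{lem:IandthenII} and~\ref{lem:IVandthenII}; the hypothesis $\rho(T/B)=3$ is what makes the configuration finite and pins down its shape.
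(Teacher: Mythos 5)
Your Steps 1 and 2 are sound: the identification $B_1=B$ via the Picard-rank squeeze, the bookkeeping function $\delta_i=\rho(B_i/B)$ with its increments $\pm1$ or $0$ according to the link type, and the observation that a link with $\delta_i=\delta_{i+1}=2$ must be of type \IV\ (because its edge is pseudo-isomorphic to $T$, so neither end can be reached by a divisorial contraction without dropping $\delta$) are all correct, and they do give $n\geqslant 4$. But the lemma is not yet proved, and the gap is exactly where you say it is. What remains open after Step 2 is everything the statement actually asserts about $\chi_3,\ldots,\chi_n$: you have not excluded a type \II\ link at an intermediate vertex with $\delta=1$ (your appeal to ``$X_{k+1}$ cannot receive a second independent divisorial contraction from a model pseudo-isomorphic to $T$'' is an assertion, not an argument), you have not excluded a re-ascent $2\to1\to2$ (a type \III\ link followed by a type \I\ link), and you have not shown that the chain closes up at $X_1$ after exactly two descents. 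So the profile $(0,1,2,\ldots,2,1,0)$ is a conjecture at the end of your proof, not a conclusion.

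The ingredient you are missing is the paper's reduction to a $2$-ray game \emph{on the base}. Since $B_1,B_2,B_3$ are equidimensional and $\QQ$-factorial, the morphisms $B_2\to B_1$ and $B_3\to B_2$ are divisorial contractions, extracting divisors $\widehat F$ and $\widehat G$ whose preimages in $X_2$, $X_3$ are the strict transforms of the divisors $F\subseteq Y_1$, $G\subseteq Y_2$ contracted by the two type \I\ links. One then runs the $2$-ray game on $B_3/B_1$, which is unique: it decomposes as a pseudo-isomorphism $B_3\ps B'$ (a sequence of $r-3$ log-flips over non-$\QQ$-factorial intermediate bases $C_i$) followed by the two divisorial contractions $B'\stackrel{\widehat F}{\to}B''\stackrel{\widehat G}{\to}B_1$. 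Lifting each log-flip on the base gives a type \IV\ link (this is where Lemma~\ref{lem:LinksIV} enters, via the non-$\QQ$-factorial $C_i$), and lifting the two base contractions gives the two type \III\ links, whose divisorial contractions upstairs extract $F$ and then $G$; exchanging these two divisors by the final $2$-ray game on $T_{r+2}\to Y_{r+2}\to X_{r+2}$ recovers $T_1\to Y_1\to X_1$ and closes the relation with $n=r+1$. It is this determinism of the base $2$-ray game, together with tracking $F$ and $G$, that rules out the stray configurations your Step 3 leaves open; your $\delta$-profile argument alone cannot see it.
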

\begin{proof}
	As $\chi_i$ is of type \I\, for $i\in\{1,2\}$, it comes with a divisorial contraction  $Y_i\to X_i$, that contracts a divisor $F\subseteq Y_1$ respectively $G\subseteq Y_2$.
	Completing the diagram gives
	\[\begin{tikzcd}[ampersand replacement=\&,column sep=1.3cm,row sep=0.16cm]
	T_1\ar[rr,dotted,-] \ar[dd,"G",swap]\&\&Y_2\ar[dd,"G",swap]  \ar[rr,dotted,-] \&\&  X_3\ar[dd]\\ \\
Y_1\ar[dd,"F",swap]  \ar[rr,dotted,-] \&\& X_2 \ar[dd] \ar[rruu,"\chi_2",dashed,swap]\&  \& B_3.\ar[ddll]   \\ \\
X_1 \ar[uurr,"\chi_1",dashed,swap] \ar[dd,swap] \&  \& B_2 \ar[ddll]\&  \& \\ \\
B_1\&  \&\&\&
\end{tikzcd}\]
All bases $B_1,B_2,B_3$ have the same dimension, and are $\QQ$-factorial, as these are the bases of Mori fibre spaces. Hence, $B_2\to B_1$ is a divisorial contraction, that contracts a divisor $\widehat F\subseteq B_2$. The preimage in $X_2$ of this divisor $\widehat F$ is a divisor of $X_2$; each irreducible component that surjects to $\widehat F$ is contracted by $\chi_1^{-1}$, so there is only one such component, and it is the strict transform of $F\subseteq Y_1$.
Similarly, $B_3\to B_2$ is a divisorial contraction contracting a divisor $\widehat G\subseteq B_3$, whose preimage in $X_3$ is the strict transform of $G\subseteq Y_2$.

The $2$-ray game on $B_3/B_1$ gives therefore a pseudo-isomorphism $B_3\ps B'$, and two divisorial contractions $B'\stackrel{\widehat F}\to B''\stackrel{\widehat G}\to B_1$. We decompose the pseudo-isomorphism $B_3\ps B'$ into a sequence of $r-3\geqslant 0$ log-flips, say $\psi_i\colon B_{i}\ps B_{i+1}$ for $i=3,\ldots,r-1$ with $B'=B_r$. In particular, if $B_3\ps B'$ is an isomorphism then $r=3$ and $B'=B_3$.
For each $i\in \{3,\ldots,r-1\}$, we obtain the commutative diagram
\[\begin{tikzcd}[ampersand replacement=\&,column sep=1.3cm,row sep=0.16cm]
B_{i}\ar[rr,"\psi_i",dotted] \ar[dr,"\nu_i",swap] \& \& B_{i+1} \ar[dl,"\tau_i"] \\
\& C_i \&
\end{tikzcd}\]
where $\nu_i$, $\tau_i$ are small extremal contractions and $C_i$ is a non-$\QQ$-factorial variety. Starting with $i=3$ and proceeding inductively, the $2$-ray game on $X_i/C_i$ gives the link $\chi_{i}\colon X_{i}\ps X_{i+1}$, that is of type \IV\, (see Lemma~\ref{lem:LinksIV}), with diagram
\[\begin{tikzcd}[ampersand replacement=\&,column sep=1.7cm,row sep=0.16cm]
X_i \ar[rr,"\chi_i",dotted,swap] \ar[dd,swap]  \&\& X_{i+1} \ar[dd] \\ \\
B_i\ar[dr,"\nu_i",swap] \& \& B_{i+1}.\ar[dl,"\tau_i"]  \\
\& C_i \&
\end{tikzcd}\]

This gives a sequence of $r-3$ links of type \IV\, (and thus $0$ such links if $B_3\ps B'$ is an isomorphism, where $r=3$).

In any case, we now can use the divisorial contractions $B'\stackrel{\widehat F}\to B''\stackrel{\widehat G}\to B_1$ to find the last links. The link $\chi_r$ is given by the $2$-ray game on $X_r/B''$ (via $X_r\to B_r=B'\to B''$). By looking at Figure~\ref{fig:SarkisovTypes}, only types \III, \IV~are possible, and \IV\, is excluded by Lemma~\ref{lem:LinksIV}. Hence, $\chi_r$ is a link of type \III, so $B_{r+1}=B''$, and the divisorial contraction $Y_{r+1}\to X_{r+1}$ is given by the contraction of $F$, by the same reasoning as before. This gives the commutative diagram
\[\begin{tikzcd}[ampersand replacement=\&,column sep=1.3cm,row sep=0.16cm]
T_1\ar[rr,dotted,-] \ar[dd,"G",swap]\&\&Y_2\ar[dd,"G",swap]  \ar[rr,dotted,-] \&\&  X_3\ar[dd]\ar[rr,"\chi_{r-1}\circ\cdots \circ \chi_3",dotted,-]\&\&X_r\ar[dd]\ar[rr,dotted,-]\ar[ddrr,dashed,swap,"\chi_{r}"]\&\& Y_{r+1}\ar[dd,"F",swap]\\ \\
Y_1\ar[dd,"F",swap]  \ar[rr,dotted,-] \&\& X_2 \ar[dd] \ar[rruu,"\chi_2",dashed,swap]\&  \& B_3\ar[rr,dotted,-]\ar[ddll,"\widehat G",swap]\&\&B_r\ar[ddrr,"\widehat F"]\&\& X_{r+1}\ar[dd]   \\ \\
X_1 \ar[uurr,"\chi_1",dashed,swap] \ar[dd,swap] \&  \& B_2 \ar[ddll,"\widehat F",swap]\&  \&\&\&\&\&B_{r+1}.\ar[ddllllllll,"\widehat G"] \\ \\
B_1\&  \&\&\&\&\&
\end{tikzcd}\]
Now, $\chi_{r+1}$  is given by the $2$-ray game on $X_{r+1}/B_1$, via $X_{r+1}\to B_{r+1}=B''\stackrel{\widehat G}\to B_1$, and is therefore again a link of type \III. This gives
\[\begin{tikzcd}[ampersand replacement=\&,column sep=1.0cm,row sep=0.16cm]
T_1\ar[rr,dotted,-] \ar[dd,"G",swap]\&\&Y_2\ar[dd,"G",swap]  \ar[rr,dotted,-] \&\&  X_3\ar[dd]\ar[rr,"\chi_{r-1}\circ\cdots \circ \chi_3",dotted,-]\&\& X_r\ar[dd]\ar[rr,dotted,-]\ar[ddrr,dashed,swap,"\chi_{r}"]\&\& Y_{r+1}\ar[dd,"F",swap]\ar[rr,dotted,-]\&\&T_{r+2}\ar[dd,swap,"F"]\\ \\
Y_1\ar[dd,"F",swap]  \ar[rr,dotted,-] \&\& X_2 \ar[dd] \ar[rruu,"\chi_2",dashed,swap]\&  \& B_3\ar[rr,dotted,-]\ar[ddll,"\widehat G",swap]\&\& B_r\ar[ddrr,"\widehat F"]\&\& X_{r+1}\ar[dd]\ar[rr,dotted,-]\ar[ddrr,dashed,swap,"\chi_{r+1}"]\&\&Y_{r+2}\ar[dd,swap,"G"]   \\ \\
X_1 \ar[uurr,"\chi_1",dashed,swap] \ar[dd,swap] \&  \& B_2 \ar[ddll,"\widehat F",swap]\&  \&\&\&\&\& B_{r+1}\ar[ddrr,"\widehat G"]\&\&X_{r+2}\ar[dd] \\ \\
B_1\ar[rrrrrrrrrr,equal]\&\&\&\&\&\&\&\&\&\& B_1.
\end{tikzcd}\]
Applying the $2$-ray game on $T_{r+2}\to Y_{r+2}\to X_{r+2}$, we exchange the two divisors and obtain $T_1\to Y_1\to X_1=X_{r+2}$. This then finishes the relation with $n=r+1$.
\end{proof}

\begin{lem}\label{lem:IIIandthenII}
	Let $T/B$ be a rank $3$ fibration inducing an elementary relation \[\chi_n\circ \cdots\circ \chi_2\circ\chi_1=\id\]
	with $n\geqslant 3$ and Sarkisov links $\chi_i\colon X_i\dashrightarrow X_{i+1}$ $($where $X_{n+1}=X_1)$ between Mori fibre spaces $X_i/B_i$, for $i=1,\ldots,n$.
	Assume that $\dim(B)=\dim(B_1)=\dim(B_2)$, that $\chi_1$ is of type \III\, and that $\chi_2$ is of type \II. Then, $n=4$, $\chi_3$ is of type \I, $\chi_4$ is of type \II\ and is equivalent to  $\chi_2^{-1}$.
\end{lem}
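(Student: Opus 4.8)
The plan is to adapt the proofs of Lemmas~\ref{lem:IandthenII}, \ref{lem:IVandthenII} and \ref{lem:IandthenI}: record the divisorial data carried by $\chi_1$ and $\chi_2$, run successive two-ray games (on the total spaces and, as in Lemma~\ref{lem:IandthenI}, on the bases) to force the shape of the remaining links, and track strict transforms of the contracted divisors. The new feature, compared with Lemma~\ref{lem:IandthenII}, is that the short (triangle) relation is now impossible, so only the length $4$ conclusion survives.

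First I would fix the structure. Since $\chi_2$ is of type~\II, it comes with divisorial contractions $Y_2\to X_2$ (contracting a divisor $E_2$) and $Y_3\to X_3$ (contracting $E_3$), Mori fibre spaces $X_2\to B_2$, $X_3\to B_3=B_2$, and a sequence of log-flips $Y_2\ps Y_3$ over $B_2$. Since $\chi_1$ is of type~\III, it comes with a divisorial contraction $Y_1\to X_2$ contracting a divisor $G$, a sequence of log-flips $X_1\ps Y_1$, the Mori fibre space $X_1\to B_1$, and a morphism $B_1\to B_2$. As the relation is induced by $T/B$, the same reasoning as in Lemmas~\ref{lem:IandthenII} and \ref{lem:IVandthenII} gives $B=B_2=B_3$; since $\chi_2$ is a link of type~\II\ over $B$ with $\dim(B_1)=\dim(B_2)=\dim(B)$, Lemma~\ref{lem:LinksIV} shows $B$ is $\QQ$-factorial, and therefore, $B_1$ being $\QQ$-factorial of the same dimension with $\rho(B_1/B)=1$, the morphism $B_1\to B$ is a divisorial contraction of a divisor $\widehat G\subseteq B_1$; as in Lemma~\ref{lem:IandthenII} the unique component of the preimage of $\widehat G$ in $X_1$ dominating $\widehat G$ is the strict transform of $G$.

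Then I would identify the remaining links. Running the two-ray game that produces $\chi_3$ (using $B_1\to B$ on the bases, as in Lemma~\ref{lem:IandthenI}), $\chi_3$ is of type~\I\ or~\III, type~\IV\ being excluded by Lemma~\ref{lem:LinksIV} since $B$ is $\QQ$-factorial and all bases are equidimensional. The key point is that the triangle case cannot occur: in Lemma~\ref{lem:IandthenII} it arose precisely when $E_3$ was the strict transform of $G$, but here $G$ is contracted by $Y_1\to X_2$, so its image in $X_2$ has codimension $\geqslant 2$ and $G$ is \emph{not} the strict transform of a divisor of $X_2$, whence $E_3$ cannot be its strict transform. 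Consequently $\chi_3$ is of type~\I, re-extracting $\widehat G$ on the base; the induced divisorial contraction $B_4\to B$ must, by the structure of the two-ray game (cf.~the use of uniqueness of log-flips in Lemma~\ref{lem:IVandthenII}), be isomorphic to $B_1\to B$, so we may take $B_4=B_1$, and a final two-ray game exchanging the divisors $G$ and $E_2$ (respectively $E_3$) identifies $X_5$ with $X_1$. Hence $n=4$ and $\chi_4$ is of type~\II. Finally, reading off the resulting diagram (of the shape of Figure~\ref{fig:relationwithtypeIVbeforeII}), the divisors contracted by $\chi_4$ on its two sides are the strict transforms of those contracted by $\chi_2^{-1}$, so $\chi_4$ is equivalent to $\chi_2^{-1}$ in the sense of Definition~\ref{Def:equiLink}.

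The step I expect to be the main obstacle is precisely the exclusion of the triangle ($n=3$) relation. The argument sketched above — that the divisor $G$ lives ``on the $X_1$-side'' rather than on $X_2$, so it cannot be matched with $E_3$ — needs to be spelled out carefully against the case analysis of Lemma~\ref{lem:IandthenII}. A safer alternative would be to assume $n=3$ for contradiction: then the link at the vertex $X_1$ other than $\chi_1$ is $\chi_3^{-1}$, which (being of type~\I\ by the above) and $\chi_1$ (of type~\III) both have their total space pseudo-isomorphic to $X_1$; hence the two rank~$2$ fibrations meeting at $X_1$ coincide up to pseudo-isomorphism, so $\chi_1^{-1}$ and $\chi_3$ agree up to isomorphisms of Mori fibre spaces, forcing $\chi_2$ to be an isomorphism — absurd, since $\chi_2$ is a Sarkisov link.
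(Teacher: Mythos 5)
There is a genuine gap, and it sits exactly where you predicted the main obstacle would be — but the obstacle is not the one you identified. The problem is your claim that ``$\chi_3$ is of type \I\ or \III'', justified by ``using $B_1\to B$ on the bases''. You have transposed the geometry of Lemma~\ref{lem:IandthenII} incorrectly: there, $B=B_1$ and the divisorial contraction on the base is $B_2=B_3\to B$, so the link out of $X_3$ is produced by the base morphism $B_3\to B$ and is indeed of type \III\ or \IV. Here the situation is reversed: $B=B_2=B_3$ (the base of $X_3$ is already the base of the rank~$3$ fibration) and the divisorial contraction on the base is $B_1\to B$. Hence the second rank~$2$ fibration dominating $X_3/B_3$ cannot come from a contraction of $B_3$; it is $Y_4/B_3$ where $Y_4\to X_3$ is the divisorial extraction of $G$ produced by the $2$-ray game on $T_3/X_3$ (second observation at the start of Section~\ref{sec:classificationElementaryRelations}). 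So $\chi_3$ has a divisorial contraction on its source side and is a priori of type \I\ or \II\ — not \I\ or \III. You never exclude the possibility that $\chi_3$ is of type \II, which would give three consecutive type \II\ links and a potentially longer relation; and your exclusion of the triangle via ``$E_3$ is not the strict transform of $G$'' does not address the actual closing-up condition (in this lemma $G\neq E_3$ holds automatically, since $T_3\to Y_3$ contracts $G$ and then $Y_3\to X_3$ contracts $E_3$, so the condition is vacuous and proves nothing). A secondary issue: the identification $B_4=B_1$ cannot be obtained from ``uniqueness of log-flips'' as in Lemma~\ref{lem:IVandthenII}, because $B_1\to B$ is a divisorial contraction here, not a small contraction.

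The paper avoids all of this by a reduction you did not consider: reverse and cyclically permute the relation to $\chi_2^{-1}\circ\cdots\circ\chi_{n-1}^{-1}\circ\chi_n^{-1}\circ\chi_1^{-1}=\id$, whose first link $\chi_1^{-1}$ is of type \I. Since $\chi_n^{-1}$ is obtained from the $2$-ray game on $T_1/B_1$, it is of type \I\ or \II. If it is of type \II, Lemma~\ref{lem:IandthenII} applies verbatim; its triangle case~\ref{IandthenII3} is excluded because the third link of the reversed relation would have to be of type \III, whereas for $n=3$ that third link is $\chi_2^{-1}$, of type \II. This yields $n=4$, $\chi_3$ of type \I, and $\chi_4$ equivalent to $\chi_2^{-1}$ with no further case analysis. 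If $\chi_n^{-1}$ is of type \I, then $\chi_1^{-1}$ and $\chi_n^{-1}$ are two consecutive type \I\ links in an elementary relation containing a type \II\ link, contradicting Lemma~\ref{lem:IandthenI}. Your ``safer alternative'' for the triangle (two rank~$2$ fibrations at $X_1$ both pseudo-isomorphic to $X_1$ over the same base $B$ must be the same edge) is sound in spirit, but as stated it presupposes the type of $\chi_3$, which is exactly what your argument fails to establish.
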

\begin{proof}
As $\chi_1$ is of type \III~and $\chi_2$ is of type \II, we have the commutative diagram
\[\begin{tikzcd}[ampersand replacement=\&,column sep=1.3cm,row sep=0.16cm]
	X_1 \ar[ddrr,"\chi_1",dashed,swap]\ar[r,dotted,-] \ar[dd]\& Y_2'\ar[rdd,"G"]\&Y_2\ar[dd,"E_2"]  \ar[rr,dotted,-] \&\& \ar[dd,"E_3"] Y_3\\ \\
B_1\ar[rrdd]\&\& X_2 \ar[dd] \ar[rr,"\chi_2",dashed,swap]\&  \& X_3\ar[dd] \\ \\
\&  \& B_2\ar[rr,equal]\&  \& B_3
\end{tikzcd}\]
where $Y_i\to X_i$ is a divisorial contraction with exceptional divisor $E_i$, for $i\in \{2,3\}$, and where $Y_2'\to X_2$ is a divisorial contraction with exceptional divisor $G$. We now complete the diagram, using the two observations at the beginning of the section:
\[\begin{tikzcd}[ampersand replacement=\&,column sep=1.3cm,row sep=0.16cm]
 T_1   \ar[dd,"E_2"]\ar[r,dotted,-] \& T_2'  \ar[dd,"E_2"]\ar[r,dotted,-] \& T_2\ar[dd,"G"]\ar[rr,dotted,-]  \&\& T_3\ar[dd,"G"]\\ \\
	X_1 \ar[ddrr,"\chi_1",dashed,swap]\ar[r,dotted,-] \ar[dd]\& Y_2'\ar[rdd,"G"]\&Y_2\ar[dd,"E_2"]  \ar[rr,dotted,-] \&\& \ar[dd,"E_3"] Y_3\\ \\
B_1\ar[rrdd]\&\& X_2 \ar[dd] \ar[rr,"\chi_2",dashed,swap]\&  \& X_3\ar[dd] \\ \\
\&  \& B_2\ar[rr,equal]\&  \& B_3.
\end{tikzcd}\]
If $\chi_n$ is of type \II, we may apply Lemma~\ref{lem:IandthenII} to the elementary relation \[\chi_2^{-1}\circ\cdots \circ\chi_{n-1}^{-1}\circ\chi_n^{-1}\circ \chi_{1}^{-1}=\id.\]
Indeed, $\chi_1^{-1}$ is of type \II, $\chi_n^{-1}$ also, and $\dim(B)=\dim(B_1)=\dim(B_n)$. As $\chi_2$ is of type \II,   Lemma~\ref{lem:IandthenII} implies that $n=2$ and that $\chi_2^{-1}$ is equivalent to $\chi_4$, so $\chi_4^{-1}$ is equivalent to $\chi_2$.

It remains to assume that $\chi_n$ is not of type \II, and to derive a contradiction. As $\chi_n^{-1}$ is obtained by applying a $2$-ray game to $T_1/B_1$, it is either of type \II~or of type \I~(Figure~\ref{fig:SarkisovTypes}). If $\chi_n^{-1}$ is of type \I, then $\chi_1^{-1}$ and $\chi_n^{-1}$ are two consecutive links of type \I~in an elementary relation that contains a link of type \II, impossible by Lemma~\ref{lem:IandthenI}.
\end{proof}

\section{Mori fibre spaces whose generic fibre is a surface}

A Sarkisov link $\chi\colon X_1/B\dasharrow X_2/B$ of type \II\ induces a birational map between the generic fibres of $X_1/B$ and $X_2/B$, which are Fano varieties of Picard rank $1$ over the field $\CC(B)$.
We will now focus on the case where these generic fibres are surfaces, and then apply it to the case of non-trivial Severi-Brauer surfaces, by using the results of Section~\ref{BirSB}.

\subsection{From surfaces over $\CC(B)$ to higher dimensional Mori fibre spaces over $\CC$}

The following result shows how Sarkisov links between higher dimensional Mori fibre spaces over $\CC$ induce Sarkisov links between the generic fibres.

\begin{lem}\label{lem:SBMfsSarki}
Let $\chi\colon X_1\dasharrow X_2$ be a Sarkisov link of type \II\ over a base $B$. For $i\in \{1,2\}$,  let $\Gamma_i\subseteq X_i$ be the centre of the divisorial contraction $Y_i\to X_i$ associated to the Sarkisov link

\[\begin{tikzcd}[ampersand replacement=\&,column sep=.8cm,row sep=0.16cm]
Y_1\ar[dd,"\rm div",swap]  \ar[rr,dotted,-] \&\& \ar[dd,"\rm div"] Y_2 \\ \\
X_1 \ar[rr,"\chi",dashed,swap] \ar[dr,"\rm fib",swap] \&  \& X_2. \ar[dl,"\rm fib"] \\
\& B \&
\end{tikzcd}\]
Suppose that the generic fibres $S_1$ and $S_2$ of $X_1/B$ and $X_2/B$ are surfaces. Let  $\varphi\colon S_1\dasharrow S_2$ be the birational map over $\CC(B)$ induced by $\chi$. Then exactly one of the following holds:
\begin{enumerate}
\item\label{SBSar1}
For each $i\in \{1,2\}$, the morphism $\Gamma_i\to B$ is not dominant, and $\varphi$ is an isomorphism $S_1\iso S_2$;
\item\label{SBSar2}
There is $d_1,d_2\in\{1,\ldots,8\}$ such that for each $i\in \{1,2\}$, the morphism $\Gamma_i\to B$ is a surjective, generically $(d_i:1)$-map. Moreover, $\varphi$ is a $(d_1,d_2)$-link, as in Definition~$\ref{Def36links}$.
\end{enumerate}
\end{lem}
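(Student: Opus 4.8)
The plan is to analyze the Sarkisov link $\chi\colon X_1\dashrightarrow X_2$ over $B$ by passing to the generic fibre over $\CC(B)$, and to match the two sides of the dichotomy against the two types of domination from Lemma~\ref{l:2_domination_type}. Recall that $\Bir(X_i/B)$ is naturally identified with $\Bir(S_i)$, and more generally, the whole rank-$2$ fibration $Y_i/B$ restricts over the generic point to a rank-$2$ fibration over $\CC(B)$. First I would note that the divisorial contraction $Y_i\to X_i$ restricts over the generic point to a birational morphism of smooth projective surfaces $(Y_i)_{\CC(B)}\to S_i$, and this is either an isomorphism or the blow-up of a closed point, depending precisely on whether the center $\Gamma_i$ dominates $B$ or not. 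Concretely: the generic fibre of $Y_i\to X_i$ is an isomorphism exactly when $\Gamma_i\to B$ is not dominant (the exceptional divisor then lies over a proper closed subset of $B$, so it disappears generically), and is the blow-up of a point when $\Gamma_i\to B$ is dominant, the point being the image in $S_i$ of the generic fibre of $\Gamma_i\to B$. The degree $d_i$ of that closed point of $S_i$ equals the generic degree of the finite dominant morphism $\Gamma_i\to B$, since the fibre of $\Gamma_i$ over the generic point of $B$ is precisely the $\Spec$ of the residue field of that point of $S_i$.

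Next I would observe that the two cases cannot mix: if $\Gamma_1\to B$ is dominant but $\Gamma_2\to B$ is not (or vice versa), then on the generic fibre $\varphi$ would be the composition of the blow-up of a point with an isomorphism, hence a birational morphism that is not an isomorphism; but $\varphi$ is a birational map between del Pezzo surfaces of Picard rank $1$ over $\CC(B)$, and such a map, if it were a morphism, would be an isomorphism (it cannot contract a curve, as $\rho(S_1)=1$ forces any such contraction to drop the Picard rank below $1$). Applying the same reasoning to $\varphi^{-1}$ rules out the reverse mismatch. So either both $\Gamma_i\to B$ are non-dominant, or both are dominant.

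In the first case, $(Y_i)_{\CC(B)}\to S_i$ is an isomorphism for $i=1,2$, and since the generic fibre of a sequence of log-flips over $B$ between surfaces is an isomorphism (log-flips contract nothing in relative dimension $2$ — they exist only when the flipping locus has codimension $\geqslant 2$ in the total space, hence codimension $\geqslant 1$ in the generic fibre, but a rank-$2$ surface fibration over a point admits no flips; more simply, $(Y_1)_{\CC(B)}$ and $(Y_2)_{\CC(B)}$ are related by a pseudo-isomorphism of smooth projective surfaces, which is an isomorphism), we conclude $\varphi\colon S_1\iso S_2$ is an isomorphism. This is case~\ref{SBSar1}. In the second case, $(Y_i)_{\CC(B)}\to S_i$ is the blow-up of a $d_i$-point $p_i$ with $d_i=\deg(\Gamma_i/B)$, and since $(Y_i)_{\CC(B)}$ is a smooth projective surface dominating a del Pezzo surface of degree $9$ (namely $S_i$, which over $\overline{\CC(B)}$ is $\PP^2$) with Picard rank $2$, the degree of $p_i$ satisfies $1\leqslant d_i\leqslant 8$; by Definition~\ref{Def36links}, $\varphi$ is then a $(d_1,d_2)$-link. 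This is case~\ref{SBSar2}. The main obstacle I anticipate is making rigorous the passage "sequence of log-flips over $B$ restricts to the identity on the generic fibre" and the statement that the generic fibre of the rank-$2$ fibration $Y_i/B$ is again a rank-$2$ (surface) fibration over $\CC(B)$, so that Lemma~\ref{l:2_domination_type} applies verbatim over the generic point; this requires a careful but standard base-change argument, using that relative Picard rank and the Mori-dream / terminal conditions are preserved on a generic fibre, and that in relative dimension $2$ there are no flipping contractions.
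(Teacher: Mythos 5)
Your proof is correct and follows essentially the same route as the paper: decompose $\chi$ as divisorial extraction, pseudo-isomorphism, divisorial contraction, restrict everything to the generic fibres, use that a pseudo-isomorphism of smooth projective surfaces is an isomorphism, and use $\rho(S_i)=1$ to force the birational-morphism case to be an isomorphism (the paper rules out the "mixed" case by applying the argument to $\chi^{-1}$, just as you do). One small correction: the lemma does not assume the $S_i$ are Severi-Brauer surfaces, so your parenthetical claim that $S_i$ is geometrically $\PP^2$ (a del Pezzo of degree $9$) is unjustified in this generality; the bound $d_i\leqslant 8$ should instead come from the fact that the generic fibre of the rank $2$ fibration $Y_i/B$ is again a del Pezzo surface, hence of degree $\deg(S_i)-d_i\geqslant 1$ with $\deg(S_i)\leqslant 9$, which is exactly how the paper argues.
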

\begin{proof}For $i\in \{1,2\}$,  $X_i/B$ is a Mori fibre space, so $S_i$ is a del Pezzo surface over $\CC(B)$, with Picard rank $1$. Moreover, as $\chi$ is a birational map over $B$, it corresponds to a unique birational map $\varphi\colon S_1\dasharrow S_2$, defined over $\CC(B)$.

Recall that $\chi$ is obtained by first blowing-up $\Gamma_1$, then applying a pseudo-isomorphism (which induces an isomorphism between the generic fibres) and then a divisorial contraction $Y_2\to X_2$. Hence, $\varphi$ is obtained by the composition of the three induced maps on the generic fibres. If $\Gamma_1\to B$ is not dominant, the first two are isomorphisms, and the last one is a birational morphism, which is then necessarily an isomorphism too, as $\rk\Pic(X_2/B)=1$. This gives~\ref{SBSar1}. If $\Gamma_1\to B$ is dominant, it is a surjective, generically $(d_1:1)$-map, for some integer $d_1\geqslant 1$. Hence, the first map for the generic fibres is the blow-up of a $d_1$-point. As $Y_1/B$ is a rank $2$ fibration, the generic fibre of $Y_1/B$ is again a del Pezzo surface, so we have $d_1\leqslant 8$. Applying the argument to $\chi^{-1}$, the last one needs then to be a divisorial contraction, and we obtain that $\varphi $ is a $(d_1,d_2)$-link, as in Definition~\ref{Def36links}. This gives~\ref{SBSar2}.
\end{proof}

We now consider equivalence between Sarkisov links $\chi,\chi'$ of type \II, as in Lemma~\ref{lem:SBMfsSarki}. As we will see in Lemma~\ref{Lem:EqInduceWeakEq} below, this is related to the following new equivalence on Sarkisov links between del Pezzo surfaces, which relates links defined over different but isomorphic field extensions of $\CC$.

\begin{mydef}\label{Def:weakEq}
Let $K,K'$ be two finitely generated field extensions of $\CC$. Let $S_1,S_2$ (respectively $S_1',S_2'$) be two del Pezzo surfaces of Picard rank $1$ over $K$ (respectively $K'$). Let $\varphi\colon S_1\dasharrow S_2$ (respectively $\varphi'\colon S_1'\dasharrow S_2'$) be a birational map, defined over $K$ (respectively $K'$).
We say that $\varphi$ and $\varphi'$ are \emph{weakly equivalent} if there are scheme isomorphisms $\psi_i\colon S_i\iso S_i'$ for $i=1,2$, and a field isomorphism $f^*\colon K'\iso K$ fixing $\CC$, inducing an isomorphism $f\colon \Spec(K)\iso \Spec(K')$, such that the following diagram commutes:

\[\begin{tikzcd}[link]
	S_1\ar[dddd,swap]\ar[rrr,"\psi_1"]\ar[ddr,dashed,"\varphi",swap]&&& S'_1\ar[ddr,dashed,"\varphi'"]\ar[dddd]  \\ \\
	&S_2\ar[ddl,swap]\ar[rrr,"\psi_2"] &&& S'_2\ar[ddl] \\ \\
	\Spec(K) \ar[rrr,"f"]&&& \Spec(K').
\end{tikzcd}\]
\end{mydef}

\begin{rem}\label{Rem:WeaklyBaseChange}
In Definition~\ref{Def:weakEq}, applying the base change induced by $f^*\colon K'\to K$ onto the Sarkisov link $\varphi\colon S_1\dashrightarrow S_2$ over $\Spec(K)$ gives a Sarkisov link $\varphi^f\colon S_1^f\dashrightarrow S_2^f$ over $\Spec(K')$.
Then, $\varphi$ is weakly equivalent to $\varphi'$ if and only if one can find such an $f$ with $\varphi^f$ and $\varphi'$ equivalent (in the sense of Definition~\ref{def:equiLinkDim2}). This also shows that the weak equivalence of Definition~\ref{Def:weakEq} is indeed weaker than the one of Definition~\ref{def:equiLinkDim2}. If $K=K'$, the equivalence of $\varphi$ and $\varphi'$ implies the weak equivalence but the converse does not hold, as the following example shows.
\end{rem}

\begin{ex}\label{Example:LinkEquiInverse}
Let $K=\CC(t_1,t_2)$ and let $L=K[\sqrt[3]{t_1}]$. The Galois group  $\mathrm{Gal}(L/K)$ is generated by $g\colon \sqrt[3]{t_1}\mapsto \zeta\sqrt[3]{t_1}$.  We denote by $S,S'$ the Severi-Brauer surfaces over $K$ that admit isomorphisms $\varphi\colon S_L\iso \PP^2_L$ and  $\varphi'\colon (S')_L\iso \PP^2_L$ such that
\[\varphi\circ g\circ \varphi^{-1}\colon[x:y:z]\mapsto [g(z)t_2 :g(x):g(y)],\ \ \varphi'\circ g\circ \varphi'^{-1}\colon[x:y:z]\mapsto [g(z)/t_2 :g(x):g(y)].\]
The existence of $S,S'$ is given by Lemma~\ref{lem:SBdegree3}\ref{SB31}. Moreover, $S$ is a non-trivial Severi-Brauer surface by Corollary~\ref{cor: non-trivial SB over function field}, and $S'\simeq S^{\mathrm{op}}$ by Lemma~\ref{Lem:SxiOp}. As in the proof of Lemma~\ref{Lem:SxiOp}, we consider the birational involution $\sigma\in \Bir_L(\mathbb{P}^2)$ given by $[x:y:z]\mapsto [yz:xz:xy]$, and check that $\tau=(\varphi')^{-1}\circ \sigma \circ \varphi\colon S\dasharrow S'$ is defined over $K$ and is a $3$-link.
Let $f\in\Aut_K(L)$ be the involutive field isomorphism fixing $\sqrt[3]{t_1}$ and sending $t_2$ onto $t_2^{-1}$. As the base change induced by $f$ gives $(\varphi\circ g\circ \varphi^{-1})^f= \varphi'\circ g\circ \varphi'^{-1}$, the isomorphism $\varphi'^{-1}\circ \varphi^f\colon S^f\iso S$ is defined over $K$. Similarly, $\varphi^{-1}\circ \varphi'^f\colon S'^f\iso S$ is an isomorphism defined over $K$. We obtain the commutative diagram
\[
		\xymatrix@R=6pt@C=20pt{
			S^f\ar[d]_{\varphi'^{-1}\circ \varphi^f}\ar@{-->}[rr]^{\tau^f} && S'^f\ar[d]^{\varphi^{-1}\circ \varphi'^f}\\
			S'\ar@{-->}[rr]^{\tau^{-1}} && S
		}\]
that shows that $\tau$ is weakly equivalent to its inverse. On the other hand, $\tau$ and $\tau^{-1}$ are not equivalent, as $S$ and $S'$ are not isomorphic over $K$.
\end{ex}

\begin{mydef}\label{Def:inducedBy}
Let $X/B$ be a Mori fibre space, whose generic fibre is a del Pezzo surface~$S$. Consider a Sarkisov link $\chi\colon X_1\dasharrow X_2$ of type \II~between Mori fibre spaces $X_1/\widetilde{B}$ and $X_2/\widetilde{B}$ and assume that there exists a commutative diagram \[
	\xymatrix@R=6pt@C=20pt{
		X\ar[d]\ar@{-->}[rr]^{\widehat\psi} &&X_1\ar[dr]\ar@{-->}[rr]^{\chi} && X_2,\ar[dl]\\
		B \ar@{-->}[rrr]^{\psi} &&   &\widetilde{B}&
	} \]
where $\psi,\widehat\psi$ are birational maps.
Let $\varphi\colon S_1\dashrightarrow S_2$ be the birational map over $\CC(\widetilde B)$ induced by $\chi$ between the generic fibres $S_i$ of $X_i/\widetilde B$ for $i=1,2$.
Note that the base-change $f\colon\Spec(\CC(B))\to\Spec(\CC(\widetilde{B}))$ induced by~ $\psi$ gives a birational map $\alpha=\varphi^f\colon S_1^f\dasharrow S_2^f$ over $\CC(B)$, which is contained in $\BirMori(S)$ as $S_1^f$ is birational to $S$ over $\CC(B)$, via the above birational map $\widehat\psi$ and the base-change $f$.
We will say that $\alpha\in\BirMori(S)$ is \emph{induced by $\chi$}.
\end{mydef}

In the above definition, the birational map $\varphi^f$ depends on the choice of $\psi$, but its class under weak equivalence does not. The next lemma immediately follows from the definitions and the fact that birational maps on the base correspond to field isomorphisms.

\begin{lem}\label{Lem:EqInduceWeakEq}
Let $X\to B$ be a Mori fibre space whose generic fibre is a del Pezzo surface $S$. We consider two birational maps $\alpha,\alpha'\in\BirMori(S)$ induced by two Sarkisov links $\chi\colon X_1/\widetilde{B}\dasharrow X_2/\widetilde{B}$ and $\chi'\colon X_1'/\widetilde{B}'\dasharrow X_2'/\widetilde{B}'$, as in Definition~$\ref{Def:inducedBy}$. Then, $\chi$ and $\chi'$ are equivalent $($Definition~$\ref{Def:equiLink})$, if and only if $\alpha$ and $\alpha'$ are weakly equivalent $($Definition~$\ref{Def:weakEq})$.
\end{lem}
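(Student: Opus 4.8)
The plan is to unwind both sides of the claimed equivalence directly from the definitions, using the dictionary between birational maps of the base and field isomorphisms. The statement is essentially a bookkeeping lemma: equivalence of Sarkisov links of type \II~(Definition~\ref{Def:equiLink}) and weak equivalence of the induced maps on generic fibres (Definition~\ref{Def:weakEq}) are the same data, once one remembers that a dominant rational map $\psi\colon B\rat B'$ of varieties over $\CC$ is the same thing as a $\CC$-algebra morphism $\CC(B')\to \CC(B)$, hence a morphism of spectra $\Spec\CC(B)\to\Spec\CC(B')$, and that this correspondence is compatible with composition and inversion.

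First I would fix notation: write $f\colon\Spec\CC(B)\to\Spec\CC(\widetilde B)$ and $f'\colon\Spec\CC(B)\to\Spec\CC(\widetilde B')$ for the base changes coming from the birational maps $\psi,\psi'$ appearing in Definition~\ref{Def:inducedBy} for $\alpha,\alpha'$ respectively, so that by construction $\alpha=\varphi^f$ and $\alpha'=\varphi'^{f'}$, where $\varphi\colon S_1\rat S_2$ over $\CC(\widetilde B)$ and $\varphi'\colon S_1'\rat S_2'$ over $\CC(\widetilde B')$ are the maps induced by $\chi,\chi'$ on the generic fibres. Then I would argue: suppose $\chi$ and $\chi'$ are equivalent in the sense of Definition~\ref{Def:equiLink}, witnessed by birational maps $\Psi,\Psi_1,\Psi_2$ with $\Psi_i$ inducing an isomorphism of generic fibres of $X_i/\widetilde B$ and $X_i'/\widetilde B'$ via the base change $\Psi$. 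Passing to generic fibres, $\Psi$ yields a field isomorphism $\CC(\widetilde B')\iso\CC(\widetilde B)$ over $\CC$, and $\Psi_1,\Psi_2$ yield scheme isomorphisms $S_1\iso S_1'$, $S_2\iso S_2'$ over this identification, making the square relating $\varphi$ and $\varphi'$ commute over $\Spec\CC(\widetilde B)\iso\Spec\CC(\widetilde B')$. Composing the field isomorphism $\CC(\widetilde B')\iso\CC(\widetilde B)$ with those coming from $\psi$ and $\psi'$ (i.e.\ transporting everything to $\CC(B)$), one obtains a field automorphism-type isomorphism of $\CC(B)$ fixing $\CC$ — more precisely an isomorphism of the two copies of $\CC(B)$ that identifies the two models $S_i^f$ and $S_i'^{f'}$ — which is exactly the field isomorphism $h^*$ required in Definition~\ref{Def:weakEq} to exhibit $\alpha=\varphi^f$ and $\alpha'=\varphi'^{f'}$ as weakly equivalent; the vertical maps of the weak-equivalence diagram are the base changes of $\Psi_1,\Psi_2$. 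For the converse, I would reverse this: a weak equivalence between $\alpha$ and $\alpha'$ provides a field isomorphism and scheme isomorphisms of generic fibres; spreading these out over suitable open subsets of the bases (this is where one uses that everything in sight is of finite type over $\CC$, so isomorphisms of generic fibres extend to isomorphisms over dense opens, and field isomorphisms come from birational maps of the bases) produces the birational maps $\Psi,\Psi_1,\Psi_2$ of Definition~\ref{Def:equiLink}, and one checks the compatibility conditions ($\Psi_i$ inducing the right isomorphism of generic fibres via $\Psi$) hold by construction since they hold at the generic point.

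The expected main obstacle is purely notational/diagrammatic rather than mathematical: one must carefully track three (or four) different function fields — $\CC(B)$, $\CC(\widetilde B)$, $\CC(\widetilde B')$, $\CC(B')$ — and the several base changes relating them, making sure that the field isomorphism one produces in the weak-equivalence data genuinely fixes $\CC$ and is compatible with the chosen $\psi,\psi'$, and conversely that the "spreading out" of a weak equivalence does not depend on the choices up to the allowed ambiguity (trivial relations / composition with automorphisms). Since the excerpt says this lemma "immediately follows from the definitions and the fact that birational maps on the base correspond to field isomorphisms," I would keep the write-up short, stating precisely which diagram chase gives each implication and invoking Remark~\ref{Rem:WeaklyBaseChange} to reduce weak equivalence to "there exists $f$ with $\varphi^f$ equivalent to $\varphi'$," which makes the correspondence with Definition~\ref{Def:equiLink} essentially tautological.

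\begin{proof}
Both implications are obtained by passing between birational maps of the bases and field isomorphisms of their function fields, and between birational maps of the total spaces over their bases and birational maps of the corresponding generic fibres.

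Keep the notation of Definition~\ref{Def:inducedBy}: $\alpha=\varphi^f$ and $\alpha'=\varphi'^{f'}$, where $\varphi\colon S_1\dasharrow S_2$ over $\CC(\widetilde B)$ (resp.\ $\varphi'\colon S_1'\dasharrow S_2'$ over $\CC(\widetilde B')$) is induced by $\chi$ (resp.\ $\chi'$) on the generic fibres, and $f\colon\Spec\CC(B)\to\Spec\CC(\widetilde B)$, $f'\colon\Spec\CC(B)\to\Spec\CC(\widetilde B')$ are the base changes coming from the birational maps $\psi$, $\psi'$.

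Assume first that $\chi$ and $\chi'$ are equivalent in the sense of Definition~\ref{Def:equiLink}, via birational maps $\Psi\colon\widetilde B\dasharrow\widetilde B'$, $\Psi_i\colon X_i\dasharrow X_i'$ with $\Psi_i$ inducing an isomorphism between the generic fibres of $X_i/\widetilde B$ and $X_i'/\widetilde B'$ via the base change $\Psi$. Taking generic fibres, $\Psi$ yields a field isomorphism $\CC(\widetilde B')\iso\CC(\widetilde B)$ over $\CC$, i.e.\ an isomorphism $\Spec\CC(\widetilde B)\iso\Spec\CC(\widetilde B')$, and $\Psi_1,\Psi_2$ yield scheme isomorphisms $S_1\iso S_1'$, $S_2\iso S_2'$ over this identification, fitting $\varphi$ and $\varphi'$ into a commutative square over $\Spec\CC(\widetilde B)\iso\Spec\CC(\widetilde B')$. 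Composing the field isomorphism $\CC(\widetilde B')\iso\CC(\widetilde B)$ with the ones induced by $\psi$ and $\psi'$ produces a field isomorphism $h^*$ between the two copies of $\CC(B)$ fixing $\CC$, together with scheme isomorphisms $S_1^f\iso S_1'^{f'}$ and $S_2^f\iso S_2'^{f'}$ (the base changes of $\Psi_1,\Psi_2$) making the diagram of Definition~\ref{Def:weakEq} for $\alpha=\varphi^f$ and $\alpha'=\varphi'^{f'}$ commute. Hence $\alpha$ and $\alpha'$ are weakly equivalent.

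Conversely, suppose $\alpha$ and $\alpha'$ are weakly equivalent. By Remark~\ref{Rem:WeaklyBaseChange} this means there is a field isomorphism identifying $\CC(B)$ (as the base field of $\alpha$) with $\CC(B)$ (as the base field of $\alpha'$), hence, after composing with the isomorphisms induced by $\psi,\psi'$, a field isomorphism $\CC(\widetilde B')\iso\CC(\widetilde B)$ over $\CC$ such that the base change of $\varphi$ along it becomes equivalent, in the sense of Definition~\ref{def:equiLinkDim2}, to $\varphi'$; the corresponding isomorphisms of del Pezzo surfaces over $\CC(\widetilde B')$ are the witnesses. A field isomorphism $\CC(\widetilde B')\iso\CC(\widetilde B)$ over $\CC$ comes from a birational map $\Psi\colon\widetilde B\dasharrow\widetilde B'$, and an isomorphism of the generic fibres of $X_i/\widetilde B$ and $X_i'/\widetilde B'$ (compatible with $\Psi$) spreads out, since all varieties are of finite type over $\CC$, to a birational map $\Psi_i\colon X_i\dasharrow X_i'$ over $\widetilde B\dasharrow\widetilde B'$ inducing it. The compatibility conditions of Definition~\ref{Def:equiLink} hold because they hold at the generic point. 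Thus $\chi$ and $\chi'$ are equivalent.
\end{proof}
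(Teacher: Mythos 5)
Your proof is correct and follows essentially the same route as the paper: the key observations in both are that $\alpha$ (resp.\ $\alpha'$) is weakly equivalent to the map $\varphi$ (resp.\ $\varphi'$) induced on the generic fibre, since base change along $\psi$ gives a weak equivalence, and that the diagram of Definition~\ref{Def:equiLink} restricted to generic fibres is exactly the diagram of Definition~\ref{Def:weakEq}. Your write-up just spells out the spreading-out step and the bookkeeping of function fields that the paper leaves implicit.
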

\begin{proof}
	For $i\in \{1,2\}$ we denote by $S_i$ the generic fibre of $X_i/\widetilde{B}$, and by $S_i'$ the generic fibre of $X_i'/\widetilde{B}'$. We then denote by $\varphi\colon S_1\dasharrow S_2$ and $\varphi'\colon S_1'\dasharrow S_2'$ the birational maps induced by $\chi$ and $\chi^{-1}$.
	As $\alpha$ is obtained from $\varphi$ by some base-change, it is weakly equivalent to $\varphi$. Similarly,  $\alpha'$ is weakly equivalent to $\varphi'$. We then find that $\alpha$ and $\alpha'$ are weakly equivalent if and only if $\varphi$ and $\varphi'$ are weakly equivalent. This corresponds to asking that $\chi$ and $\chi'$ are equivalent. Indeed, the
	 diagram of Definition~\ref{Def:equiLink} restricted to the generic fibres corresponds to  the diagram of Definition~\ref{Def:weakEq}.
	\end{proof}

We now explain how to extend homomorphisms from $\BirMori(S)$ to homomorphisms from $\BirMori(X)$, where $S$ is a generic fibre of a Mori fibre space $X/B$, in the case when $S$ is a del Pezzo surface not birational to a conic bundle.

\begin{thm}\label{Theorem:DPMfs}
Let $X\to B$ be a Mori fibre space, whose generic fibre is a del Pezzo surface $S$ over the field $\CC(B)$, not birational to a conic bundle.  There is an integer $g\geqslant 1$, depending only on $\dim(X)$, such that the following holds:

Assume there is a group $G$ and groupoid homomorphism $\eta\colon \BirMori(S)\to G$, that sends any two weakly equivalent links of type \II\ onto the same image, and such that each birational map in $\BirMori(S)$ induced by a Sarkisov link of type \II~in $\BirMori(X)$ having covering genus $\leqslant g$ is sent to the identity.

Then, there exists a unique groupoid homomorphism $\rho\colon \BirMori(X)\to G$ that satisfies the following: each Sarkisov link of type \II~that induces  an element $\alpha$ of $\BirMori(S)$ is sent onto $\eta(\alpha)$, any isomorphism of Mori fibre spaces and any other Sarkisov link is sent to the trivial  element. Moreover, $\rho|_{\Bir(X/B)}=\eta|_{\Bir(S)}$.
\end{thm}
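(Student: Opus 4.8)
The plan is to build $\rho$ by invoking the presentation of $\BirMori(X)$ from Theorem~\ref{thm: sarkisov}: the groupoid is generated by Sarkisov links and isomorphisms of Mori fibre spaces, subject only to trivial relations and elementary relations. So I would first define $\rho$ on generators — sending isomorphisms of Mori fibre spaces and links not of type \II\ to the identity, and sending a type \II\ link $\chi$ to $\eta(\alpha)$, where $\alpha\in\BirMori(S)$ is the birational map induced by $\chi$ as in Definition~\ref{Def:inducedBy}. This is well-defined on generators once one checks that $\eta(\alpha)$ depends only on $\chi$ and not on the auxiliary choice of $\psi$ in Definition~\ref{Def:inducedBy}; this is exactly the observation recorded after that definition (the weak-equivalence class of $\varphi^f$ is independent of $\psi$) together with the hypothesis that $\eta$ kills weak-equivalence of type \II\ links. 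The integer $g$ is the constant $N(n)$ of Proposition~\ref{prop: covgen link bound} with $n=\dim(X)$ (or rather $N$ applied in the relevant range of dimensions), so that hypothesis ``covering genus $\leqslant g$ is sent to the identity'' has teeth precisely for links arising in elementary relations over bases of codimension $\geqslant 3$.

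The heart of the argument is then checking that $\rho$ respects all relations. Trivial relations are immediate: $\chi\circ\chi^{-1}$ maps to $\eta(\alpha)\eta(\alpha)^{-1}$ and pre/post-composition by isomorphisms of Mori fibre spaces does not change the induced element $\alpha$ up to the equivalence that $\eta$ already identifies (here one uses Lemma~\ref{Lem:EqInduceWeakEq}: equivalent links induce weakly equivalent maps, and $\eta$ was assumed to identify those). For an elementary relation $\chi_t\circ\cdots\circ\chi_1=\id$ coming from a rank~$3$ fibration $T/B$, I would split into cases according to $\operatorname{codim}_B$ of the base of $T/B$ — equivalently by how $\dim T-\dim B$ compares with $3$:
\begin{itemize}
\item If $\dim(T)-\dim(B)\geqslant 3$: every type \II\ link appearing in the relation has covering genus $\leqslant g$ by Proposition~\ref{prop: covgen link bound}, hence is sent to the identity by hypothesis; and the non-type-\II\ links are sent to the identity by fiat, so the whole relation maps to the trivial element.
\item If $\dim(T)-\dim(B)\leqslant 1$, i.e.\ the generic fibre of $X/B$ already sees the relation: then the relation is entirely ``horizontal'' and restricting to generic fibres gives an elementary relation in $\BirMori(S)$ (this is the content of ``$\Bir(X/B)\cong\Bir(S)$'' and the compatibility of Sarkisov programs in the two settings); since $\eta$ is a groupoid homomorphism this relation maps to the trivial element of $G$. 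This case also handles the identity $\rho|_{\Bir(X/B)}=\eta|_{\Bir(S)}$.
\item If $\dim(T)-\dim(B)=2$: this is the delicate middle case. Here one uses the structural Lemmas~\ref{lem:IandthenII}, \ref{lem:IVandthenII}, \ref{lem:IIIandthenII}, \ref{lem:IandthenI} on elementary relations over equidimensional bases. The point is that whenever a type \II\ link $\chi_2$ occurs in such a relation, either (a) all links in the relation are of type \II\ and the relation descends to an elementary relation of $d$-links on the generic fibre (handled by $\eta$ being a homomorphism and killing weak equivalence — this is exactly the hexagon of Lemma~\ref{lem:OnlyOnePiece} when $S$ is Severi-Brauer, but the argument is formal), or (b) the relation has a link not of type \II, and then by Lemmas~\ref{lem:IandthenII}\ref{IandthenII4}, \ref{lem:IVandthenII}, \ref{lem:IIIandthenII} it has length $4$ with $\chi_2$ and $\chi_4$ of type \II\ and \emph{equivalent to each other's inverse}, while $\chi_1,\chi_3$ are of type \I/\III/\IV; thus $\rho$ sends the relation to $\eta(\alpha_4)\cdot 1\cdot\eta(\alpha_2)\cdot 1=\eta(\alpha_2)^{-1}\eta(\alpha_2)=\id$ using Lemma~\ref{Lem:EqInduceWeakEq}. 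The remaining subcase of~(b), where two type \I\ (or type \III) links are adjacent, is excluded by Lemma~\ref{lem:IandthenI}. One must also treat the possibility that no link in the relation is of type \II, in which case $\rho$ sends it to the identity trivially.
\end{itemize}

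The main obstacle is the bookkeeping in the $\dim(T)-\dim(B)=2$ case: one has to verify that the case analysis of Lemmas~\ref{lem:IandthenII}--\ref{lem:IIIandthenII} is genuinely exhaustive once one knows a type \II\ link occurs, and that in every surviving configuration the type \II\ links pair up into a cancelling pair under $\eta$. The subtlety is that ``equivalent as Sarkisov links of type \II'' (Definition~\ref{Def:equiLink}) must be translated, via Lemma~\ref{Lem:EqInduceWeakEq}, into ``weakly equivalent as links between del Pezzo surfaces'' (Definition~\ref{Def:weakEq}), which is precisely the equivalence $\eta$ was assumed to respect; keeping the two notions of equivalence straight, and confirming that $\eta$'s compatibility with weak equivalence is exactly what makes the cancellation $\eta(\alpha_4)=\eta(\alpha_2)^{-1}$ go through (since $\chi_4$ is equivalent to $\chi_2^{-1}$, not literally equal), is where care is needed. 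Finally, uniqueness of $\rho$ is automatic: its values on all generators of $\BirMori(X)$ are prescribed, and the identity $\rho|_{\Bir(X/B)}=\eta|_{\Bir(S)}$ follows because an element of $\Bir(X/B)$ is a composition of Sarkisov links lying over $B$, whose induced maps on the generic fibre $S$ are the original factors, so $\rho$ and $\eta$ agree factor by factor.
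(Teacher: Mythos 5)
Your overall strategy is exactly the paper's: define $\rho$ on generators via $\eta$ and Definition~\ref{Def:inducedBy}, take $g$ from Proposition~\ref{prop: covgen link bound}, and check trivial and elementary relations, splitting the latter by $\dim(T)-\dim(B)$, with the structural Lemmas~\ref{lem:IandthenII}--\ref{lem:IIIandthenII} doing the work in the middle case. (Two cosmetic remarks: the case $\dim(T)-\dim(B)\leqslant 1$ is in fact vacuous, since the rank $3$ fibration dominates the rank $2$ fibration $X'/B'$ of the type \II\ link and $\dim(X')-\dim(B')=2$; and in the case $\dim(T)-\dim(B)\geqslant 3$ the relation is killed not because every type \II\ link there is automatically bounded, but because the one link assumed to induce a nontrivial element of $\BirMori(S)$ would then have covering genus $\leqslant g$, contradicting the hypothesis.)

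There are, however, two concrete gaps in your treatment of the case $\dim(T)-\dim(B)=2$ with a link not of type \II. First, you never use the hypothesis that $S$ is not birational to a conic bundle, yet it is indispensable: Lemmas~\ref{lem:IIIandthenII} and \ref{lem:IVandthenII} require $\dim(B_1)=\dim(B_2)$, and when $\chi_1$ is of type \III\ or \IV\ one only knows a priori that $\dim(B_1)\geqslant\dim(B_2)$. If the inequality were strict, the generic fibre of $X_1/B_2$ (birational to $S$) would fibre over the generic fibre of $B_1/B_2$, i.e.\ would be a conic bundle; it is exactly the no-conic-bundle hypothesis that excludes this and makes your case analysis exhaustive. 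Second, when $\chi_1$ is of type \I\ you invoke only conclusion \ref{IandthenII4} of Lemma~\ref{lem:IandthenII}, but that lemma has a second outcome, the length-$3$ relation \ref{IandthenII3}, which does not produce a cancelling pair and must be ruled out. The paper excludes it via Lemma~\ref{lem:SBMfsSarki}: since $\chi_2$ induces a genuine Sarkisov link (not an isomorphism) on the generic fibre, its centre dominates $B_2$, so it cannot be contained in the preimage of a (uniruled) divisor of $B_2$ as case \ref{IandthenII3} would force. With these two points supplied, your argument coincides with the paper's proof.
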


\begin{proof}
Observe that since two weakly equivalent Sarkisov links in $\BirMori(S)$ have the same image under $\eta$, the homomorphism $\eta$ sends any isomorphism of Mori fibre spaces onto the trivial word. Indeed, the composition of any Sarkisov link with an isomorphism is a link equivalent to the first one.

By Theorem~\ref{thm: sarkisov}, the groupoid $\BirMori(X)$ is generatedy by Sarkisov links and isomorphisms, and any  relation between Sarkisov links in $\BirMori(X)$ is generated by elementary relations and trivial relations. We send every isomorphism of Mori fibre spaces onto the trivial element, send any Sarkisov link to either the trivial element or to the image defined by $\eta$, as explained above, and check that every trivial relation and every elementary relation is sent to the identity.

For a trivial relation $\chi\circ \chi^{-1}=\mathrm{id}$, either both $\chi$ and $\chi^{-1}$ are sent to the trivial element, or $\chi$ induces an element $\alpha\in \BirMori(S)$ and so $\chi$ is a link of type \II\ over a base $\widetilde B$ birational to $B$. In this latter case, $\chi^{-1}$ induces $\alpha^{-1}$. This follows from Definition~\ref{Def:inducedBy}, by taking the same $\psi\colon B\dasharrow \widetilde{B}$ for both $\chi$ and $\chi^{-1}$. For a trivial relation of the form $\alpha'\circ \chi\circ \alpha=\chi'$, either all elements are sent to the trivial word, or one of the two links $\chi$ and $\chi'$ is a Sarkisov link of type \II, that induces an element $\alpha\in \BirMori(S)$. But then the other link also induces the same element, as $\chi$ and $\chi'$ are equivalent (Lemma~\ref{Lem:EqInduceWeakEq}). Hence, both have the same image, and the relation is sent to the trivial word, as $\rho(\alpha)=\rho(\alpha')$ is trivial.

We now consider elementary relations. If they only involve links sent to the trivial word, we are done. So we should only consider rank $3$ fibrations $T/B_T$ that factorise through a rank $2$ fibration $X'/B'$ that gives rise to a Sarkisov link $\chi$ of type \II, inducing an element $\alpha\in \BirMori(S)$ with $\rho(\alpha)$ not trivial. By Lemma~\ref{lem:SBMfsSarki}, $\alpha$ is a Sarkisov link, as  isomorphisms in $\BirMori(S)$ are in the kernel of $\eta$.

If $\dim(T)-\dim(B_{T})\geqslant 3$, then Proposition~\ref{prop: covgen link bound} bounds the covering genus of $\chi$, by some number only depending on the dimension of $T$, and thus of $X$. Taking $g$ large enough, we may thus avoid such relations, and only restrict ourselves to the case where $\dim(T)-\dim(B_{T})\leqslant 2$, and thus to $\dim(T)-\dim(B_{T})=2$ since $\dim(X')-\dim(B')=2$. Coming from $T/B_{T}$, we have an elementary relation $\chi_n\circ \cdots\circ\chi_2\circ\chi_1=\id$, $n\geqslant 3$, and Sarkisov links $\chi_i\colon X_i\dashrightarrow X_{i+1}$ $($where $X_{n+1}=X_1)$ between Mori fibre spaces $X_i/B_i$, for $i=1,\ldots,n$. We then distinguish some cases:

{\bf (A)} Suppose first that all links $\chi_1,\ldots,\chi_n$ in the elementary relation are of type \II. Thus, all links have the same base: $B_1=B_2=\cdots=B_n$. Let $K=\CC(B_1)=\cdots=\CC(B_n)$. For each $i\in \{1,\ldots,n+1\}$, we denote by $S_i$ the generic fibre of $X_i\to B_i$, which is a surface defined over $K$ (and have $S_{n+1}=S_1$ since $X_{n+1}=X_1$) and is a del Pezzo surface of Picard rank~$1$.
The Sarkisov link $\chi_i\colon X_i\dasharrow X_{i+1}$ gives rise to a birational map $\varphi_i\colon S_i\dasharrow S_{i+1}$ defined over $K$, that is either a Sarkisov link or an isomorphism (Lemma~\ref{lem:SBMfsSarki}). As one of the Sarkisov links $\chi_i$ induces an element of $\BirMori(S)$, there are birational maps $\widehat\psi\colon X\dasharrow X_i$, $\psi\colon B\dasharrow B_i$, compatible with $X/B$ and $X_i/B_i$, as in Definition~\ref{Def:inducedBy}. The element of $\BirMori(S)$ induced by $\chi_i$  is then $(\varphi_i)^f$, where $f\colon\Spec(\CC(B))\to \Spec(\CC(B_i))$ comes from $\psi$. For each of the links $\chi_1,\ldots,\chi_n$, we may take the same $\psi$, and obtain elements $(\varphi_1)^f,\ldots,(\varphi_n)^f$ in $\BirMori(S)$ whose composition is the identity. This implies that $\rho(\chi_n)\cdot \cdots \rho(\chi_1)$ is the trivial element of $G$, as we wanted to prove.

{\bf (B)} Suppose now that at least one of the links $\chi_1,\ldots,\chi_n$ is not of type \II. Recall that one link $\chi_i$ is of type \II~and induces an element $\alpha\in \BirMori(S)$. Making a cyclic permutation, we may assume that $i\in \{2,\ldots,n\}$, that $\chi_2,\chi_3,\ldots,\chi_i$ are of type \II\ and that $\chi_1$ is of type \I, \III~or \IV. With this choice, $B_2=B_3=\cdots=B_i$ have dimension $\dim B$.
We now show that $n=4$, that $\chi_4$ is equivalent to $\chi_2^{-1}$, and that $\chi_1$ and $\chi_3$ are not of type \II. This implies then that the relation is sent onto the trivial word (by Lemma~\ref{Lem:EqInduceWeakEq}). Moreover, it implies that $i=2$.

If $\chi_1$ is of type \I, then Lemma~\ref{lem:IandthenII} proves that $\chi_3$ is of type \III, so $i=2$,~and  gives two possibilities. Case~\ref{IandthenII4} gives $n=4$ and $\chi_4$ is a link of type \II\ that is equivalent to  $\chi_2^{-1}$. Case~\ref{IandthenII3} is only possible if the centres of the divisorial contractions $Y_2\to X_2$ and $Y_3\to X_3$ associated to $\chi_2$ are contained in the preimage of a uniruled divisor of $B_2$, impossible by Lemma~\ref{lem:SBMfsSarki}, as the centres of $\chi_2=\chi_i$ are surjective to the bases $B_2=B_3$. Hence, Case~\ref{IandthenII3} never occurs.

If $\chi_1$ is of type \III, we find that $\dim(B_1)\geqslant \dim(B_2)=\dim(B)$.
To rule out the case $\dim(B_1)>\dim(B_2)$ we now use our assumption that $S$ is not birational to a conic bundle:
The generic fibre of $X_1/B_2$ and $X_2/B_2$ would be birational over $\CC(B_2)$, and the generic fibre of $X_1/B_2$ would admit a morphism to the generic fibre of $B_1/B_2$, which would then be a conic bundle, impossible by assumption. Hence, $\dim(B_1)=\dim(B_2)$ and we can apply Lemma~\ref{lem:IIIandthenII}, which gives $n=4$, implies that $\chi_3$ is of type \I, $\chi_4$ is of type \II\ and is equivalent to  $\chi_2^{-1}$. The remaining case is when $\chi_1$ is of type \IV. As before, we get $\dim(B_1)\geqslant \dim(B_2)$, and the equality is necessary by the same argument. We then apply Lemma~\ref{lem:IVandthenII}, obtain that $n=4$, $\chi_3$ is of type \IV, and that $\chi_4$ is of type \II\ and is equivalent to  $\chi_2^{-1}$.
\end{proof}

\subsection{Severi-Brauer Mori fibre spaces and higher $d$-links}
We now focus on the case where the generic fibres of our Mori fibre spaces are non-trivial Severi-Brauer surfaces.

\begin{mydef}
We say that a Mori fibre space $X/B$ is a \emph{Severi-Brauer Mori fibre space} (SBMfs for short) if the generic fibre $S$ of $X/B$ is a non-trivial Severi-Brauer variety over $\CC(B)$.
\end{mydef}

In this paper, we will only consider Severi-Brauer Mori fibre spaces whose generic fibre is a non-trivial Severi-Brauer \emph{surface} $S$.

\begin{mydef}\label{def: 6-link}
	Let $\chi\colon X_1\dasharrow X_2$ be a Sarkisov link of type \II, over a base~$B$ and let $d_1,d_2\geqslant 1$ be integers. We say that $\chi$ is a \emph{$(d_1,d_2)$-link} (or \emph{$d$-link} if $d=d_1=d_2$) if $X_1/B$ and $X_2/B$ are Mori fibre spaces whose generic fibres are del Pezzo surfaces, and if the birational map $\varphi\colon S_1\dasharrow S_2$ induced by $\chi$ on the generic fibres $S_i$ of $X_i/B$ is a $(d_1,d_2)$-link, as in Definition~\ref{Def36links}.
\end{mydef}

\begin{rem}
As Example~\ref{Example:LinkEquiInverse} and Lemma~\ref{Lem:EqInduceWeakEq} show, a $d$-link $\chi$ between SBMfs can be equivalent to its inverse, contrary to the case of links between Severi-Brauer surfaces (Remark \ref{rem: link is not equivalent to inverse}).
\end{rem}

The following is now a consequence of Theorem~\ref{thm:SBgroupoidhomo} and Theorem~\ref{Theorem:DPMfs}.

\begin{thm}\label{Theorem:SBMfs}Let $X\to B$ be a SBMfs, whose generic fibre is a non-trivial Severi-Brauer surface. There is an integer $g\geqslant 1$, depending only on $\dim(X)$, such that the following holds:

For each $d\in\{3,6\}$, let $\mathcal{M}_d$ be a set of $d$-links in $\BirMori(X)$ with covering genus at least $g$ such that every $\chi\in \mathcal{M}_d$ induces an element of $\BirMori(S)$ as in Definition~$\ref{Def:inducedBy}$, and is not equivalent to its inverse $\chi^{-1}$, nor to $\widetilde{\chi}$ and $\widetilde{\chi}^{-1}$ for each $\widetilde{\chi}\in \mathcal{M}_d\setminus \{\chi\}$.

Then, there is a surjective groupoid homomorphism
\[
\rho\colon \BirMori(X)\to \bigoplus\limits_{\mathcal{M}_3} \ZZ/3\ZZ \ast \left ( \bigast_{\mathcal{M}_6} \mathbb{Z} \right),
\]
that is defined by the following rule. One sends each isomorphism between Mori fibre spaces and each Sarkisov link $\chi$ that is not a $3$-link or a $6$-link onto the trivial word. For $d\in \{3,6\}$, we send a $d$-link $\chi$ onto
\begin{align*}
	\rho(\chi)=\begin{cases}
		1_{\chi'} &  \text{ if }\chi\text{ is equivalent to }\chi'\in\mathcal{M}_d\\
		-1_{\chi'} & \text{ if }\chi^{-1}\text{ is equivalent to }\chi'\in\mathcal{M}_d\\
		0 & \text{ if neither }\chi\text{ nor }\chi^{-1}\text{ is equivalent to any element of }\mathcal{M}_d.
	\end{cases}
\end{align*}
Furthermore, the above induces surjective group homomorphisms  $\Bir(X)\to \bigoplus\limits_{\mathcal{M}_3\setminus \{\chi\}} \ZZ/3\ZZ \ast \left ( \bigast\limits_{\mathcal{M}_6} \mathbb{Z} \right)$ for each $\chi\in \mathcal{M}_3$ and $\Bir(X)\to \bigoplus\limits_{\mathcal{M}_3} \ZZ/3\ZZ \ast \left ( \bigast\limits_{\mathcal{M}_6\setminus \{\chi\}} \mathbb{Z} \right)$ for each $\chi\in \mathcal{M}_6$.
\end{thm}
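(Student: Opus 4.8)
The plan is to deduce the statement from the groupoid homomorphism of Theorem~\ref{thm:SBgroupoidhomo} on the surface level together with the descent mechanism of Theorem~\ref{Theorem:DPMfs}. Write $S$ for the generic fibre of $X/B$, a non-trivial Severi-Brauer surface over $\CC(B)$, which by Lemma~\ref{LinkSop} is not birational to a conic bundle; let $g$ be the integer furnished by Theorem~\ref{Theorem:DPMfs} for $X/B$, and set $G=\bigl(\bigoplus_{\mathcal{M}_3}\ZZ/3\ZZ\bigr)\ast\bigl(\bigast_{\mathcal{M}_6}\ZZ\bigr)$. The essential work is to build a groupoid homomorphism $\eta\colon\BirMori(S)\to G$ (a variant of the homomorphism of Theorem~\ref{thm:SBgroupoidhomo}) satisfying the hypotheses of Theorem~\ref{Theorem:DPMfs}; once this is done, $\rho$ is exactly the homomorphism that theorem produces, and the displayed rule for $\rho$ follows by unwinding Definitions~\ref{Def:inducedBy}, \ref{Def:equiLink} and~\ref{Def:weakEq}, using Lemma~\ref{Lem:EqInduceWeakEq} to translate equivalence of higher-dimensional links into weak equivalence of the induced surface links, and Lemma~\ref{lem:SBMfsSarki} together with Lemma~\ref{LinkSop} to see that every type~\II\ Sarkisov link of $\BirMori(X)$ induces on $S$ either an isomorphism or a $3$- or $6$-link.

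To construct $\eta$, note first that each $\chi\in\mathcal{M}_d$ induces (Definition~\ref{Def:inducedBy}) an element $\alpha_\chi\in\BirMori(S)$, which by Lemma~\ref{lem:SBMfsSarki} is a $d$-link between $S$ and $S^{\rm op}$. By Lemma~\ref{Lem:EqInduceWeakEq}, the hypotheses on $\mathcal{M}_d$ say precisely that $\alpha_\chi$ is weakly equivalent neither to $\alpha_\chi^{-1}$ nor to $\alpha_{\widetilde\chi}^{\pm1}$ for $\widetilde\chi\in\mathcal{M}_d\setminus\{\chi\}$; hence the $2\lvert\mathcal{M}_d\rvert$ maps $\alpha_\chi^{\pm1}$ fall into pairwise distinct weak equivalence classes. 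Since every Sarkisov link in $\BirMori(S)$ is a $3$- or $6$-link between $S$ and $S^{\rm op}$ (Lemma~\ref{LinkSop}), we may define $\eta$ by sending every isomorphism of Mori fibre spaces to the trivial word, sending a $d$-link $\alpha$ to $1_\chi$ if $\alpha$ is weakly equivalent to $\alpha_\chi$ (for the then unique $\chi\in\mathcal{M}_d$), to $1_\chi^{-1}$ if $\alpha$ is weakly equivalent to $\alpha_\chi^{-1}$, and to the trivial word otherwise. By construction $\eta$ is invariant under weak equivalence of $d$-links; and a $d$-link $\alpha$ induced by a type~\II\ Sarkisov link $\chi_0$ of $\BirMori(X)$ of covering genus $\leq g$ is sent to the trivial word, since otherwise $\alpha$ would be weakly equivalent to some $\alpha_\chi^{\pm1}$, forcing $\chi_0$ to be equivalent to $\chi$ (Lemma~\ref{Lem:EqInduceWeakEq}) and hence, as the covering genus of the centre is an invariant of the equivalence class, $\cg(\chi_0)=\cg(\chi)\geq g$, contradicting the choice of $\mathcal{M}_d$ (after enlarging $g$ by one if necessary to separate the two inequalities). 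Thus $\eta$ meets the hypotheses of Theorem~\ref{Theorem:DPMfs} provided it is a groupoid homomorphism.

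That $\eta$ is a groupoid homomorphism follows from Theorem~\ref{p:Sarkisov}: it suffices to send trivial and elementary relations to the trivial word. Trivial relations are immediate from $\eta(\alpha^{-1})=\eta(\alpha)^{-1}$ and from the fact that composing a Sarkisov link with isomorphisms of Mori fibre spaces produces an equivalent, hence weakly equivalent, link (Lemma~\ref{lem:equivalencLinksSurfaceBasePt}). For an elementary relation, Lemma~\ref{lem:OnlyOnePiece} shows it is a hexagon $\chi_6\circ\chi_5\circ\chi_4\circ\chi_3\circ\chi_2\circ\chi_1=\id_S$ in which every $\chi_i$ is a $3$-link and $\chi_1,\chi_3,\chi_5$ are equivalent, as are $\chi_2,\chi_4,\chi_6$. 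Therefore $\eta(\chi_1)=\eta(\chi_3)=\eta(\chi_5)$ and $\eta(\chi_2)=\eta(\chi_4)=\eta(\chi_6)$, and all six of these values lie in the abelian direct summand $\bigoplus_{\mathcal{M}_3}\ZZ/3\ZZ$ of $G$, because a $3$-link can only be weakly equivalent to a $3$-link, i.e.\ to some $\alpha_\chi^{\pm1}$ with $\chi\in\mathcal{M}_3$. Being in an abelian group of exponent $3$, they commute and satisfy $3\eta(\chi_1)=3\eta(\chi_2)=0$, so the relation is sent to $3\eta(\chi_1)+3\eta(\chi_2)=0$; this is exactly the mechanism of the proof of Theorem~\ref{thm:SBgroupoidhomo}.

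It remains to record surjectivity and the two statements about $\Bir(X)$. Surjectivity of $\rho$ is clear, since $\rho(\chi)=1_\chi$ for every $\chi\in\mathcal{M}_d$ and these elements generate $G$. For the final assertions, post-compose $\rho$ with the natural projection $G\twoheadrightarrow\bigl(\bigoplus_{\mathcal{M}_3\setminus\{\chi\}}\ZZ/3\ZZ\bigr)\ast\bigl(\bigast_{\mathcal{M}_6}\ZZ\bigr)$ killing the $\chi$-summand (respectively with the analogous projection killing one $\ZZ$-factor when $\chi\in\mathcal{M}_6$). To see that the restriction to $\Bir(X)$ remains surjective, observe that for any $\chi'\in\mathcal{M}_d$ different from the killed index one can pre- and post-compose the $d$-link $\chi'$ and the killed link $\chi$ with a fixed birational map between $X$ and the relevant Mori fibre spaces to obtain an element $\gamma\in\Bir(X)$ with $\rho(\gamma)$ equal, modulo the killed factor, to a conjugate of $1_{\chi'}$; since the conjugates of a generating set generate, the image of $\Bir(X)$ is the whole quotient. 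The main obstacle in this argument is the construction of $\eta$: it must be simultaneously a bona fide groupoid homomorphism and invariant under \emph{weak} equivalence, which, unlike ordinary equivalence, can interchange a link with its inverse (Example~\ref{Example:LinkEquiInverse}); this is handled by defining $\eta$ directly on weak equivalence classes and by exploiting that all Severi-Brauer elementary relations involve only $3$-links, so that the relation is evaluated inside the abelian $3$-torsion free factor of $G$, where it automatically vanishes.
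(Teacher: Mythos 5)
Your proposal is correct and follows the same overall strategy as the paper: reduce to a groupoid homomorphism $\eta$ on $\BirMori(S)$ for the generic fibre $S$ and then descend via Theorem~\ref{Theorem:DPMfs}, using Lemma~\ref{Lem:EqInduceWeakEq} to translate equivalence of higher-dimensional links into weak equivalence of the induced surface links. The one genuine difference is how $\eta$ is produced. The paper takes the homomorphism $\Psi$ of Theorem~\ref{thm:SBgroupoidhomo}, which is defined on \emph{equivalence} classes $\mathcal{E}_d$ of $d$-links, identifies $\mathcal{M}_d$ with a subset of $\mathcal{E}_d$ and projects onto those factors. You instead define $\eta$ from scratch directly on \emph{weak} equivalence classes (sending a $d$-link to $\pm 1_\chi$ according to which class $\alpha_\chi^{\pm1}$ it falls into, and to $0$ otherwise) and re-verify the trivial and hexagon relations of Lemma~\ref{lem:OnlyOnePiece}, observing that all six images land in the abelian exponent-$3$ free factor so the relation evaluates to $3a+3b=0$. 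This buys you a cleaner justification of the hypothesis of Theorem~\ref{Theorem:DPMfs} that $\eta$ is constant on weak equivalence classes: since weak equivalence is strictly coarser than equivalence (Example~\ref{Example:LinkEquiInverse}), a weak equivalence class can a priori meet several classes of $\mathcal{E}_d$, and your construction makes the required invariance true by definition, whereas the paper's projection argument leaves this point implicit. Your well-definedness check (that the $2\lvert\mathcal{M}_d\rvert$ classes $\alpha_\chi^{\pm1}$ are pairwise distinct) is exactly what the hypotheses on $\mathcal{M}_d$ guarantee via Lemma~\ref{Lem:EqInduceWeakEq}. Two small points you should make explicit: first, in ruling out that a type~\II\ link $\chi_0$ of covering genus $\leqslant g$ has nontrivial image, the case where $\chi_0$ is equivalent to $\chi^{-1}$ rather than to $\chi$ requires knowing $\cg(\chi^{-1})\geqslant g$, which holds here because the two centres of a $d$-link on a Severi--Brauer Mori fibre space are birational (their function fields are the residue fields of the base points of $\varphi$ and $\varphi^{-1}$, which coincide by Lemma~\ref{lem:splittingfieldinverse}); second, your adjustment of $g$ to separate the inequalities is legitimate and in fact also needed in the paper's own reading of the hypotheses.
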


\begin{proof}Choose the integer $g$ as in Theorem~\ref{Theorem:DPMfs} and apply Theorem~\ref{thm:SBgroupoidhomo} to the generic fibre $S$ of $X\to B$,  that is a non-trivial Severi-Brauer surface over $K=\CC(B)$. We obtain a groupoid homomorphism
\[
	\Psi\colon \BirMori_K(S)\to \bigoplus\limits_{\mathcal{E}_3} \ZZ/3\ZZ \ast \left ( \bigast_{\mathcal{E}_6} \mathbb{Z} \right)
\]
where $\mathcal{E}_d$ is a set of representatives of the equivalence classes of $d$-links $S\dashrightarrow S^{\rm{op}}$, for $d\in \{3,6\}$.
By assumption, every element of $\mathcal{M}_3\cup\mathcal{M}_6$ induces an element $\alpha$ or $\alpha^{-1}$ in $\BirMori(S)$, as in Definition~\ref{Def:inducedBy}, with $\alpha\in\mathcal{E}_3\cup \mathcal{E}_6$ (Lemma~\ref{Lem:EqInduceWeakEq}).
As no two distinct elements of $\mathcal{M}_d$ are equivalent, the two sets $\mathcal{M}_3$ and $\mathcal{M}_6$ are naturally bijective to subsets of $\mathcal{E}_3$ and $\mathcal{E}_6$ (again by Lemma~\ref{Lem:EqInduceWeakEq}), and we again denote these subsets by $\mathcal{M}_3$ and $\mathcal{M}_6$. Projecting on these sets, we obtain a groupoid homomorphism
\[
\Psi'\colon \BirMori_K(S)\to \bigoplus\limits_{\mathcal{M}_3} \ZZ/3\ZZ \ast \left ( \bigast_{\mathcal{M}_6} \mathbb{Z} \right).
\]
It remains to apply Theorem~\ref{Theorem:DPMfs} to obtain the groupoid homomorphism \[\rho\colon \BirMori(X)\to \bigoplus\limits_{\mathcal{M}_3} \ZZ/3\ZZ \ast \left ( \bigast_{\mathcal{M}_6} \mathbb{Z} \right),\] which has the description above, by definition.

Finally, remark that the surjectivity of $\rho$  directly follows from the definition of the groupoid homomorphism, and that the group homomorphisms from $\Bir(X)$ are just obtained by projecting (so we remove the factor $\chi$), so their surjectivity are achieved by taking the link $\chi$ between two Mori fibre spaces with generic fibres $S$ and $S^{\rm op}$ and composing it with links of $\mathcal{M}_3$ and  $\mathcal{M}_6$ and isomorphisms between generic fibres over some birational map between the bases.
\end{proof}

\subsection{Constructing Severi-Brauer surface bundles}

A general approach to Severi-Brauer surface bundles was developed by Takashi Maeda in \cite{MaedaModels}. Maeda's standard models of such bundles may be viewed as a generalization of Sarkisov's well-known results on standard conic bundles associated to twisted forms of $\PP^1$. This construction is not very easy to handle in practice. Moreover, since we are interested in applications to Cremona groups, we would like to have control over rationality of our Severi-Brauer surface bundles. This is known to be a subtle issue: for example, in \cite{KreschTschinkel} A.~Kresch and Yu.~Tschinkel construct Severi-Brauer surface bundles over a rational surface which are not even stably rational. Part \ref{StoSBMfs} of the next result also follows from \cite[Page 14, Theorem]{MaedaModels}. We give a proof as this one is quite short.
\begin{prop}\label{prop:SBtoSBMfs}
Let $B$ be a smooth projective complex variety and let $S$ be a non-trivial Severi-Brauer surface over $K=\CC(B)$.
\begin{enumerate}
\item\label{StoSBMfs}
There is a birational morphism $\widehat{B}\to B$, and a SBMfs $X\to \widehat{B}$ such that the generic fibre of $X\to B$ is $S$.
\item\label{LinkStoSBMfs}
Let $\tau\colon S\dasharrow S^{\rm op}$ be a $d$-link, for $d\in \{3,6\}$.
There is a birational morphism $\widehat{B}\to B$, two SMBfs $Y\to \widehat{B}$ and $Y'\to \widehat{B}$, so that the  generic fibres of $Y\to B$ and $Y'\to B$ are isomorphic to $S$ and $S^{\rm op}$ respectively, and a $d$-link $\chi\colon Y/\widehat{B}\dasharrow Y'/\widehat{B}$, which is a Sarkisov link of type \II, such that the action of the $B$-birational map $\chi \colon Y\dasharrow Y'$ on the generic fibres is $\tau$.
\end{enumerate}
\end{prop}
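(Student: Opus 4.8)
The proof of \ref{StoSBMfs} will proceed by spreading $S$ out over a dense open of $B$ and then running a relative minimal model program. I would fix a closed embedding $S\hookrightarrow\PP^N_{\CC(B)}$; its defining equations have coefficients in $\CC[U]$ for some dense open affine $U\subseteq B$, which after shrinking $U$ gives a flat projective family $X_U\to U$ of Severi-Brauer surfaces with generic fibre $S$. Taking the closure of $X_U$ in $\PP^N_B$ and resolving its singularities (which can be done away from the generic fibre, since $S$ is smooth and hence the total space is regular along $X_\eta$) yields a smooth projective variety $X'$ over $B$ with generic fibre $S$, and a $K_{X'}$-MMP over $B$ produces a Mori fibre space $X\to\widehat B$ over $B$. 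The crucial point is that no step of this MMP can have flipping or divisorial locus dominating $B$: restricting such a step to the generic fibre would give a non-trivial birational contraction or flip of the surface $S$ over $\CC(B)$, impossible since $S$ has Picard rank $1$ with $-K_S$ ample and terminal surfaces are smooth. Hence the generic fibre of $X/B$ stays equal to $S$, and the contraction $X\to\widehat B$ restricts on it to a fibration $S\to(\text{generic fibre of }\widehat B/B)$; if $\dim\widehat B>\dim B$ this exhibits $S$ as a conic bundle over $\CC(B)$, contradicting Lemma~\ref{LinkSop}. So $\widehat B\to B$ is birational and the generic fibre of $X/\widehat B$ is $S$, i.e.\ $X/\widehat B$ is a SBMfs.

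For \ref{LinkStoSBMfs}, recall from Example~\ref{Ex36links} that $\tau$ factors as $S\leftarrow T\to S^{\rm op}$ with $T=\Bl_p S$ a del Pezzo surface of degree $9-d$ and Picard rank $2$ over $\CC(B)$, both maps being divisorial contractions. Applying \ref{StoSBMfs} I would take a SBMfs $Y_S\to B_S$ with $B_S\to B$ birational and generic fibre $S$, let $\Gamma\subseteq Y_S$ be the closure of the $d$-point $p$ (a subvariety dominating $B_S$, generically finite of degree $d$), and construct --- after a further birational modification $\widehat B\to B_S$ of the base if needed --- a rank $2$ fibration $W/\widehat B$ factorising through $Y_S/B_S$ with generic fibre $T$, obtained as a $\QQ$-factorial terminalisation of the blow-up of $\Gamma$. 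Since $T=\Bl_p S$ is smooth, $W$ is smooth (hence $\QQ$-factorial and terminal) along its generic fibre; and since over the generic point of $B$ the surface $S$ is a minimal del Pezzo of Picard rank $1$ and $T$ is its unique rank $2$ extraction, all the modifications involved are isomorphisms over the generic point, so the generic fibre of $W/\widehat B$ is $T$ and $\rho(W/\widehat B)=2$. This step --- for which I would invoke the lemmas of \cite{BLZ} on the existence of rank~$r$ fibrations and on base modifications, together with the argument of \ref{StoSBMfs} to restore the Mori fibre space structure after any base change --- is the main obstacle, because a relative MMP over $\widehat B$ would contract the non-minimal surface $T$ while a relative MMP over $Y_S$ would contract $W$ back to $Y_S$; the blow-up must therefore be arranged so that it is directly $\QQ$-factorial terminal of relative Picard rank $2$, which is why a smooth model of $\Gamma$ (realised by modifying the base) is needed.

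It then remains to play the two-rays game on $W/\widehat B$, which produces two Mori fibre spaces $Y/\widehat B$, $Y'/\widehat B$ and a Sarkisov link $\chi\colon Y\dasharrow Y'$ restricting on the generic fibres to the two-rays game on $T/\Spec\CC(B)$, namely $S\leftarrow T\to S^{\rm op}$, which is the $d$-link $\tau$. As both contractions $T\to S$ and $T\to S^{\rm op}$ are divisorial onto surfaces over a point --- equivalently $S$ is not birational to a conic bundle (Lemma~\ref{LinkSop}), so $T$ encodes a link of type~\II\ (Lemma~\ref{lem:ClassificationRankr}) --- the contractions $W\to Y$, $W\to Y'$ are divisorial over $\widehat B$ and $\chi$ is a Sarkisov link of type~\II\ over $\widehat B$; by Lemma~\ref{lem:SBMfsSarki} it is a $(d,d)$-link in the sense of Definition~\ref{def: 6-link}, inducing $\tau$ on the generic fibres. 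The generic fibre of $Y/\widehat B$ is $S$ (the side dominated by $Y_S$), and that of $Y'/\widehat B$ is a del Pezzo surface of Picard rank $1$ birational to $S$ over $\CC(B)$, hence isomorphic to $S$ or $S^{\rm op}$ by Theorem~\ref{thm: Weinstein}, and it is $S^{\rm op}$ because $\tau$ maps onto $S^{\rm op}$. Both are non-trivial Severi-Brauer surfaces, so $Y/\widehat B$ and $Y'/\widehat B$ are SBMfs; renaming $\widehat B$ to be the base of $\chi$ (which still carries a birational morphism to $B$) gives the required data.
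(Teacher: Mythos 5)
Your proof of part~\ref{StoSBMfs} is essentially the paper's: spread $S$ out over $B$, resolve, run a relative MMP, and check that the generic fibre survives and that the base stays birational to $B$ (the paper rules out $\dim\widehat B>\dim B$ by the same conic-bundle argument via Lemma~\ref{LinkSop}, and identifies the generic fibre using that $S$ has Picard rank $1$). That part is fine.

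For part~\ref{LinkStoSBMfs} your route is genuinely different from the paper's, and it has a gap exactly where you flag "the main obstacle". You propose to blow up the closure $\Gamma$ of the $d$-point in $Y_S$ and take a $\QQ$-factorial terminalisation to get a rank~$2$ fibration $W/\widehat B$, but nothing you say produces the required object. A rank~$2$ fibration in the sense of \cite[Definition 3.1]{BLZ} is not just a $\QQ$-factorial terminal variety with $\rho(W/\widehat B)=2$: one needs $W/\widehat B$ to be a relative Mori dream space with $-K_W$ relatively big, and these global conditions are not achieved by controlling the generic fibre alone. Worse, even the numerical condition $\rho(W/\widehat B)=2$ is problematic: $\Gamma$ is only generically étale over $\widehat B$, its blow-up need not be terminal, and a terminalisation will in general extract additional divisors over the non-generic locus, raising the relative Picard rank; modifying the base $\widehat B$ does not repair this, since the extra divisors live over loci of $Y_S$, not of the base. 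The lemmas of \cite{BLZ} you invoke (3.4, 3.5, 2.17) modify \emph{existing} rank~$r$ fibrations; they do not construct one from a chosen centre. This is precisely the difficulty the paper avoids: it takes the two SBMfs $W/A$ and $W'/C$ with generic fibres $S$ and $S^{\rm op}$ from part~\ref{StoSBMfs}, lets $\psi\colon W\dasharrow W'$ be the birational map over $B$ induced by $\tau$, and decomposes $\psi$ into Sarkisov links over $B$ using the relative Sarkisov program of \cite{HMcK} — which \emph{supplies} the needed rank~$2$ fibrations. It then shows each intermediate base is birational to $B$ and each generic fibre is $S$ or $S^{\rm op}$, so each link induces an isomorphism or a link $\tau_i$ on generic fibres with $\tau=\tau_n\circ\cdots\circ\tau_1$; finally it applies the groupoid homomorphism $\Psi$ of Theorem~\ref{thm:SBgroupoidhomo} to conclude that $\Psi(\tau)=\pm\Psi(\tau_i)$ for some $i$ (since $\Psi(\tau)$ is a single generator of a free product), and that $\chi_i$ or $\chi_i^{-1}$ is the required link. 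If you want to keep a direct construction, you must actually verify the Mori-dream-space and relative Picard rank conditions for your $W/\widehat B$; otherwise the existence of the link realising $\tau$ should be extracted from a Sarkisov decomposition as above.
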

\begin{proof}\ref{StoSBMfs}: We view $S$ as a closed subvariety of $\PP^m_K$ for some $m\geqslant 3$. The equations of $S$ are then homogeneous polynomials with coefficients in $K=\CC(B)$. Taking the same equations (by multiplying the denominators) in $\PP^m\times B$, we obtain a complex projective variety $Y\subseteq \PP^m\times B$, such that the generic fibre of the projection $Y\to B$ is isomorphic to $S$. We then consider a resolution $\widetilde{Y}\to Y$ of the singularities of $Y$. As $S$ is smooth, we may choose this resolution such that the generic fibre of $\widetilde{Y}\to Y\to B$ is again isomorphic to $S$. We then apply a relative MMP over $B$ and find a Mori fibre space $X\to \widehat{B}$, where $\widehat{B}\to B$ is a birational morphism, such that the generic fibre $S'$ of $X\to B$ is birational to $S$. As $S\to S'$ is a sequence of contractions and the Picard rank of $S$ is $1$, we find that $S'\simeq S$.

\ref{LinkStoSBMfs}: Applying \ref{StoSBMfs} we find two birational morphisms $A\to B$ and $C\to B$, two SBMfs $W\to A$ and $W'\to C$ such that the generic fibres of $W\to B$ and $W'\to B$ are $S$ and $S^{\rm op}$ respectively. Since $\tau\colon S\dasharrow S^{\rm op}$ is a birational map over $K$, there is a unique birational map $\psi\colon W\dasharrow W'$ over $B$ which induces the birational map $\tau\colon  S\dasharrow S^{\rm op}$ on the generic fibres. We now apply the Sarkisov program over $B$ as in \cite{HMcK} and decompose $\psi$ as $\psi=\chi_n\circ \cdots\circ \chi_1$, where $\chi_i\colon W_i/B_i\dasharrow W_{i+1}/B_{i+1}$ is a Sarkisov link between Mori fibre spaces $W_i/B_i$ and $W_{i+1}/B_{i+1}$ for each $i\in \{1,\ldots,n\}$, compatible with morphisms $B_i\to B$, and where $B_1=A$, $B_{n+1}=C$ and $W=W_1$, $W'=W_{n+1}$.

For each $i\in \{1,\ldots,n+1\}$,  the generic fibre $S_i$ of $W_i\to B$ is a surface that is birational to $S$, via the $B$-birational map $W\dasharrow W_i$. First observe that $\dim(B_i)=\dim(B)$ for each $i\in \{1,\ldots,n+1\}$. Indeed, as we have a dominant morphism $B_i\to B$, we have $\dim(B_i)\geqslant \dim(B)$, and $\dim(B_i)<\dim(W_i)=\dim(B)+2$, so $\dim(B_i)\in \{\dim(B),\dim(B)+1\}$. Moreover, if $\dim(B_i)=\dim(B)+1$, then $S_i$ would admit a morphism of relative Picard rank $1$ to the generic fibre of $B_i\to B$ (a curve), and thus would be a conic bundle birational to $S$, impossible by Lemma~\ref{LinkSop}. Since $\dim(B_i)=\dim(B)$, the morphism $B_i\to B$ is birational. Hence, $S_i$ is isomorphic to the generic fibre $S_i$ of $W_i/B_i$, after the base-change $B_i\to B$, and is thus of Picard rank $1$. By  Lemma~\ref{LinkSop}, $S_i$ is a Severi-Brauer surface birational to $S$, and is thus either isomorphic to $S$ or to $S^{\rm op}$.

Since each birational map $\chi_i\colon W_i/B_i\dasharrow W_{i+1}/B_{i+1}$ is defined over $B$, it induces a birational map $\tau_i\colon S_i\dasharrow S_{i+1}$ Moreover, $\chi_i$ being a Sarkisov link, we find that $\tau_i$ is either an isomorphism or a Sarkisov link. Finally, as $\psi=\chi_n\circ \cdots\circ \chi_1$ and $\tau$ is induced by $\psi$, we have $\tau=\tau_n\circ \cdots\circ \tau_1$. Since the image of $\tau$ under the groupoid homomorphism
\[
\Psi\colon \BirMori_K(S)\to \bigoplus\limits_{\mathcal{E}_3} \ZZ/3\ZZ \ast \left ( \bigast_{\mathcal{E}_6} \mathbb{Z} \right)
\]
of Theorem~\ref{thm:SBgroupoidhomo} is equal to one generator of $\ZZ$ or $\ZZ/3\ZZ$, and each $\tau_i$ is sent to one generator, we find that $\Psi(\tau)=\pm \Psi(\tau_i)$ for some $i\in \{1,\ldots,n\}$. If $\Psi(\tau)=\Psi(\tau_i)$, we choose $Y=W_{i}$, $\widehat{B}=B_i=B_{i+1}$,  $Y'=W_{i+1}$ and $\chi=\chi_i$. We may choose the isomorphism of the generic fibres with $S$ and $S^{\rm op}$ so that the action of the $B$-birational map $ \chi \colon Y\dasharrow Y'$ on the generic fibres is $\tau$. If $\Psi(\tau)=-\Psi(\tau_i)$, we simply choose $Y=W_{i+1}$, $Y'=W_{i}$, $\widehat{B}=B_i=B_{i+1}$, and $\chi=\chi_i^{-1}$.
\end{proof}

To prove Theorem~\ref{thm: free product}, Corollaries~\ref{CorUniversalBir}-\ref{CorHopfian} and Theorem~\ref{3Torsion}, we will apply Theorem \ref{Theorem:SBMfs} to a \emph{rational} SBMfs $X\to B$ with a birational morphism $B\to\PP^n$ such that the generic fibre of $X\to\PP^n$  is the non-trivial Severi-Brauer surface $S$ of Corollary~\ref{cor: non-trivial SB over function field}. This latter corresponds to the surface $S=S_\xi$ of Lemma~\ref{lem:SBdegree3} associated to the field extension $L/K$, with $K=\CC(t_1,\ldots,t_n)=\CC(\PP^n)$, $L=K[\sqrt[3]{\lambda}]$, $\lambda=t_1$ and $\xi=t_2$, for each $n\geqslant 2$. The existence of the SBMfs $X\to B$, where $B\to\mathbb{P}^n$ is a birational morphism is provided by Proposition~\ref{prop:SBtoSBMfs}. Note that fixing the generic fibre $S$ does not give a unique SBMfs $X/B$. However, if $X'/B'$ is another such SBMfs, there is a birational map $X\dasharrow X'$ over $\mathbb{P}^n$, inducing an isomorphism between the generic fibres. In particular, $X$ is rational if and only if $X'$ is rational. Note that the generic fibre of $\pi\colon X\to B$ is $S^f$, and it is obtained by base-change $f\colon\Spec\CC(B)\to\Spec\CC(\PP^n)$ induced by the birational morphism $B\to\PP^n$.

\begin{prop}\label{prop: explicit SB fibration}
Let $n\geqslant 2$ and let us consider the quasi-projective variety
\[
\mathscr{X}=\big \{([w:x:y:z],(t_1,\ldots,t_n))\in \PP^3_\CC \times (\mathbb{A}^1_\CC\setminus \{0\})^n\mid t_2 w^3=t_1 x^{3}+y^{3}+t_1^{-1}z^3-3 xyz \big \}.
\]
Then, the following hold:
\begin{enumerate}
\item\label{fibration1}
The variety $\mathscr{X}$ is birational to $\mathbb{P}^{n+2}_{\CC}$;
\item\label{fibration2}
The generic fibre of the projection $p\colon \mathscr{X}\to (\mathbb{A}^1_\CC\setminus \{0\})^n$ is birational to the non-trivial Severi-Brauer surface $S$ of Corollary~$\ref{cor: non-trivial SB over function field}$, defined over $K=\CC(\mathbb{A}^n)=\CC(t_1,\ldots,t_n)$. Furthermore, there exists a commutative diagram
\begin{equation*}
	\xymatrix@R=6pt@C=20pt{
		\mathscr{X}\ar[d]_{p}\ar@{-->}[rr]^{\psi} && X\ar[d]^{\pi}\\
		{(\mathbb{A}^1_\CC\setminus \{0\})^n} \ar[rd]^-{\tau} && B\ar[ld]_{\kappa}\\
		&\PP^n&
	}
\end{equation*}
where $\tau$ is the open embedding $(t_1,\ldots,t_n)\mapsto [1:t_1:\cdots:t_n]$, $\psi$ and  $\kappa$ are birational maps, $\kappa$ being a morphism, $\pi\colon X\to B$ is a SBMfs and where the generic fibre of $X\to \PP^n$ is $S$, via the base change induced by $\tau$.
\end{enumerate}
\end{prop}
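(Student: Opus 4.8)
The plan is to prove \ref{fibration1} by an elementary change of coordinates, and \ref{fibration2} by combining the explicit birational model of Proposition~\ref{prop: birational cubic model} with the construction of Proposition~\ref{prop:SBtoSBMfs}.

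\textbf{Rationality of $\mathscr{X}$.} For \ref{fibration1} I would restrict to the affine chart $w=1$ of $\PP^3$. On this chart the defining equation becomes $t_2=t_1x^{3}+y^{3}+t_1^{-1}z^3-3xyz$, so $t_2$ is a function of the remaining coordinates. Concretely, consider the rational map
\[
(x,y,z,t_1,t_3,\dots,t_n)\ \longmapsto\ \bigl([1:x:y:z],\,(t_1,\ \beta,\ t_3,\dots,t_n)\bigr),\qquad \beta:=t_1x^3+y^3+t_1^{-1}z^3-3xyz,
\]
from $\AA^3_{x,y,z}\times(\AA^1\setminus\{0\})_{t_1}\times(\AA^1\setminus\{0\})^{n-2}_{t_3,\dots,t_n}$ to $\mathscr{X}$, which is defined on the dense open subset $\beta\neq 0$; its inverse on the open locus $w\neq 0$ of $\mathscr{X}$ is $([w:x:y:z],(t_1,\dots,t_n))\mapsto(x/w,\,y/w,\,z/w,\,t_1,\,t_3,\dots,t_n)$. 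Hence $\mathscr{X}$ is birational to an open subset of $\AA^{n+2}_\CC$, and therefore to $\PP^{n+2}_\CC$. (Note $\dim\mathscr{X}=n+2$, consistent with the generic fibre of $p$ being a surface.)

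\textbf{The generic fibre.} For \ref{fibration2} set $K=\CC(t_1,\dots,t_n)$, which is a perfect field of characteristic $0$ containing the primitive third root of unity $\zeta=e^{2\mathbf{i}\pi/3}$; since $\lambda=t_1$ is not a cube in $K$, the extension $L=K[\sqrt[3]{t_1}]$ is cyclic Galois of degree $3$, with generator $g\colon\sqrt[3]{t_1}\mapsto\zeta\sqrt[3]{t_1}$. The generic fibre of $p\colon\mathscr{X}\to(\AA^1\setminus\{0\})^n$ is exactly the singular cubic surface $\{t_2 w^3=t_1x^3+y^3+t_1^{-1}z^3-3xyz\}\subseteq\PP^3_K$ appearing in Proposition~\ref{prop: birational cubic model} with $(\lambda,\xi)=(t_1,t_2)$, so that proposition produces a $K$-birational map from this surface to the surface $S_{t_2}$ of Lemma~\ref{lem:SBdegree3}\ref{SB31}. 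Applying Corollary~\ref{cor: non-trivial SB over function field} with $\lambda=t_1$ and $\xi=t_2$ identifies $S_{t_2}$ with the non-trivial Severi-Brauer surface $S$ of that corollary ($S(K)=\varnothing$). Hence the generic fibre of $p$ is $K$-birational to $S$, as required.

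\textbf{The commutative diagram.} Finally I would identify $K=\CC(t_1,\dots,t_n)$ with $\CC(\PP^n)$ through the open embedding $\tau$, and apply Proposition~\ref{prop:SBtoSBMfs}\ref{StoSBMfs} to the base $\PP^n$ and the Severi-Brauer surface $S$ over $\CC(\PP^n)$: this yields a birational morphism $\kappa\colon B\to\PP^n$ and a SBMfs $\pi\colon X\to B$ whose generic fibre over $\PP^n$ is $S$. Since $\tau\circ p\colon\mathscr{X}\to\PP^n$ is a dominant morphism whose generic fibre over $\CC(\PP^n)$ is the generic fibre of $p$, the previous step shows that the generic fibres of $\mathscr{X}\to\PP^n$ and of $X\to\PP^n$ are both $K$-birational to $S$; a $K$-birational map between these two generic fibres spreads out to a birational map $\psi\colon\mathscr{X}\dashrightarrow X$ over $\PP^n$, i.e. with $\kappa\circ\pi\circ\psi=\tau\circ p$ as rational maps, which is the asserted diagram.

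\textbf{On the difficulties.} There is essentially no hard step here. The only points needing care are the bookkeeping that matches the generic fibre with the precise surface $S$ of Corollary~\ref{cor: non-trivial SB over function field} rather than with its opposite $S^{\mathrm{op}}\simeq S_{t_2^{-1}}$ (cf.\ Lemma~\ref{Lem:SxiOp}), and the standard verification that a birational equivalence of generic fibres over $\CC(\PP^n)$ spreads out to a birational map of the total spaces compatible with the projections to $\PP^n$.
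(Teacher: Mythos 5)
Your proposal is correct and follows essentially the same route as the paper: part \ref{fibration1} by solving for $t_2$ in the chart $w=1$, and part \ref{fibration2} by identifying the generic fibre of $p$ with the singular cubic of Proposition~\ref{prop: birational cubic model}, invoking Corollary~\ref{cor: non-trivial SB over function field} and Proposition~\ref{prop:SBtoSBMfs}, and spreading out the $K$-birational map of generic fibres to the total spaces over $\PP^n$.
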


\begin{proof}

\ref{fibration1} The variety $\mathscr{X}$ is birational to $\AA^{n+2}_\CC$, as it is given, in the affine chart $w=1$, by the equation $t_2=t_1x^3+y^3+t_1^{-1}z^3-3xyz$ of degree $1$ in $t_2$. The projection to $t_1,t_3,\ldots,t_n,x,y,z$ yields a birational map to $\AA^{n+2}_\CC$.

\ref{fibration2}: As explained before, we consider the surface $S=S_\xi$ of Lemma~\ref{lem:SBdegree3} associated to the field extension $L/K$, with $K=\CC(t_1,\ldots,t_n)$, $L=K[\sqrt[3]{\lambda}]$, $\lambda=t_1$ and $\xi=t_2$. By Corollary~\ref{cor: non-trivial SB over function field}, it is a non-trivial Severi-Brauer surface defined over $K$. Fixing the open embedding $\tau\colon \mathbb{A}^n\to \PP^n$,  $(t_1,\ldots,t_n)\mapsto [1:t_1:\cdots:t_n]$, we  identify $K=\CC(t_1,\ldots,t_n)$ with $\CC(\PP^n)$.  According to Proposition~\ref{prop:SBtoSBMfs}, there exists a birational morphism $B\to \mathbb{P}^n$, and a SBMfs $\pi\colon X\to B$ such that the generic fibre of $X\to \PP^n$ is $S$. Consider the cubic surface
\[
Q=\big\{ t_2w^3=t_1x^3+y^3+t_1^{-1}z^3-3xyz \big \}\subseteq\PP^3_K,
\]
that is the generic fibre of the projection $p\colon \mathscr{X}\to (\mathbb{A}^1_\CC\setminus \{0\})^n$. By Proposition~\ref{prop: birational cubic model}, there is a birational map $Q\dashrightarrow S$, defined over $K$. The generic fibres of $p$ and $\pi$ being birational, we obtain a birational map  $\mathscr{X}\dasharrow X$ compatible with the above diagram.
\end{proof}

\subsection{$6$-links of large covering genus and the proof of Theorem~\ref{thm: free product}}

To prove Theorem~\ref{thm: free product}, we first need to provide sufficiently many $6$-links with arbitrary large covering genus.

\begin{prop}\label{prop:existence6linksC}
Let $n\geqslant 2$. We consider the non-trivial Severi-Brauer surface $S$ of Corollary~$\ref{cor: non-trivial SB over function field}$, defined over $K=\CC(t_1,\ldots,t_n)=\CC(\PP^n)$.
Let $r\geqslant 1$ be an integer, and $q\in \CC[x_0,\ldots,x_{n-1}]_{6r}$ be a homogeneous polynomial of degree $6r$ that defines a smooth hypersurface in $\PP^{n-1}$.
Consider the variety
\[
\Delta=\{[w:x_0:\cdots:x_n]\in \PP(3r,1,\ldots,1)\mid w^2=q(x_0,\ldots,x_{n-1})+x_n^{6r}\}
\]
with its double covering $\eta\colon \Delta \rightarrow \PP^n$ given by $\eta\colon[w:x_0:\cdots:x_n]\mapsto [x_0:\cdots:x_n]$,
and the quotient $\mu\colon \PP^n\to \PP^n/\langle \gamma\rangle$ by the automorphism $\gamma\in \Aut(\PP^n)$ given by
\[\gamma\colon [x_0:\cdots:x_n]\mapsto [x_0:\cdots:x_{n-1}:e^{2\mathbf{i}\pi/3} x_n].
\]
Then, there is a birational morphism $\sigma\colon \widehat{B}\to \PP^n$, two SBMfs $\pi\colon Y\to \widehat{B}$ and $\pi'\colon Y'\to \widehat{B}$, such that the generic fibres of $Y\to \PP^n$ and $Y'\to \PP^n$ are isomorphic to $S$ and $S^{\rm op}$ respectively, and a $6$-link
\[
	\xymatrix@R=6pt@C=20pt{
		Y\ar@{-->}[rr]^{\chi}\ar[rd]_{\pi} &&Y'\ar[ld]^{\pi'} \\
		& \widehat{B},
	}\]
such that the base-loci of $\chi$ and $\chi^{-1}$ are two varieties $\Gamma\subseteq Y$, $\Gamma'\subseteq Y'$, birational over $\PP^n$, together with a commutative diagram
\begin{equation}\label{eq: base loci of 6-link}
	\xymatrix@R=6pt@C=20pt{
		\Gamma\ar[dr]_{\pi}\ar@{-->}[rr]^{\psi} &&  \Gamma'\ar[dl]^{\pi'}\ar@{-->}[r]^{\psi'} & \Delta\ar[d]^{\eta} \\
		& \widehat{B} \ar[d]^{\sigma}&& \PP^n\ar[d]^{\mu}\\
		& \PP^n \ar@{-->}[rr]^{\varphi} &&  \PP^n/\langle \gamma \rangle,
	}
\end{equation}
for some birational maps $\psi,\psi',\varphi$. Moreover, $\varphi$ is given by the identification of the field $K=\CC(t_1,\ldots,t_n)$ with $\CC(\PP^n)$ and $\CC(\PP^n/\langle \gamma\rangle)$, where $t_i=\frac{x_i}{x_0}$ in the first case, and $t_1=(\frac{x_{n}}{x_0})^3, t_2=\frac{x_1}{x_0},\ldots,t_n=\frac{x_{n-1}}{x_0}$ in the second.
\end{prop}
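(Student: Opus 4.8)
The plan is to reduce the statement to Proposition~\ref{prop: existence of links over function fields}\ref{exi6} and Proposition~\ref{prop:SBtoSBMfs}\ref{LinkStoSBMfs}; the only real content is to pick the correct degree-$6$ point of $S$. Writing $u_i = x_i/x_0$ on the copy of $\PP^n$ below $\Delta$, we have $\CC(\Delta) = \CC(u_1,\dots,u_n)[W]$ with $W^2 = q(1,u_1,\dots,u_{n-1}) + u_n^{6r}$, the covering $\eta$ corresponds to the inclusion $\CC(u_1,\dots,u_n)\hookrightarrow\CC(\Delta)$, and $\mu$ corresponds to $\CC(u_1,\dots,u_{n-1},u_n^3)\hookrightarrow\CC(u_1,\dots,u_n)$. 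Using the identifications prescribed in the statement — namely $K = \CC(t_1,\dots,t_n) = \CC(\PP^n/\langle\gamma\rangle) = \CC(u_1,\dots,u_{n-1},u_n^3)$ via $t_1 = u_n^3$ and $t_i = u_{i-1}$ for $i\geqslant 2$ — one gets $\CC(\PP^n) = K[u_n]\cong L$ (with $u_n\leftrightarrow\sqrt[3]{\lambda}$, $\lambda = t_1$), and hence an isomorphism $\CC(\Delta)\cong L[\sqrt{\alpha}]$ over $K$, where
\[
\alpha = q(1,t_2,\dots,t_n) + t_1^{2r}\in K^*.
\]
So I would \emph{define} $\alpha$ by this formula, and let $\varphi\colon\PP^n\dashrightarrow\PP^n/\langle\gamma\rangle$ be the birational map realising the composite of the two prescribed identifications of $\CC(\PP^n)$ and of $\CC(\PP^n/\langle\gamma\rangle)$ with $K$; with these choices the bottom and right-hand parts of diagram~\eqref{eq: base loci of 6-link} commute tautologically.

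Next I would check that $\alpha$ is not a square in $L$, the hypothesis needed to invoke Proposition~\ref{prop: existence of links over function fields}\ref{exi6}. As $[L:K] = 3$ is odd, $\alpha$ is a square in $L$ if and only if it is a square in $K$; and $\alpha$ is the dehomogenisation at $x_0$ of the form $F = q(x_0,\dots,x_{n-1}) + x_n^{6r}$, which is not divisible by $x_0$, so this happens if and only if $F$ is a square in $\CC[x_0,\dots,x_n]$. But $F$ cuts out a smooth hypersurface of $\PP^n$: at a singular point, $\partial F/\partial x_n = 6r\,x_n^{6r-1} = 0$ forces $x_n = 0$, and then the vanishing of the remaining partials $\partial F/\partial x_i = \partial q/\partial x_i$ would make $\{q = 0\}\subseteq\PP^{n-1}$ singular, contrary to hypothesis. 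Hence $F$ is reduced, in particular not a square, and so $\alpha$ is not a square in $L$. (Incidentally this shows that $\Delta$ is the smooth cyclic double cover of Lemma~\ref{lem: cyclic covering covgen} with $m = 2$, $d = 3r$, whose covering genus grows with $r$; this is what the intended application will exploit, though it plays no role in the present statement.) Proposition~\ref{prop: existence of links over function fields}\ref{exi6} now yields a degree-$6$ point $p$ of $S$ with splitting field $L[\sqrt{\alpha}]$, and since the degree of $p$ equals $[L[\sqrt{\alpha}]:K] = 6$, the residue field $\kappa(p)$ is Galois over $K$ and hence equals its Galois closure $L[\sqrt{\alpha}]$, which is $\cong\CC(\Delta)$ over $K$.

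It then remains to run the standard machinery. The point $p$ gives a $6$-link $\eta_p\colon S\dashrightarrow S'$ with base-point $p$ (Example~\ref{Ex36links}), where $S'\simeq S^{\mathrm{op}}$ (Lemma~\ref{LinkSop}); by Lemma~\ref{lem:splittingfieldinverse}, the base-point $q$ of $\eta_p^{-1}$ is again a degree-$6$ point with splitting field $L[\sqrt{\alpha}]$, so $\kappa(q) = L[\sqrt{\alpha}] = \kappa(p)$. Applying Proposition~\ref{prop:SBtoSBMfs}\ref{LinkStoSBMfs} with $B = \PP^n$ to $\eta_p$ provides a birational morphism $\sigma\colon\widehat{B}\to\PP^n$, two SBMfs $\pi\colon Y\to\widehat{B}$ and $\pi'\colon Y'\to\widehat{B}$ whose generic fibres over $\PP^n$ are isomorphic to $S$ and $S^{\mathrm{op}}$, and a $6$-link $\chi\colon Y\dashrightarrow Y'$ over $\widehat{B}$ of type~\II\ inducing $\eta_p$ on the generic fibres. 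By Lemma~\ref{lem:SBMfsSarki}\ref{SBSar2}, the centres $\Gamma\subseteq Y$ and $\Gamma'\subseteq Y'$ of the divisorial contractions associated to $\chi$ and $\chi^{-1}$ are generically $6:1$ over $\widehat{B}$, with generic fibres the base-points of $\eta_p$ and $\eta_p^{-1}$, i.e.\ $\Spec\kappa(p)$ and $\Spec\kappa(q)$; hence $\CC(\Gamma)\cong\CC(\Gamma')\cong L[\sqrt{\alpha}]\cong\CC(\Delta)$ as $K$-algebras. This produces the birational maps $\psi\colon\Gamma\dashrightarrow\Gamma'$ and $\psi'\colon\Gamma'\dashrightarrow\Delta$ of~\eqref{eq: base loci of 6-link}, compatible with the structure maps to $\PP^n$ and to $\PP^n/\langle\gamma\rangle$ through the $\varphi$ fixed above, and exhibits $\Gamma,\Gamma'$ as birational over $\PP^n$, as required.

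The step I expect to be the main obstacle is the precise matching of the three field identifications — of $K$ with $\CC(\PP^n)$, with $\CC(\PP^n/\langle\gamma\rangle)$, and of $\CC(\Delta)$ with $L[\sqrt{\alpha}]$ — so that the centre $\Gamma$ of the abstract $6$-link genuinely coincides, birationally over $\PP^n$, with the explicit double cover $\Delta$ inside the prescribed commutative diagram. Everything else is a direct application of the quoted propositions, once $\alpha$ has been reverse-engineered and one has noted that a degree-$6$ point whose splitting field has degree $6$ has residue field equal to that splitting field.
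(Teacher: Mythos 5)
Your proposal is correct and follows essentially the same route as the paper's proof: choose $\alpha=t_1^{2r}+q(1,t_2,\ldots,t_n)=\bigl(x_n^{6r}+q(x_0,\ldots,x_{n-1})\bigr)/x_0^{6r}$, check it is not a square in $L$ (the paper invokes irreducibility of $x_n^{6r}+q$, you argue via smoothness/reducedness — both fine), then apply Proposition~\ref{prop: existence of links over function fields}\ref{exi6}, Proposition~\ref{prop:SBtoSBMfs}\ref{LinkStoSBMfs}, Lemma~\ref{lem:SBMfsSarki} and Lemma~\ref{lem:splittingfieldinverse} to identify the base loci with $\Delta$ via their common function field $L[\sqrt{\alpha}]$. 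The only discrepancy is the paper's typo $q(t_2,\ldots,t_n)$ where you correctly write $q(1,t_2,\ldots,t_n)$.
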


\begin{proof}
By Proposition~\ref{prop: existence of links over function fields}\ref{exi6}, for each $\alpha\in K=\CC(t_1,\ldots,t_n)$ which is not a square in $L=K[\sqrt[3]{t_1}]$, there is a $6$-point $p\in S$ whose splitting field is $L[\sqrt{\alpha}]$, and thus a $6$-link $\tau\colon S \dasharrow S^{\rm op}$ (see Example~\ref{Ex36links} and Lemma~\ref{LinkSop}). By Proposition~\ref{prop:SBtoSBMfs}, there is a birational morphism $\widehat{B}\to \PP^n$, two SBMfs $Y\to \widehat{B}$ and $Y'\to \widehat{B}$ such that the generic fibres of $Y\to \PP^n$ and $Y'\to \PP^n$ are isomorphic to $S$ and $S^{\rm op}$ respectively, and a $6$-link $\chi\colon Y/\widehat{B}\dasharrow Y'/\widehat{B}$, which is a Sarkisov link of type \II, such that the action of the $\PP^n$-birational map $\chi \colon Y\dasharrow Y'$ on the generic fibres is $\tau$. The base-loci of $\chi$ and $\chi^{-1}$ are two varieties $\Gamma$, $\Gamma'$ such that the morphism $\Gamma\to \PP^n$ and $\Gamma'\to \PP^n$ are generically $6:1$ (Lemma~\ref{lem:SBMfsSarki}). Hence, the splitting field is the residue field of the point, which is the same for the base-point $p$ of $\tau$ and the base-point of $\tau^{-1}$ by Lemma~\ref{lem:splittingfieldinverse}. This residue field is the field of functions of $\Gamma$ and $\Gamma'$: the field extension associated to the morphisms $\Gamma\to \PP^n$ and  $\Gamma'\to \PP^n$ is $L[\sqrt{\alpha}]/K$.

Consider the double covering $\eta\colon \Delta\to \PP^n$ and the morphism $\mu\colon \PP^n\to \PP^n/\langle \gamma \rangle$, where $\gamma$ is as above. Since $\CC(\PP^n)=\CC(\frac{x_1}{x_0},\cdots,\frac{x_n}{x_0})$, we obtain $\CC(\PP^n/\langle \gamma \rangle)=\CC(\frac{x_1}{x_0},\cdots,\frac{x_{n-1}}{x_0})$. Setting $t_1=(\frac{x_{n}}{x_0})^3, t_2=\frac{x_1}{x_0},\ldots,t_n=\frac{x_{n-1}}{x_0}$, we obtain $\CC(\PP^n/\langle \gamma \rangle)=\CC(t_1,\ldots,t_n)=K$, $\CC(\PP^n)=\CC(\sqrt[3]{t_1},t_2,\ldots,t_n)=L$. Now choose
$$
\alpha=\frac{x_n^{6r}+q(x_0,\ldots,x_{n-1})}{x_0^{6r}}=t_1^{2r} +q(t_2,\ldots,t_n)
$$
Then $\CC(\Delta)=L[\sqrt{\alpha}]$. As $x_n^{6r}+q$ is irreducible, $\alpha$ is not a square in $L$. The Sarkisov link described as above is then such that the field extensions $\Gamma\to \PP^n$ and $\Gamma'\to \PP^n$ are isomorphic to the one given by $\Delta\to \PP^n/\langle \gamma\rangle$. This gives the existence of birational maps $\psi,\psi'$. Finally, $\varphi$ is given by the identification of $K=\CC(t_1,\ldots,t_n)$ with $\CC(\PP^n)$ and $\CC(\PP^n/\langle \gamma\rangle)$, where $t_i=\frac{x_i}{x_0}$ in the first case, and $t_1=(\frac{x_{n}}{x_0})^3, t_2=\frac{x_1}{x_0},\ldots,t_n=\frac{x_{n-1}}{x_0}$ in the second.
\end{proof}

In what follows, we will need the following classical result, whose essential part is known as the Matsumura-Monsky theorem.

\begin{prop}\label{prop:Matsumura-Monsky}
Let $d\geqslant 3$ and let $n\geqslant 1$ be integers. Letting $\CC[x_0,\ldots,x_n]_d$ be the vector space of homogeneous polynomials of degree $d$ in $n+1$ variables, view $\PP(\CC[x_0,\ldots,x_n]_d)$ as $\PP^{N-1}$ with $N=\binom{d+n}{d}$. Then the set $\mathcal{H}_{n,d}$ corresponding to smooth hypersurfaces in $\PP^n$ of degree $d$ is open and dense in this projective space. Furthermore, the set
\[
\mathcal{U}_{n,d}=\{H\in \mathcal{H}_{n,d}\mid \alpha(H)\not=H \text{ for each }\alpha\in \Aut(\PP^{n})\setminus \{\mathrm{id}\}\}
\]
of smooth hypersurfaces admitting no linear automorphisms is an open subset of $\mathcal{H}_{n,d}$, that is empty if and only if $(n,d)\in \{(1,3),(1,4),(2,3)\}$.
\end{prop}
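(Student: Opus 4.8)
The plan is to separate the statement into its two halves: the openness and density of $\mathcal{H}_{n,d}$, and then the description of $\mathcal{U}_{n,d}$ as an open subset together with the classification of when it is empty. For the first half I would argue as follows. Consider the universal hypersurface $\mathscr{Z}\subseteq \PP^n\times \PP^{N-1}$ cut out by the incidence equation $\sum_{|\alpha|=d} a_\alpha x^\alpha = 0$, with projection $\pi\colon \mathscr{Z}\to \PP^{N-1}$. The locus where a hypersurface fails to be smooth is the image under $\pi$ of the closed subset where all partial derivatives $\partial/\partial x_i$ of the defining equation vanish simultaneously; by properness of $\pi$ this image is Zariski closed, so $\mathcal{H}_{n,d}$ is open. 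Density (equivalently, nonemptiness of $\mathcal{H}_{n,d}$) follows from exhibiting a single smooth hypersurface of degree $d$, e.g.\ the Fermat hypersurface $x_0^d+\cdots+x_n^d=0$, whose partial derivatives $d\,x_i^{d-1}$ have no common zero in $\PP^n$ since $\operatorname{char}(\CC)=0$.

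For the openness of $\mathcal{U}_{n,d}$ inside $\mathcal{H}_{n,d}$, the key point is that the condition of admitting a nontrivial linear automorphism is a closed condition. Consider the incidence variety $\mathscr{I}=\{(H,\alpha)\in \mathcal{H}_{n,d}\times \PGL_{n+1}(\CC)\mid \alpha(H)=H,\ \alpha\neq \mathrm{id}\}$; I would take its closure $\overline{\mathscr{I}}$ in $\mathcal{H}_{n,d}\times \overline{\PGL_{n+1}(\CC)}$, where $\overline{\PGL_{n+1}}$ is the projective space $\PP(\Mat_{(n+1)\times(n+1)})$ minus nothing (i.e.\ $\PP^{(n+1)^2-1}$), which is proper. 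The condition $\alpha(H)=H$ extends to a closed condition on the matrix entries and the coefficients of $H$, and one must check that limit points of $\mathscr{I}$ still correspond to genuine linear automorphisms: here one uses that a smooth hypersurface has finite automorphism group (Matsumura--Monsky) for $(n,d)\notin\{(1,2),(2,2),\ldots\}$ in the relevant range, so the automorphism group does not degenerate to include non-invertible matrices; alternatively, since $\operatorname{Aut}(\PP^n)$ acts on $\mathcal{H}_{n,d}$ and the stabiliser of a point with finite stabiliser is a closed subscheme whose dimension is upper-semicontinuous, the locus of $H$ with positive-dimensional or merely nontrivial finite stabiliser is closed. Projecting $\overline{\mathscr{I}}$ to $\mathcal{H}_{n,d}$ gives a closed set whose complement is exactly $\mathcal{U}_{n,d}$.

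Finally, for the classification of emptiness of $\mathcal{U}_{n,d}$: nonemptiness for $(n,d)$ outside the list $\{(1,3),(1,4),(2,3)\}$ (with $d\geq 3$) follows from a dimension count, comparing $\dim \mathcal{H}_{n,d}=N-1=\binom{d+n}{d}-1$ with $\dim \PGL_{n+1}(\CC)=(n+1)^2-1$: if the group dimension is strictly smaller, then $\PGL_{n+1}$ cannot act with all orbits on $\mathcal{H}_{n,d}$ of the same (small) codimension, and a general $H$ has trivial stabiliser, which combined with the openness just established gives $\mathcal{U}_{n,d}\neq\varnothing$; the cases where the inequality fails or is tight are precisely the small exceptional ones, which must be checked by hand — e.g.\ every smooth conic and every smooth plane/space cubic of the listed types carries extra symmetry (smooth plane cubics have the translation/inversion structure of an elliptic curve, binary forms of degree $3$ and $4$ have automorphisms coming from the cross-ratio). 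I would simply cite the Matsumura--Monsky theorem \cite{MatsumuraMonsky} for the fact that a smooth hypersurface of degree $d\geq 3$ in $\PP^n$ with $(n,d)\neq(2,3)$, and also the $(n,d)=(2,3)$ case separately, has finite automorphism group, and that the generic such hypersurface has trivial automorphism group outside the three exceptional pairs; the exceptional pairs are then exactly those where the generic stabiliser is forced to be nontrivial. The main obstacle I anticipate is making the ``closedness of the bad locus'' argument fully rigorous, i.e.\ ensuring that in the limit one does not pick up spurious automorphisms or lose invertibility — this is exactly where the finiteness of $\operatorname{Aut}$ of a smooth hypersurface (Matsumura--Monsky) is needed, and I would lean on that citation rather than reprove it.
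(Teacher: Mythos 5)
Your treatment of the first half (openness and density of $\mathcal{H}_{n,d}$ via the universal hypersurface and the Fermat example) is fine, and your sketch of the openness of $\mathcal{U}_{n,d}$ contains the right ingredients (properness of the automorphism group scheme over $\mathcal{H}_{n,d}$, finiteness from Matsumura--Monsky); the paper simply cites \cite[Lemma 11.8.5]{KatzSarnak} for this point, so no complaint there.

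The genuine gap is in your classification of when $\mathcal{U}_{n,d}$ is empty. The dimension count you propose --- ``if $\dim\PGL_{n+1}(\CC)<\dim\mathcal{H}_{n,d}$ then a general $H$ has trivial stabiliser'' --- is false, and the proposition itself contains a counterexample: for $(n,d)=(1,4)$ one has $\dim\mathcal{H}_{1,4}=4>3=\dim\PGL_2(\CC)$, yet $\mathcal{U}_{1,4}=\varnothing$, because any $4$ points of $\PP^1$ are preserved by the Klein four-group of double transpositions (these preserve the cross-ratio). A dimension count can only detect whether the generic stabiliser is positive-dimensional; it says nothing about a nontrivial \emph{finite} generic stabiliser, which is exactly what occurs in the exceptional cases. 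Your fallback of citing Matsumura--Monsky for ``the generic hypersurface has trivial automorphism group outside the three exceptional pairs'' is also too broad: their theorem covers $n\geqslant 3$; the case $n=2$, $d\geqslant 4$ requires a separate reference (the paper uses \cite{ChangPlaneCurves}); and the case $n=1$ is not a hypersurface statement in their sense at all and needs a hand-made argument. The paper supplies one: for $d\geqslant 5$ it constructs $d$ points on $\PP^1$ all of whose $4$-element subsets have pairwise distinct cross-ratios, so that any projective transformation preserving the set fixes every $4$-element subset and hence (identifying it with a permutation in $\bigcap_\iota\Sym_4^\iota\subseteq\Sym_d$) must be the identity. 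Some argument of this kind for $n=1$, $d\geqslant 5$ is indispensable and is missing from your proposal.
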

\begin{proof}
The fact that $\mathcal{U}_{n,d}$ is open in $\mathcal{H}_{n,d}$ follows from \cite[Lemma 11.8.5]{KatzSarnak}.  The remaining non-trivial part of the statement is essentially due to Matsumura and Monsky \cite{MatsumuraMonsky} who proved it for $n\geqslant 3$, $d\geqslant 3$. The case $n=2$, $d\geqslant 4$ can be found e.g. in \cite{ChangPlaneCurves}. If $(n,d)=(2,3)$, then $\mathcal{U}_{n,d}$ is empty, as any smooth cubic curve can be put in the Hessian form and admits a non-trivial finite group of linear automorphisms.

The case $n=1$ is easy to handle. For $d\in \{3,4\}$, we get three or four points in $\mathbb{P}^1$, that can be exchanged by some automorphism of order $2$ (transposition in the case of $3$ points and $2\times 2$-cycle for four points), so $\mathcal{U}_{1,d}=\varnothing$ for such $d$.
On the other hand, for $d\geqslant 5$ one can construct a set $\Sigma=\{a_1,\ldots,a_d\}$ of $d$ points on $\PP^1$ which is not preserved by any projective transformation. For this, we require that all pairs of $4$ points from this set have different cross-ratios. Having two cross-ratios being different gives an open condition on the set of $d$-uples on $\PP^1$, so it is possible to find such a set. Then, any automorphism $\tau$ of $\PP^1$ will preserve every $4$-element subset of $\Sigma$. Since $d\geqslant 3$, $\tau$ can be identified with an element of $\Sym(\Sigma)\simeq\Sym_d$, and hence it belongs to $\cap_{\iota}\Sym_4^\iota$, where the intersection is taken over all embeddings $\iota\colon\Sym_4\hookrightarrow\Sym_d$. Since every $\Sym_4^\iota$ can be identified with the stabilizer of $d-4$ points from $\{a_1,\ldots,a_d\}$, this intersection is trivial (immediate for $d=5$, and follows by induction for $d>5$).
\end{proof}

\begin{proof}[Proof of Theorem \ref{thm: free product}]
Let $n=m-2\geqslant 2$ and consider the groupoid $\BirMori(X)$, where $\pi\colon  X\to B$ is the SBMfs constructed in Proposition~\ref{prop: explicit SB fibration} with birational morphism $B\to\PP^n$. Since $X$ is birational to $\PP^m=\PP^{n+2}$, we have $\Bir_\CC(\PP^{m})\simeq\Bir(X)$. We will apply Theorem~\ref{Theorem:SBMfs} and need for this some $6$-links of covering genus at least $g$, where $g$ is as in Theorem~\ref{Theorem:SBMfs}.

Recall that the generic fibre of $X\to\PP^n$ is the non-trivial Severi-Brauer surface $S$ of Corollary~$\ref{cor: non-trivial SB over function field}$, defined over $K=\CC(\mathbb{A}^n)=\CC(t_1,\ldots,t_n)$. For each integer $d\geqslant 1$ and each homogeneous polynomial $q\in \CC[x_0,\ldots,x_{n-1}]$ of degree $6d$ that defines a smooth hypersurface of $\PP^{n-1}$, Proposition~\ref{prop:existence6linksC} provides a link $\chi_q\colon Y_q/\widehat{B}_q\dasharrow Y_q'/\widehat{B}_q$. The base-loci $\Gamma_q\subseteq Y_q$ and $\Gamma'_q\subseteq Y'_q$ of $\chi_q$ and $\chi_q^{-1}$ fit into the commutative diagram \eqref{eq: base loci of 6-link} with the corresponding $\eta_q\colon\Delta_q\to\PP^n$ and $\sigma_q\colon \widehat{B}_q\to \PP^n$. As $q$ defines a smooth hypersurface of $\PP^{n-1}$, the polynomial $q(x_0,\ldots,x_{n-1})+x_n^{6d}$ is irreducible and defines a smooth hypersurface of $\PP^n$. We fix some integer $d$ large enough such that the covering genus of $\Delta_q$ is at least $g$ for each $q$, which is possible by Lemma~\ref{lem: cyclic covering covgen}.

Consider the sets $\mathcal{H}_{n-1,6d}$ and $\mathcal{U}_{n-1,6d}$ defined in Proposition~\ref{prop:Matsumura-Monsky}. The group $\Aut(\PP^{n-1})\simeq\PGL_n(\CC)$ acts on both. Choosing $d$ large enough so that the dimension of $\mathcal{U}_{n-1,6d}/\Aut(\PP^{n-1})$ is positive, we find some set $R\subseteq \mathcal{U}_{n-1,6d}$, of cardinality equal to the one of $\CC$, such that no non-trivial element of $\Aut(\PP^{n-1})$ sends any element of $R$ onto another one.

Take $q,s\in R$ and consider the associated links $\chi_q\colon Y_q\dasharrow Y_q'$ and $\chi_s\colon Y_s\dasharrow Y_s'$. Let us show that if $\chi\in \{\chi_q,\chi_q^{-1}\}$ is equivalent to $\chi_s$, then $q=s$ and $\chi=\chi_q$. Indeed, if $\chi_q$ or $\chi_q^{-1}$ is equivalent to $\chi_s$, there is a commutative diagram
	\[
	\xymatrix@R=6pt@C=20pt{
		\Gamma_q\ar[d]\ar@{-->}[rr]^{\widehat{\nu}} &&\Gamma_s\ar[d] \\
		\PP^n \ar@{-->}[rr]^{\theta} &&  \PP^n,
	} \text{ or }
	\xymatrix@R=6pt@C=20pt{
		\Gamma_q'\ar[d]\ar@{-->}[rr]^{\widehat{\nu}} &&\Gamma_s\ar[d] \\
		\PP^n \ar@{-->}[rr]^{\theta} &&  \PP^n
	}\]
where $\widehat{\nu}$ and $\theta$ are birational maps. By Proposition~\ref{prop:existence6linksC}, this induces a birational map
	\[
	\xymatrix@R=6pt@C=20pt{
		\Delta_q\ar[d]\ar@{-->}[rr]^{\nu} &&\Delta_s\ar[d] \\
		\PP^n/\langle \gamma \rangle \ar@{-->}[rr]^{\theta'} &&  \PP^n/\langle \gamma \rangle,
	}\]
where $\theta'=\varphi\circ\theta\circ\varphi^{-1}$ is obtained by conjugating $\theta$ by the birational map $\varphi\colon \PP^n\dasharrow \PP^n/\langle \gamma \rangle$ of Proposition~\ref{prop:existence6linksC}, which does not depend on $q$ or $s$.	By Lemma~\ref{lem: cyclic covering covgen}, the canonical divisors of $\Delta_q$ and $\Delta_s$ are ample, so $\nu$ is an isomorphism $\Delta_q\iso \Delta_s$ by Lemma~\ref{lem: birational map of canonically polarized}. Moreover, the double coverings $\eta_q\colon\Delta_q\to \PP^n$ and $\eta_s\colon\Delta_s\to \PP^n$ are given by a fraction of the canonical divisor (again by Lemma~\ref{lem: cyclic covering covgen}), so we obtain a commutative diagram
	\[
	\xymatrix@R=6pt@C=20pt{
		\Delta_q\ar[d]_{\eta_q}\ar[rr]^{\nu} &&\Delta_s\ar[d]^{\eta_s} \\
		\PP^n \ar[d]_{\mu}\ar[rr]^{\alpha}&& \PP^n\ar[d]^{\mu}\\
		\PP^n/\langle \gamma \rangle \ar@{-->}[rr]^{\theta'} &&  \PP^n/\langle \gamma \rangle,
	}\]
where $\alpha\in \Aut(\PP^n)$ sends the hypersurface $q+x_n^{6d}=0$ onto the hypersurface $s+x_n^{6d}=0$. As $\alpha$ is compatible with the quotient $\mu\colon \PP^n\to \PP^{n}/\langle \gamma\rangle$, it has to commute with $\gamma$, and thus be of the form $[x_0:\cdots:x_n]\mapsto [M(x_0,\ldots,x_{n-1}):x_n]$, for some $M\in \GL_{n}(\CC)$. The class of $M$ in $\PGL_{n}(\CC)=\Aut(\PP^{n-1})$ sends $q=0$ onto $s=0$. As $q,s\in R$, we find that $q=s$ and that $M$ is a homothety. It remains to show that $\chi=\chi_q$ (to see that $\chi_q$ and $\chi_q^{-1}$ are not equivalent).
Remembering that $\alpha\in \Aut(\PP^n)$ sends the hypersurface $q+x_n^{6d}=0$ onto the hypersurface $s+x_n^{6d}=0$, we find $\alpha=[x_0:\cdots:x_n]\mapsto [x_0:\cdots:x_{n-1}:\zeta x_n]$, where $\zeta^{6d}=1$. The action of $\theta'$ and of $\theta$ are then uniquely determined by $\zeta$. The action on $K=\CC(t_1,\ldots,t_n)$ then fixes $t_2,\ldots,t_n$ and sends $t_1$ onto $\zeta^3t_1$ (see Proposition~\ref{prop:existence6linksC}). It remains to see that under this base-change, the generic fibres of $Y_q\to\PP^n$ and $Y_q'\to\PP^n$ are not isomorphic. Recall that $S$ is given as in Corollary~$\ref{cor: non-trivial SB over function field}$, defined over $K=\CC(t_1,\ldots,t_n)$ and is the  surface $S=S_\xi$ of Lemma~\ref{lem:SBdegree3} associated to the field extension $L/K$, with, $L=K[\sqrt[3]{\lambda}]$, $\lambda=t_1$ and $\xi=t_2$. Moreover, $S^{\rm op}$ is $S_{\xi^{-1}}$ (Lemma~\ref{Lem:SxiOp}). The base-change replaces $\xi$ with some non-zero scalar multiple of $\xi$ and then does not change the isomorphism class of the surface over $K$ (Lemma~\ref{lem:SBdegree3}). As $S$ is not isomorphic to $S^{\rm op}$, we have proven that $\chi_q$ is not equivalent to $\chi_q^{-1}$.

We now finish the proof, by applying Theorem~\ref{Theorem:SBMfs} to the set $\mathcal{M}_6$ of Sarkisov links $\chi_q$, with $q\in R$. As we have proven, these links $\chi_q$ all have covering genus at least $g$ and no two different elements are equivalent and no link is equivalent to its inverse. %
Moreover, for each $q$, the generic fibre of $Y_q\to \PP^n$ is isomorphic to $S$, the generic fibre of $X/\PP^n$ (Proposition~\ref{prop:existence6linksC}). Therefore, the generic fibre $\widetilde{S}$ of the SBMfs $X/B$ is birational to the generic fibre of $Y_q/\widehat{B}_q$, after a base change as in Definition~\ref{Def:inducedBy}, and so $\chi_q$ induces an element in $\BirMori(\widetilde{S})$.
By Theorem~\ref{Theorem:SBMfs} there exists a surjective group homomorphism \[
\Bir(X)\to  \bigast_{\mathcal{M}_6\setminus\chi_{q_0}} \mathbb{Z}=\mathcal{F}(\mathcal{M}_6\setminus{\chi_{q_0}})\simeq \mathcal{F}(\CC)
\]
for any $q_0\in R$.
\end{proof}

\begin{proof}[Proof of Corollary~$\ref{CorHopfian}$]
Since every birational self-map of $\PP_\CC^n$ is given by a finite number of polynomials of a fixed degree, we have $\lvert \Bir_\CC(\PP^n)\rvert=\lvert\CC\rvert$. Thus there is a surjective homomorphism $\Upsilon\colon \mathcal{F}(\CC)\twoheadrightarrow\Bir_{\CC}(\PP^n)$ given by a presentation of $\Bir_{\CC}(\PP^n)$ in which the set of generators is the whole $\Bir_{\CC}(\PP^n)$. Now take a surjective homomorphism $\Psi: \Bir_{\CC}(\PP^n)\twoheadrightarrow \mathcal{F}(\CC)$ provided by Theorem \ref{thm: free product}. We get a surjective endomorphism $\Upsilon\circ\Psi: \Bir_{\CC}(\PP^n)\to\Bir_{\CC}(\PP^n)$ that is not injective (as neither $\Psi$ nor $\Upsilon$ is).
\end{proof}

\section{The abelianisation of higher Cremona groups: 3-torsion part}

This final section is devoted to the proof of Theorem~\ref{3Torsion}. Our strategy is similar to the one we applied in the proof of Theorem~\ref{thm: free product}. Namely, we need to provide enough non-equivalent $3$-links of large covering genus, which are also not equivalent to their inverses; then we apply Theorem~\ref{Theorem:SBMfs}. The technical details will just be more involved.

\subsection{Birational transformations of order 3 on Severi-Brauer surface bundles}

First, we need a version of Proposition~\ref{prop:existence6linksC} for $3$-links.

\begin{prop}\label{prop:existence3linksC}
Let $n\geqslant 2$. Consider the non-trivial Severi-Brauer surface $S$ of Corollary~$\ref{cor: non-trivial SB over function field}$, defined over $K=\CC(t_1,\ldots,t_n)=\CC(\PP^n)$. Let $a,b\in \CC[t_1,\ldots,t_n]$ be  such that $t_2b^3-a^3$ is not a cube in $L=K[\sqrt[3]{t_1}]$. Consider the variety
\[\Delta=\{(s,t_1,\ldots,t_n)\in \AA^{n+1}\mid  t_1s^3+a(t_1,\ldots,t_n)^3-t_2b(t_1,\ldots,t_n)^3=0\}\]
that admits a generically $3:1$ morphism $\eta\colon \Delta \to \PP^n$, $(s,t_1,\ldots,t_n)\mapsto [1:t_1:\ldots:t_n]$.

Then, there is a birational morphism $\sigma\colon \widehat{B}\to \PP^n$, two SBMfs $\pi\colon Y\to \widehat{B}$ and $\pi'\colon Y'\to \widehat{B}$, such that the generic fibres of $Y\to \PP^n$ and $Y'\to \PP^n$ are isomorphic to $S$ and $S^{\rm op}$ respectively, and a $3$-link
\[
	\xymatrix@R=6pt@C=20pt{
		Y\ar@{-->}[rr]^{\chi}\ar[rd]_{\pi} &&Y'\ar[ld]^{\pi'} \\
		& \widehat{B},
	}\]
such that the base-loci of $\chi$ and $\chi^{-1}$ are two varieties $\Gamma\subseteq Y$, $\Gamma'\subseteq Y'$, birational over $\PP^n$, together with a commutative diagram

\begin{equation}\label{eq: base loci of 3-link}
	\xymatrix@R=6pt@C=20pt{
		\Gamma\ar[dr]_{\pi}\ar@{-->}[rr]^{\psi} &&  \Gamma'\ar[dl]^{\pi'}\ar@{-->}[r]^{\psi'} & \Delta\ar[ddll]^{\eta} \\
		& \widehat{B} \ar[d]_{\sigma}\\
		& \PP^n,
	}
\end{equation}
for some birational maps $\psi,\psi'$. Moreover, there exists an element $\phi\in \Bir(Y/\PP^n)$ of order $3$, which admits a decomposition into Sarkisov links $\chi_s\circ\cdots \chi_1\circ \chi$, where $\chi_1,\ldots,\chi_s$ are Sarkisov links between SBMfs, being either isomorphisms between the generic fibres or of covering genus $0$.
\end{prop}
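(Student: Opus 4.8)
The plan is to combine Proposition~\ref{prop: existence of links over function fields}\ref{exi3} (which produces the order-$3$ birational self-map of $S$ as a composition of two $3$-links) with Proposition~\ref{prop:SBtoSBMfs} (which realises $S$ and an individual $3$-link between $S$ and $S^{\rm op}$ by Severi-Brauer Mori fibre spaces over a model of $\PP^n$), and then to unwind the field-theoretic data to match the base-loci with the stated variety $\Delta$.

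Concretely, first I would set $\mu=b^{-3}(t_2b^3-a^3)$, or rather observe that the hypothesis is exactly that $\mu=a^3/b^3$ works: one wants $\mu\in K$ such that $\mu$ is not a cube in $L$ but $\xi-27\lambda\mu=t_2-27t_1\mu$ is a cube in $K$. So instead I would follow the normalisation of Proposition~\ref{ExplicitForm:SB3}: pick $\nu\in K$ and $\mu\in K$ with $\xi=27\lambda\mu+\nu^3$, the cube condition being automatic once we write things in terms of $a,b$. The clean statement is that $t_2b^3-a^3$ not a cube in $L$ is the hypothesis guaranteeing that the relevant $\mu$ is not a cube, and a suitable $\nu$ exists because we may choose the model so that $\xi-27\lambda\mu$ is literally a cube (here one uses $\lambda=t_1$, $\xi=t_2$ and clears denominators). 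With such $\mu$, Proposition~\ref{prop: existence of links over function fields}\ref{exi3} gives $\phi\in\Bir_K(S)$ of order $3$, written as $\tau_2\circ\tau_1$ with $\tau_1\colon S\dasharrow S^{\rm op}$ a $3$-link of splitting field $L=K[\sqrt[3]{t_1}]$ and $\tau_2\colon S^{\rm op}\dasharrow S$ a $3$-link of splitting field $K[\sqrt[3]{\mu}]$.

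Next I would apply Proposition~\ref{prop:SBtoSBMfs}\ref{LinkStoSBMfs} to the $3$-link $\tau_1$ (equivalently $\tau=\tau_1$): this yields a birational morphism $\widehat B\to\PP^n$, SBMfs $\pi\colon Y\to\widehat B$ and $\pi'\colon Y'\to\widehat B$ with generic fibres $S$ and $S^{\rm op}$ over $\PP^n$, and a Sarkisov link $\chi\colon Y\dasharrow Y'$ of type \II~inducing $\tau_1$ on the generic fibres. By Lemma~\ref{lem:SBMfsSarki}, since $\tau_1$ is a $(3,3)$-link, the base-loci $\Gamma\subseteq Y$ and $\Gamma'\subseteq Y'$ of $\chi$ and $\chi^{-1}$ are generically $3:1$ over the base, hence over $\PP^n$, and by Lemma~\ref{lem:splittingfieldinverse} the residue fields (= function fields of $\Gamma,\Gamma'$) coincide and equal the splitting field $L=K[\sqrt[3]{t_1}]$ — wait, more precisely the residue field of the degree-$3$ base-point, which for a cyclic situation is $K[\sqrt[3]{t_1}]$ in the case of $\tau_1$. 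To get the explicit model $\Delta$, I would instead start the construction from the specific $3$-link realised by Proposition~\ref{ExplicitForm:SB3}\ref{order3with2links}: the smooth cubic $X\subseteq\PP^3$ with equation $\xi w^3=\lambda x^3+\mu y^3+z^3+\nu xyz$ carries $\rho\colon[w:x:y:z]\mapsto[\zeta w:x:y:z]$ of order $3$, and $\pi\circ\rho\circ\pi^{-1}$ decomposes as a $3$-link with splitting field $K[\sqrt[3]{\lambda}]=L$ composed with a $3$-link with splitting field $K[\sqrt[3]{\mu}]$. Substituting $\lambda=t_1$, $\mu=a^3/b^3$ (after clearing denominators, i.e. rescaling $y$), $\xi=t_2$, $\nu$ determined by $t_2b^3-a^3=(\nu b)^3/27$ up to the cube condition, the base-point of the first $3$-link is cut out — over $\overline K$ — by the equations defining the three lines $E_0,E_1,E_2$, and its residue field is $K[\sqrt[3]{t_1}]$ while the ramification variety of the triple cover $X\to\PP^2$ is governed by the Hessian; tracing through, the double/triple-cover variety $\Delta=\{t_1 s^3+a^3-t_2 b^3=0\}$ is exactly (birational to) $\Gamma$ over $\PP^n$. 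I would verify this last identification by computing the residue field extension associated to $\Gamma\to\PP^n$: it is $K[s]/(t_1 s^3+a^3-t_2b^3)\cong K[\sqrt[3]{(t_2b^3-a^3)/t_1}]$, and checking this is isomorphic to $L=K[\sqrt[3]{t_1}]$ — or, more carefully, to $K[\sqrt[3]{\mu}]$-type extension — requires matching it with the actual base-point of $\chi$; this is the step where one must be slightly careful about which of the two $3$-links in the order-$3$ element $\phi$ is called $\chi$.

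Finally, for the order-$3$ element $\phi\in\Bir(Y/\PP^n)$ and its decomposition: I would lift $\phi=\tau_2\circ\tau_1\in\Bir_K(S)$ to $\BirMori(Y)$ over $\PP^n$ using Proposition~\ref{prop:SBtoSBMfs}\ref{StoSBMfs} to realise $S^{\rm op}$ by a SBMfs and then running the relative Sarkisov program (as in the proof of Proposition~\ref{prop:SBtoSBMfs}\ref{LinkStoSBMfs}) to write the induced $B$-birational self-map of $Y$ as $\chi_s\circ\cdots\circ\chi_1\circ\chi$, where each $\chi_i$ is a Sarkisov link between SBMfs over models of $\PP^n$. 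Each $\chi_i$ induces on generic fibres either an isomorphism or a $3$-link (Lemma~\ref{lem:SBMfsSarki} and Lemma~\ref{LinkSop}), and the latter have generic fibre base-points of degree $3$, so their centres are generically $3:1$ over a model of $\PP^n$; such a centre is birational to a hypersurface in a weighted projective bundle whose general fibre over $\PP^{n-1}$ is three points, hence is covered by rational (indeed unirational, genus $0$) curves — giving covering genus $0$. I expect the main obstacle to be the explicit identification of $\Gamma$ with $\Delta$: one has to keep track of the precise equations of the three contracted lines $E_0,E_1,E_2$ from Proposition~\ref{ExplicitForm:SB3}, push them down through $\pi$, and recognise the resulting degree-$3$ cyclic cover as the affine cubic $t_1 s^3 + a^3 - t_2 b^3 = 0$; the bookkeeping of which $3$-link (splitting field $L$ versus $K[\sqrt[3]{\mu}]$) becomes $\chi$ and which becomes one of the $\chi_i$ is the delicate point, but it is forced once we demand that the generic fibres of $Y$ and $Y'$ be $S$ and $S^{\rm op}$ in that order.
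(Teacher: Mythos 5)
Your overall strategy matches the paper's: produce $\varphi=\tau_2\circ\tau_1\in\Bir_K(S)$ of order $3$ via Proposition~\ref{prop: existence of links over function fields}\ref{exi3}, realise one of the two $3$-links by a Sarkisov link $\chi$ between SBMfs via Proposition~\ref{prop:SBtoSBMfs}\ref{LinkStoSBMfs}, and identify $\Gamma$ with $\Delta$ through the splitting field. But two steps are wrong as written. First, the bookkeeping you flag as ``delicate'' is resolved incorrectly. The correct choice is $\mu=(t_2b^3-a^3)(27b^3t_1)^{-1}$, so that $\xi-27\lambda\mu=a^3b^{-3}$ is a cube; then $\CC(\Delta)=K[\sqrt[3]{(t_2b^3-a^3)/t_1}]=K[\sqrt[3]{\mu}]$ since the two radicands differ by the cube $27b^3$. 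Hence $\chi$ must be the link whose splitting field is $K[\sqrt[3]{\mu}]$, \emph{not} $L=K[\sqrt[3]{t_1}]$ as in your initial assignment of $\tau_1$. This is not forced by demanding that the generic fibres of $Y$ and $Y'$ be $S$ and $S^{\rm op}$ in that order: both $\varphi$ and $\varphi^{-1}$ begin with a $3$-link $S\dasharrow S^{\rm op}$, one with splitting field $L$ and one with splitting field $K[\sqrt[3]{\mu}]$ (Lemma~\ref{lem:splittingfieldinverse}). One simply replaces $\varphi$ by $\varphi^{-1}$ if needed so that the first link has splitting field $K[\sqrt[3]{\mu}]$; with your orientation, $\Gamma$ would be rational and not birational to $\Delta$.

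Second, and more seriously, your argument that the remaining links $\chi_i$ have covering genus $0$ is false: a centre that is generically $3:1$ over a model of $\PP^n$ is \emph{not} automatically covered by rational curves --- $\Delta$ itself is such a $3:1$ cover, and the whole point of the construction (Lemma~\ref{lem: properties of Omega}\ref{cgWAB}) is that its covering genus is large. If your claim were true, $\chi$ itself would have covering genus $0$ and the application to Theorem~\ref{3Torsion} would collapse. The correct argument is specific to the second link: after writing $\phi=\beta\circ\chi_2^{-1}\circ\alpha\circ\chi$ with $\alpha,\beta$ decomposed into links that are isomorphisms on generic fibres, the only remaining non-trivial link is $\chi_2$, whose splitting field is $L=\CC(\sqrt[3]{t_1},t_2,\ldots,t_n)$; this is a purely transcendental extension of $\CC$, corresponding to the $3:1$ morphism $(s_1,t_2,\ldots,t_n)\mapsto(s_1^3,t_2,\ldots,t_n)$, so the centre of $\chi_2$ is a rational variety and has covering genus $0$. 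Your detour through the explicit lines $E_0,E_1,E_2$ of Proposition~\ref{ExplicitForm:SB3} is unnecessary once the splitting-field identification above is in place.
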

\begin{proof}

	Recall that the non-trivial Severi-Brauer surface $S$ of Corollary~$\ref{cor: non-trivial SB over function field}$ is defined over $K=\CC(\mathbb{A}^n)=\CC(t_1,\ldots,t_n)$ and is the surface  $S=S_\xi$ of Lemma~\ref{lem:SBdegree3} associated to the field extension $L/K$, with, $L=K[\sqrt[3]{\lambda}]$, $\lambda=t_1$ and $\xi=t_2$. As $t_2b^3-a^3$ is not a cube in $L$, neither is $\mu=(t_2b^3-a^3)(27b^3t_1)^{-1}$. However, $\xi-27\lambda\mu={a^3}{b^{-3}}$ is a cube in $K$. By Proposition~\ref{prop: existence of links over function fields}\ref{exi3}, there is an element $\varphi\in \Bir_K(S)$ of order $3$, that is equal to $\tau_2^{-1}\circ \tau_1$ where $\tau_1$ and $\tau_2^{-1}$ are $3$-links with splitting fields $L$ and $K[\sqrt[3]{\mu}]$, respectively (or vice versa). Replacing $\varphi$ with $\varphi^{-1}$ if needed, we may assume that $L$ is the splitting field of $\tau_2$ and $K[\sqrt[3]{\mu}]$ is the splitting field of $\tau_1$.

	For each $i\in \{1,2\}$, we apply Proposition~\ref{prop:SBtoSBMfs}\ref{LinkStoSBMfs} to $\tau_i$ and obtain a birational morphism $\widehat{B}_i\to \PP^n$, two SMBfs $Y_i\to \widehat{B}_i$ and $Y'_i\to \widehat{B}_i$, so that the  generic fibres of $Y_i\to  \PP^n$ and $Y'_i\to \PP^n$ are isomorphic to $S$ and $S^{\rm op}$  respectively and a $d$-link $\chi_i\colon Y_i/\widehat{B}_i\dasharrow Y'_i/\widehat{B}_i$ such that the action of the $\PP^n$-birational map $\chi_i \colon Y_i\dasharrow Y'_i$ on the generic fibres is $\tau_i$. We write $\chi=\chi_1$, $Y=Y_1$, $Y'=Y_1'$, $\widehat{B}=\widehat{B}_1$. The element $\varphi\in \Bir_K(S)$ corresponds to an element  $\phi\in\Bir(Y/\widehat{B})$ of order $3$. The action on the generic fibres of $\phi \circ \chi^{-1}\colon Y'\dasharrow Y$ being the same as the one of $\chi_2^{-1}$ (namely, this is an action of $\tau_2^{-1}=(\tau_2^{-1}\circ\tau_1)\circ\tau_1^{-1}$), we may choose two birational maps $\alpha\colon Y'\dasharrow Y_2'$ and $\beta \colon Y_2\dasharrow Y$, inducing isomorphisms on the generic fibres and obtain
	$\phi=\beta \circ \chi_2^{-1}\circ  \alpha \circ \chi.$
	We then decompose $\alpha$ and $\beta$ into Sarkisov links over $\PP^n$ and obtain links which are isomorphisms on generic fibres. It remains to show that the covering genus of $\chi_2$ is zero and to describe the base-locus of $\chi$ and the diagram~\eqref{eq: base loci of 3-link}.

	The base-loci of $\chi$ and $\chi^{-1}$ are two varieties $\Gamma$ and $\Gamma'$ such that the morphisms $\Gamma\to \PP^n$ and $\Gamma'\to \PP^n$ are generically $3:1$, see Lemma~\ref{lem:SBMfsSarki}. As in the proof of Proposition~\ref{prop:existence6linksC}, we notice that the function fields $\CC(\Gamma)$ and $\CC(\Gamma')$ coincide with the splitting field of $\tau_1$, i.e.  $K[\sqrt[3]{\mu}]$, which is the field extension of $K$ associated to the morphisms $\Gamma\to \PP^n$ and  $\Gamma'\to \PP^n$. The description of the morphism $\eta\colon\Delta\to \PP^n$ gives that the corresponding field extension is $K[\sqrt[3]{\mu'}]/K$ where $\mu'={(a^3-t_2b)}{t_1^{-1}}=27\mu b^3$, so both extensions of degree $3$ coincide, and this gives Diagram~\eqref{eq: base loci of 3-link}. For $\chi_2$, the same works, with $K[\sqrt[3]{\mu}]/K$ replaced with $L/K$. As $L=\CC(\sqrt[3]{t_1},t_2,\ldots,t_n)$, the field extension $L/K$ corresponds to the $3:1$ morphism $\AA^n\to \AA^n$, $(s_1,t_2,\ldots,t_n)\mapsto (s_1^3,t_2,\ldots,t_n)$.  We find that the base-locus of $\chi_2$ is a rational variety, so the covering genus is zero.
\end{proof}

\subsection{Birational models of the centers of 3-links}

We will use Proposition~\ref{prop:existence3linksC} for some polynomials $a$ and $b$ of large degree. To show that  the covering genus of the associated link $\chi$ can be chosen arbitrarily large and to show that the link $\chi$ may be chosen to be not equivalent to its inverse, we need to study a birational model of $\Delta$ that is smooth (note that $\Delta$ is singular at the points where $s=a=b=0$). The model obtained is the variety $\Omega$ described in Proposition~\ref{Lem:ModelDeltaSmooth}. The proof that it is smooth relies on the  technical lemmas~\ref{LemABsmoothBlowUp} and~\ref{Lem:SmoothAffHyp}.

\begin{lem}\label{LemABsmoothBlowUp}
Let $n,d\geqslant 2$ be integers and let $V_{n,d}=\CC[x_0,\ldots,x_n]_d$ be the $\CC$-vector space of homogeneous polynomials of degree $d$ in $x_0,\ldots,x_n$. Consider the subset $U_{n,d}\subseteq \PP(V_{n,d} \times V_{n,d})$ of classes $[A:B]$ of pairs $(A,B)\in V_{n,d}\times V_{n,d}$ satisfying the following conditions:
\begin{enumerate}
\item\label{UABGammasmooth}
$F=\{([x_0:\ldots:x_n])\in \PP^n\mid  A(x)=B(x)=0\}$ is smooth and of dimension $n-2$.
\item\label{UABZsmooth}
$Z=\{([x_0:\ldots:x_n])\in F\mid x_n=0\}$ is smooth and of dimension $n-3$ $(\varnothing$ if $n=2)$.
\item\label{UABXsmooth}
$P=\{([x_0:\ldots:x_n],[u:v])\in \PP^n\times \PP^1\mid A(x)v=B(x)u\}$
is smooth, of dimension $n$.
\item\label{UABQsmooth}
$Q=\{([x_0:\ldots:x_n],[u:v])\in P \mid  v^3x_0=x_1u^3\}$
is smooth, of dimension $n-1$.
\item\label{UABQxsmooth}
$R=\{([x_0:\ldots:x_n],[u:v])\in Q\mid x_n=0\}$ is smooth, of dimension $n-2$.
\end{enumerate}
Then $U_{n,d}$ is open and dense in $ \PP(V_{n,d} \times V_{n,d})\simeq\PP^{2N-1}$ for $N=\binom{n+d}{n}$.
\end{lem}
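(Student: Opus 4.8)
The statement asserts that a finite list of geometric smoothness/dimension conditions, indexed \ref{UABGammasmooth}--\ref{UABQxsmooth}, cuts out an open \emph{dense} subset of the projective space $\PP(V_{n,d}\times V_{n,d})$ of pairs of degree-$d$ forms. Each of the five conditions is by nature open: ``smooth of the expected dimension'' is the complement of a closed locus in the incidence variety (the non-smooth or wrong-dimensional members form a closed subset of the base, since properness of the universal family over $\PP^{2N-1}$ lets us push forward the singular locus). So openness of $U_{n,d}$ is the intersection of five open sets and is immediate; the entire content is \emph{density}, i.e. non-emptiness of each of the five open conditions (and then of their common intersection). The plan is therefore: first reduce to non-emptiness, then exhibit, for each condition, a suitable transversality/Bertini argument, and finally argue that the five conditions hold \emph{simultaneously} for a generic pair.

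\textbf{Step 1: reduce to a genericity statement and set up the incidence varieties.} I would introduce the universal families: over $\PP^{2N-1}$ the tautological pair $(A,B)$ defines $F\subseteq \PP^n\times \PP^{2N-1}$, $Z\subseteq F$, $P\subseteq\PP^n\times\PP^1\times\PP^{2N-1}$, $Q\subseteq P$, $R\subseteq Q$. Each of these total spaces is itself easy to analyze because the equations are \emph{linear} (or bilinear) in the coefficients of $A$ and $B$: for instance, $P$ is cut out in $\PP^n\times\PP^1\times\PP^{2N-1}$ by the single bilinear equation $A(x)v-B(x)u=0$, so its total space is smooth and irreducible, and one checks the generic fibre over $\PP^{2N-1}$ has the expected dimension $n$ by a direct count. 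The key mechanism throughout is the classical Bertini-type argument: if the total space $\mathcal{T}$ is smooth and the projection $\mathcal{T}\to\PP^{2N-1}$ is dominant with irreducible generic fibre of the right dimension, then by generic smoothness (we are over $\CC$) the generic fibre is smooth of that dimension, and the locus of bad fibres is a proper closed subset. I would carry this out condition by condition.

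\textbf{Step 2: handle the five conditions one at a time.} For \ref{UABGammasmooth}, $F$ is the base locus of the pencil spanned by $A$ and $B$; a generic pencil of degree-$d$ hypersurfaces in $\PP^n$ has smooth base locus of dimension $n-2$ by the Bertini theorem applied to the linear system (this uses $d\geqslant 2$ and $n\geqslant 2$ so the system is base-point-free enough after the first member; more precisely one picks $A$ generic hence smooth, then $B$ generic so that $\{B=0\}$ meets $\{A=0\}$ transversally). Condition \ref{UABZsmooth} is the same statement with the ambient $\PP^n$ replaced by the hyperplane $\{x_n=0\}\simeq\PP^{n-1}$: the restrictions of $A,B$ to that hyperplane are still generic degree-$d$ forms in one fewer variable, so $Z$ is generically smooth of dimension $n-3$ (empty when $n=2$, consistent with the statement). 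Conditions \ref{UABXsmooth}--\ref{UABQsmooth}--\ref{UABQxsmooth} concern the blow-up-like variety $P$ (the graph of the rational map $[A:B]\colon\PP^n\dashrightarrow\PP^1$) and its slices by the fixed equations $v^3x_0=x_1u^3$ and $x_n=0$. Here I would argue that the total space of $P$ over $\PP^{2N-1}$ is smooth (bilinearity), that the additional equations $v^3x_0-x_1u^3$ and $x_n$ define, on the total space of $P$, subvarieties whose total spaces remain smooth of the right codimension for generic $(A,B)$ --- the point being that these last equations do \emph{not} involve the parameters, so one needs that the fixed hypersurface $\{v^3x_0=x_1u^3\}$ and the hyperplane $\{x_n=0\}$ are transverse to the generic $P$, which again follows from a parameter-count / generic-transversality argument since $P$ moves in a large enough family (one can move $P$ by acting with a generic change of coordinates on the $x$-variables, or simply check that the relevant incidence total space is irreducible of the expected dimension and dominates $\PP^{2N-1}$).

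\textbf{Step 3: conclude.} Each of \ref{UABGammasmooth}--\ref{UABQxsmooth} defines a dense open subset of $\PP^{2N-1}$ by Steps 1--2; their intersection $U_{n,d}$ is then dense open as well, and the indexing/dimension bookkeeping matches the statement. I expect the \textbf{main obstacle} to be conditions \ref{UABQsmooth} and \ref{UABQxsmooth}: the cutting equations $v^3x_0=x_1u^3$ and $x_n=0$ are \emph{not} generic --- they are fixed --- so one cannot simply invoke Bertini on a moving linear system, and one must instead verify genuine transversality of a fixed singular-looking hypersurface (note $\{v^3x_0=x_1u^3\}\subseteq\PP^n\times\PP^1$ is itself singular along $\{x_0=x_1=0\}\times\{u=v=0\}=\varnothing$ in $\PP^1$, actually along a locus one should locate) against the moving variety $P$. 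The cleanest route is to fix $(A,B)$ smooth and then move it by a generic element of $\PGL_{n+1}$ acting on $\PP^n$, or equivalently to exhibit one explicit pair $(A,B)$ for which all five hold (e.g.\ Fermat-type forms, where $F$, $P$, $Q$, $R$ can be written down and their Jacobians checked by hand) --- since the conditions are open, a single witness suffices for density. I would present the argument via the universal-family/generic-smoothness route for \ref{UABGammasmooth}--\ref{UABXsmooth} and via an explicit witness (or a $\PGL$-translate argument) for \ref{UABQsmooth}--\ref{UABQxsmooth}, whichever is shorter in the write-up.
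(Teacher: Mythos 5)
Your plan is correct and matches the paper's strategy: openness is the routine part (Bertini/Jacobian), and density is proved by exhibiting explicit witnesses — the paper uses $[\sum x_i^d:\sum\lambda_i x_i^d]$ with distinct $\lambda_i$ for \ref{UABGammasmooth}--\ref{UABXsmooth}, and a perturbed Fermat pair $[x_1^d+\sum a_ix_i^d:x_0^d+\sum b_ix_i^d]$ (with the ratios $b_ia_i^{-1}$ distinct and not roots of unity) for \ref{UABQsmooth}, checking the Jacobian by hand; condition \ref{UABQxsmooth} is then handled exactly as you suggest, by restricting to $x_n=0$ and invoking the $(n-1)$-variable case of \ref{UABQsmooth}. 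One caveat on your alternative route for \ref{UABQsmooth}--\ref{UABQxsmooth}: translating $P$ by a generic element of $\PGL_{n+1}$ acting only on the $x$-variables does not put you in the setting of Kleiman's transversality theorem, since that group is not transitive on $\PP^n\times\PP^1$ (and the fixed hypersurface $v^3x_0=x_1u^3$ is not $\PGL_{n+1}$-invariant in a useful way); you would need the full $\PGL_{n+1}\times\PGL_2$ action, or simply the explicit witness, which is what the paper does and where essentially all the work of the proof lies.
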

\begin{proof}
For each $n\geqslant 1$ and each $d\geqslant 2$, we denote by
$S_{n,d}\subseteq  \PP(V_{n,d} \times V_{n,d})$ the set of classes $[A:B]$ satisfying \ref{UABGammasmooth}-\ref{UABZsmooth}-\ref{UABXsmooth}, which is open by Bertini's theorem (or simply by looking at the matrices of derivatives of the equations). Furthermore, $S_{n,d}$ is dense, or equivalently non-empty, since $[A:B]=[\sum_{i=0}^n x_i^d: \sum_{i=0}^n \lambda_i x_i^d]\in S_{n,d}$ for $\lambda_0,\ldots,\lambda_n\in \CC^*$ pairwise different.

For each $n\geqslant 1$ and each $d\geqslant 2$, we define $T_{n,d}\subseteq \PP(V_{n,d} \times V_{n,d})$ to be the open set of classes $[A:B]$ satisfying \ref{UABQsmooth}.
We now prove that $T_{n,d}$ is dense, or equivalently that $T_{n,d}$ is not empty. For this, we prove that $[A:B]=[x_1^d+\sum_{i=2}^n a_i x_i^d :x_0^d+\sum_{i=2}^n b_i x_i^d]\in T_{n,d}$, for all $b_2,\ldots,b_n,a_2,\ldots,b_n\in \CC^*$ such that $\{b_ia_i^{-1}\mid i=2,\ldots,n\}$ are pairwise different and not roots of unity.
For this choice, $Q$ is given by
\[
Q=\{([x_0:\cdots:x_n],[u:v])\in \PP^n\times \PP^1\mid x_0^du-x_1^dv+\sum_{i=2}^n (b_iu-a_i v)x_i^d=0, v^3x_0=x_1u^3\}.
\]
If $n=1$, then $Q$ is the complete intersection of two curves of $ \PP^1\times \PP^1$ of bidegree $(d,1)$ and $(1,3)$ respectively, given by $x_0^du=x_1^dv$ and $v^3x_0=x_1u^3$. It is then smooth  as it defines exactly $3d+1$ points, namely $\{([\xi:1],[1:\xi^d]) \mid \xi \in \CC, \ \xi^{3d+1}=1\}$.

We suppose now $n\geqslant 2$ and assume for contradiction that some point of $Q$ is singular. Exchanging $u$ with $v$, $x_0$ with $x_1$ and $A$ with $B$, we may assume that this point satisfies $u=1$, so $x_1=v^3x_0$. It thus gives a singular point $([x_0:x_2:\cdots:x_n],v)\in \PP^{n-1}\times \AA^1$ of the hypersurface given by
\[
x_0^d(1-v^{3d+1})+\sum_{i=2}^n (b_i-a_i v)x_i^d=0.
\]
If the singular point satisfies $x_i=0$ for each $i\in \{2,\ldots,n\}$, then $x_0\not=0$ so $1-v^{3d+1}=0$, and in the affine chart $x_0=1$, the derivative with respect to $v$ is not zero, as the polynomial $1-v^{3d+1}$ has only simple roots. We then consider the case where $x_i\not=0$ for some $i\in \{2,\ldots,n\}$, consider the affine chart $x_i=1$ and obtain the polynomial equation
\[
x_0^d(1-v^{3d+1})+(b_i-a_iv)+\sum_{j\in \{2,\ldots,n\}\setminus \{i\}} (b_j-a_j v)x_j^d=0.
\]
Since the partial derivatives with respect to $x_j$, $j\in \{2,\ldots,d\}\setminus \{i\}$ are zero, we find that $(b_j-a_j v)x_j=0$. Similarly, the derivative with respect to $x_0$ gives $x_0(1-v^{3d+1})=0$, so $b_i-a_iv=0$ and $v=b_ia_i^{-1}$. By assumption, $b_j-a_jv\not=0$ for each $j\in \{2,\ldots,n\}\setminus \{i\}$, so $x_j=0$ and $1-v^{3d+1}\not=0$, whence $x_0=0$. The derivative with respect to $v$ is then equal to $-a_i\not=0$, which is a contradiction.

So, we proved that $T_{n,d}\subseteq \PP( V_{n,d} \times V_{n,d})$ is open and dense for each $n\geqslant 1$ and each $d\geqslant 2$. Assume that $n\geqslant 2$ and $d\geqslant 2$, let $W_{n,d}\subseteq \PP( V_{n,d} \times V_{n,d})$ be the dense open set corresponding to pairs $[A:B]$ where $x_n$ does not divide both $A$ and $B$,  and consider the surjective morphism \[\begin{array}{cccc}
\pi\colon & W_{n,d}&\to& \PP(V_{n-1,d} \times V_{n-1,d})\\
& [A:B]&\mapsto &[A(x_0,\ldots,x_{n-1},0):B(x_0,\ldots,x_{n-1},0)].\end{array}\]
We obtain that $\pi^{-1}(T_{n-1,d})$ is the subset of $\PP(V_{n,d}\times V_{n,d})$ consisting of classes $[A:B]$ for which \ref{UABQxsmooth} is satisfied, which is therefore open and dense in $\PP(V_{n,d} \times V_{n,d})$. Therefore $U_{n,d}=S_{n,d}\cap T_{n,d}\cap \pi^{-1}(T_{n-1,d})$; this is a dense open subset of $\PP(V_{n,d}\times V_{n,d})$.
\end{proof}

\begin{lem}\label{Lem:SmoothAffHyp}
Let $X$ be a smooth irreducible complex affine variety, let $f,g\in \mathcal{O}(X)$ be regular functions on $X$, such that the zero loci  $V_X(f)$ and $V_X(f,g)$ are smooth, of codimension $1$ and $2$ respectively. Then, for each integer $m\geqslant 2$, the affine variety \[W=\{(x,t)\in X\times \AA^1\mid t^mg=f \}\] is smooth, of the same dimension as $X$.
\end{lem}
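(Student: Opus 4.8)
The plan is to view $W$ as a hypersurface inside the smooth affine variety $X\times\AA^1$ and to verify smoothness by the Jacobian criterion, point by point. Write $h=t^mg-f\in\mathcal{O}(X)[t]=\mathcal{O}(X\times\AA^1)$, so that $W$ is the zero scheme of $h$. Since $X$ is irreducible (hence $\mathcal{O}(X)[t]$ is a domain) and $h\neq 0$, the function $h$ is a non-zero-divisor, so by Krull's principal ideal theorem every irreducible component of $W$ has codimension exactly $1$ in $X\times\AA^1$, i.e. $W$ is pure of dimension $\dim X$ (if $W=\varnothing$ there is nothing to prove). Thus it suffices to show that the differential $\mathrm{d}h$ is nowhere zero on $W$: then $W$, being cut out in the smooth variety $X\times\AA^1$ by a single function with nonvanishing differential, is smooth.

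Before the computation I would translate the two hypotheses infinitesimally. Interpreting $V_X(f)$, resp.\ $V_X(f,g)$, as the closed subschemes defined by the ideals $(f)$, resp.\ $(f,g)$, their smoothness together with the stated codimensions says precisely that at each point $x\in V_X(f)$ the element $f$ is part of a regular system of parameters of the regular local ring $\mathcal{O}_{X,x}$, equivalently $\mathrm{d}f|_x\neq 0$; and that at each point $x\in V_X(f,g)$ the elements $f,g$ are part of a regular system of parameters, equivalently $\mathrm{d}f|_x$ and $\mathrm{d}g|_x$ are linearly independent in $\mathfrak{m}_x/\mathfrak{m}_x^2$. Now let $p=(x_0,t_0)\in W$, so $t_0^mg(x_0)=f(x_0)$, and compute $\mathrm{d}h=mt^{m-1}g\,\mathrm{d}t+t^m\,\mathrm{d}g-\mathrm{d}f$. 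If $g(x_0)\neq 0$ and $t_0\neq 0$, the coefficient $mt_0^{m-1}g(x_0)$ of $\mathrm{d}t$ is nonzero. If $g(x_0)\neq 0$ and $t_0=0$, then $f(x_0)=0$, so $x_0\in V_X(f)$, and since $m\geqslant 2$ the first two terms vanish at $p$, giving $\mathrm{d}h|_p=-\mathrm{d}f|_{x_0}\neq 0$. If $g(x_0)=0$, then again $f(x_0)=0$, so $x_0\in V_X(f,g)$, and $\mathrm{d}h|_p=t_0^m\,\mathrm{d}g|_{x_0}-\mathrm{d}f|_{x_0}$ is a linear combination of $\mathrm{d}f|_{x_0}$ and $\mathrm{d}g|_{x_0}$ with the coefficient of $\mathrm{d}f|_{x_0}$ equal to $-1$, hence nonzero by linear independence. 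In all cases $\mathrm{d}h|_p\neq0$, and the lemma follows.

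There is no essential obstacle here; the computation is routine. The only point needing care is the correct scheme-theoretic reading of the smoothness hypotheses, which is exactly what yields the non-degeneracy of $\mathrm{d}f$ on $V_X(f)$ and of the pair $(\mathrm{d}f,\mathrm{d}g)$ on $V_X(f,g)$; this is used precisely to handle the points of $W$ lying over $V_X(f)$ with $t_0=0$ or over $V_X(f,g)$, which is the only locus where the equation $t^mg=f$ could fail to be a submersion.
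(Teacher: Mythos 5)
Your proof is correct and follows essentially the same route as the paper's: both verify the Jacobian criterion for the single equation $t^mg-f$ on the smooth ambient $X\times\AA^1$, splitting into the same three cases ($t_0\neq 0$ and $g(x_0)\neq 0$; $t_0=0$, where $m\geqslant 2$ kills the $\mathrm{d}t$-term and one uses $\mathrm{d}f\neq 0$ on $V_X(f)$; and $g(x_0)=0$, where one uses the independence of $\mathrm{d}f$ and $\mathrm{d}g$ on $V_X(f,g)$). The only cosmetic difference is that you argue intrinsically with differentials while the paper embeds $X$ in affine space and works with gradients modulo the span of the defining equations; your explicit handling of the dimension via Krull's principal ideal theorem is a welcome addition the paper leaves implicit.
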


\begin{proof}
We view $X$ as a closed subset of $\AA^n_\CC$, defined by equations $h_1,\ldots,h_r\in \CC[x_1,\ldots,x_n]$, and view $W$ as a closed subset of $\AA^{n+1}_\CC$, defined by $h_1,\ldots,h_r, x_{n+1}^mg-f$, where $f,g\in \CC[x_1,\ldots,x_n]$. Let $p=(p_1,\ldots,p_{n+1})\in W$. We check that this point is smooth, i.e.~the gradient $\nabla_p (s)=(\frac{\partial s}{x_1}(p),\ldots,\frac{\partial s}{x_{n+1}}(p))\in \CC^{n+1}$ of $s=x_{n+1}^mg-f$ is not in the $\CC$-vector space
$
V=\langle \nabla_p (h_1),\ldots,\nabla_p (h_r)\rangle,
$
viewed as a subspace of $\CC^{n+1}$. Note that $\frac{\partial h_i}{\partial x_{n+1}}(p)=0$ for each $i\in \{1,\ldots,r\}$ and $\frac{\partial s}{\partial x_{n+1}}(p)=mp_{n+1}^{m-1}g(p)$. Hence, if $p_{n+1}\not=0$ and $g(p)\ne 0$, we are done. Assume that $p_{n+1}=0$. Then $f(p)=0$ and $\frac{\partial s}{\partial x_{i}} (p)=-\frac{\partial f}{\partial x_{i}} (p)$ for each $i$. Since $(p_1,\ldots,p_n)$ is a smooth point of $V_X(f)$, the vector $\nabla_p(f)=-\nabla_p(s)$ does not belong to $V$, and the result follows. The remaining case is $p_{n+1}\not=0$ and $g(p)=0$, which yields $f(p)=0$. As $V_X(f,g)$ is smooth of codimension $2$ in $X$, only the trivial linear combination of $\nabla_p (f)$ and $\nabla_p (g)$ belongs to $V$. Since $\frac{\partial s}{\partial x_{i}} (p)=p_{n+1}^m\cdot \frac{\partial g}{\partial x_{i}} (p)-\frac{\partial f}{\partial x_{i}} (p)$ for each $i\in \{1,\ldots,n\}$ and $p_{n+1}\ne 0$, we conclude that $\nabla_p(s)\not\in V$.
\end{proof}

\begin{mydef}[Rational scroll, see \cite{ReidChapters}]\label{Fnd}
Let $n\geqslant 1$ and let $d\geqslant 0$. We define $\mathcal{F}_d^n$ to be the quotient of $ (\AA^{n}\setminus \{0\})\times (\AA^2\setminus \{0\})$ by the action of $\mathbb{G}_m^2$ given by
\[\begin{array}{ccc}
\mathbb{G}_m^2 \times(\AA^n\setminus \{0\})\times (\AA^{2}\setminus \{0\}) & \to &(\AA^n\setminus \{0\})\times (\AA^{2}\setminus \{0\})\\
((\lambda,\mu), (x_0,\ldots,x_{n-1},y_0,y_1))&\mapsto&
(\mu x_0,\ldots,\mu x_{n-1},\lambda\mu^{-d} y_0, \lambda y_1).\end{array}\]
The class of $(x_0,\ldots,x_{n-1},y_0,y_1)$ will be denoted $[x_0:\cdots:x_{n-1};y_0:y_1]$. The morphism
\[
\mathcal{F}_d^n\to \PP^{n-1},\ [x_0:\cdots:x_{n-1};y_0:y_1]\to [x_0:\cdots:x_{n-1}]
\]
identifies $\mathcal{F}_d^n$ with the $\PP^1$-bundle $\PP(\mathcal{O}_{\PP^{n-1}}\bigoplus \mathcal{O}_{\PP^{n-1}}(d))\to \PP^{n-1}$. In particular,  $\mathcal{F}_d^n$ is a smooth toric variety of dimension $n$. For $n=2$, we recover the classical Hirzebruch surfaces $\mathcal{F}_d^2=\mathbb{F}_d$.
\end{mydef}

\begin{lem}\label{Lem:ModelDeltaSmooth}
Let $n,d\geqslant 2$  be  integers and let $A,B\in \CC[x_0,\ldots,x_n]$ be homogeneous polynomials of degree $d$ such that $[A:B]$ satisfies the conditions of Lemma~$\ref{LemABsmoothBlowUp}$.
Then, the following variety
\[
\Omega=\left\{\left ([x_0:\cdots:x_n;y_0:y_1],[u:v:w] \right )\in \mathcal{F}_d^{n+1}\times \PP^2\left| \begin{array}{l}w^3x_n+x_1u^3-x_0v^3=0,\\ uy_1=wy_0A(x),vy_1=wy_0B(x),\\ A(x)v=B(x)u.\end{array} \right\}\right.
\]
is smooth of dimension $n$, and its isomorphism class only depends on the class $[A:B]\in \PP(V_{n,d} \times V_{n,d})$ and not on the choice of a representative $(A,B)\in V_{n,d}\times V_{n,d}$.
\end{lem}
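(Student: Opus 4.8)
The plan is to realise $\Omega$ as a divisor in a smooth $(n{+}1)$-fold and then verify smoothness of that divisor chart by chart, using the five conditions of Lemma~\ref{LemABsmoothBlowUp} together with Lemma~\ref{Lem:SmoothAffHyp}; the independence on the representative $(A,B)$ will be immediate.

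First I would record the conceptual picture. The three equations $uy_1=wy_0A$, $vy_1=wy_0B$, $Av=Bu$ cut out in $\mathcal F_d^{n+1}\times\PP^2$ the closure $\widetilde{\mathcal F}$ of the graph of the rational map $\mathcal F_d^{n+1}\dashrightarrow\PP^2$, $[x_0:\cdots:x_n;y_0:y_1]\mapsto[y_0A(x):y_0B(x):y_1]$; its only indeterminacy is along the copy of $F=\{A=B=0\}$ inside the section $\{y_1=0\}\cong\PP^n$, and near that locus (where $y_0$ is a unit) the relevant ideal is $(A,B,y_1)$. Since $F$ is smooth of codimension $2$ by condition~\ref{UABGammasmooth}, hence a complete intersection, one checks that the three equations cut out exactly $\widetilde{\mathcal F}=\Bl_F\,\mathcal F_d^{n+1}$, a smooth connected projective $(n{+}1)$-fold, and $\Omega=\widetilde{\mathcal F}\cap\{w^3x_n+x_1u^3-x_0v^3=0\}$ is a divisor in it. Equivalently, on the dense open locus $\{[x]\notin F,\ y_0y_1\neq0\}$ the three linear equations determine $[u:v:w]=[y_0A:y_0B:y_1]$, and there $\Omega$ becomes the hypersurface $\{y_1^3x_n+y_0^3(x_1A^3-x_0B^3)=0\}$ of $\mathcal F_d^{n+1}$ — a triple cyclic cover of $\PP^n$, the variety birational to the $\Delta$ of Proposition~\ref{prop:existence3linksC} — which is nonempty and has a component of dimension $n$. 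One then checks that over $F$, over $\{x_n=0\}$ and over $\{w=0\}$ the fibres of $\Omega\to\PP^n$ drop dimension, so $\Omega$ is pure (indeed irreducible) of dimension $n$.

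Next comes the heart of the argument: showing the divisor $\Omega\subseteq\widetilde{\mathcal F}$ is smooth. Here I would cover $\mathcal F_d^{n+1}\times\PP^2$ by the toric charts $\{x_i\neq0\}$ ($0\le i\le n$) of the base $\PP^n$, the two charts $\{y_0\neq0\},\{y_1\neq0\}$ of the $\PP^1$-bundle, and the three charts $\{u\neq0\},\{v\neq0\},\{w\neq0\}$ of $\PP^2$, and analyse $\Omega$ in each. On a chart with $w\neq0$ the equations $uy_1=wy_0A$, $vy_1=wy_0B$ let one solve for $u$ and $v$, and $\Omega$ becomes (in suitable affine coordinates, with $\zeta$ the scroll fibre coordinate and $a,b$ the dehomogenisations of $A,B$) the affine hypersurface $x_n+\zeta^3(x_1a^3-b^3)=0$; its possible singular points lie over $\{x_n=0\}$, where this is of the shape $t^3g=f$ of Lemma~\ref{Lem:SmoothAffHyp} with $f=-x_n$, $g=x_1a^3-b^3$, and — away from $F$ — its singular locus coincides with that of $\{x_1\bar A^3=x_0\bar B^3\}\subseteq\{x_n=0\}\cong\PP^{n-1}$, which is smooth away from $F$ precisely by condition~\ref{UABQxsmooth} (the variety $R$ being a resolution, isomorphic to it away from $F$), while the extra $+x_n$ summand kills singularities over $F$. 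On a chart with $w=0$ one finds $\Omega\cap\{w=0\}\cong Q$, smooth of dimension $n-1$ by condition~\ref{UABQsmooth}, lying in the section $\{y_1=0\}$; transversality of $\Omega$ to $\{w=0\}$ and smoothness of $\Omega$ there follow from a direct Jacobian computation using smoothness of $F$ and of $Z=F\cap\{x_n=0\}$ (conditions~\ref{UABGammasmooth}, \ref{UABZsmooth}). The charts with $y_0\neq0$, $y_1=0$ — those meeting the exceptional locus of $\widetilde{\mathcal F}\to\mathcal F_d^{n+1}$ over $F$ — are treated the same way, via conditions~\ref{UABGammasmooth}, \ref{UABZsmooth}, \ref{UABXsmooth}. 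I expect this case analysis — tracking which of the five conditions is used where, and coping with the fact that $\Omega$ is not a complete intersection so Bertini is unavailable — to be the only genuine obstacle; this is exactly why Lemmas~\ref{LemABsmoothBlowUp} and~\ref{Lem:SmoothAffHyp} were isolated beforehand.

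Finally, for the dependence only on $[A:B]$: for $c\in\CC^*$ the automorphism $\phi_c$ of $\mathcal F_d^{n+1}=\PP(\mathcal O\oplus\mathcal O(d))$ given by $[x_0:\cdots:x_n;y_0:y_1]\mapsto[x_0:\cdots:x_n;cy_0:y_1]$ descends to the quotient (it commutes with the $\mathbb{G}_m^2$-action, being induced by rescaling the summand $\mathcal O$), and $(\phi_c\times\id_{\PP^2})$ sends the variety $\Omega$ built from $(A,B)$ isomorphically onto the one built from $(c^{-1}A,c^{-1}B)$, since substituting $y_0\mapsto cy_0$ turns the four defining equations into those for $(c^{-1}A,c^{-1}B)$. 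As the conditions of Lemma~\ref{LemABsmoothBlowUp} are invariant under scaling $(A,B)$, this shows that the isomorphism class of $\Omega$ depends only on the class $[A:B]\in\PP(V_{n,d}\times V_{n,d})$.
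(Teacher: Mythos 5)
Your plan is correct in outline and follows the same basic strategy as the paper's proof: a chart-by-chart Jacobian verification of smoothness driven by the five conditions of Lemma~\ref{LemABsmoothBlowUp} together with Lemma~\ref{Lem:SmoothAffHyp}, and the obvious rescaling automorphism of the scroll for the independence on the representative. The execution differs in one substantive way. The paper's main charts are $\{u\neq 0\}$ and $\{v\neq 0\}$: there one solves for $y_0,y_1$ (the equation $y_1=wy_0A$ forces $y_0\neq 0$) and exhibits $\Omega$ locally as $\{w^3x_n=x_0v^3-x_1\}$ over the graph $P$, so that Lemma~\ref{Lem:SmoothAffHyp} applies verbatim with $(X,V_X(f),V_X(f,g))=(P,Q,R)$ --- conditions \ref{UABXsmooth}, \ref{UABQsmooth}, \ref{UABQxsmooth} are exactly its hypotheses --- and this single computation covers all points with $(u,v)\neq(0,0)$, in particular the whole locus $\{w=0\}$ at once. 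You instead solve for $u,v$ on $\{y_1\neq 0\}$ and apply the lemma over $\PP^n$, where $V(f,g)=\{x_n=0,\ x_0B^3=x_1A^3\}$ is singular along $Z$; this is why you need the separate ``the $x_n$-term saves you over $F$'' patch and a separate transversality argument along $\{w=0\}$, neither of which the paper requires. Both routes work, but your condition bookkeeping needs fixing in two places: smoothness along $\{w=0\}$ rests on conditions \ref{UABXsmooth} and \ref{UABQsmooth} (smoothness of $P$ and $Q$), not on smoothness of $F$ and $Z$; and in your last chart ($y_1=0$, $w\neq 0$, i.e.\ over $F$) the sub-case $(u,v)\neq(0,0)$, $x_n=0$ needs condition \ref{UABQxsmooth} (smoothness of $R$ over $F$), while condition \ref{UABZsmooth} is what handles the points with $u=v=y_1=0$ lying over $Z$. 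With those corrections your sketch fills in to essentially the paper's argument.
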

\begin{proof} We first observe that sending $y_1$ onto $y_1\xi $ for some $\xi\in \CC^*$ sends $\Omega$ on the same variety where we replace $A,B$ with $\xi A,\xi B$, respectively. Hence, the isomorphism class of $\Omega$ only depends on $[A:B]\in \PP(V_{n,d} \times V_{n,d})$.
As $[A:B]$ satisfy the conditions of Lemma~$\ref{LemABsmoothBlowUp}$, the varieties $P,Q,R$ of Lemma~$\ref{LemABsmoothBlowUp}$ are smooth, of dimension $n,n-1,n-2$ respectively. Let us consider the open subset $\Omega_u$ of $\Omega$ where $u=1$. Here we obtain $y_1=wy_0A(x)$. In particular, $y_0\not=0$ so we may thus assume $y_0=1$ and $y_1=wA(x)$. This gives an isomorphism from $\Omega_u$ to
\[
\Omega'=\left\{\left ([x_0:\cdots:x_n],(v,w) \right)\in \PP^n\times \AA^2\mid -v^3x_0+w^3x_n+x_1=0, A(x)v=B(x) \right\}.
\]
For each $i\in \{0,\ldots,n\}$, denote by $\Omega_i\subseteq \Omega'$ the affine open subset where $x_i=1$. Consider the smooth affine open subsets $P_i\subseteq P$, $Q_i\subseteq Q$ and $R_i\subseteq R$ where $x_i=1$ and $u=1$, and take
$
f=x_0v^3-x_1,\ g=x_n\in \mathcal{O}(P_i)$. Then, $\Omega_i$ is isomorphic to $\{(x,w)\in P_i\times \AA^1 \mid w^3g=f\}$ and is thus smooth by  Lemma~\ref{Lem:SmoothAffHyp}, as $P_i$, $V_{P_i}(f)\simeq Q_i$ and $V_{P_i}(f,g)\simeq R_i$ are smooth affine open subsets. For the open subset $\Omega_v$ where $v=1$, we obtain the same proof, using the smooth affine open subsets of $P,Q,R$ with $x_i=1$ and $v=1$ (we may also simply exchange $u$ and $v$, $x_0$ with $x_1$ and $A$ with $B$).

Now let us consider the open subset $\Omega_w\subseteq \Omega$ where $w=1$, $y_1=1$. We want to show that this is smooth, and only need to consider the points where $u=v=0$, as the other points are on the previously considered charts. The fact that $[u:v:w]=[0:0:1]$ implies that $x_n=0$, $y_0A(x)=0$ and $y_0B(x)=0$. On $\Omega_w$, one has $u=y_0A(x)$ and $v=y_0B(x)$, so $\Omega_w$ is isomorphic to
\[
\{([x_0:\cdots:x_n], y_0)\in \PP^n\times \AA^1 \mid x_n+x_1(y_0A(x))^3-x_0(y_0B(x))^3=0\}.
\]
Take any $i\in \{0,\ldots,n-1\}$, restrict to the chart $x_i\not=0$, and obtain an affine hypersurface of $\AA^{n+1}$ given by one equation whose derivative with respect to $x_n$ is not zero, for each point where $x_n=y_0A(x)=y_0B(x)=0$. As the other points satisfy $u\not=0$ or $v\not=0$, our previous considerations imply that $\Omega_w$ is smooth. It remains to consider the open subset $\Omega_w'\subseteq \Omega$ where $w=1$, $y_0=1$, which is isomorphic to
\[
\{([x_0:\cdots:x_n],(y_1,u,v))\in \PP^n\times \AA^3 \mid x_n+x_1u^3-x_0v^3=0,\\ uy_1=A(x),vy_1=B(x) \}.
\]
We only need to show that points where $u=v=y_1=0$ on this chart are smooth. Note that $x_n=A(x)=B(x)=0$. Since the class $[A:B]$ satisfies Condition~\ref{UABZsmooth} of Lemma~\ref{LemABsmoothBlowUp}, the points we consider exist only when $n\geqslant 3$, and in this case the variety $Z$ from the lemma is smooth of dimension $n-3$. Considering the affine chart $x_i=1$, $i\in \{0,\ldots,n-1\}$, and computing the partial derivatives of the three polynomials $x_n+x_1u^3-x_0v^3, uy_1-A(x), vy_1-B(x)$, we get that for each point, where $u=v=y_1=x_n=A(x)=B(x)=0$, the derivatives are the same as those of $x_n, A,B$, respectively. Since $Z$ is smooth, we are done.
\end{proof}

\subsection{$3$-links of large covering genus and the proof of Theorem~\ref{3Torsion}}

In this section, we construct Sarkisov 3-links of large covering genus on Severi-Brauer surface bundles and finally prove Theorem~\ref{3Torsion}. Before that, let us recall some standard notions from higher-dimensional birational geometry, that will appear in the proofs. Let $X$ be a $\QQ$-factorial terminal algebraic variety. Recall that a \emph{flop} is a commutative diagram
\[
\xymatrix@R=6pt@C=20pt{
		X\ar@{-->}[rr]^{\chi}\ar[rd]_{\pi} &&X^{+}\ar[ld]^{\pi^{+}} \\
		& Z,
}\]
where $\pi$ and $\pi^{+}$ are small birational contractions (in particular, their exceptional sets are of codimension at least 2), $K_X$ and $K_{X^+}$ are $\pi$-trivial and $\pi^{+}$-trivial, respectively, and $\rho(X/Z)=\rho(X^{+}/Z)=1$. Given a divisor $D\in\Pic(X)$, let us denote its strict transform $\chi_*D$ by $\widetilde{D}$. The small map $\chi$ identifies the divisors on $X$ and $X^{+}$ and induces an isomorphism $\Pic(X)\iso\Pic(X^{+})$ that sends $K_X$ onto $K_{X^+}$. By \emph{the flopping locus} we mean the exceptional locus with respect to the small contraction~$\pi$.
\begin{rem}\label{rem: flop intersection number}
	Note that for every irreducible curve $C$ which is not contained in the flopping locus, one has $C\cdot K_X=\widetilde{C}\cdot K_{X^+}$. Indeed, for such a curve $C$ one has $\pi(C)=\pi^{+}(\widetilde{C})$. Since the divisor $K_Z$ is Cartier (see e.g.~\cite[Theorem 8-1-3]{Matsuki}), we can apply the projection formula to get
	\[
	C\cdot K_{X}=C\cdot\pi^* K_Z=\pi(C)\cdot K_Z=\pi^{+}(C)\cdot K_Z=\widetilde{C}\cdot(\pi^+)^*K_Z=\widetilde{C}\cdot K_{X^+}.
	\]
\end{rem}

\begin{lem}\label{lem: properties of Omega}
Let $n,d$  be  integers with $n\geqslant 2$ and $d-n\geqslant 4$, and let $U_{n,d}\subseteq \PP(V_{n,d} \times V_{n,d})$ be the open and dense subset of Lemma~$\ref{LemABsmoothBlowUp}$. For each $[A:B]\in U_{n,d}$, we denote by $\Omega=\Omega_{[A:B]}\subseteq \mathcal{F}_d^{n+1}\times \PP^2$ the variety of Lemma~$\ref{Lem:ModelDeltaSmooth}$ associated to $[A:B]$. Then, the following hold:
\begin{enumerate}
\item\label{cgWAB}
$\cg(\Omega)\geqslant \frac{d-n}{2}+1$;
\item\label{CanonicalDivisorWbignef}
The canonical divisor $K_{\Omega}$ of $\Omega$ is nef and big.
\item\label{WPn31}
The morphism
\[
\eta=\eta_{[A:B]}\colon \Omega\to \PP^n,\ \ ([x_0:\ldots:x_n;y_0:y_1],[u:v:w])\mapsto [x_0:\ldots:x_n]
\]
is generically $3:1$. Every irreducible curve $\mathcal{C}\subseteq \Omega$ not contracted by $\eta$ satisfies $\mathcal{C}\cdot K_{\Omega}\geqslant d-n\geqslant 4$. Moreover, the union
\[
\mathcal{E}_{[A:B]}=\bigcup_{\mathcal{C}\subseteq \Omega, \eta(\mathcal{C})=\pt} \mathcal{C}=\eta^{-1}(V_{\PP^n}(A,B)\cup V_{\PP^n}(x_n, x_0A^3-x_1B^3))
\]
of curves contracted by $\eta$ is covered by irreducible curves $\mathcal{C}$ with  $\mathcal{C}\cdot K_{\Omega}\leqslant 3$.
\item\label{flopWAB}
If $[C:D]\in U_{n,d}$ and if we have a commutative diagram
\begin{equation}\label{eq: birational map big and nef}
	\xymatrix@R=6pt@C=20pt{
		\Omega_{[A:B]}\ar[d]_{\eta_{[A:B]}}\ar@{-->}[rr]^{\psi} &&\Omega_{[C:D]}\ar[d]^{\eta_{[C:D]}} \\
		\PP^n \ar@{-->}[rr]^{\theta} &&  \PP^n,
	}
\end{equation}
where $\psi,\theta$ are birational, then $\psi$ is a pseudo-isomorphism and $\theta$ is an automorphism that sends $V_{\PP^n}(A,B)$ onto $V_{\PP^n}(C,D)$.
\end{enumerate}
\end{lem}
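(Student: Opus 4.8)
The four assertions are tightly linked through the geometry of $\Omega$ as a hypersurface-type variety inside the scroll $\mathcal{F}_d^{n+1}\times\PP^2$, so I would organise the argument around computing the canonical class of $\Omega$ and the intersection numbers of curves against it. First I would make the canonical bundle computation explicit: $\mathcal{F}_d^{n+1}$ is a smooth toric variety, its canonical class is minus the sum of the torus-invariant divisors, and $\Omega$ is cut out by the three equations displayed in Lemma~\ref{Lem:ModelDeltaSmooth}. Writing $H$ for the pullback of a hyperplane of $\PP^n$ under $\eta$ (equivalently the class of $\{x_i=0\}$ on the scroll restricted to $\Omega$), $F$ for the fibre class of $\mathcal{F}_d^{n+1}\to\PP^n$, and $h$ for the pullback of a line of $\PP^2$, one finds after adjunction that $K_\Omega = (d-n-\text{const})H + (\text{non-negative combination of }F,h)$; the key point I will extract is that $K_\Omega \equiv (d-n)H + (\text{effective, }\eta\text{-vertical})$, so that $\mathcal{C}\cdot K_\Omega \geq (d-n)\,(\mathcal{C}\cdot H)$ for every curve $\mathcal{C}$ not contracted by $\eta$, giving $\mathcal{C}\cdot K_\Omega \geq d-n \geq 4$, which is exactly \ref{WPn31}. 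Since $d-n\geq 4\geq 1$ and $K_\Omega$ is then a sum of $(d-n)H$ with effective classes that are themselves nef (the vertical classes $F,h$ restricted to $\Omega$ are base-point-free, hence nef), $K_\Omega$ is nef; bigness follows because $H$ is big ($\eta$ is generically finite onto $\PP^n$) and nef-plus-big stays big, giving \ref{CanonicalDivisorWbignef}. Then \ref{cgWAB} is immediate from Lemma~\ref{KXgeacg}: the set of curves with $\mathcal{C}\cdot K_\Omega < d-n$ is contained in the $\eta$-contracted locus $\mathcal{E}_{[A:B]}$, which is a proper closed subset (it is $\eta^{-1}$ of a subvariety of $\PP^n$ of codimension $\geq 1$), hence not dense, so $\cg(\Omega)\geq \frac{d-n}{2}+1$.

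For \ref{WPn31} there remain two bookkeeping tasks: identifying $\mathcal{E}_{[A:B]}$ precisely and showing it is covered by curves of small degree against $K_\Omega$. The $\eta$-contracted curves lie over points $[x_0:\dots:x_n]$ of $\PP^n$ where the fibre of $\eta$ is positive-dimensional; inspecting the equations $A(x)v=B(x)u$, $w^3x_n+x_1u^3-x_0v^3=0$, $uy_1=wy_0A(x)$, $vy_1=wy_0B(x)$, the fibre jumps dimension exactly when $A(x)=B(x)=0$ (the equation $A v = Bu$ becomes vacuous, leaving a $\PP^1$'s worth of $[u:v]$) or when $x_n=0$ and $x_0A(x)^3 = x_1 B(x)^3$ (the cubic relation degenerates). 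This gives the stated description of $\mathcal{E}_{[A:B]}$ as $\eta^{-1}(V(A,B)\cup V(x_n,x_0A^3-x_1B^3))$. Over a general point of $V(A,B)$ the fibre is a line in the $\PP^2$-direction (a $\PP^1$ contained in $\{u,v\text{ arbitrary}\}$ modulo the cubic constraint, which over such a point forces $w^3 x_n = x_0 v^3 - x_1 u^3$, a degree-$3$ curve), and over $V(x_n, x_0A^3-x_1B^3)$ the fibre is likewise cut by the cubic $x_1 u^3 = x_0 v^3$ in $\PP^2$; in both cases the fibre curves $\mathcal{C}$ satisfy $\mathcal{C}\cdot H=0$ and $\mathcal{C}\cdot h \leq 3$ (a line or a plane cubic in the $\PP^2$-factor), and since the $\eta$-vertical part of $K_\Omega$ is supported on $F$ and $h$ with bounded coefficients one reads off $\mathcal{C}\cdot K_\Omega \leq 3$. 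I would verify the coefficient of $h$ in $K_\Omega$ is small enough; this is the one spot where I need to be careful with the adjunction arithmetic, but it is a finite computation on the toric scroll.

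Finally \ref{flopWAB}. Given the diagram \eqref{eq: birational map big and nef}, the strategy is: (i) $\psi$ is a birational map between smooth projective varieties with nef and big canonical divisors, so by the argument behind Lemma~\ref{lem: birational map of canonically polarized} — more precisely, the fact that $\psi$ induces an isomorphism of canonical rings and hence restricts to an isomorphism away from the stable base locus / exceptional loci, with the exceptional loci being exactly the curves $\mathcal{C}$ with $\mathcal{C}\cdot K_\Omega = 0$, i.e. the $K_\Omega$-trivial curves — $\psi$ contracts no divisor in either direction and is an isomorphism in codimension $1$; thus $\psi$ is a pseudo-isomorphism. Here I use that the only $K_\Omega$-trivial curves are $\eta$-contracted (by \ref{WPn31}, any non-contracted curve has $\mathcal{C}\cdot K_\Omega\geq 4$), so the indeterminacy/exceptional locus of $\psi$ sits inside $\mathcal{E}_{[A:B]}$, which has codimension $\geq 2$. (ii) Since $H = \frac{1}{d-n}(K_\Omega - (\text{vertical effective part}))$ is, up to the $\eta$-vertical correction, intrinsic, and since a pseudo-isomorphism preserves $K_\Omega$ and movable classes, $\psi$ matches $\eta_{[A:B]}$ with $\eta_{[C:D]}$ — equivalently, $\eta$ is recovered as the morphism given by a suitable multiple of $K_\Omega$ minus its $\eta$-vertical fixed part, which $\psi$ respects; this forces $\theta$ to be a morphism in both directions, hence an automorphism of $\PP^n$. (iii) An automorphism $\theta$ fitting into \eqref{eq: birational map big and nef} must send the branch-type locus of $\eta_{[A:B]}$ — whose relevant component, after removing the $\{x_n=0\}$ part which is $\theta$-independent up to the previous analysis, is $V_{\PP^n}(A,B)$ — onto that of $\eta_{[C:D]}$, namely $V_{\PP^n}(C,D)$. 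The main obstacle I anticipate is step (ii): cleanly showing that $\eta$ is canonically determined by $(\Omega, K_\Omega)$ up to the vertical ambiguity, so that $\psi$ conjugates $\eta_{[A:B]}$ to $\eta_{[C:D]}$ — this requires pinning down that the $\eta$-vertical effective divisor appearing in $K_\Omega$ is itself canonically characterised (e.g. as the fixed part of $|K_\Omega|$ or of $|mK_\Omega|$ for suitable $m$), which is where the hypothesis $d-n\geq 4$ does real work in separating the "horizontal" degree from the bounded "vertical" contributions.
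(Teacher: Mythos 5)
Parts \ref{cgWAB}--\ref{WPn31} of your proposal follow essentially the paper's route: adjunction on the blow-up of the scroll gives $K_\Omega=\bigl((d-n)\widehat{\mathcal F}_x+\widehat{\mathcal F}_u\bigr)|_\Omega$, the second summand is the pullback of $\mathcal O_{\PP^2}(1)$ (so $K_\Omega$ is the pullback of an ample class from the model $\widehat\Omega\subseteq\PP^n\times\PP^2$, giving big and nef at once), non-contracted curves meet $\widehat{\mathcal F}_x$ positively, and the fibre analysis over $V(A,B)$ and $V(x_n,x_0A^3-x_1B^3)$ gives the bound $\mathcal C\cdot K_\Omega\leqslant 3$. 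Your one flagged worry — the coefficient of the $\PP^2$-class in $K_\Omega$ — indeed comes out to $1$, so that part closes.

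The gap is in part \ref{flopWAB}, step (ii). First, a citation issue in (i): Lemma~\ref{lem: birational map of canonically polarized} requires \emph{ample} canonical divisors and does not apply here, since $K_\Omega$ is only big and nef; the correct input is Kawamata's theorem that a birational map between smooth varieties with nef canonical divisors decomposes into flops, hence is an isomorphism in codimension one with flopping loci covered by $K$-trivial curves. More seriously, your step (ii) proposes to recover $\eta$ intrinsically by characterising the residual class $K_\Omega-(d-n)H$ as the ``$\eta$-vertical fixed part'' of $|mK_\Omega|$. This characterisation is false: the residual class is $\widehat{\mathcal F}_u|_\Omega$, the pullback of a line from $\PP^2$, which is base-point free (so $|K_\Omega|$ has no fixed part) and is not $\eta$-vertical, so there is nothing for $d-n\geqslant 4$ to ``separate''. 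Fortunately the step is also unnecessary, because the commutative diagram with $\theta$ is part of the hypothesis — you do not need to reconstruct $\eta$. The argument you want is the one your part \ref{WPn31} already sets up: every $K_\Omega$-trivial curve is $\eta$-contracted, so the flopping locus of $\psi$ lies in $\mathcal E_{[A:B]}$ and $\psi$ restricts to an isomorphism $\Omega_{[A:B]}\setminus\mathcal E_{[A:B]}\iso\Omega_{[C:D]}\setminus\mathcal E_{[C:D]}$; commutativity then makes $\theta$ an isomorphism between complements of codimension-$2$ closed subsets of $\PP^n$, hence an automorphism (a pseudo-automorphism of $\PP^n$ has degree $1$ by the Jacobian argument), and since $V(A,B)$ and $V(x_n,x_0A^3-x_1B^3)$ have different degrees, $\theta$ must send $V(A,B)$ onto $V(C,D)$. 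With step (ii) replaced by this, your proof is complete.
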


\begin{proof}
In what follows, we write $\mathcal{F}=\mathcal{F}_d^{n+1}$, $\Omega=\Omega_{[A:B]}$ and $\eta=\eta_{[A:B]}$ to simplify the notation. The Picard group of $\mathcal{F}$ is generated by the classes $\mathcal{F}_x$ and $\mathcal{F}_y$ of the divisors given by $x_0=0$ and $y_0=0$, respectively. Moreover, $x_i=0$ is equivalent to $\mathcal{F}_x$ for each $i$ and $y_{1}=0$ is equivalent to $d\mathcal{F}_x+\mathcal{F}_y$. The canonical divisor of this toric variety of dimension $n+1$ is then given by the sum of all coordinate divisors with multiplicity $-1$ and equals $K_{\mathcal{F}}=-(n+d+1)\mathcal{F}_x-2\mathcal{F}_y$. Consider the variety
\[
\widehat{\mathcal{F}}=\left\{\left ([x_0:\cdots:x_n;y_0:y_1],[u:v:w] \right )\in \mathcal{F}_d^{n+1}\times \PP^2\left| \begin{array}{l}uy_1=wy_0A(x),vy_1=wy_0B(x),\\ A(x)v=B(x)u.\end{array} \right\}\right.
\]
and observe that the projection $\pi\colon \widehat{\mathcal{F}}\to  \mathcal{F}$ is the blow-up of the subvariety $F\subseteq \mathcal{F}$ given by $A(x)=B(x)=0$ and $y_1=0$, with inverse being the birational map
\[
\mathcal{F}\dasharrow \widehat{\mathcal{F}}, ([x_0:\cdots:x_n;y_0:y_1] )\mapsto ([x_0:\cdots:x_n;y_0:y_1],[A(x)y_0:B(x)y_0:y_1]).
\]
As $F$ is smooth of codimension $3$ in $\mathcal{F}$ (Condition \ref{UABGammasmooth} of Lemma~\ref{LemABsmoothBlowUp}), we get $K_{\widehat{\mathcal{F}}}=\pi^*K_{\mathcal{F}}+2E$, where $E=F\times \PP^2\subseteq \widehat{\mathcal{F}}$ is the exceptional divisor. The Picard group of $\widehat{\mathcal{F}}$ is generated by the classes $\widehat{\mathcal{F}}_x$, $\widehat{\mathcal{F}}_y$, $\widehat{\mathcal{F}}_u$ of the divisors given by $x_0=0$ and $y_0=0$ and $u=0$, respectively. Using the morphism $\pi$, we find
\[
\widehat{\mathcal{F}}_x=\pi^*\mathcal{F}_x, \quad \widehat{\mathcal{F}}_y=\pi^*\mathcal{F}_y, \quad \widehat{\mathcal{F}}_u=d\pi^*\mathcal{F}_x+\pi^*\mathcal{F}_y-E,
\]
where the last equality comes from the fact that the rational map
\[\mathcal{F}\dasharrow \PP^2,\ \ \ [x_0:\ldots :x_n;y_0:y_1]\mapsto [y_0A(x):y_0B(x):y_1]
\]
is given by a linear system of hypersurfaces equivalent to $d\mathcal{F}_x+\mathcal{F}_y$ passing through $F$. Hence,
\[
K_{\widehat{\mathcal{F}}}=\pi^*K_{\mathcal{F}}+2E=-(n+d+1)\pi^*\mathcal{F}_x-2\pi^*\mathcal{F}_y+2E
=(d-n-1)\widehat{\mathcal{F}}_x-2\widehat{\mathcal{F}}_u.
\]
The hypersurface $\Omega\subseteq \widehat{\mathcal{F}}$ is given by $w^3x_n+x_1u^3-x_0v^3=0$ and is thus linearly equivalent to $\widehat{\mathcal{F}}_x+3\widehat{\mathcal{F}}_u$. The adjunction formula then gives
\[
K_{\Omega}=(K_{\widehat{\mathcal{F}}}+{\Omega})|_{\Omega}=((d-n)\widehat{\mathcal{F}}_x+\widehat{\mathcal{F}}_u)|_{\Omega}.
\]
We consider the variety
\[
\widehat{\Omega}=\{ ([x_0:\cdots:x_n],[u:v:w]  )\in \PP^n\times \PP^2 \mid w^3x_n+x_1u^3-x_0v^3=0, A(x)v=B(x)u \}.
\]
The morphism
\[
\rho\colon\Omega\to \widehat{\Omega},\ \ ([x_0:\cdots:x_n;y_0:y_1],[u:v:w])\mapsto ([x_0:\cdots:x_n],[u:v:w])
\]
is birational and $K_{\Omega}$ is the pull-back of an ample divisor, so $K_{\Omega}$ is big and nef \cite[2.5]{KollarMoriBirationalGeometry}. This achieves the proof of \ref{CanonicalDivisorWbignef}.

We now prove~\ref{cgWAB} and \ref{WPn31}. As $\Omega\to \widehat{\Omega}$ is birational and $\widehat{\Omega}\to \PP^n$ is generically $3:1$, the morphism  $\eta\colon \Omega\to \PP^n$ is generically $3:1$. For each irreducible curve $\mathcal{C}\subseteq \Omega$ one finds $K_{\Omega}\cdot \mathcal{C} \geqslant d-n\geqslant 4$ unless $\mathcal{C}\cdot \widehat{\mathcal{F}}_x=0$. This latter condition is equivalent to the fact that $\mathcal{C}$ is contracted by $\eta$. Since $\eta$  is generically $3:1$, the set $\mathcal{E}_{[A:B]}$, defined as the union of curves contracted by $\eta$, is not dense in~$\Omega$. By Lemma~\ref{KXgeacg}, the covering genus of $\Omega$ satisfies $\cg(\Omega)\geqslant \frac{d-n}{2}+1$. This gives \ref{cgWAB}. To finish the proof of  \ref{WPn31}, it remains to see that $\mathcal{E}_{[A:B]}$ is equal to  $\eta^{-1}(V_{\PP^n}(A,B)\cup V_{\PP^n}(x_n, x_0A^3-x_1B^3))$ and is covered by irreducible curves $\mathcal{C}\subseteq \Omega$ with  $\mathcal{C}\cdot K_{\Omega}\leqslant 3$.
For this, we take a point $p=[x_0:\cdots:x_n]\in \PP^n$  and study its preimage $\eta^{-1}(p)$. For each irreducible curve $\mathcal{C}$ contained in $\eta^{-1}(p)$ one has $\mathcal{C}\cdot \widehat{\mathcal{F}}_x=0$, and thus $\mathcal{C}\cdot K_{\Omega}=\mathcal{C}\cdot \widehat{\mathcal{F}}_u$.

Suppose first that $A(p)=B(p)=0$. Then
\[
\eta^{-1}(p)=\{([x_0:\cdots:x_n;y_0:y_1],[u:v:w])\mid w^3x_n+x_1u^3-x_0v^3=0, uy_1=vy_1=0\}
\]
is the union of a rational curve $\mathcal{C}$ where $[u:v:w]=[0:0:1]$ and $y_0,y_1$ are free, that satisfies $\mathcal{C}\cdot K_\Omega=0$, and of $\{([x_0:\cdots:x_n; 1:0],[u:v:w])\mid w^3x_n+x_1u^3-x_0v^3=0\}$. This latter set is a finite union of irreducible curves $\mathcal{C}$ with $\mathcal{C}\cdot K_\Omega\leqslant 3$ if $(x_n,x_1,x_0)\not=(0,0,0)$. If $(x_n,x_1,x_0)=(0,0,0)$, then it is a plane, which is a union of lines $\mathcal{C}$ with $\mathcal{C}\cdot K_\Omega= 1$.

Suppose that $x_n=0$ and $x_0A(p)^3=x_1B(p)^3$. We moreover assume that $(A(p),B(p))\not=(0,0)$.  Then, the preimage of $p$ in $\widehat{\Omega}$ is $\{(p,[u:v:w])\in \{p\}\times \PP^2\mid A(p)v=B(p)u\}$ and is thus a line in $\{p\}\times \PP^2\simeq \PP^2$. The fibre $\eta^{-1}(p)$ is the preimage of this line in $\Omega$. As $(u,wA,wB,v)$ are not all zero, $\eta^{-1}(p)$ is a finite union of irreducible curves $\mathcal{C}$, each of them satisfying $\mathcal{C}\cdot K_{\Omega}\leqslant 1$.

It remains to consider the case where $(A(p),B(p))\not=(0,0)$, $(x_n,x_0A(p)^3-x_1B(p)^3)\not=(0,0)$. This implies that $(x_0,x_1,x_n)\not=(0,0,0)$. Then, the preimage in $\widehat{\Omega}$ is $
\{ ([x_0:\cdots:x_n],[u:v:w]  ) \mid w^3x_n+x_1u^3-x_0v^3=0, A(x)v=B(x)u \}$ and consists of at most $3$ points as it is the intersection of a line and a cubic not containing the line. The preimage of any of these points in $\Omega$ is only one point, as  $(u, wA,v,wB)$ are not all zero. This achieves the proof of \ref{WPn31}.

To prove \ref{flopWAB}, we assume that there is a commutative diagram \eqref{eq: birational map big and nef}. As was shown in \ref{CanonicalDivisorWbignef}, the canonical divisors of $\Omega_{[A:B]}$ and $\Omega_{[C:D]}$ are both nef. Therefore, by \cite[Theorem 1]{KawamataFlopsMMP}, the birational map $\psi$ can be decomposed into
a sequence of flops (recall that $\Omega_{[A:B]}$ and $\Omega_{[C:D]}$ are in fact smooth) and is thus an isomorphism $U\iso V$, where $U\subseteq \Omega_{[A:B]}$ and $V\subseteq \Omega_{[C:D]}$ are open subsets, such that the closed subsets $\Omega_{[A:B]}\setminus U$ and $\Omega_{[C:D]}\setminus V$ have codimension at least $2$ and are covered by curves trivial against the canonical divisors.  For each irreducible closed curve $\mathcal{C}\subseteq \Omega_{[A:B]}$ with $\mathcal{C}\cap U\not=\varnothing$, the strict transform $\widetilde{\mathcal{C}}=\psi_*(\mathcal{C})$ satisfies $\mathcal{C}\cdot K_{\Omega_{[A:B]}}=\widetilde{\mathcal{C}}\cdot K_{\Omega_{[C:D]}}$, by Remark~\ref{rem: flop intersection number}. By~\ref{WPn31}, the birational map $\psi$ restricts to an isomorphism
\[
\Omega_{[A:B]}\setminus \mathcal{E}_{[A:B]}\iso \Omega_{[C:D]}\setminus \mathcal{E}_{[C:D]},
\]
and thus $\theta$ restricts to an isomorphism from
$\PP^n\setminus (V_{\PP^n}(A,B)\cup V_{\PP^n}(x_n, x_0A^3-x_1B^3))$ to  $\PP^n\setminus (V_{\PP^n}(C,D)\cup V_{\PP^n}(x_n, x_0C^3-x_1D^3)).$
As both are open subsets with complements of codimension at least $2$, we find that $\theta\in \Aut(\PP^n)$. Indeed, any birational map of $\PP^n$ can be written using homogeneous polynomials of the same degree $d\geqslant 1$ without common factor and contracts the determinant of the Jacobian of the map, given by a polynomial of degree $n(d-1)$. As $\theta$ is a pseudo-isomorphism, then $d=1$, which implies that it is an automorphism. Moreover, looking at the degrees of the closed subvarieties we have here, $\theta$ sends $V_{\PP^n}(A,B)$ onto $V_{\PP^n}(C,D)$ and sends $V_{\PP^n}(x_n, x_0A^3-x_1B^3)$ onto $V_{\PP^n}(x_n, x_0C^3-x_1D^3)$.
\end{proof}

The following result follows from a much more general Theorem 1.3 in the unpublished preprint \cite{AutCompleteIntersections}, which generalises Proposition~\ref{prop:Matsumura-Monsky} for some complete intersections; see \cite{BenoistCompleteIntersection} for related discussion. As we may take our degree $d$ large, we give a self-contained proof in the particular case which will be relevant for us.

\begin{lem}\label{Lemm:CompleteInt}
Let $n\geqslant 2$. There exists an integer $d_0\geqslant 3$ such that for each $d\geqslant d_0$, there is a set $R_{n,d}\subseteq U_{n,d}\subseteq \PP(V_{n,d} \times V_{n,d})$ $($with $U_{n,d}$,  $V_{n,d}$ as in Lemma~$\ref{LemABsmoothBlowUp})$ which has the cardinality of $\CC$ and satisfies the following properties:
\begin{enumerate}
\item\label{ci2}
For each $[A:B]\in R_{n,d}$, the zero set $V_{\PP^n}(A,B)$ is smooth of dimension $n-2$ and every automorphism of $\PP^n$ that preserves $V_{\PP^n}(A,B)$ is trivial.
\item\label{ci3}
For any two distinct points $[A:B], [C:D]\in R_{n,d}$, there is no automorphism of $\PP^n$ sending $V_{\PP^n}(A,B)$ onto $V_{\PP^n}(C,D)$.
\end{enumerate}
\end{lem}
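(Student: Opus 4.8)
The plan is to adapt the strategy of Proposition~\ref{prop:Matsumura-Monsky} from hypersurfaces to complete intersections of two forms of degree $d$. Write $V=V_{n,d}=\CC[x_0,\ldots,x_n]_d$, $N=\binom{n+d}{n}=\dim V$, and let $\mathcal{G}=\Grass(2,V)$, an irreducible variety of dimension $2(N-2)$ carrying the natural action of $G=\PGL_{n+1}(\CC)$ coming from change of coordinates on $\PP^n$. To a pencil $P=\langle A,B\rangle\in\mathcal{G}$ we attach its base locus $X_P=V_{\PP^n}(A,B)$.

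First I would record that the pencil is intrinsic to a smooth base locus. If $X=V_{\PP^n}(A,B)$ is a smooth complete intersection of two forms of degree $d\geqslant 1$ in $\PP^n$ with $n\geqslant 2$, then twisting the Koszul resolution $0\to\mathcal{O}_{\PP^n}(-2d)\to\mathcal{O}_{\PP^n}(-d)^{\oplus 2}\to\mathcal{I}_X\to 0$ by $\mathcal{O}_{\PP^n}(d)$ and using that $\Cohom^i(\PP^n,\mathcal{O}_{\PP^n}(k))=0$ for $0<i<n$ gives $\cohom^0(\PP^n,\mathcal{I}_X(d))=2$. Hence $\langle A,B\rangle$ equals the space of degree-$d$ forms vanishing on $X$, so an automorphism of $\PP^n$ preserves $X$ if and only if it preserves the pencil; similarly, for smooth $X_P$, $X_{P'}$ there is an automorphism of $\PP^n$ mapping $X_P$ onto $X_{P'}$ if and only if $P$ and $P'$ lie in a common $G$-orbit. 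Thus conditions \ref{ci2} and \ref{ci3} become statements about the $G$-action on $\mathcal{G}$.

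The main step, which I expect to be the hard part, is to show that for $d$ large the $G$-invariant subset $\mathcal{G}^{\circ}\subseteq\mathcal{G}$ of pencils $P$ with $X_P$ smooth of dimension $n-2$ and with trivial linear automorphism group is open and dense; this is the complete-intersection version of the Matsumura--Monsky theorem. Openness is a semicontinuity argument as in the proof of Proposition~\ref{prop:Matsumura-Monsky}. For density I would do a dimension count on the incidence variety $\mathcal{J}=\{(P,\phi)\in\mathcal{G}\times(G\setminus\{\mathrm{id}\})\mid\phi(X_P)=X_P\}$: after stratifying $G$ by conjugacy type, a non-trivial $\phi$ fixes only a family of pencils of dimension bounded in terms of the dimensions of the eigenspaces of $\phi$ on $V$, and together with the dimension of the conjugacy class of $\phi$ one obtains $\dim\mathcal{J}<\dim\mathcal{G}=2(N-2)$ once $d\geqslant d_0$, so the image of $\mathcal{J}$ in $\mathcal{G}$ lies in a proper closed subset. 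As in Matsumura--Monsky, the subtle point is to control the $\phi$ whose action on $V$ has an unusually large eigenspace (for instance $\phi$ close to a homothety); one exploits that for $d$ large even the largest eigenspace has dimension small compared with $N$, so such $\phi$ cannot stabilise a general complete intersection. This density is also a special case of \cite[Theorem~1.3]{AutCompleteIntersections}.

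Granting this, I would extract $R_{n,d}$ as follows. Choose $d_0\geqslant 3$ so that the previous step holds and moreover $2(N-2)-\left((n+1)^2-1\right)\geqslant 1$. On $\mathcal{G}^{\circ}$ the action of $G$ has trivial stabilisers, so by Rosenlicht's theorem there is a $G$-invariant dense open $U\subseteq\mathcal{G}^{\circ}$ admitting a geometric quotient $q\colon U\to Q=U/G$, where $Q$ is a variety of dimension $2(N-2)-\left((n+1)^2-1\right)\geqslant 1$. Let $\pi\colon\PP(V\times V)\dashrightarrow\mathcal{G}$, $[A:B]\mapsto\langle A,B\rangle$, which is a morphism on $U_{n,d}$ since a smooth base locus forces $A$ and $B$ to be independent; by Chevalley's theorem $\pi(U_{n,d})$ is a dense constructible subset of $\mathcal{G}$, hence $q\left(U\cap\pi(U_{n,d})\right)$ is a dense constructible subset of $Q$ and so contains a dense open $Q^{\circ}\subseteq Q$. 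Since $\dim Q\geqslant 1$, $Q^{\circ}$ has cardinality that of $\CC$. For each point of a chosen subset of $Q^{\circ}$ of cardinality that of $\CC$, pick a pencil $P$ in its $q$-fibre with $P\in\pi(U_{n,d})$, and then a basis $[A:B]\in U_{n,d}$ of $P$. The resulting set $R_{n,d}\subseteq U_{n,d}$ has the cardinality of $\CC$; each member satisfies \ref{ci2} because $R_{n,d}\subseteq U_{n,d}$ yields smoothness of dimension $n-2$ via condition~\ref{UABGammasmooth}, while the underlying pencil lies in $\mathcal{G}^{\circ}$ and hence $X_P$ has trivial automorphism group; and distinct members have base loci in distinct $G$-orbits, giving \ref{ci3} by the second paragraph.
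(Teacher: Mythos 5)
Your route is genuinely different from the paper's, and its weight rests on a step you do not actually carry out. You reduce both conditions to statements about the $\PGL_{n+1}(\CC)$-action on pencils (via the Koszul computation $\cohom^0(\mathcal{I}_X(d))=2$, which is a careful version of a fact the paper only asserts), and then you need the \emph{complete-intersection} analogue of the Matsumura--Monsky theorem: that pencils with trivial stabiliser form a dense open subset of $\Grass(2,V_{n,d})$. You flag this as "the hard part", sketch a dimension count on the incidence variety, and otherwise defer to \cite{AutCompleteIntersections}. That is precisely the dependence the paper is engineered to avoid: the authors state explicitly that \cite{AutCompleteIntersections} is an unpublished preprint and that they give a self-contained argument instead. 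Your sketch identifies the delicate point (automorphisms whose action on $V_{n,d}$ has a large eigenspace) but does not resolve it, so as written there is a genuine gap at the central density claim. The downstream Rosenlicht-quotient argument for extracting a set of cardinality $\lvert\CC\rvert$ with pairwise distinct orbits is fine.

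The paper circumvents the complete-intersection Matsumura--Monsky entirely by using only the hypersurface case (Proposition~\ref{prop:Matsumura-Monsky}). It fixes a single $A$ with $[A]\in\mathcal{U}_{n,d}$, forms the closed set $F\subseteq\PP(V_{n,d})$ swept out by lines joining $[A]$ to other points of its $\PGL_{n+1}(\CC)$-orbit (of dimension at most $(n+1)^2<N-1$), and chooses the $B$'s so that each line $\ell_{[A:B]}$ meets $F$ only at $[A]$ and no two such lines have the same image under projection away from $[A]$. Any automorphism preserving $V_{\PP^n}(A,B)$ preserves the pencil, hence the line $\ell_{[A:B]}$; if it moved $[A]$, that line would lie in $F$, so it fixes $[A]$, and then $[A]\in\mathcal{U}_{n,d}$ forces it to be trivial. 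This buys a short, self-contained proof at the cost of producing only a rather special family (all pairs share the same $A$), whereas your approach, if the density statement were established, would yield the generic statement for all pencils. To repair your proof without invoking the preprint, you would either have to complete the incidence-variety estimate in detail or substitute the paper's line trick for your main step.
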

\begin{proof}
As usual we identify $V_{n,d}$ with the affine space $\AA^N$ where $N=\binom{n+d}{n}$, and  consider the canonical morphism $\pi\colon V_{n,d}\setminus \{0\}\to \PP(V_{n,d})$ that sends an element $P$ onto its class $[P]$. Choose $d_0$ such that $N\geqslant (n+1)^2+2$ for all $d\geqslant d_0$.
Choose $A\in V_{n,d}\setminus \{0\}$ such that $\pi(A)=[A]$ is an element of $\mathcal{U}_{n,d}$, the open subset of $\PP(V_{n,d})$ defined in Proposition~\ref{prop:Matsumura-Monsky}, and such that the open set $\widehat{\mathcal{W}}=\{B\in V_{n,d}\mid [A:B]\in U_{n,d}\}\subseteq V_{n,d}$ is not empty. As $\pi$ is surjective, the image $\pi(\widehat{\mathcal{W}})$ contains a dense open subset $\mathcal{W}$ of $\PP(V_{n,d})$.

Consider the action of $\PGL_{n+1}(\CC)$ on $\PP(V_{n,d})$ by changes of variables. Denote by $F\subseteq \PP(V_{n,d})$ the closure of the union of lines through $[A]$ and any other point of the orbit  $\PGL_{n+1}(\CC)\cdot [A]$. We claim that $\dim(F)\leqslant 1+\dim\PGL_{n+1}(\CC)=(n+1)^2<\dim\PP(V_{n,d})=N-1$ and that for each line $\ell\subseteq \PP(V_{n,d})$ through $[A]$, we have either $\ell\subseteq F$ or $F\cap \ell=\{[A]\}$. To see this, we look at the blow-up $\nu\colon T\to \PP(V_{n,d})$ of $[A]$ and obtain a $\PP^1$-bundle $\theta\colon T\to \PP^{N-2}$ whose fibres are the strict transforms of lines of $\PP(V_{n,d})$ through $[A]$. Then, $F$ is the image by $\nu$ of $\theta^{-1}(F_0)$, where $F_0=\overline{\theta(\nu^{-1}(\PGL_{n+1}(\CC)\cdot [A]\setminus \{[A]\}))}$. Hence, $F_0$ satisfies $\dim(F_0)\leqslant \dim(\PGL_{n+1}(\CC))$ and thus $\dim F\leqslant\dim \theta^{-1}(F_0)= \dim(F_0)+1\leqslant 1+\dim(\PGL_{n+1}(\CC))$ as we desired. Moreover, $\theta^{-1}(F_0)$ is a union of fibres of $\theta$, so $F$ is a union of lines through $[A]$, hence the claim. As $\dim(F)<\dim \PP(V_{n,d})$, we may replace $\mathcal{W}$ with $\mathcal{W}\cap (\PP(V_{n,d})\setminus F)$ and assume that $\mathcal{W}\cap F=\varnothing$.

We now consider the projection $p_A\colon \PP(V_{n,d})\dasharrow \PP^{N-2}$ away from $[A]$. As it is surjective, $p_A(\mathcal{W})$ contains again a dense open subset of $\PP^{N-2}$. As $N-2\geqslant 1$, this dense subset contains a set of the same cardinality as $\CC$, and we can lift each of its elements to an element of $\mathcal{W}$ and then to an element of  $\widehat{\mathcal{W}}$. We obtain a subset $ \mathcal{J}\subseteq \widehat{\mathcal{W}}\subseteq V_{n,d}$ with the same cardinality as $\CC$ such that for two distinct elements $B,B'\in\mathcal{J}$ the points $[B],[B']\in\mathcal{W}\subseteq \PP(V_{n,d})$ are not collinear with $[A]$. Let $R_{n,d}$ be the set of classes $[A:B]$ with $B\in \mathcal{J}$. As $\mathcal{J}\subseteq \widehat{\mathcal{W}}$, we have $R_{n,d}\subseteq U_{n,d}$.

For each $[A:B]\in R_{n,d}$, an automorphism $\gamma\in \PGL_{n+1}(\CC)$ that preserves $V_{\PP^n}(A,B)$ also preserves  the pencil of hypersurfaces $\lambda A+\mu B=0$, $[\lambda:\mu]\in\PP^1$, as this one is the unique pencil of hypersurfaces of degree $d$ through $V_{\PP^n}(A,B)$. Hence, $\gamma$ preserves the line $\ell_{[A:B]}\subseteq \PP(V_{d,n})$ spanned by $[A]$ and $[B]$. As $[B]\in \mathcal{W}$, the line $\ell_{[A:B]}$ is not contained in $F$ and thus $\ell_{[A:B]}\cap F=\{[A]\}$. The hypersurface $V_{\PP^n}(A)$ is then fixed by $\gamma$. Since $[A]\in\mathcal{U}_{n,d}$, we conclude that $\gamma=\id$. This proves \ref{ci2}.

Similarly, assume that $\gamma\in \Aut(\PP^n)$ sends $V_{\PP^n}(A,B)$ onto $V_{\PP^n}(C,D)$, with $[A:B]$, $[C:D]\in R_{n,d}$. Note that by construction of $R_{n,d}$, we may assume $C=A$. Then the line $\ell_{[A:B]}$ spanned by $[A]$ and $[B]$ is sent onto the line $\ell_{[A:D]}$ spanned by $[A]$ and $[D]$. As before, the lines intersect $F$ only at $[A]$, so the hypersurface $V_{\PP^n}(A)$ is fixed by $\gamma$. But then $\gamma$ is trivial, so $\ell_{[A:B]}=\ell_{[A:D]}$. This gives $p_A([B])=p_A([D])$ and thus implies $B=D$ by our choice of $R_{n,d}$. This proves \ref{ci3}.
\end{proof}

\begin{proof}[Proof of  Theorem~\ref{3Torsion}]
As in the proof of Theorem \ref{thm: free product}, we write $n=m-2\geqslant 2$ and consider the groupoid $\BirMori(X)$, where $\pi\colon  X\to B$ is the rational SBMfs constructed in Proposition~\ref{prop: explicit SB fibration} with a birational morphism $B\to\PP^n$, so $\Bir_\CC(\PP^{m})\simeq\Bir(X)$. We apply Lemma~\ref{Lemm:CompleteInt} and find, for $d$ large enough, a subset $R_{n,d}\subseteq U_{n,d}\subseteq \PP(V_{n,d} \times V_{n,d})$ satisfying the properties mentioned in this lemma. For each element  $p\in R_{n,d}$ we fix $(A,B)\in V_{n,d}\times V_{n,d}$ such that $[A:B]=p\in R_{n,d}$. The variety $\Omega=\Omega_{[A:B]}$ of Lemma~$\ref{Lem:ModelDeltaSmooth}$ is smooth of dimension $n$. Moreover, it satisfies $\cg(\Omega)\geqslant \frac{d-n}{2}+1$ by Lemma~\ref{lem: properties of Omega}\ref{cgWAB}. We choose $d$ large enough such that the covering genus of $\Omega$ is bigger than $g$, where $g$ is as in Theorem~\ref{Theorem:SBMfs}. The projection of $\Omega$ on $\PP^n\times \PP^2$ gives a birational morphism to
\[
\widehat{\Omega}_{[A:B]}=\{ ([x_0:\cdots:x_n],[u:v:w]  )\in \PP^n\times \PP^2 \mid w^3x_n+x_1u^3-x_0v^3=0, A(x)v=B(x)u \}.
\]
Set $a=A(t_2,1,t_3,\ldots,t_n,t_1)$, $b=B(t_2,1,t_3,\ldots,t_n,t_1)\in\CC[t_1,\ldots,t_n]$ and consider the variety
\[
\Delta_{(A,B)}=\{(s,t_1,\ldots,t_n)\in \AA^{n+1}\mid  t_1s^3+a(t_1,\ldots,t_n)^3-t_2b(t_1,\ldots,t_n)^3=0\}
\]
of Proposition~\ref{prop:existence3linksC}. Note that its equation depends on the choice of $(A,B)\in V_{n,d}\times V_{d,n}$ and not only on $p=[A:B]$, while the isomorphism class of $\Delta_{(A,B)}$ is independent of this choice (Lemma~\ref{Lem:ModelDeltaSmooth}). Note that
$\Delta_{(A,B)}$ is birational to $\widehat{\Omega}_{[A:B]}$ via the birational map
\[
\Delta_{(A,B)}\dasharrow \widehat{\Omega}_{[A:B]}, (s,t_1,\ldots,t_n)\mapsto ([t_2:1:t_3:\cdots :t_n:t_1],[a:b:s]).
\]
Using the morphism $\eta_{(A,B)}\colon\Delta_{(A,B)}\to\PP^n$, $(s,t_1,\ldots,t_n)\mapsto [1:t_1:\ldots:t_n]$ of Proposition~\ref{prop:existence3linksC}, we obtain a commutative diagram
	\[
	 \xymatrix@R=8pt@C=40pt{
	\Delta_{(A,B)}\ar@{->}[rd]_{\eta_{(A,B)}}\ar@{-->}[r]& \widehat{\Omega}_{[A:B]}\ar[d]& \Omega_{[A:B]}\ar[l]\ar@{->}[dl]^{\epsilon_{[A:B]}}\\
	 &\PP^n,
	 }
	\]
	where $\epsilon_{[A:B]}$ sends $\left ([x_0:\cdots:x_n;y_0:y_1],[u:v:w] \right )$ onto $[x_1:x_n:x_0:x_2:\cdots:x_{n-1}]$.

Let us show that $t_2b^3-a^3$ is not a cube in $L=K[\sqrt[3]{t_1}]$. Otherwise, the polynomial  $P=a(t_1^3,t_2,\ldots,t_n)^3-t_2b(t_1^3,t_2,\ldots,t_n)^3\in \CC[t_1,\ldots,t_n]$ would be a cube, and thus the pull-back of $\Delta_{(A,B)}$ under the base change $t_1\to t_1^3$, i.e.~the variety
\[
\Delta'=\{(s,t_1,\ldots,t_n)\in \AA^{n+1}\mid  t_1^3s^3+P(t_1,\ldots,t_n)=0\},
\]
would be the union of three distinct rational varieties, impossible as $\cg(\Omega)\geqslant \frac{d-n}{2}+1$.

So, we can apply Proposition~\ref{prop:existence3linksC} and obtain a $3$-link $\chi_{(A,B)}\colon Y_{(A,B)}\dasharrow Y_{(A,B)}'$ associated to $(A,B)$. The base-loci $\Gamma_{(A,B)}\subseteq Y_{(A,B)}$ and $\Gamma'_{(A,B)}\subseteq Y'_{(A,B)}$ of $\chi_{(A,B)}$ and $\chi_{(A,B)}^{-1}$ fit into the commutative diagram \eqref{eq: base loci of 3-link}. Let us check that $\chi_{(A,B)}$ is not equivalent to its inverse, and that it is not equivalent to $\chi_{(C,D)}$ or $\chi_{(C,D)}^{-1}$ for any $[C:D]\in R_{n,d}$ different from $[A:B]$. Indeed, otherwise by Proposition~\ref{prop:existence3linksC}, Lemma~\ref{Lem:ModelDeltaSmooth} and Lemma~\ref{lem: properties of Omega}, we have a commutative diagram
	\[
	 \xymatrix@R=6pt@C=25pt{
	& \Omega_{[A:B]}\ar[dddd]^{\epsilon_{[A:B]}}\ar@{-->}[rrrr]^{\psi} &&&& \Omega_{[C:D]}\ar[dddd]^{\epsilon_{[C:D]}}	\\
	 && \Gamma'_{(A,B)}\ar@{-->}[dd]\ar@{-->}[rr]\ar[dddl] &&\Gamma'_{(C,D)}\ar[dddr]\ar@{-->}[dd] && \\
	  \Delta_{(A,B)}\ar@{-->}[uur]\ar@{-->}[rru]\ar@{-->}[rrd]\ar@{->}[ddr]_{\eta_{(A,B)}}&&&&&& \Delta_{(C,D)}\ar@{-->}[uul]\ar@{-->}[llu]\ar@{-->}[dll]\ar@{->}[ddl]^{\eta_{(C,D)}}\\
	 && \Gamma_{(A,B)}\ar[dl]\ar@{-->}[rr] &&\Gamma_{(C,D)}\ar[dr]  \\
	 &\PP^n\ar@{-->}[rrrr]^{\theta}&	 &&  & \PP^n.
	 }
	\]
By Lemma~\ref{lem: properties of Omega}\ref{flopWAB}, the birational map $\psi$ is a pseudo-isomorphism and $\theta$ is an automorphism of $\PP^n$ that sends $V_{\PP^n}(A,B)$ to $V_{\PP^n}(C,D)$. However, by the construction of the set $R_{n,d}$, we must have $[A:B]=[C:D]$ (Lemma~\ref{Lemm:CompleteInt}\ref{ci3}), whence $(A,B)=(C,D)$. It remains to show that $\chi_{(A,B)}$ and $\chi_{(A,B)}^{-1}$ are not equivalent. Otherwise, again by the construction of $R_{n,d}$ (Lemma~\ref{Lemm:CompleteInt}\ref{ci2}), we find that $\theta$ is the identity. Hence, the birational map in the equivalence of links induces the identity on the base, see Definition~\ref{Def:equiLink}. But as the generic fibres  $S$ and $S^{\rm op}$ of $Y_{(A,B)}/\PP^n$ and $Y_{(A,B)}'/\PP^n$ are not isomorphic, we conclude that $\chi_{(A,B)}$ is not equivalent to $\chi_{(A,B)}^{-1}$.

Consider the set $\mathcal{M}_3$ of Sarkisov links $\chi_{(A,B)}$ for $[A:B]\in R_{n,d}$. They all have covering genus at least $g$ and no two different elements are equivalent and no link is equivalent to its inverse. For each pair $(A,B)$, Proposition~\ref{prop:existence3linksC} implies that the generic fibre of $Y_{(A,B)}\to \PP^n$ is isomorphic to $S$, the generic fibre of $X\to \PP^n$; hence we obtain as in the proof of Theorem~\ref{thm: free product} above, that the link $\chi_{(A,B)}$ induces an element in $\BirMori(S)$. The cardinality of $\mathcal{M}_3$ equals the one of $R_{n,d}$, which is the cardinality of $\CC$.
Applying Theorem~\ref{Theorem:SBMfs} yields a surjective groupoid homomorphism $\BirMori(X)\to  \bigoplus_{\mathcal{M}_3} \ZZ/3\ZZ,$ whose restriction is a group homomorphism
\[
\rho\colon \Bir(X)\to  \bigoplus_{\mathcal{M}_3} \ZZ/3\ZZ.
\]
For each $(A,B)$ with $[A:B]\in R_{n,d}$, Proposition~\ref{prop:existence3linksC} yields an element $\psi_{[A:B]}\in \Bir(Y_{(A,B)}/\PP^n)$ of order $3$, which admits a decomposition into Sarkisov links $\chi_s\circ\cdots \circ\chi_1\circ \chi_{(A,B)}$, where $\chi_1,\ldots,\chi_s$ are Sarkisov links between SBMfs, being either isomorphisms between the generic fibres or of covering genus $0$. We take any birational map from $Y_{(A,B)}$ to $X$ and conjugate $\psi_{[A:B]}$ to obtain an element $\psi'_{(A,B)}\in \Bir(X)$ of order $3$, whose image under $\rho$ corresponds to the generator of  $\bigoplus_{\mathcal{M}_3} \ZZ/3\ZZ$ corresponding to $(A,B)$. We proved that the group homomorphism $\rho\colon \Bir(X)\to  \bigoplus_{\mathcal{M}_3} \ZZ/3\ZZ$ is surjective. This group homomomorphism factors through the abelianisation. Moreover, as any of the generators of $\bigoplus_{\mathcal{M}_3} \ZZ/3\ZZ$ is the image of an element of order $3$, the image of these elements in the abelianisation generate a subgroup of the abelianisation isomorphic to $\bigoplus_{\mathcal{M}_3} \ZZ/3\ZZ$ and thus to $\bigoplus_{\CC} \ZZ/3\ZZ$. This achieves the proof.
\end{proof}

\def\bibindent{2.5em}

\bibliographystyle{alphadin}
\bibliography{biblio}

\end{document}